\newtheorem{thm}{Theorem}[section]
\newtheorem{lem}[thm]{Lemma}
\newtheorem{proposition}[thm]{Proposition}
\newtheorem{claim}[thm]{Claim}
\newtheorem{coro}[thm]{Corollary}
\newtheorem{dfn}[thm]{Definition}
\newtheorem{fact}[thm]{Fact}
\newtheorem{conjecture}{Conjecture}
\def\showfiglabel#1{}
\def\showlabel#1{}
\algnewcommand{\Break}{\State \textbf{break} }
\algnewcommand{\Continue}{\State \textbf{continue} }
\algnewcommand{\True}{\textbf{true}}
\algnewcommand{\False}{\textbf{false}}
\title{Three-edge-coloring projective planar cubic graphs:\\ 
A generalization of the Four Color Theorem}
\begin{document}
\author{
Yuta Inoue\thanks{The University of Tokyo, Tokyo, Japan, \texttt{yutainoue@is.s.u-tokyo.ac.jp}.  Supported by JSPS Kakenhi 22H05001, by JST ASPIRE JPMJAP2302 and by Hirose Foundation Scholarship.}
\and
Ken-ichi Kawarabayashi\thanks{National Institute of Informatics \& The University of Tokyo, Tokyo, Japan, \texttt{k\_keniti@nii.ac.jp}.  Supported by JSPS Kakenhi 22H05001, by JSPS Kakenhi JP20A402 and by JST ASPIRE JPMJAP2302.}
\and
Atsuyuki Miyashita\thanks{The University of Tokyo, Tokyo, Japan, \texttt{miyashita-atsuyuki869@is.s.u-tokyo.ac.jp}.  Supported by JSPS Kakenhi 22H05001 and by JST ASPIRE JPMJAP2302.}
\and
Bojan Mohar\thanks{Simon Fraser University, Burnaby, BC, Canada, \texttt{mohar@sfu.ca}. Supported in part by NSERC Doscovery Grant R832714 (Canada),
by the ERC Synergy grant (European Union, ERC, KARST, project number 101071836),
and by the Research Project N1-0218 of ARIS (Slovenia).}
\and
Tomohiro Sonobe\thanks{National Institute of Informatics \& The University of Tokyo, Tokyo, Japan, \texttt{tomohiro\_sonobe@nii.ac.jp}.  Supported by JSPS Kakenhi 22H05001 and by JST ASPIRE JPMJAP2302.}
 }
\date{\today}
\maketitle
%\linenumbers

\begin{abstract}
We prove that every cyclically 4-edge-connected cubic graph that can be embedded in the projective plane, with the single exception of the Petersen graph, is 3-edge-colorable. 
In other words, the only (non-trivial) snark that can be embedded in the projective plane is the Petersen graph. 

This implies that a 2-connected cubic (multi)graph that can be embedded in the projective plane is not 3-edge-colorable if and only if it can be obtained from the Petersen graph by replacing each vertex 
%or edge 
by a 2-edge-connected planar cubic (multi)graph. 
Here, a replacement of a vertex $v$ in a cubic graph $G$ is the operation that takes a 2-connected planar (cubic) multigraph $H$ containing some vertex $u$ of degree 3, unifying $G-v$ and $H-u$, and connecting the vertices in $N_G[v]$ in $G-v$ with the three neighbors of $u$ in $H-u$ with 3 edges. Any graph obtained in such a way is said to be \emph{Petersen-like}.
%The replacement of an edge is a similar operation to the above.
   
This result is a nontrivial generalization of the Four Color Theorem, and its proof requires a combination of extensive computer verification and computer-free extension of existing proofs on colorability. 

Using this result, we obtain the following algorithmic consequence. 

\begin{quote}
{\bf Input}: A cubic graph $G$.\\
{\bf Output}: Either a 3-edge-coloring of $G$, an obstruction showing that $G$ is not 3-edge-colorable, or the conclusion that $G$ cannot be embedded in the projective plane (certified by exposing a forbidden minor for the projective plane contained in $G$).\\
{\bf Time complexity}: $O(n^2)$, where $n=|V(G)|$.
\end{quote}
An unexpected consequence of this result is a coloring-flow duality statement for the projective plane: A cubic graph embedded in the projective plane is 3-edge-colorable if and only if its dual multigraph is 5-vertex-colorable. Moreover, we show that a 2-edge connected graph embedded in the projective plane admits a nowhere-zero 4-flow unless it is Peteren-like (in which case it does not admit nowhere-zero 4-flows). This proves a strengthening of the Tutte 4-flow conjecture for graphs on the projective plane.

Some of our proofs require extensive computer verification. The necessary source codes, together with the input and output files and the complete set of more than 6000 reducible configurations are 
available on Github\footnote{\url{https://github.com/edge-coloring}. Refer to the ``README.md'' file in each directory for instructions on how to run each program. Moreover, we provide pseudocodes for all our computer verifications, see Section \ref{sect:code}}, which can be considered as an Addendum to this paper.
\end{abstract}

\section{Introduction}

\subsection{The four color theorem, snarks, and our main result}

The four-color theorem (4CT) states that every loopless planar graph is 4-colorable. 
It was first shown by Appel and Haken \cite{4ct1,4ct2} in 1977, and a simplified version of the proof was given in 1997 by Robertson et al.~\cite{RSST}. The latter proof was used by the same authors \cite{RSST-STOC} to obtain a quadratic-time algorithm for 4-coloring planar graphs. 

Looking outside the set of planar graphs, it seems extremely difficult to extend the 4CT to larger families of graphs.
Indeed, it has been known that for every surface with positive Euler genus there are infinitely many \emph{$5$-color-critical graphs}\footnote{A graph $G$ is said to be \emph{$k$-color-critical} if $G$ is not $(k-1)$-colorable, but every proper subgraph of $G$ is.} that can be embedded in the surface \cite{MT}. 
This means that while there are no 5-color-critical planar graphs, there are infinitely many 5-color-critical graphs on the projective plane. When regarding vertex-coloring, there is a major gap between planar graphs and graphs of higher genus.

On the other hand, the 4CT has another equivalent formulation by taking planar duality:

\begin{thm}\label{thm:3ECplanar}
   Every $2$-connected cubic planar graph is $3$-edge-colorable. 
\end{thm}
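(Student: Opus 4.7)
The plan is to derive this from the 4CT via Tait's classical duality argument: translate a proper 4-face-coloring of the cubic graph $G$ (equivalently, a proper 4-coloring of its planar dual) into a 3-edge-coloring of $G$, exploiting the group structure on the four colors.

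First, I would fix an embedding of $G$ in the sphere and form the planar dual $G^*$. Because $G$ is 2-connected, every edge of $G$ bounds two distinct faces, so $G^*$ is loopless. The 4CT then produces a proper vertex-coloring $c\colon V(G^*) \to \mathbb{Z}_2 \times \mathbb{Z}_2$, that is, a proper 4-coloring of the faces of $G$ by elements of the \emph{Klein four-group}.

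Next, for each edge $e$ of $G$ I would set $\phi(e) := c(f) + c(f')$, where $f, f'$ are the two faces of $G$ bounding $e$ and addition is in $\mathbb{Z}_2 \times \mathbb{Z}_2$. Since the face-coloring is proper, $\phi(e)$ is always nonzero, so $\phi$ takes values in the three-element set $\{01,10,11\}$. To verify that $\phi$ is a proper edge coloring, I would consider any vertex $v$ of $G$: 2-connectedness guarantees that each facial walk of the embedding is a cycle, so the three edges incident to $v$ separate three pairwise distinct faces, say with colors $a, b, c$. The three edges at $v$ then receive colors $a+b$, $b+c$, $a+c$, which are pairwise distinct (e.g.\ $a+b=b+c$ forces $a=c$) and all nonzero, hence exhaust the three available colors.

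The only substantive step is the invocation of the 4CT; the rest is a short group-theoretic translation attributed to Tait. The 2-connectedness hypothesis is essential twice: it ensures that $G^*$ is loopless (so the 4CT applies) and that each facial walk is a cycle (so the three faces around any vertex are distinct, preventing a color collision at a cut vertex). Consequently there is no real obstacle beyond the 4CT itself, which is assumed; the argument is reversible, yielding the claimed equivalence.
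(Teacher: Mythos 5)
Your proof is correct and is exactly the classical Tait duality argument that the paper invokes implicitly: the paper states Theorem \ref{thm:3ECplanar} without proof as the planar-dual reformulation of the Four Color Theorem, and your Klein-four-group translation of a proper 4-face-coloring into a 3-edge-coloring is the standard way to establish that equivalence. The only point worth making explicit is that the three faces meeting at a vertex are pairwise \emph{adjacent} (each pair shares one of the three incident edges), which is what forces their colors $a,b,c$ to be pairwise distinct and hence the edge colors $a+b$, $b+c$, $a+c$ to be three distinct nonzero elements.
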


While the Four Color Theorem has no extension to other surfaces, the version of Theorem \ref{thm:3ECplanar} has some chance to hold under some additional assumptions. This led Tutte in 1966 to propose his famous \emph{$4$-Flow Conjecture}.

\begin{conjecture}[Tutte, \cite{tutte}]\label{conj:Tutte4Flow}
   Every $2$-connected graph with no Petersen graph minor admits a nowhere-zero $4$-flow. 
\end{conjecture}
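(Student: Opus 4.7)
The plan is to reduce the conjecture to Tutte's snark conjecture and then combine the paper's projective-planar classification with a structural decomposition of Petersen-minor-free graphs.

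First, I would reduce to the cubic case. A standard fact is that a bridgeless graph $G$ has a nowhere-zero 4-flow if and only if its cubic inflation $G'$ does, where each vertex of degree $d \ge 4$ is replaced by a planar cubic gadget (for instance, a path of triangles respecting the cyclic order of the $d$ incident edges). Because the gadget is planar, this local replacement cannot create a Petersen minor if $G$ had none: a putative Petersen minor in $G'$, when the gadgets are contracted, projects to a Petersen minor in $G$ (provided the gadgets are chosen to be 3-connected so that two branch sets cannot share a gadget non-trivially). For bridgeless cubic graphs, a nowhere-zero 4-flow is equivalent to a 3-edge-coloring, and standard Parity-Lemma arguments reduce further across 2- and 3-edge-cuts. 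The conjecture therefore becomes Tutte's snark conjecture: every cyclically 4-edge-connected cubic graph with no Petersen minor is 3-edge-colorable.

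Next, I would invoke a (currently conjectural) structure theorem for cubic Petersen-minor-free graphs, asserting a trichotomy: any such graph is either (a) embeddable in the projective plane, (b) \emph{apex-planar}, i.e., it has a vertex whose deletion leaves a planar cubic graph, or (c) it splits along a small edge-cut into strictly smaller pieces of the same kind. Case (a) is handled by the paper's main result: the only cyclically 4-edge-connected projective-planar snark is the Petersen graph itself, which is ruled out by hypothesis. Case (b) reduces to the Four Color Theorem (in its 3-edge-coloring form, Theorem~\ref{thm:3ECplanar}) applied to $G-v$, followed by a local Kempe-chain argument that extends a 3-edge-coloring across the three edges at $v$; any obstruction to this extension would itself encode a Petersen minor routed through $v$. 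Case (c) is dispatched by induction with standard palette matching across the cut.

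The main obstacle, and the reason the full conjecture remains open, is precisely that no such clean decomposition for Petersen-minor-free graphs is currently known, and obtaining one is widely believed to be harder than the 4-flow conjecture itself. A more realistic variant of the plan would replace the second step with a direct discharging attack in the style of the Four Color Theorem: use the Robertson-Seymour ``nearly-embeddable'' graph-minor structure to place any cyclically 4-edge-connected Petersen-minor-free cubic graph on a surface of bounded Euler genus (possibly with a bounded number of apex vertices and vortices), and then extend the paper's computer-verified catalogue of more than 6000 reducible configurations from the projective plane to this broader setting. The central technical hurdle is that the Euler-formula ``charge budget'' driving such discharging arguments weakens as the genus grows, so the set of reducible configurations required blows up; handling apices and vortices rigorously, and keeping the catalogue finite and computer-verifiable, is where a genuinely new idea beyond the present paper would be required.
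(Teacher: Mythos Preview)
The statement you are attempting to prove is labeled \emph{Conjecture} in the paper, and for good reason: it is Tutte's 4-Flow Conjecture, which remains open. The paper does not prove it; it merely states it and notes that the special case for cubic graphs was established by Robertson, Seymour, and Thomas. So there is no ``paper's own proof'' to compare against.

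Your proposal is honest about its central gap (the missing structure theorem for Petersen-minor-free cubic graphs), but it also contains a genuine error in a step you treat as routine. Your reduction to the cubic case via ``cubic inflation'' does not obviously preserve Petersen-minor-freeness. You argue that a Petersen minor in the inflated graph $G'$ projects to one in $G$ by contracting the gadgets, but this fails in general: distinct branch sets of the minor may each intersect a given gadget, and contracting the gadget merges them, destroying the minor. Requiring the gadgets to be 3-connected does not rescue this. In fact, the reduction of Tutte's conjecture from general graphs to cubic graphs is itself not known; this is precisely why the general conjecture remains open even though the cubic case has been settled.

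Two further points. First, your trichotomy (projective planar / apex-planar / small cut) is not how the Robertson--Seymour--Thomas argument is organized; their cubic result runs through the apex and doublecross cases, not the projective plane. Second, you describe the apex case as ``4CT applied to $G-v$, followed by a local Kempe-chain argument.'' This is a serious underestimate: the apex case was a major piece of the RST program, and as the present paper notes, its full proof was never published. It is not a local patching argument.
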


While this conjecture is still open, its special case restricted to cubic graphs was proved in 1997 by Robertson, Seymour, and Thomas \cite{3edgecoloring}. The proof of this major achievement is based on two nontrivial results, one about doublecross graphs \cite{doublecross}, and the other one about apex cubic graphs. (The full proof of the latter case was not published.)

For cubic graphs, having a nowhere-zero $4$-flow is equivalent to having a 3-edge-coloring. 
Motivated by questions about colorings and flows for graphs on surfaces, there is an ongoing effort to formulate which cubic graphs that can be embedded in a fixed surface are 3-edge-colorable. 
This question can be reduced to cyclically 4-edge-connected graphs.\footnote{A graph is \emph{cyclically $k$-edge-connected} if after deleting fewer than $k$ edges from the graph, there is at most one component that contains a cycle. See also Section \ref{sect:minimal}.}
Cyclically 4-edge-connected cubic graphs with girth of at least 5 that are not 3-edge-colorable are called \emph{snarks}. 
The most famous snark is the Petersen graph, which was the first snark discovered \cite{petersen-1898}. 
Snarks are an interesting and important class of graphs because they often appear as counterexamples to various intriguing conjectures.
%(see \cite{MT}.) 

A well-known conjecture by Gr\"{u}nbaum from 1968 \cite{grunbaum} asserts that every cubic graph with a polyhedral embedding (i.e., graphs whose dual triangulation is a simple graph) in an orientable surface is 3-edge-colorable. 
% For non-orientable surfaces this conjecture does not hold, as the Petersen graph, a well-known non-three-edge colorable cubic graph, can be polyhedrally embedded in the projective plane. 
This conjecture was disproved by Kochol \cite{kochol}, who found a counterexample of genus 5. 
However, whether the conjecture holds for surfaces of smaller Euler genus is still unresolved.
Vodopivec \cite{Vodopivec} proved that there are infinitely many snarks that can be embedded in the torus. 
They are certain dot products of copies of the Petersen graph; each of them contains two edges whose removal yields a planar graph. 
It is possible that the provided examples are the only ones, and the conjecture of Gr\"unbaum \cite{grunbaum} claims precisely that.

\begin{conjecture}[Gr\"unbaum \cite{grunbaum}, 1968]\label{conj:grunbaum torus}
    If $G$ is a $2$-connected cubic graph embedded in the torus and $G$ is not $3$-edge-colorable, then $G$ contains two edges whose removal gives a planar graph. 
\end{conjecture}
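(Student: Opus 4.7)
The plan is to attempt a reduction analogous to the main theorem proved in this paper for the projective plane, while accounting for the essential difference that the torus carries infinitely many (rather than a single) cyclically 4-edge-connected cubic snark. As a first step, I would reduce to the cyclically 4-edge-connected case. If $G$ has a cyclic edge-cut of size at most $3$, the standard splitting argument produces smaller cubic graphs $G_1, G_2$ embedded in the torus and in some disk (or both in the torus); applying induction yields colorings that can be recombined, and crucially the two ``bad'' edges can be chosen inside whichever factor inherits non-planarity. So one may assume $G$ is a cyclically 4-edge-connected cubic graph embedded in the torus which is not 3-edge-colorable, i.e., a torus snark.

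The core of the plan is then a structural classification: prove that every torus snark $G$ admits an edge pair $\{e_1,e_2\}$ whose removal yields a planar graph. The natural model is Vodopivec's dot-product construction. I would look for a non-contractible simple closed curve $\gamma$ on the torus meeting $E(G)$ transversally in as few edges as possible. Since $G$ is not planar, $\gamma$ must cross $G$ at least once; cyclic 4-edge-connectivity together with snark properties should force $\gamma$ to cross $G$ in exactly two edges after a judicious choice (handling the handle of the torus). Cutting the torus along $\gamma$ produces a cylinder carrying an ``annular'' cubic graph $G'$ obtained from $G$ by deleting those two edges; the claim is precisely that $G'$ is planar, which amounts to showing that the remaining homology class is also killed automatically once snark-ness is used.

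The hard part is to prove exactly this: that the two-edge non-contractible cut always exists. I would combine a dual/flow viewpoint — translating 3-edge-colorability into a nowhere-zero 4-flow and then into a 5-coloring of the dual embedded multigraph, mirroring the coloring-flow duality established in the projective-plane case — with a structural analysis of which dual triangulations of the torus fail to be 5-colorable. By the result of Robertson--Seymour--Thomas that Tutte's 4-flow conjecture holds for cubic graphs, the obstruction must be the Petersen minor; the task is to show that, on the torus, the minor actually sits in $G$ as a topological/structural subgraph in such a way that a non-separating 2-edge-cut is visible in the embedding.

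The principal obstacle, and the reason I expect this program to be genuinely difficult, is twofold. First, unlike the projective plane (where the only irreducible snark is Petersen), the torus supports an infinite family of snarks, so no finite list of reducible configurations can close out the argument by direct computer enumeration in the style of this paper; the reducibility machinery must instead be used to force a global dot-product decomposition. Second, controlling non-contractible curves in both independent homology classes simultaneously appears to require a delicate argument about the interaction of the embedding with $4$-edge-cuts, which is where the proof for the projective plane was already technically heavy and where the torus contributes qualitatively new cases. I would expect that resolving this obstacle requires a new structure theorem for cubic graphs on the torus whose dual is not $5$-colorable, together with a substantial but now \emph{bounded-case} computer verification analogous to the one carried out here.
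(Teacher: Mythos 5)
This statement is not a theorem of the paper: it is Gr\"unbaum's conjecture for the torus, which the paper explicitly records as \emph{unresolved} (the authors prove the analogous statement only for the projective plane, where the single exceptional snark is the Petersen graph). So what is required here is either a genuine proof of an open problem or an acknowledgement that none is known; your text is a research programme, and by your own admission (``the hard part is to prove exactly this'') it leaves the entire core of the argument unproved. Concretely, the missing step is the claim that every cyclically 4-edge-connected torus snark admits a non-contractible simple closed curve meeting the embedding in exactly two edges (equivalently, has face-width at most $2$, which is what makes the graph minus those two edges embeddable in the cut-open cylinder and hence planar). Nothing in cyclic 4-edge-connectivity or in ``snark-ness'' forces this: a priori a non-3-edge-colorable cubic graph could be embedded in the torus with large representativity, and ruling that out is precisely the content of the conjecture. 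Asserting that a ``judicious choice'' of curve ``should'' cross only two edges restates the conjecture rather than proving it.

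Two further ingredients you invoke are not available. First, the coloring--flow duality you propose to ``mirror'' is, as the paper stresses, a \emph{consequence} of its main theorem on the projective plane and was previously believed not to hold on nonorientable surfaces; no analogous duality is known for the torus (where the relevant dual statement would concern $6$- or $7$-colorings, and the torus carries infinitely many $6$-critical triangulations, so the dual route breaks down structurally). Second, the Robertson--Seymour--Thomas theorem only tells you that a torus snark contains a Petersen minor; since the Petersen graph itself embeds in the torus with no two-edge planarizing set forced by the minor alone, the existence of the minor does not localize a non-separating $2$-edge cut in the embedding. The correct assessment is that the statement remains open, and your proposal identifies the right obstacles without overcoming them.
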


Returning our attention to the projective plane, Mohar conjectured in 2004 (see \cite{Mohar-snarksPP}) that the only snark that is embeddable on the projective plane is the Petersen graph (Indeed, in the mid-1990, Neil Robertson already considered this question, and he made a conjecture which is similar to Conjecture \ref{conj:grunbaum torus} for any surface, see Problem 5.5.19 in \cite{MT}).  The main result of this paper verifies this conjecture which is tightly related to Gr\"unbaum's conjecture for the projective plane.
%and which can be also viewed as another form of ''the only exception is the Petersen graph''.

\begin{thm}\label{mainth}\showlabel{mainth}
    The only snark embeddable in the projective plane is the Petersen graph. 
    In other words, a $2$-connected cubic graph $G$ that can be embedded in the projective plane is not $3$-edge-colorable if and only if $G$ can be obtained from the Petersen graph by replacing each vertex by a $2$-connected planar cubic graph.
\end{thm}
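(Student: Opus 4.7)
The plan is to prove the forward direction by contradiction, taking a vertex-minimum counterexample $G$: a cubic graph embedded in the projective plane that is not $3$-edge-colorable and not Petersen-like. Standard splitting arguments at small edge cuts, together with the planar case (Theorem \ref{thm:3ECplanar}), reduce the situation to the case where $G$ is a snark (cyclically $4$-edge-connected with girth at least $5$, not $3$-edge-colorable), so it suffices to show that the only projective-planar snark is the Petersen graph. Fix a $2$-cell embedding of $G$ in the projective plane, and let $f_i$ denote the number of faces of length $i$. Euler's formula for the projective plane, combined with $3|V(G)|=2|E(G)|$, gives $\sum_i (6-i) f_i = 6$, exactly half of the corresponding quantity for the sphere. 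Because the girth is at least $5$, this forces the ``excess'' charge of $+6$ to be concentrated on short faces, setting up a controlled local analysis.

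The main tool will be a discharging argument patterned on the Robertson--Sanders--Seymour--Thomas proof of the Four Color Theorem \cite{RSST}, adapted to the projective plane. Starting with an initial charge of $i-6$ on each face of length $i$, one designs local redistribution rules that move charge away from short faces toward their neighborhoods; since the total is $-6$, some bounded-radius configuration must accumulate negative charge, and the set of possible such configurations forms an unavoidable set. For each configuration $C$ in this set I would establish reducibility in the classical sense: assuming $C$ appears in $G$, construct a smaller cubic graph $G'$ (still embedded in the projective plane, or often in a disk if $C$ lies in a contractible region), invoke minimality to obtain a $3$-edge-coloring of $G'$, and extend it back across $C$ to $G$ using Kempe-chain exchanges. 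This is where the computer verification of the several thousand configurations enters: both the reducibility of each configuration and the fact that the discharging rules indeed yield an unavoidable set are verified by the accompanying software. At the very end, the cubic case of Tutte's $4$-flow conjecture \cite{3edgecoloring} can be used as a safety net whenever the reduction lands in a graph with no Petersen minor.

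The main obstacle I expect is twofold. First, the Petersen exception must be threaded through the induction: a configuration that is ``reducible'' in general might fail precisely when its reduction produces the Petersen graph (or a Petersen-like graph) embedded in the projective plane, so the discharging must either forbid such reductions at the outset or rule them out by showing that $G$ is then itself Petersen. Second, and more seriously, non-contractible cycles in the projective plane interact nontrivially with Kempe chains: a chain that would close into a planar cycle may instead trace a M\"obius-like curve, breaking the parity arguments that underlie planar reducibility. Handling these global effects requires showing that in a minimum counterexample every configuration of the unavoidable set can be realized inside a contractible disk of the embedding, and that its reduction preserves projective planarity without accidentally creating a Petersen-like structure. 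This bridge between the purely local, computer-checkable reductions and the global topological content of the projective plane is, I expect, where the bulk of the computer-free work of the proof will need to be done.
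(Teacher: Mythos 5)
Your proposal follows essentially the same route as the paper: a minimal counterexample reduced via small cyclic edge-cuts, a discharging argument on the projective plane driven by Euler's formula, a computer-verified unavoidable set of reducible configurations, and explicit attention to the two genuine difficulties (reductions that land on Petersen-like graphs, and non-contractible curves/low representativity interfering with Kempe-chain reducibility). The only minor divergences are that the paper pushes the connectivity reduction to cyclic $5$-edge-connectivity rather than $4$, and it does not invoke the Robertson--Seymour--Thomas cubic $4$-flow theorem as a safety net; otherwise your outline matches the paper's structure.
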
    

By \emph{replacing a vertex $v$} in a cubic graph $G$ by a planar cubic graph $H$, we mean selecting a vertex $u\in V(H)$, then taking the disjoint union of $G-v$ and $H-u$ and add three edges joining vertices of degree 2 in $G-v$ with those in $H-u$. 
This operation can be conducted repeatedly and can also involve a similar operation of replacing an edge of $G$ with $H$ (where we join $G-r$ and $H-f$ for some edge $f\in E(H)$). It is easy to see that making successive replacements can be done with a single step, so there is no need to mention that.
    
%    \begin{enumerate}
%        \item Replacing each vertex $v$ by a 2-edge-connected planar cubic multigraph $H$. 
%        More precisely, choose a vertex $u \in H$, obtain two sub-cubic graphs $G-v$ and $H-u$, and add three edges in between vertices with degrees less than 3 to obtain a cubic graph.
%        \item 
%        Replacing each edge $e$ by a 2-edge-connected planar cubic multigraph $H$. 
%        More precisely, choose an edge $f \in H$, obtain two sub-cubic graphs $G-e$ and $H-f$, and add two edges in between vertices with degrees less than 3 to obtain a cubic graph.
%    \end{enumerate}

Theorem \ref{mainth} implies a strengthening of the Tutte 4-Flow Conjecture (Conjecture \ref{conj:Tutte4Flow}).

\begin{coro}
    Every cyclically-$4$-edge-connected projective planar graph different from the Petersen graph admits a nowhere-zero $4$-flow\footnote{For the precise definition of nowhere-zero $4$-flow, we refer the reader to the textbook by Diestel \cite{Diestel_book}.}.
\end{coro}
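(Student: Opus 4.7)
The plan is to deduce the corollary directly from Theorem~\ref{mainth}, combined with the classical Tutte equivalence, for cubic bridgeless graphs, between $3$-edge-colorability and the existence of a nowhere-zero $4$-flow (obtained by identifying the three color classes with the three non-identity elements of $\mathbb{Z}_2 \oplus \mathbb{Z}_2$). When $G$ is itself cubic, the second formulation of Theorem~\ref{mainth} immediately yields a $3$-edge-coloring and hence a nowhere-zero $4$-flow: any Petersen-like graph with a non-trivial replacement piece carries a cyclic $3$-edge-cut around that piece, so the only cyclically $4$-edge-connected Petersen-like graph is the Petersen graph itself, which is excluded by hypothesis.

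For a general $G$ with vertices of arbitrary degrees, I would reduce to the cubic case by a standard inflation. First suppress every vertex of degree~$2$ (this has no effect on nowhere-zero flows or on the embedding). Then for each vertex $v$ of degree $d = d(v) \ge 4$, inflate $v$ into a cycle $C_v$ of length $d$ whose consecutive vertices correspond to the half-edges at $v$ in the cyclic order given by the projective-planar rotation at $v$. The resulting cubic graph $G^*$ is embedded in the projective plane, and since contracting every $C_v$ recovers $G$ and contraction preserves nowhere-zero $4$-flows, it suffices to exhibit a nowhere-zero $4$-flow on $G^*$.

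The main obstacle is to verify that $G^*$ satisfies the hypotheses of Theorem~\ref{mainth}. A short case analysis on where the edges of a putative cyclic edge-cut of $G^*$ of size at most $3$ lie relative to the inflated cycles shows that such a cut either projects to a cyclic edge-cut of the same size in $G$ (contradicting cyclic $4$-edge-connectivity) or contracts to a strictly smaller edge-cut of $G$ (contradicting $2$-edge-connectivity), so $G^*$ is cyclically $4$-edge-connected. Excluding $G^* = $ Petersen requires one further check: since Petersen has girth $5$, the presence of any inflated cycle in $G^*$ forces $G$ to arise from the Petersen graph by contracting one or both of its two disjoint $5$-cycles, leaving only the wheel $W_5$ (planar, hence admits a nowhere-zero $4$-flow by the Four Color Theorem) and the two-vertex multigraph with five parallel edges (for which a nowhere-zero $4$-flow is produced by hand, for example by assigning the values $1,1,2,2,2$ to the five edges oriented from one vertex to the other). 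Once these exceptional cases are handled, Theorem~\ref{mainth} supplies the desired $3$-edge-coloring of $G^*$, and the corollary follows.
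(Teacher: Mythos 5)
Your route is genuinely different from the paper's. The paper does not prove the corollary by inflating vertices; it deduces it from the stronger Theorem~\ref{thm:nz4flow}, whose proof (in the appendix) takes a minimum counterexample among $2$-connected projective-planar graphs and shows it must be a $3$-connected cubic graph: $1$- and $2$-separations are handled by gluing nowhere-zero $4$-flows across a $2$-sum, and a vertex of degree greater than $3$ is eliminated by splitting off a single pair of consecutive edges in the rotation, re-choosing the pair if the split graph happens to be Petersen-like. Your approach replaces this incremental splitting by a single global inflation of every high-degree vertex into its facial cycle, followed by contraction to recover the flow. What your approach buys is that all the work is concentrated in two verifiable claims about $G^*$ (cyclic $4$-edge-connectivity and $G^*\neq P_{10}$); what the paper's approach buys is that it proves the stronger statement for all $2$-connected graphs, where your inflation would not preserve the needed connectivity.

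There are two places where your argument is incomplete as written. First, the exclusion of $G^*=P_{10}$: girth $5$ only forces each inflated cycle $C_v$ to have length at least $5$, not exactly $5$. Since $G$ is loopless, each $C_v$ is induced in $G^*$, and the induced cycles of the Petersen graph have length $5$ or $6$ (there are no $7$- or $10$-cycles, and every $8$- or $9$-cycle has a chord), so besides one or two disjoint $5$-cycles you must also allow a single induced $6$-cycle, i.e.\ a vertex of degree $6$ in $G$. Contracting an induced $6$-cycle of $P_{10}$ yields a $5$-vertex planar multigraph (a degree-$6$ vertex joined by six edges to a tree on the remaining four vertices), which again has a nowhere-zero $4$-flow, so the conclusion survives, but the case must appear in the enumeration. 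Second, the connectivity check is asserted with the wrong invariant: a cyclic $\le 3$-edge-cut of $G^*$ that uses two edges of some $C_v$ projects to a cut $\delta_G(X)$ of size at most $3$ in which $X$ could a priori induce a tree, so ``contradicting $2$-edge-connectivity'' is not what finishes the argument; one needs that after suppression every vertex of $G$ has degree at least $3$, whence any $X$ inducing a tree satisfies $|\delta_G(X)|\ge |X|+2$, which simultaneously rules out $2$-edge-cuts of $G$ and forces both sides of the projected cut to contain cycles, contradicting cyclic $4$-edge-connectivity. (A last boundary issue, shared with the statement itself: if $G$ is a proper subdivision of the Petersen graph, it is cyclically $4$-edge-connected, distinct from $P_{10}$, and has no nowhere-zero $4$-flow; your suppression step lands exactly on this case, so the corollary must be read up to suppression of degree-$2$ vertices.)
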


Using Theorem \ref{mainth}, we obtain the following algorithmic outcome.

\begin{thm}\label{algo}\showlabel{algo}
The following algorithmic question can be solved in quadratic time, $O(n^2)$, where $n=|V(G)|$.\\
{\bf Input}: A cubic graph $G$.\\
{\bf Output}: Either a 3-edge-coloring of $G$ or an obstruction showing that $G$ is not 3-edge colorable, or the conclusion that $G$ cannot be embedded in the projective plane (certified by exposing a forbidden minor for embedding in the projective plane that is contained in $G$).
\end{thm}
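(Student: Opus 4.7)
The plan is to leverage Theorem \ref{mainth} (which gives a complete structural characterization of non-3-edge-colorable projective planar cubic graphs) together with known efficient algorithms for projective planarity testing and planar edge coloring. The algorithm proceeds in three stages: (i) test projective planarity, (ii) decompose along small cyclic edge cuts, (iii) color each cyclically $4$-edge-connected piece and lift the coloring back through the decomposition tree.

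For Stage (i), I apply Mohar's linear-time algorithm for embedding in the projective plane. If the algorithm reports non-embeddability, a standard post-processing (testable against Archdeacon's list of $35$ minor-minimal forbidden minors for the projective plane) produces a subdivision of a forbidden minor inside $G$ in $O(n^2)$ time, which is returned as the obstruction. So from now on we may assume $G$ is embedded in the projective plane.

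For Stage (ii), we detect all cyclic edge cuts of size at most $3$ and split along them. A bridge in a cubic graph is already an obstruction (a $1$-edge cut separates the vertex set into two parts of odd size). A cyclic $2$-edge cut or $3$-edge cut splits $G$ into two smaller cubic graphs $G_1$, $G_2$ (by closing the cut with a single edge in the $2$-cut case, or by adding a degree-$3$ apex vertex in the $3$-cut case), and $G$ is $3$-edge-colorable if and only if both $G_1$ and $G_2$ are; moreover a coloring of $G$ can be reconstructed from compatible colorings of $G_1$, $G_2$ in constant time per cut. After exhaustive splitting (which can be organized in $O(n^2)$ time using standard min-cut data structures) we obtain a decomposition tree whose leaves are cyclically $4$-edge-connected cubic graphs, each embedded in either the sphere or the projective plane.

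For Stage (iii), each leaf is (a) a cubic planar graph, to which the quadratic $3$-edge-coloring algorithm of Robertson--Sanders--Seymour--Thomas \cite{RSST-STOC} applies, (b) isomorphic to the Petersen graph, detectable in constant time, or (c) a cyclically $4$-edge-connected projective planar cubic graph distinct from the Petersen graph. In case (c), Theorem \ref{mainth} guarantees a $3$-edge-coloring exists, and the reducibility-plus-discharging proof underlying Theorem \ref{mainth} turns directly into a coloring algorithm in the style of \cite{RSST-STOC}: iteratively locate a reducible configuration on the projective planar embedding (possible in $O(n)$ time using the bounded-size configuration library produced by the computer verification), apply the reduction to obtain a smaller cyclically $4$-edge-connected projective planar cubic graph, recursively color it, and extend the coloring in constant time per reduction. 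Since each reduction removes at least a constant number of vertices, the total time is $O(n^2)$. If a leaf falls in case (b), then together with the decomposition tree it witnesses that $G$ is Petersen-like (all other leaves must then be planar, else some reducible configuration would have been applicable, and this too can be certified in quadratic time), and we return the embedded Petersen subgraph together with the description of the replacements as the obstruction.

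The main obstacle is exactly step (c): transforming the (partly computer-assisted) existence proof in Theorem \ref{mainth} into a polynomial-time algorithm that locates a reducible configuration in each sub-case and verifies that the reduction preserves both cyclic $4$-edge-connectivity and non-Petersen-likeness on the projective plane. The planar template of \cite{RSST-STOC} handles the sphere, but here we must additionally cope with configurations that cross the crosscap and with the bookkeeping needed to decide whether a reduction might create a new small cyclic cut; this is the part where we rely most heavily on the finite catalogue of more than $6000$ reducible configurations produced during the verification of Theorem \ref{mainth}.
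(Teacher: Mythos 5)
Your skeleton matches the paper's: Mohar's linear-time projective embedding algorithm for Stage (i), decomposition along small cyclic cuts for Stage (ii), and an \cite{RSST-STOC}-style reduce-and-recolor loop for Stage (iii). However, the step you flag as ``the main obstacle'' --- turning the existence proof of Theorem \ref{mainth} into a linear-time-per-round reduction procedure --- is exactly where all the content of the paper's proof lives, and your proposal does not actually supply it. Concretely, three ingredients are missing. First, ``iteratively locate a reducible configuration'' is not the right primitive when the representativity of the embedding is $3$, $4$, or $5$: in that regime no configuration from $\mathcal{K}$ need appear, and the paper instead computes a shortest noncontractible curve in $O(n)$ time (via Cabello et al.), builds the cycle $C$ of Lemma \ref{lem:repcycle}, and reduces a \emph{projective island} from $\Pi_y^k$ (Lemmas \ref{lem:lowrepcomp}--\ref{lem:repfinal}), possibly after rerouting the curve once. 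Second, even when representativity is at least $6$ and a positive-charge vertex yields a configuration, one must certify that the reduction is \emph{safe} (no bridge, not Petersen-like); the paper does this by selecting a separating $6$- or $7$-cycle that is minimal in the sense of Lemma \ref{lem:6,7-cut}, which in turn requires enumerating all $5$-, $6$-, $7$-cycles of $G'$ in $O(n)$ time via the BFS-layer/bounded-treewidth trick. Third, the claim that a configuration can be found in $O(n)$ total time needs the observation that no configuration in $\mathcal{K}$ contains a vertex of degree $\ge 12$, so one only searches a bounded-radius neighborhood $Q_v$ of each positive-charge vertex.

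Two smaller points. Your Stage (ii) stops at cyclic $4$-edge-connectivity, but the discharging/reducibility machinery (Lemma \ref{minc}) requires cyclic $5$-edge-connectivity plus the condition that every cyclic $5$-cut isolates a pentagon, so the decomposition must also split along $4$-cuts and non-pentagon $5$-cuts before Stage (iii) can begin. And in the Petersen leaf case, ``all other leaves must then be planar'' is automatic from Lemma \ref{low-red} (each low edge-cut reduction of a projective planar graph yields one planar piece and one projective planar piece), not something that needs the configuration library; your parenthetical justification there is off, though the conclusion is right.
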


The coloring-flow duality that holds for planar graphs no longer holds for graphs on any other surface. However, an unexpected corollary of our Theorem \ref{mainth} is that there is a similar duality on the projective plane. Namely, the following result is equivalent to Theorem \ref{mainth}.

\begin{thm}\label{thm:duality on pp}
    A cubic graph embedded in the projective plane is $3$-edge-colorable if and only if its dual is vertex $5$-colorable.
\end{thm}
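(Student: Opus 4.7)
The plan is to deduce this duality from Theorem \ref{mainth} by translating the property ``Petersen-like'' on the cubic (primal) side into ``contains $K_6$ as a subgraph'' on the triangulation (dual) side, and then invoking the classical extension of Dirac's map-color theorem for the projective plane, which says that a projective-planar graph is $5$-vertex-colorable if and only if it contains no $K_6$.

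First, I would establish the structural correspondence: a $2$-connected cubic graph $G$ embedded in the projective plane is Petersen-like if and only if the dual triangulation $G^*$ contains $K_6$ as a subgraph. For the forward direction, the Petersen graph has a unique embedding in the projective plane (six pentagonal faces), whose dual is $K_6$ with its standard embedding as a triangulation with ten triangular faces. Replacing a vertex $v$ of Petersen by a $2$-connected planar cubic graph $H$ (removing a distinguished $u\in V(H)$) corresponds on the dual side to subdividing the $K_6$-triangle at $v$ by the planar triangulation $H^*$, identifying the triangular $u$-face of $H^*$ with the $K_6$-triangle on $v$; in particular the whole vertex and edge set of $K_6$ is preserved as a subgraph of $G^*$. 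For the reverse direction, $K_6$ has a unique embedding in the projective plane, so if $K_6\subseteq G^*$ then the ten triangular faces of $K_6$ are each tiled by triangular faces of $G^*$, giving a planar triangulation inside each $K_6$-face; taking the dual recovers Petersen with each vertex replaced by a planar cubic graph, so $G$ is Petersen-like.

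Second, I would invoke the Albertson--Hutchinson theorem (the analog of Dirac's map-color theorem for the projective plane): a graph embeddable in the projective plane is $5$-vertex-colorable if and only if it contains no $K_6$. The easy direction is just $\chi(K_6)=6$; the content is the converse that ``no $K_6$'' already forces $5$-colorability. Combining this with the previous step yields: $G$ is not $3$-edge-colorable iff (by Theorem \ref{mainth}) $G$ is Petersen-like iff (by the structural correspondence) $K_6\subseteq G^*$ iff (by Albertson--Hutchinson) $G^*$ is not $5$-vertex-colorable, which is the stated equivalence.

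I expect the main obstacle to be verifying the structural correspondence carefully, in particular checking that after replacing a vertex $v$ of Petersen by a planar cubic graph $H$ with distinguished $u\in V(H)$, the three faces of Petersen around $v$ still pairwise share an edge in $G$, so that the $K_6$-triangle at $v$ actually survives in $G^*$. This requires analyzing how the three new edges joining $H-u$ to the Petersen neighbors of $v$ split the merged boundary region of $H-u$ into three parts and separate the three merged faces pairwise, which in turn confirms that the three dual vertices in $G^*$ that correspond to these merged faces form a triangle identical to the one in $K_6$.
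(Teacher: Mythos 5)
Your proposal follows essentially the same route as the paper: reduce via Theorem \ref{mainth} to the equivalence ``Petersen-like $\Leftrightarrow$ $K_6\subseteq G^*$'' (which the paper asserts and leaves to the reader, and which you spell out in more detail), then invoke the Albertson--Hutchinson fact that $K_6$ is the only $6$-color-critical graph on the projective plane. The only (trivial) point you omit is the case where $G$ has a bridge, which the paper dispatches by noting that $G^*$ then has a loop, so neither side of the stated equivalence holds.
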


\begin{proof}
    Let $G^*$ be the dual graph with respect to an embedding of $G$ in the projective plane. 
    If $G$ has a bridge, $G^*$ has a loop, hence neither $G^*$ is 5-colorable nor $G$ is 3-edge-colorable. Hence we may assume that $G$ is 2-connected. 
    It follows by Theorem \ref{mainth} that $G$ is not 3-edge-colorable if and only if $G^*$ contains $K_6$ as a subgraph. 
    (This fact, which is not hard to show and whose verification is left to the reader, is based on the fact that the dual of the Petersen graph is $K_6$.) 
    On the other hand, it is known (see \cite{MT}) that the only 6-color-critical graph on the projective plane is $K_6$.  
\end{proof}

The above proof shows that Theorem \ref{mainth} implies Theorem \ref{thm:duality on pp}.
The reverse implication is even easier, and the proof is omitted.

Theorem \ref{thm:duality on pp} is indeed a surprising outcome since it was believed that there is no coloring-flow duality for graphs on nonorientable surfaces (see \cite{MT} or \cite{DGMVZ}).

Our Theorem \ref{mainth} yields another result about 4-flows in more general (not necessarily cubic) graphs. It can be considered as a very strong version of the Tutte 4-flow Conjecture. For that effect, we extend the definition of \emph{Petersen-like} graphs to allow vertices of higher degrees. We start with the Petersen graph and then we replace some of its vertices by planar graphs $H$ that need not be 3-regular; we only request that the vertex $u$ of $H$ that is split to connect with 3 edges to the Petersen graph is of degree 3.

\begin{thm}\label{thm:nz4flow}
    A 2-connected graph embedded in the projective plane admits a nowhere-zero $4$-flow if and only if it is not Petersen-like.
\end{thm}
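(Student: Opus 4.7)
The plan is to deduce Theorem \ref{thm:nz4flow} from Theorem \ref{mainth} via a cubification argument.

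\textbf{Only-if direction.} If $G$ is Petersen-like, then contracting each planar chunk $H_v - u_v$ (a connected subgraph) to a single vertex recovers the Petersen graph $P$. A nowhere-zero 4-flow on $G$ would descend to one on $P$: the flow values on the contracted edges vanish and Kirchhoff's law aggregates at each new vertex. Since $P$ admits no nowhere-zero 4-flow, neither does $G$.

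\textbf{If direction.} Suppose $G$ is 2-connected, projective planar, and not Petersen-like. If $G$ is a planar graph, I invoke the Four Color Theorem to obtain a nowhere-zero 4-flow on $G$. Otherwise I cubify: at each vertex $v$ of degree $d\ge 4$ I replace $v$ by a cycle $C_v$ of length $d$, attaching the $d$ original edges at $v$ to distinct cycle-vertices in the cyclic order dictated by the projective planar rotation. The resulting graph $G^c$ is cubic, 2-connected, and still projective planar. Provided $G^c$ is not Petersen-like in the cubic sense, Theorem \ref{mainth} gives a proper 3-edge-coloring, i.e.\ a nowhere-zero 4-flow on $G^c$; this flow restricts to $E(G)\subset E(G^c)$ to give a nowhere-zero 4-flow on $G$, since Kirchhoff's law at $v$ is equivalent to the closure condition of $C_v$, which is automatic.

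\textbf{The main obstacle} is to verify that $G^c$ is not cubic-Petersen-like when $G$ is non-planar and not generalized Petersen-like. I argue by contradiction. Suppose $G^c$ decomposes into ten planar cubic chunks joined by the fifteen edges of the Petersen quotient. Each cycle $C_v$ bounds the rotational disk at $v$ in $G$'s embedding, hence is contractible in the projective plane and encloses an empty region, making it a face of $G^c$. Each visit of $C_v$ to a chunk consumes two of the three boundary edges of that chunk, so each chunk is visited at most once. If $C_v$ lies inside a single chunk, contracting it back turns that chunk into a planar graph in which $v$ has degree $d$ and the attachment vertex $u$ still has degree $3$; thus $G$ is generalized Petersen-like, a contradiction. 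Otherwise $C_v$ spans several chunks, and its projection to the Petersen quotient is a contractible cycle bounding an empty face of Petersen's essentially unique PP-embedding (the dual of $K_6$). Since every face of that embedding is a pentagon, $C_v$ spans exactly five chunks realizing one of the six facial 5-cycles. A direct analysis of the resulting structure of $G$ — a wheel-like $W_5$ skeleton with the reduced planar chunks attached via single-edge or double-edge attachments, and the five non-visited planar chunks attached in a pentagonal pattern — shows that $G$ is planar, contradicting our non-planarity assumption. Carrying out this planarity verification, together with the Whitney-type rigidity of Petersen's projective planar embedding, is the technical core of the argument.
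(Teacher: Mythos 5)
Your overall strategy (cubify, apply Theorem \ref{mainth}, restrict the resulting $\mathbb{Z}_2\times\mathbb{Z}_2$-flow back to $G$) is viable and genuinely different from the paper's. The paper instead takes a minimum counterexample (minimizing $|V(G)|+|E(G)|$ over 2-edge-connected, non-Petersen-like projective-planar graphs without a nowhere-zero 4-flow), shows via 2-sum flow arguments that it is 3-connected, and then splits off a \emph{single} pair of consecutive edges at a vertex of degree $\ge 4$; if that one splitting yields a Petersen-like graph, the new degree-2 vertex and $v$ lie on a common face in distinct chunks, and an adjacent splitting is shown not to be Petersen-like. That local, one-split-at-a-time argument entirely avoids the global question you face, namely classifying how the fully cubified $G^c$ could be Petersen-like.

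That global question is where your proposal has a genuine gap: the planarity claim in the case where some expansion cycle $C_v$ spans five chunks is asserted, not proved, and it is subtler than your description suggests. If $C_v$ projects onto a facial pentagon $F=Q_1\cdots Q_5$ of $P_{10}$, then after contraction the five unvisited chunks $Q_6,\dots,Q_{10}$ are joined to one another along the 5-cycle induced by $V(P_{10})\setminus V(F)$, which sits in \emph{pentagram} order relative to the spokes $Q_iQ_{i+5}$; meanwhile the leftover lobes $L_i=Q_i-P_i$ (where $P_i=C_v\cap Q_i$) are attached around $v$ in pentagon order. A naive ``$W_5$ skeleton'' drawing therefore forces spoke crossings, and planarity of $G$ holds only because each $L_i$ is attached to nothing except $v$ (via the edges from $L_i$ to $P_i$) and to its single spoke, so the lobes may be cyclically permuted around $v$ to match the pentagram order of the outer ring. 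This argument needs to be made, as do two facts you invoke silently: (a) that each chunk of a Petersen-like $G^c$ is embedded in a disk, so that contracting chunks really yields the (unique, all-pentagonal-faces) embedding of $P_{10}$ and the image of the empty face bounded by $C_v$ is again an empty face; and (b) that at most one expansion cycle can span chunks (this follows since any two facial pentagons of $P_{10}$ share an edge, which two vertex-disjoint expansion cycles cannot both use, but it must be said). You also use that $G^c$ is 2-connected without checking it. Since you yourself flag the planarity verification as ``the technical core,'' the proposal as written is an outline of a correct-looking strategy rather than a complete proof; filling it in is feasible but is real work, and arguably more work than the paper's splitting-off argument.
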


The proof of Theorem \ref{thm:nz4flow} is given in the appendix.

%\section*{Acknowledgement}
%This research is partially supported by JSPS Kakenhi JP20A402 and by JSPS Kakenhi 22H05001. 

%Part of this work appears in two of the authors' undergraduate thesis in 2023.

\subsection{Sketch of the proof: Technical difficulty over the proof of 4CT}

The basic outline of our proof of Theorem \ref{mainth} is similar to the proof of 4CT \cite{RSST} and its extension to the doublecross case in \cite{doublecross}. A description of these proofs is given in \cite{doublecross}, and we omit it here. 

On the other hand, there are essential differences between the projective plane case and the planar case. 
We list below some of the main issues pertaining to the projective plane. 

\begin{itemize}
    \item[1.] The projective plane has smaller Euler characteristic, so the total charge used in the discharging part of the proof is smaller. 
    \item[2.] It is much harder to obtain a set of valid discharging rules and reducible configurations.
    \item[3.] The Petersen graph, a snark, is embeddable in the projective plane. By removing a configuration that is otherwise reducible, one can obtain the Petersen graph or a simple extension of it that is not 3-edge-colorable.
    \item[4.] Two vertices in a reducible configuration may ``wrap around'' the crosscap, admitting small representativity.
    \item[5.] Reducibility of configurations on the projective plane is much harder to achieve. Most of the smaller configurations that are reducible when considering 4-coloring of planar graphs are not reducible for the projective plane.  
    \item[6.] One needs reducible configurations whose diameter is at most five.  
\end{itemize}

The first issue is regarding the \emph{discharging method}, which we discuss in Section \ref{sect:discharging}.
With a cubic graph $G$ polyhedrally embedded to some surface, one can define the dual graph $G'$ of $G$, which is a triangulation\footnote{A \emph{triangulation} is a simple graph on a surface, in which all faces are triangles.}.
The average degree of $G'$ plays an important role in the discharging method. 
Since the Euler characteristic of the projective plane is positive, one can show that the average degree of $G'$ is strictly smaller than $6$, and the discharging method can still be conducted like in the planar case.

The second issue is far more troublesome than the first one.
In Section \ref{sect:discharging}, we discuss the reducibility of configurations.
Roughly speaking, configurations are subgraphs in cubic graphs, and a configuration in $G$ is \emph{reducible} if it can be replaced by a smaller one (by deleting edges and suppressing the endpoints of one or more edges) without changing the non-3-edge-colorability of $G$. 
There are two important differences from the planar case that make the discovery and treatment of reducible configurations much more difficult.
Firstly, it is much harder to prove that the smaller graph is not 3-edge-colorable (see the discussion below), and secondly, there is a possibility that we obtain a Petersen-like graph (see the definition below) and may thus come from a 3-edge-colorable graph to one that is not 3-edge-colorable.

When checking for reducibility of a configuration, we utilize \emph{Kempe chains}, which are sets of disjoint paths connecting the outskirts of the configuration, and all edges on each such path are colored with two colors only. 
Exchanging the two colors on such a path gives a new 3-edge-coloring.
% Where each path has an alternating color.
%There are many sets of disjoint paths exclusively on projective planes.
Kempe chains are much more versatile on the projective plane than on the plane, and as a result, many configurations that are reducible in the planar case are non-reducible on the projective plane.
Therefore, reducible configurations are much harder to find on the projective plane than on the plane.
%It turns out that the maximal ''projective planar'' consistent subset of ring colorings disjoint from those that extend inwards gets much bigger than that in the planar case; To give a few examples, 
%In fact, it is empirically much harder to find reducibility for projective planar cubic graphs, even compared to doublecross cubic graphs.
For example, consider the first 7 configurations in the 4CT proof.
Planar graphs and doublecross graphs both use these configurations, and this is the key that the same rules can apply to both cases.
For projective planar graphs, however, only the first one out of the seven is reducible and the rest are not. 
Even the first configuration is harder to reduce for the projective planar case (see Figure \ref{fig:55551}). 
For the planar case, no edge is necessary to delete for reduction, whereas for the projective plane case, we need to delete at least six edges, as shown in Figure \ref{fig:55551} (the figure shows the dual graph, which is a triangulation).
Deleting a lot of edges in a configuration is dangerous in general because the graph after the reduction may contain bridges, which do not satisfy the induction hypothesis.
However, we believe that it is not possible to find a proof without using this configuration, so we very carefully construct a workaround for this particular configuration, as shown in Section \ref{sect:6,7-cut} (see Lemma \ref{lem:5555}). 
One interesting fact is that we need reducible configurations shown in Figure \ref{fig:5555conf_pre} that contain the first graph as a subgraph (they consist of five vertices that are of degree at most six in the dual graph). 
Usually, it does not make sense to add such configurations for reducibility, but in our case, we use these reducible configurations because they are D-reducible (i.e., they do not require contracting any edge). 
If they appear, then the reduction would not end up with $K_6$. 
If they do not appear (but the first graph in Figure \ref{fig:55551} appears), we analyze the structure of the ring (and conclude that it is not possible to end up with $K_6$). See Lemma \ref{lem:5555} for more details. 

\begin{figure}
    \centering
    \includesvg[height=3.6cm]{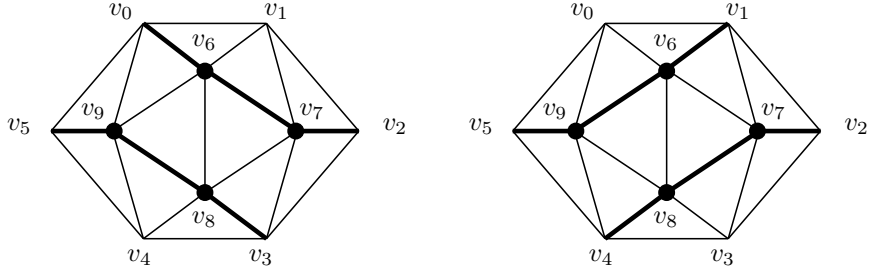}
    \caption{The first reducible configuration that consists of four vertices of degree five in the dual graph. The bold lines represent the contraction (which corresponds to edge-deletion in the cubic graph) used for C-reducibility.}
    \label{fig:55551}
\end{figure}

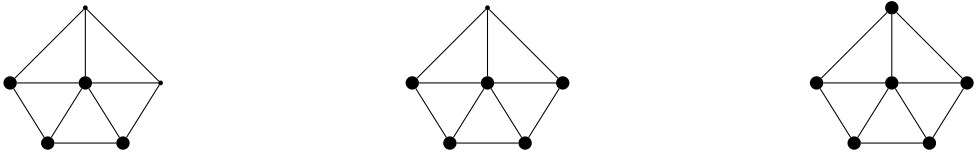
\begin{figure}[htbp]
  \tikzset{deg5/.style={thick, circle, draw, fill=black, inner sep=1.5pt,}}
  \tikzset{deg6/.style={thick, circle, draw, fill=black, inner sep=0pt,}}
  % \begin{figure}[htbp]
%   \tikzset{deg5/.style={thick, circle, draw, fill=black, inner sep=1.5pt,}}
%   \tikzset{deg6/.style={thick, circle, draw, fill=black, inner sep=0pt,}}

\begin{tabular}{ccc}
% 5-55566
\begin{minipage}[t]{0.3\hsize}
\centering
\begin{tikzpicture} [baseline=1.5cm]
    \node [deg5] at (0.5, 0) (a) {};
    \node [deg5] at (1.5, 0) (b) {};
    \node [deg5] at (0, 0.8) (c) {};
    \node [deg5] at (1.0, 0.8) (d) {};
    \node [deg6] at (2.0, 0.8) (e) {};
    \node [deg6] at (1.0, 1.8) (f) {};
    \foreach \u / \v in {a/b, a/c, a/d, b/d, b/e, c/d, c/f, d/e, d/f, e/f}
        \draw (\u) -- (\v);
\end{tikzpicture} 
\end{minipage} &
% 5-55556
\begin{minipage}[t]{0.3\hsize}
\centering
\begin{tikzpicture} [baseline=1.5cm]
    \node [deg5] at (0.5, 0) (a) {};
    \node [deg5] at (1.5, 0) (b) {};
    \node [deg5] at (0, 0.8) (c) {};
    \node [deg5] at (1.0, 0.8) (d) {};
    \node [deg5] at (2.0, 0.8) (e) {};
    \node [deg6] at (1.0, 1.8) (f) {};
    \foreach \u / \v in {a/b, a/c, a/d, b/d, b/e, c/d, c/f, d/e, d/f, e/f}
        \draw (\u) -- (\v);
\end{tikzpicture}
\end{minipage} &
% 5-55555
\begin{minipage}[t]{0.3\hsize}
\centering
\begin{tikzpicture} [baseline=1.5cm]
    \node [deg5] at (0.5, 0) (a) {};
    \node [deg5] at (1.5, 0) (b) {};
    \node [deg5] at (0, 0.8) (c) {};
    \node [deg5] at (1.0, 0.8) (d) {};
    \node [deg5] at (2.0, 0.8) (e) {};
    \node [deg5] at (1.0, 1.8) (f) {};
    \foreach \u / \v in {a/b, a/c, a/d, b/d, b/e, c/d, c/f, d/e, d/f, e/f}
        \draw (\u) -- (\v);
\end{tikzpicture}
\end{minipage} \\
\end{tabular}

% \caption{The reducible configurations used in the proof of Lemma \ref{lem:5555}.}
% \label{fig:5555conf}
% \end{figure}
  \caption{The reducible configurations used in the proof of Lemma \ref{lem:5555}.}
\label{fig:5555conf_pre}
\end{figure}

The difficulty of finding reducible configurations for the projective planar case has an impact on the overall size of the set of reducible configurations that we construct. 
As in the fifth item listed above, more than 80\% of reducible configurations for the doublecross case (hence the planar case) are not reducible for the projective plane, so we need to search for a completely different set of unavoidable reducible configurations.
Our full list of reducible configurations consists of approximately 6000 configurations\footnote{Unfortunately, we are not able to provide pictures of all reducible configurations. They are available on the afore-mentioned Google drive.}, which is much more than that for 4CT (633 configurations) or for the doublecross case (765 configurations).  

The discharging rules require a major revision as well.
Ultimately, we have constructed a list of 169 discharging rules (vs. the 33 discharging rules for planar/doublecross cases). All of the rules are described in Section \ref{sect:rule} in the appendix. 
More details are provided in Section \ref{sect:reducible configurations} and Section \ref{sect:discharging}. 

The third issue arises when trying to reduce a configuration whose reducibility is based on having three or more edges deleted.
In previous proofs, it is easy to see that the reduction of configurations is closed under the graph family of bridgeless cubic graphs.
However, in our proof, the graph family we are interested in is the family of bridgeless cubic projective planar graphs that are not Petersen-like.\footnote{A graph is \emph{Petersen-like} if it can be obtained from a Petersen graph with the operations defined in Theorem \ref{mainth}.}
It is not obvious at all whether the reduction of a reducible configuration is closed under such a graph family when three or more edge-deletions are used, and this is a problem that must be addressed in order to provide a valid induction hypothesis.

We take care of this problem by involving the following two ideas. When we delete 3 or 4 edges, we prove that any type of a reduction is safe by enumerating all possible ways that may give rise to a Petersen-like graph. (See Section \ref{sect:smallcont}.)
The original cubic graph that we want to reduce has relatively large cyclic edge connectivity, so there are only a few ways in which the three or four edges in a non-Petersen-like graph can be deleted such that the resulting graph will be Petersen-like.
We enumerate all possible cases of such reductions and prove that in each case, there is another reducible configuration that either (1) has at most two edge-deletions or (2) results in representativity at most 2. 
In both cases, the resulting graph is not Petersen-like.

When we need to delete five or more edges, we prove that the types of reductions of configurations used specifically in this paper are safe.
There are over 250 configurations that require five or more edge deletions, and we use various conditions (such as graph size and lengths of faces of $G$) to check that none of them yields a Petersen-like graph after reduction.

To give more detail, for each configuration $I \subseteq G$ (and its dual $W$ contained in the dual triangulation $G'$), we use the following process. Let $X \subseteq E(I)$, $|X|\ge 5$, be the set of edges that are deleted in the reduction.

\begin{enumerate}
    \item Enumerate all possible $(\leq 3)$-vertex-separations in $G'$, after the edge contractions (that correspond to the edge deletions in $G$). 
    
    We utilize a lemma proved in Section \ref{sect:6,7-cut}: if $C$ is a contractible cycle of length 6 or 7 in $G'$ and the number of vertices strictly contained in the disk $D$ bounded by $C$ is at least 5, we can find another reducible configuration that is contained in $D\setminus C$. 
    \item Using the separations obtained above, delete all those vertices that could be potentially removed by a 2 or 3-vertex separation reduction (that corresponds to a 2 or 3-cut reduction in $G$). 
    \item 
%    Add edges between every pair of ring vertices (vertices adjacent to the outer face in $W$). 
    Delete all vertices that are still contained in the ``small'' side of a 2 or 3-vertex cut that contains at most one vertex of the ring or exactly two vertices that are adjacent in the ring.
    \item Check if the resulting graph is of order at least 7, or has a vertex of degree 4 or of degree $\ge6$.

    If either the graph size check or the vertex degree check is satisfied, we can conclude that $G'$ would not become the complete graph $K_6$, and by considering the dual, we can confirm that $G$ would not become the Petersen graph $P_{10}$.
\end{enumerate}

We implement the above algorithm and run computer check for each reducible configuration, and almost all configurations pass the criteria. 
% "Criteria" is the plural of criterion (Grammarly)
For a few cases that do not pass, we provide proof for each individual configuration using additional criteria. 
The full proof is shown in Section \ref{sect:largecont}.

Both, the fourth and the sixth issue, are a challenge when trying to reduce a configuration.
When dealing with reducibility, we must confirm that the outskirt of the configuration is induced, meaning that there should be no edges connecting two vertices of the configuration.
In the planar case, this can be avoided fairly easily: by the induction hypothesis, there is no separating cycle of length at most five in the triangulation (which are cuts in the dual cubic graph), except for one single vertex on one side of the cycle, and every reducible configuration is of diameter 4 or less, meaning there is no such an edge.
In our proof, it is possible that there is a cycle of short length that may be \emph{non-contractible}, and some configurations are of diameter 5 (but not 6).
Having such a short non-contractible cycle is troublesome, and we must address this issue separately.

The minimum length of such a cycle is called the \emph{representativity} (also called the \emph{face-width}) of the embedded graph, and we address graphs of representativity up to 5. The proof uses a combination of discharging methods and reducibility checking, which also involve some ideas that were developed in \cite{doublecross}.
Configurations of diameter 5 are also a problem, and for every pair of vertices $u,v$ at distance 5 in a configuration $W$, we find one of the following in $W + uv$:

\begin{itemize}
    \item A contractible cycle of length at most five (which we can exclude by applying Lemma \ref{minc}).
    \item A contractible cycle of length exactly six, with both components of $G'-C$ having at least 4 vertices (which we can exclude by using Lemma \ref{lem:6,7-cut}).
    \item A non-contractible cycle of length at most five, which we address in Section \ref{sect:rep}.
\end{itemize}

The full case analysis is presented in Section \ref{sect:dist5}.

\subsection{Some technical results and organization of the paper}\label{org}\showlabel{org}

In addition to the discharging method and the reducibility check, which are detailed in Sections \ref{sect:reducible configurations} and \ref{sect:discharging}, respectively, we utilize the following technical results.

\begin{itemize}
    \item[(a)]
    Let $C$ be a contractible cycle of length $l$ in $G'$, where $l\in\{6, 7\}$ and the disk $D$ bounded by $C$ contains at least $l-2$ vertices. Then there is a reducible configuration that is contained strictly in the interior of $D$. 

   This is shown in Sections \ref{sect:smaller} and \ref{sect:6,7-cut}.
   \item[(b)]
   Let $J$ be a projective-planar subcubic graph with all of its degree-two vertices on a face $F$ of $J$, and assume that $G = J \cup Q$, where $Q$ is a 2-connected subcubic planar graph and is embedded in $F$. Suppose, moreover, that $J$ fits one of the following cases:
%   If $W$ is the following graph and the face size of $F$ is not too large, $W$ is reducible.
    \begin{enumerate}
        \item
        $J$ is obtained from the Petersen graph by deleting a vertex and subdividing some edges on the resulting face $F$ (Section \ref{sect:smallcont}) by adding $t$ subdivision vertices, where $3\le t\le 6$, or 
       \item
        $J$ is obtained from the Petersen graph by deleting a vertex and subdividing some edges on the resulting face $F$ (Section \ref{sect:smallcont}) by adding $t$ subdivision vertices, where $3\le t\le 4$ and then also adding a new edge into one of the faces distinct from $F$ in such a way that the girth of the resulting subcubic graph $J$ is still at least 5, or 
        \item 
        $J$ is either a subdivision of $V_6$, $V_8$, or $V_{10}$, where $V_{2k}$ is obtained from the $2k$-gonal face $F$ by adding $k$ main diagonals in its outside on the projective plane (Section \ref{sect:rep}).
    \end{enumerate}
    Then $J$ is reducible if the size of the face $F$ is not too large. (The actual value of what it means being too large will be discussed later.) 
\end{itemize}

The first result (a) helps a lot for our safety check of reducibility. Indeed, in this case we have another reducible configuration $J'$ in the disk $D$ bounded by $C$ in $G$, and we manage to show that by making a reduction using $J'$, the resulting graph obtained from $G$ is not Petersen-like. More details will be given in Sections \ref{sect:smaller} and \ref{sect:6,7-cut}.  This means that if $C$ is a separating cycle of length exactly $l$ for $l=6, 7$, we may assume that the component of $G'-C$ strictly inside the disk bounded by $C$ has at most $l-3$ vertices.

In a small number of special cases we have to consider contractible cycles of length $l=8$.

These results imply that as long as we contract at most four edges for C-reducible configurations in $G$, they are safely contracted (i.e., after contractions, $G'$ never ends up with $K_6$. In other words, $G$ never ends up being the Petersen graph). 
More details are given in Section \ref{sect:smallcont}. 

We still have 201 reducible configurations that need contraction of five edges or more. 
For each of these configurations, we provide a case-by-case analysis using theorems in Sections \ref{sect:6,7-cut} and \ref{sect:rep} to prove that $G$ does not become a Petersen-like graph after contractions.
While the analysis could theoretically be carried out by hand, it is very repetitive and time-consuming to cover all different cases.
So, for the sake of simplicity, we used computer programs to cover most of the cases.
The algorithm used for the analysis is provided in Appendix \ref{sect:code}.
There are six configurations where the program did not produce valid output (that means that we could not confirm that $G$ does not become a Petersen-like graph after contractions).
In these cases, we provide a hand written proof, which we present in Lemma \ref{lem:5555} and Appendix \ref{subsec:remaining1}.

%We then enumerate all possible cubic graphs $W$ that are obtained from the Petersen by replacing two or three vertices by cubic planar graphs, and then adding at most six edges. We do our computer check to see if some cubic graph in $W$ can contain any of our reducible configurations with contraction of edges. Fortunately, there are only a few reducible configurations that some cubic graph in $W$ can contain, but after enumerating all ways of contractions, there is ANOTHER way of contractions (so we need to enumerate many ways for contractions of reducible configurations) that any cubic graph in $W$ cannot contain. 

In Section \ref{sect:minimal} we shall introduce the main notation. 
Discharging method and the reducibility check are detailed in Sections \ref{sect:reducible configurations} and \ref{sect:discharging}, respectively.
Finally, in Section \ref{sect:mainproof}, we give the whole proof of Theorem \ref{mainth} and in Section \ref{sect:algorithm} we outline our quadratic-time algorithm for Theorem \ref{algo}. The algorithm presented here is actually following the same approach as developed in \cite{RSST-STOC}. 

Some of details that mostly devote to case analysis (including the arguments for Sections \ref{sect:smallcont} and \ref{sect:largecont}) or that devote to some arguments that are already known (including the argument for Section \ref{sect:algorithm}), are deferred to the appendix. 
For programs used in our computer verification, their pseudo codes are included in Section \ref{sect:code}.

% ====================================================
\section{Minimal counterexamples}
\label{sect:minimal}
\showlabel{sect:minimal}
% ====================================================

\begin{dfn}
    Let $G$ be a cubic graph. A \emph{cyclic cut} is a set $F \subset E(G)$ of edges disconnecting $G$, such that at least two components of $G - F$ contain cycles. The graph 
    is \emph{cyclically $k$-edge-connected} if no cyclic cut of size less than $k$ exists.
\end{dfn}

%\begin{dfn}
Let $\mathcal{P}_1$ be the set of all simple cubic bridgeless graphs embeddable in the projective plane.
%\end{dfn}
For $G \in \mathcal{P}_1$, the replacement of any cyclic 3-edge-cut by a vertex is called a \emph{3-edge-cut reduction}.
Similarly, the replacement of any cyclic 2-edge-cut by an edge is called a \emph{2-edge-cut reduction}.
These two kinds of reductions are also referred to as \emph{low edge-cut reductions}.
Since there are exactly two connected components separated by a 2-edge or 3-edge-cut, there are two possible graphs that are obtained by making a low edge-cut reduction. 
The following lemma is easy to prove. 

\begin{lem}
    A \emph{low edge-cut reduction} for any $G \in \mathcal{P}_1$ yields one planar graph and one graph in $\mathcal{P}_1$.
    Furthermore, the following statements are equivalent for any $G \in \mathcal{P}_1$:
    \begin{itemize}
        \item $G$ is $3$-edge-colorable.
        \item For any $G_1$ obtained from $G$ by zero or more low edge-cut reductions, $G_1$ is 3-edge-colorable.
    \end{itemize}
    \label{low-red}
\end{lem}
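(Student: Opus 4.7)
The plan is to establish the two claims of Lemma~\ref{low-red} separately, relying on a topological analysis of the cut in the projective-plane embedding for the first assertion and a standard parity-based color-exchange argument for the equivalence.

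First I would handle the structural claim that each low-edge-cut reduction yields one planar graph and one graph in $\mathcal{P}_1$. Fix a projective-plane embedding of $G$ and a cyclic $k$-edge cut $F$, with $k\in\{2,3\}$, separating $V(G)$ into the two cycle-containing components $A$ and $B$. Draw a short arc (for $k=2$) or a Y-shaped $1$-complex (for $k=3$) joining the cut edges through the incident faces; together with $F$ this produces a closed curve $\gamma$ on the projective plane that separates $A$ from $B$. Because $\gamma$ separates, the underlying simple closed curves are $2$-sided; and because $\pi_1(\mathbb{RP}^2)=\mathbb{Z}/2$ with its nontrivial element represented by $1$-sided curves, every $2$-sided simple closed curve on the projective plane is contractible. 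Hence $\gamma$ bounds a disk $D$, and exactly one of $A,B$ embeds in $D$ (and is therefore planar after adding the replacement edge inside $D$), while the other lies in $\mathbb{RP}^2\setminus D$, which can be re-capped by a disk to recover a projective-plane embedding of that reduced graph. Cubicness is immediate by construction. Bridgelessness follows because each side contains a cycle (by cyclicity of the cut) and the replacement edge(s) tie the two new degree-two vertices into a cycle through the interior; simplicity is preserved provided the cut has distinct endpoints on each side, which may be arranged by taking a minimal representative of the cut.

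Next I would prove the coloring equivalence by a parity argument. In any $3$-edge-coloring of a cubic graph each color class is a perfect matching, so for any vertex set $S$ the number $k_i$ of color-$i$ edges in $\partial S$ satisfies $k_i\equiv|S|\pmod 2$. For a $2$-edge cut this forces two of the $k_i$ to be $0$ and one to be $2$, so the two cut edges are monochromatic; for a $3$-edge cut it forces $k_1=k_2=k_3=1$, so the three cut edges use all three colors. In either case, a $3$-edge-coloring of $G$ restricts to $3$-edge-colorings of both reduced graphs by assigning the replacement edge (resp.\ the three edges at the new degree-$3$ vertex) the inherited color(s) of the cut. Conversely, given $3$-edge-colorings of the two reduced graphs, a suitable permutation from $S_3$ applied to one of them aligns the cut colors, and the two colorings glue to a $3$-edge-coloring of $G$. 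Hence $G$ is $3$-edge-colorable if and only if both reduced graphs are.

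Finally, the ``zero or more reductions'' version follows by induction on the number of reductions performed: the base case (zero reductions, so $G_1=G$) is trivial, and the inductive step is exactly the equivalence above applied to the current graph in $\mathcal{P}_1$, noting that one side stays in $\mathcal{P}_1$ (so the induction can be iterated) while the other side is planar (and so is automatically $3$-edge-colorable by Theorem~\ref{thm:3ECplanar} whenever it is $2$-connected). The main obstacle I anticipate is the bookkeeping in the first part when cut edges share endpoints, which could threaten to create loops or parallel edges under a naive reduction; this needs a short case check using that $G$ is simple, cubic and bridgeless to show that either such degenerate cuts do not arise or, if they do, are absorbed entirely into the planar piece without affecting membership in $\mathcal{P}_1$.
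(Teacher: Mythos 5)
The paper offers no proof of Lemma~\ref{low-red}; it is dismissed with ``the following lemma is easy to prove,'' so there is nothing to compare against line by line. Your argument is the standard one the authors evidently have in mind and it is correct: a separating closed curve through a $2$- or $3$-edge cut is two-sided, hence contractible on $\mathbb{RP}^2$, so it bounds a disk containing one side (planar) and a M\"obius band containing the other (re-cappable to a projective-planar embedding); and the parity of color classes across a cut forces the pattern $(2,0,0)$ or $(1,1,1)$, so colorings restrict to, and glue from, the two reduced graphs. The only point deserving the care you already flag is the degenerate case where two cut edges share an endpoint (which threatens simplicity of the reduced graph, i.e.\ membership in $\mathcal{P}_1$); this is resolved by observing that such a configuration yields a smaller cyclic $2$-edge cut to reduce first, which is consistent with the paper's implicit assumption that the cut separates exactly two components with distinct attachment vertices.
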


%\begin{dfn}
Let $\mathcal{P}_0$ be a set of those graphs $G \in \mathcal{P}_1$ that can be reduced to a graph isomorphic to $P_{10}$ after a sequence of low edge-cut reductions. Graphs in $\mathcal P_0$ are said to be \emph{Petersen-like graphs}.
%\end{dfn}

Because $P_{10}$ is not 3-edge-colorable, Lemma \ref{low-red} implies that graphs in $\mathcal{P}_0$ are not 3-edge-colorable. 
These are the obvious non-3-edge-colorable projective planar cubic graphs, and no other examples are known. 
In this paper, we prove that these are the only non-colorable graphs. In other words, any graph in $\mathcal{P}_1 \setminus \mathcal{P}_0$ can be 3-edge-colored. While trying to prove this, we consider a hypothetical counterexample and discuss how it should look like until finally reaching a contradiction.

\begin{dfn}
    A 2-connected cubic graph $G \in \mathcal{P}_1\setminus \mathcal{P}_0$ is called a \emph{counterexample} if the graph does not admit a 3-edge-coloring. A counterexample is \emph{minimal} if there are no counterexamples of smaller order in $\mathcal{P}_1 \setminus \mathcal{P}_0$.
\end{dfn}

Our main result, Theorem \ref{mainth}, is equivalent to saying that there are no counterexamples in $\mathcal{P}_1 \setminus \mathcal{P}_0$.

The following lemma, whose proof is given in Section \ref{sect:below5cuts} (in the appendix), holds.

\begin{lem}\label{minc}\showlabel{minc}
    A minimal counterexample $G \in \mathcal{P}_1 \setminus \mathcal{P}_0$ is cyclically 5-edge-connected. Furthermore, for any cyclic cut $F$ of size $5$, one of the connected components of $G - F$ must be a 5-cycle.
\end{lem}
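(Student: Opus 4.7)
The plan is to argue by contradiction using the minimality of $G$ together with Lemma \ref{low-red} and the Four Color Theorem (Theorem \ref{thm:3ECplanar}). For each of the three ranges $k\in\{2,3\}$, $k=4$, and $k=5$, I would show that a cyclic $k$-edge-cut (or, for $k=5$, one with neither side a $5$-cycle) would force $G\in\mathcal{P}_0$, contradicting the hypothesis $G\in\mathcal{P}_1\setminus\mathcal{P}_0$.

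For $k\in\{2,3\}$, the argument is immediate from Lemma \ref{low-red}. A single low edge-cut reduction produces a smaller graph $G_1\in\mathcal{P}_1$ and a smaller $2$-connected planar cubic graph $G_2$. By Theorem \ref{thm:3ECplanar} the graph $G_2$ is $3$-edge-colorable, so by Lemma \ref{low-red} the non-$3$-edge-colorability of $G$ propagates to $G_1$. Since $|V(G_1)|<|V(G)|$, the minimality of $G$ gives $G_1\in\mathcal{P}_0$. Prepending the initial reduction to the low edge-cut reduction sequence that witnesses $G_1\in\mathcal{P}_0$ yields $G\in\mathcal{P}_0$, the contradiction.

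For $k=4$, take a cyclic $4$-edge-cut $F=\{e_1,\dots,e_4\}$ with endpoints $a_i$ in $A$ and $b_i$ in $B$. A parity argument (each color class of a $3$-edge-coloring is a perfect matching, whose intersection with any cut has parity determined by $|A|$) forces the four cut-edges to split as $(2,2,0)$ among the three colors, which pairs the $e_i$'s into a $2$-matching $M$. There are three such matchings, and for each $M$ I form cubic graphs $G_A^M$ and $G_B^M$ by joining the $a_i$'s (resp.\ $b_i$'s) according to $M$. Both are strictly smaller than $G$; exactly one inherits a projective-planar embedding and lies in $\mathcal{P}_1$, while the other is planar and $2$-connected (using the absence of cyclic $3$-edge-cuts just established). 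A standard coloring-gluing argument then shows that $G$ is $3$-edge-colorable iff for some $M$ both $G_A^M$ and $G_B^M$ are $3$-edge-colorable. The $4$CT rules out any non-colorable planar piece, so minimality places the projective-planar reduct in $\mathcal{P}_0$ for each of the three $M$, and I would then verify that the three Petersen-like decompositions so produced reassemble to exhibit $G$ itself as a Petersen-like graph.

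For $k=5$ with neither side a $5$-cycle, the same parity argument forces the color distribution on the cut to be $(3,1,1)$; I would then replace one side by a $5$-cycle gadget attached to the remaining endpoints in the cyclic order inherited from the projective embedding to obtain smaller graphs $\tilde G_A,\tilde G_B\in\mathcal{P}_1$ equivalent to $G$ in $3$-edge-colorability, and invoke minimality exactly as before to obtain $\tilde G_A,\tilde G_B\in\mathcal{P}_0$ and hence $G\in\mathcal{P}_0$. The hard step throughout is the final bookkeeping for $k=4$ and $k=5$: one must check that Petersen-likeness of the reduced pieces really glues back to Petersen-likeness of $G$ itself, which requires tracking the three matching-reductions for $k=4$ and the cyclic order of attachment of the $5$-cycle gadget for $k=5$, while also ensuring that the reductions do not create spurious bridges that would invalidate the inductive setup.
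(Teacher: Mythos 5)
Your treatment of cuts of size $2$ and $3$ matches the paper's, but for $k=4,5$ you take a genuinely different route from the paper, and that route has a fatal gap at exactly the step you flag as ``the hard step.'' The paper (Theorem \ref{thm-disk-cut} in Section \ref{sect:below5cuts}) never applies the induction hypothesis to the projective side of the cut. Instead it takes the side of the cut that embeds in a disk (a ``disk island'' $I$), closes up $I$ \emph{alone} in several ways by adding edges or small gadgets across the ring, observes that every one of these closures is planar, single-cross, or apex --- hence $3$-edge-colorable by the 4CT and its known single-cross/apex extensions --- and then uses Kempe chains to conclude that $I$ realizes \emph{every} parity-valid ring coloring when $|F|\le 4$, and at least the coloring set of a $5$-cycle when $|F|=5$ and $|V(I)|>5$. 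Thus the planar side is a D-reducible (resp.\ C-reducible to a $5$-cycle) island, and the reduction replacing it is what minimality is applied to. Your plan instead splits $G$, colors the planar piece by the 4CT, applies minimality to the projective piece, and tries to glue.

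The gap: your endgame for $k=4$ is to ``verify that the three Petersen-like decompositions reassemble to exhibit $G$ itself as a Petersen-like graph,'' but $\mathcal{P}_0$ is by definition closed only under $2$- and $3$-edge-cut substitutions, so gluing a planar piece into a Petersen-like graph along a cyclic $4$-edge-cut does not in general yield a member of $\mathcal{P}_0$; and you cannot appeal to ``non-colorable projective-planar implies Petersen-like'' since that is the theorem being proved. So the contradiction $G\in\mathcal{P}_0$ is not reachable by bookkeeping, and the argument must be restructured (as in the paper) so that the coloring information about the planar side comes from the 4CT and its extensions rather than from reassembly. There are also smaller errors along the way: parity on a $4$-edge-cut allows the monochromatic distribution $(4,0,0)$ in addition to $(2,2,0)$, so the cut edges are not always paired by color; for the ``crossing'' matching $M$ the graph $G_A^M$ is single-cross rather than planar (you need the single-cross theorem, not just the 4CT) and $G_B^M$ need not be projective-planar; and ``both $G_A^M$ and $G_B^M$ are $3$-edge-colorable'' does not by itself yield a coloring of $G$, since the two ring colorings need not be compatible --- this is precisely what the consistency/Kempe-chain machinery in the paper's Definitions \ref{dfn:fit}--\ref{dfn:Dred} and in the proof of Theorem \ref{thm-disk-cut} is there to handle.
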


Let $G$ be a minimal counterexample. 
Throughout the paper, we will interchangeably consider the cubic graph $G \in \mathcal{P}_1 \setminus \mathcal{P}_0$ and its dual graph $G'$ in our statements and proofs. 
Since $G$ is cubic and nonplanar, $G'$ is a triangulation on the projective plane. The graph $G'$ is loopless since otherwise $G$ would either have a cutedge or have representativity 1, in which case it would be planar. $G'$ is simple and 3-connected if $G$ is polyhedrally embedded in the projective plane. 

For other definitions and notation used in the paper, we refer to Diestel's book \cite{Diestel_book} for basic graph theory and to the monograph \cite{MT} for notions in topological graph theory. But there is one thing to mention. 
A \emph{cycle} in a graph $G$ is a set of vertices $U \subseteq V(G)$ where the induced graph $G[V]$ is a connected subgraph whose all vertices have a degree of 2.
A \emph{circuit} in a graph $G$ is a set of vertices that yield a closed walk. Thus a cycle is a circuit, but the converse is not true.

% ====================================================
\section{Reducible configurations}
\label{sect:reducible configurations}
\showlabel{sect:reducible configurations}
% ====================================================

In this section, we start by considering the dual graph $G'$, which is a triangulation of the projective plane. 
Let us first recall basic definitions of configurations and reducibility used in \cite{RSST}.
A \emph{near-triangulation} is a plane graph whose faces are triangles with the exception of the infinite face. 

\begin{dfn}\label{dfn:conf}\showlabel{dfn:conf}
    A \emph{configuration} $K$ consists of a near-triangulation $G(K) = (V(K), E(K))$, and a map $\gamma_K \colon V(K) \rightarrow \mathbb{Z}_{+}$ satisfying the following conditions.
    \begin{enumerate}
        \renewcommand{\labelenumi}{(\roman{enumi})}
        \item For every vertex $v \in V(K)$, $G(K) - v$ has at most two components, and if there are two, then $\gamma_K(v) = \deg_{G(K)}(v) + 2$.
        \item For every vertex $v \in V(K)$, if $v$ is not incident with the infinite face, then $\gamma_K(v) = \deg_{G(K)}(v)$, and otherwise $\gamma_{K}(v) > \deg_{G(K)}(v)$. In either case, $\gamma_K(v) \geq 5$.
        \item $K$ has ring-size $\geq 2$ where \emph{ring-size} is $\sum_{v}(\gamma_K(v) - \deg_{G(K)}(v) - 1)$, and the summation runs over all vertices $v \in V(K)$ that are on the infinite face and for which $G(K)-v$ is connected.
    \end{enumerate}
\end{dfn}

In what follows, we will have many figures showing configurations. In all of them, we will use the Heesch notation \cite{Heesch69} designating vertex degrees. To help the reader we collect the vertex-shapes in Figure \ref{fig:shapes}.

\begin{figure}[htb]
\centering
\includesvg[width=0.9\columnwidth]{degreetype.drawio.svg}
\caption{Shapes used to designate vertices of specific degrees.}
  \label{fig:shapes}
\end{figure}

\begin{dfn}\label{dfn:appear}\showlabel{dfn:apear}
    Let $G'$ be a triangulation of some surface $\Sigma$. A configuration $K$ \emph{appears} in $G'$ if 
    \begin{enumerate}
        \renewcommand{\labelenumi}{(\roman{enumi})}
        \item $G(K)$ is an induced subgraph of $G'$.
        \item Every finite face of $G(K)$ is also a face of $G'$.
        \item Every vertex $v \in V(K)$ satisfies $\gamma_K(v) = \deg_{G'}(v)$.
        \item There is a disk $D\subset \Sigma$ containing $G(K)$ in its interior, and the orientation of each oriented finite face of $G(K)$ is the same in $D$ (either all are clockwise or all are counterclockwise). 
    \end{enumerate}
    % Especially, if $G(K)$ is an induced subgraph of $G$, we call $K$ \emph{appears} in a graph $G$.
\end{dfn}

\begin{dfn}\label{dfn:ring}\showlabel{dfn:ring}
    Let $K$ be a configuration, and $R = (V(R), E(R))$ be a cycle whose length is equal to the ring-size of $K$ and is disjoint from $G(K)$. A near-triangulation $S$ is a \emph{free completion} of K with \emph{ring} $R$ if
    \begin{enumerate}
        \renewcommand{\labelenumi}{(\roman{enumi})}
        \item $R$ is a cycle in $S$ that bounds the infinite face of $S$.
        \item $G(K)$ is a subgraph of $S$, induced on $V(S)\setminus V(R)$, i.e. $G(K) = S - V(R)$, every finite region of $G(K)$ is a finite region of $S$, and the infinite region of $G(K)$ includes $R$ and the infinite region of $S$.
        \item Every vertex $v$ of $S$ not in $V(R)$ has degree $\gamma_K(v)$ in S.
    \end{enumerate}
%    $R$ is called a \emph{ring} of $K$.
\end{dfn}

\begin{dfn}\label{dfn:island}\showlabel{dfn:island}
  An \emph{island} $I$ is a plane graph with the following properties.
  \begin{enumerate}
    \renewcommand{\labelenumi}{(\roman{enumi})}
    \item $I$ is 2-connected.
    \item Every vertex of $I$ has degree two or three.
    \item Every vertex of degree two is incident with the infinite region.
  \end{enumerate}
  An island \emph{appears} in a cubic graph $G \in \mathcal{P}_1$ if $I$ is a subgraph of $G$, and every finite face of $I$ is also a face of $G$.
\end{dfn}

For any configuration $K$, the inner dual\footnote{The \emph{inner dual} is obtained from the dual graph by deleting the vertex that corresponds to the infinite face.} of the free completion of $K$ is an island. This island is called the \emph{island of $K$} and will be denoted by $I(K)$. 

% Kempe-chains
\begin{dfn}\label{dfn:match}\showlabel{dfn:match}
  Let $k$ be a positive integer. 
  A \emph{match} is an unordered pair $\{a, b\}$ of distinct integers in $\{1,\dots,k\}$.
  Two matches $\{a,b\}$ and $\{c,d\}$ with $1\le a<c<b<d\le k$ are said to \emph{overlap}.
  A \emph{signed match} is a pair $(m, \mu)$ where $m$ is a match and $\mu = \pm 1$.
\end{dfn}

% planar
\begin{dfn}\label{dfn:matching}\showlabel{dfn:matching}
  A \emph{signed matching} is a set $M$ of signed matches with the following properties.
  \begin{enumerate}
      \renewcommand{\labelenumi}{(\roman{enumi})}
      \item Any two distinct signed matches $(\{a_1, b_1\}, \mu_1), (\{a_2, b_2\}, \mu_2) \in M$ satisfy $\{a_1, b_1\} \cap \{a_2, b_2\} = \varnothing$.
      \item No two matches in $M$ overlap.
%      \item For $1 \leq a < b < c < d \leq k$, $\mu_1, \mu_2 \in \{1, -1\}$, not both $(\{a, c\}, \mu_1)$, $(\{b, d\}, \mu_2) \in M$.
%      \item If $1 \leq a < b < c < d \leq k$ and $(\{a, c\}, \mu_1)$, $(\{b, d\}, \mu_2) \in M$, then $\mu_1=\mu_2=-1$. 
  \end{enumerate}
\end{dfn}

% projective
\begin{dfn}\label{dfn:projmatching}\showlabel{dfn:projmatching}
  A \emph{projective signed matching} is a set $M$ of signed matches with the following properties.
  \begin{enumerate}
      \renewcommand{\labelenumi}{(\roman{enumi})}
      \item Any two distinct signed matches $(\{a_1, b_1\}, \mu_1), (\{a_2, b_2\}, \mu_2) \in M$ satisfy $\{a_1, b_1\} \cap \{a_2, b_2\} = \varnothing$.
      \item The set of matches $A = \{m\mid (m,\mu)\in M \hbox{ for some } \mu\in \pm1\}$ can be partitioned into $A=A_0\cup A_1$ such that the matches in $A_0$ do not overlap with any other match in $A$, and any two matches in $A_1$ overlap with each other.
%      \item There exists a signed match $m = (\{a, b\}, \mu) \in M$ such that $a < b$ and $M' = \{ (\{c', d'\}, \lambda) \mid c' = \rho(c), d' = \rho(d) \in M \setminus \{m\} \}$ is a signed matching, where
%      $\rho(e) = \begin{cases} a + b - e & \mathrm{if}\ a \leq e \leq b \\ e & \mathrm{otherwise.} \end{cases}$
  \end{enumerate}
  If $M$ is a (projective) signed matching, we let $E(M)$ denote $\{e \mid 1 \leq e \leq k, e \in m$ for some $(m, \mu) \in M \}$.
\end{dfn}

Let us consider a map $\kappa: \{1,\dots,k\} \rightarrow \{0,1,2\}$. Since such maps will be related to 3-edge-colorings, we will say that $\kappa$ is a \emph{coloring}. The coloring $\kappa$ satisfies \emph{the parity condition} if 
$|\kappa^{-1}(0)| \equiv |\kappa^{-1}(1)| \equiv |\kappa^{-1}(2)| \pmod 2$. We will only consider colorings that satisfy the parity condition since every edge-coloring of edges in a cut satisfies the parity condition.

\begin{dfn}\label{dfn:fit}\showlabel{dfn:fit}
  Let $\kappa: \{1,\dots,k\} \rightarrow \{0, 1, 2\}$ be a coloring that satisfies parity. For $\theta \in \{0, 1, 2\}$, $\kappa$ is said to \emph{$\theta$-fit} a projective signed matching $M$ if
  \begin{enumerate}
    \renewcommand{\labelenumi}{(\roman{enumi})}
    \item $E(M) = \{e \mid 1 \leq e \leq k, \kappa(e) \neq \theta\}$
    \item for each $(\{e, f\}, \mu) \in M$, $\kappa(e) = \kappa(f)$ if and only if $\mu = 1$.
  \end{enumerate}
\end{dfn}

% consistent set
\begin{dfn}\label{dfn:consist}\showlabel{dfn:consist}
  A set $\mathscr{C}$ of colorings $\{1,\dots,k\} \rightarrow \{0,1,2\}$ that satisfy parity is \emph{consistent} if for every $\kappa \in \mathscr{C}$ and every $\theta \in \{0,1,2\}$ there is a projective signed matching $M$ such that $\kappa$ $\theta$-fits $M$, and $\mathscr{C}$ contains every coloring that $\theta$-fits $M$.
\end{dfn}

% D-reducible
\begin{dfn}\label{dfn:Dred}\showlabel{dfn:Dred}
  Let $I$ be an island. We add a new incident edge to each vertex of degree 2 in $I$ and denote the set of all added edges by $R$. 
  Let $\mathscr{C}^{\ast}$ be the set of all colorings $\{1,\dots,|R|\} \rightarrow \{0,1,2\}$ that satisfy parity.
  Let $\mathscr{C_0}$ be the set of all $3$-edge-colorings of $R$ that are restrictions of $3$-edge-colorings of $I \cup R$. 
  Let $\mathscr{C}'$ be the maximal consistent subset of $\mathscr{C}^{\ast} - \mathscr{C_0}$. 
  The island $I$ is said to be \emph{D-reducible} if $\mathscr{C}' = \emptyset$. We call a configuration $K$ \emph{D-reducible} if the corresponding island $I(K)$ is D-reducible.
\end{dfn}

\begin{dfn}
    Let $G$ be a subcubic graph, and $F$ be a set of edges in $G$ satisfying the following conditions:
    \begin{itemize}
        \item The endpoints of $f \in F$ have degree 3. % TODO: erase it (dfn:Cred I \dotdiv X ) Inoue
        \item For each $v \in V(G)$, the number of edges adjacent to $v$ in $F$ is not $2$.
    \end{itemize}
    Let $G \dotdiv F$ be the subcubic graph obtained from $G$ by deleting the edges in $F$ and suppressing its endpoints if possible.
\end{dfn}

% As written in review of STOC, G \dotdiv F is already defined in the above definition, so I erase it (Yuta Inoue 2024/03/06)
% \begin{dfn}
%     Let $G$ be a subcubic graph, and $F$ be a set of edges in $G$ such that none of the vertices is incident with precisely two edges in $F$.
%     Then we let $G \dotdiv F$ be the subcubic graph obtained from $G$ by first removing the edges in $F$ from $G$ and then performing the following changes to the vertices whose degree was changed after removing edges in $F$. If a vertex became of degree 0 or 1, then we delete it from the graph, and if the vertex became of degree 2, we suppress it, i.e. replace the two incident edges $uv,wv$ by the single edge $uw$. 
% \end{dfn}

Note that if $G$ is a cubic graph, $G\dotdiv F$ is also a cubic graph.

\begin{dfn}\label{dfn:Cred}\showlabel{dfn:Cred}
  Let $I$ be an island and let $X \subseteq E(I)$ be such that none of the vertices in $I$ is incident with exactly two edges of $X$. 
  Let $I' = I \dotdiv X$ .
  Let $\mathscr{D}$ be all restrictions to $R$ of $3$-edge-colorings of $I' \cup R$.
  $I$ is \emph{C-reducible} if
%  \begin{enumerate}
%    \renewcommand{\labelenumi}{(\roman{enumi})} 
%    \item 
    $\mathscr{D} \cap \mathscr{C}' = \emptyset$.
%    \item Any time when $I$ appears in a minimal counterexample $G$, the graph obtained by replacing $I$ with $I'$ in $G$ belongs to $\mathcal{P}_1 \setminus \mathcal{P}_0$.
%  \end{enumerate}
  The edges in $X$ are called \emph{contraction edges}\footnote{Although the edges in $X$ are not contracted, but rather deleted in $I$, we use the term ``contraction" following previous work. 
  This is because the term originated from definitions using the near-triangulation $K$ rather than the cubic dual $I$.} of $I$, and $X$ is denoted by $c(I)$. 
  The size of $X$ is called the \emph{contraction size} of $I$.
  
  We extend these terms to configurations $K$, whose island $I(K)$ is reducible. We call a configuration $K$ \emph{C-reducible} if $I(K)=I$ is C-reducible, we call the edges of $K$ corresponding to the edges in $c(I)$ the \emph{contraction edges}, we denote them by $c(K)$, and we refer to their number $|c(K)|$ as the \emph{contraction size} of $K$.
\end{dfn}

Let $F$ be a set of edges in $G'$. We denote by $G' / F$ the graph obtained from $G'$ by contracting $F$. Suppose that $G$ is embedded in some surface. A separating cycle $C$ of length 2, 3 in $G' / F$ separates $G' / F$ into a planar graph $H'$ and a non-planar one if the representativity of the embedding of $G' / F$ is at least 3. We consider the graph and its embedding obtained from $G' /F $ by deleting $H'$ from $G' / F$. We call this operation a \emph{low-(vertex)-cut reduction}. This operation corresponds to a low-edge-cut reduction in the original graph $G$. The symbol $\langle G' / F \rangle$ denotes the graph obtained from $G' / F$ by repeatedly applying low-vertex cut reductions to $G' / F$ until no 2,3-vertex cut exists.

%Let $\mathcal{K}$ consist of all D-reducible and all C-reducible configurations for $G'$. We say that any configuration in $\mathcal{K}$ is \emph{reducible}\footnote{Clearly $K$ is also a reducible configuration for $G$, but hereafter, we mainly focus on the dual graph $G'$, which is a triangulation of the projective plane.}.  

The definition of reducibility here is harder to satisfy than the definitions of reducibility in the original 4CT paper due to the large amount of projective signed matches exclusively in the projective plane. 
We refer to the reducibility defined by using only planar signed matches (i.e. the reducibility used in the original 4CT proof) as the \emph{reducibility on the plane}.

Let $I$ be a reducible island appearing in some $G$.
By definition of reducibility, if we can 3-edge color $G \dotdiv c(I)$, we can extend it to a 3-edge-coloring of $G$. 
Thus, contracting the edges in $c(I)$ will not affect the non-3-edge colorability of a graph $G$ in which $I$ appears.
This implies the following lemma.
%\textbf{COMPUTER: reducible check}

\begin{lem}\label{reduc1}\showlabel{reduc1}
   Let $G$ be a minimal counterexample that is embedded in the projective plane $\Sigma$. 
%   Then, there is no circuit of $G$, bounding a closed disk $\Delta$, such that the subgraph of $G$ formed by the vertices and edges drawn in $\Delta$ is an island of one of the reducible configurations in $\mathcal{K}$. 
Suppose that $C$ is a circuit of $G$ bounding a closed disk $\Delta$ such that the subgraph of $G$ formed by the vertices and edges drawn in $\Delta$ is a reducible island $I$. Then $G \dotdiv c(I)$ either has a bridge or belongs to  $\mathcal{P}_0$.
\end{lem}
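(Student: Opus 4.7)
The plan is a minimal-counterexample argument. Set $G^{*}:=G\dotdiv c(I)$ and suppose for contradiction that $G^{*}$ is bridgeless and does not belong to $\mathcal{P}_0$. The restriction in the definition of $G\dotdiv F$ that no vertex of $G$ is incident with exactly two edges of $F$ guarantees that $G^{*}$ is again cubic, and since edge deletion followed by suppression of degree-two vertices preserves the projective-planar embedding of $G$, we have $G^{*}\in\mathcal{P}_1$. Combined with $G^{*}\notin\mathcal{P}_0$ this gives $G^{*}\in\mathcal{P}_1\setminus\mathcal{P}_0$. Since $c(I)\neq\varnothing$ in the nontrivial C-reducible case, $|V(G^{*})|<|V(G)|$, so the minimality of $G$ as a counterexample forces $G^{*}$ to admit a 3-edge-coloring $\phi$.

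The second step is to lift $\phi$ to a 3-edge-coloring of $G$ itself, which will contradict the hypothesis that $G$ is not 3-edge-colorable. Let $\kappa:=\phi|_{R}$ denote the restriction of $\phi$ to the ring $R$ of $I$. Because $I'\cup R$ with $I'=I\dotdiv c(I)$ is a subgraph of $G^{*}$, $\kappa$ is the restriction to $R$ of a 3-edge-coloring of $I'\cup R$, so $\kappa\in\mathscr{D}$. The definition of C-reducibility (Definition \ref{dfn:Cred}) then gives $\mathscr{D}\cap\mathscr{C}'=\varnothing$, whence $\kappa\in\mathscr{C}^{\ast}\setminus\mathscr{C}'$. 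Since $\mathscr{C}'$ is the maximal consistent subset of $\mathscr{C}^{\ast}\setminus\mathscr{C}_0$, there is a finite sequence of projective signed matching moves that transforms $\kappa$ into some $\kappa'\in\mathscr{C}_0$. I would interpret each such move as a Kempe-chain swap carried out in the complementary region $G-\mathrm{int}(\Delta)$, which is topologically a M\"obius band, and simultaneously update $\phi$ by those swaps. After the final swap, the outside coloring induces $\kappa'$ on $R$, and $\kappa'$ extends by definition of $\mathscr{C}_0$ to a 3-edge-coloring of $I$; pasting these two colorings together produces a proper 3-edge-coloring of $G$, the desired contradiction.

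The main obstacle is the Kempe-chain accounting in the nonorientable setting: one must verify that every move permitted by the combinatorial definition of a projective signed matching corresponds to an actual bichromatic path swap in the M\"obius band $G-\mathrm{int}(\Delta)$, and conversely that the overlapping block $A_1$ in Definition \ref{dfn:projmatching} precisely captures the topological freedom of a Kempe chain whose two endpoints on $R$ may appear in an interleaved pattern because the chain wraps around the crosscap. In the planar case this correspondence is classical; on the projective plane it is the one place where the argument genuinely diverges from \cite{RSST}, and it is exactly what motivates using projective signed matchings rather than ordinary signed matchings throughout Section \ref{sect:reducible configurations}.
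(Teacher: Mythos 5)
Your proposal is correct and follows essentially the same route as the paper, whose entire justification for this lemma is the remark preceding it: by the definition of reducibility a 3-edge-coloring of $G \dotdiv c(I)$ would extend to one of $G$, so the smaller non-colorable graph $G \dotdiv c(I)$ cannot lie in $\mathcal{P}_1 \setminus \mathcal{P}_0$, and your Kempe-chain account of the lifting step is exactly the mechanism the paper formalizes via projective signed matchings. The one caveat — shared with the paper's own sketch — is that your minimality step assumes $c(I)\neq\varnothing$, so the D-reducible case ($G \dotdiv c(I) = G$) is not actually covered by this argument and needs a separate treatment.
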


Now, we introduce a set of reducible configurations $\mathcal{K}$ which we have constructed by hand and verified each of its reducibility by computer programs.
There are approximately 5000 reducible configurations in $\mathcal{K}$ and the reader can find all reducible configurations (including contraction edges for $C$-reducible configurations) on the aforementioned Google Drive. 
Together with Lemma \ref{reduc1}, let us state a lemma here.
\begin{lem}\label{reduc}\showlabel{reduc}
Let $G$ be a minimal counterexample that is embedded in the projective plane $\Sigma$. 
Suppose that $C$ is a circuit of $G$ bounding a closed disk $\Delta$ such that the subgraph of $G$ formed by the vertices and edges drawn in $\Delta$ is an island $I$. Then either 
\begin{enumerate}
    \item $I$ is not in  $\mathcal{K}$, or  
    \item  $I$ is in $\mathcal{K}$, but $G \dotdiv c(I)$ either has a bridge or belongs to  $\mathcal{P}_0$. 
\end{enumerate}
\end{lem}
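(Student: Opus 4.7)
The plan is to reduce Lemma \ref{reduc} directly to Lemma \ref{reduc1} together with the defining property of the set $\mathcal{K}$. Formally, the two alternatives in the statement exhaust all possibilities: either $I \notin \mathcal{K}$ (in which case conclusion (1) holds vacuously), or $I \in \mathcal{K}$, in which case it remains to establish conclusion (2). Hence the only genuine content is the implication: if $I \in \mathcal{K}$ appears in a minimal counterexample $G$, then $G \dotdiv c(I)$ has a bridge or lies in $\mathcal{P}_0$.

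The key step is to observe that the set $\mathcal{K}$ is defined so that every configuration $K \in \mathcal{K}$ has a reducible island $I(K)$ in the sense of Definition \ref{dfn:Dred} or Definition \ref{dfn:Cred}. Granted this, the implication above is immediate from Lemma \ref{reduc1} applied to the island $I = I(K)$. Therefore the entire proof collapses to verifying the following claim:

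For every configuration $K \in \mathcal{K}$, the island $I(K)$ is either D-reducible or C-reducible (with the designated contraction set $c(K)$) when \emph{projective} signed matchings (Definition \ref{dfn:projmatching}) are used in place of planar ones.

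The approach to this claim is by direct computer verification, configuration by configuration, following these steps for each $K \in \mathcal{K}$: first, construct the island $I(K)$ and its ring $R$ from the near-triangulation data of $K$; second, enumerate the set $\mathscr{C}^\ast$ of all parity-satisfying colorings $R \to \{0,1,2\}$ and compute the subset $\mathscr{C}_0$ of those that extend to a $3$-edge-coloring of $I(K) \cup R$; third, iteratively prune $\mathscr{C}^\ast \setminus \mathscr{C}_0$ by removing any coloring that fails, for some $\theta \in \{0,1,2\}$, to $\theta$-fit a projective signed matching all of whose $\theta$-fitting colorings still lie in the surviving set, so as to obtain the maximal consistent subset $\mathscr{C}'$; finally, either check that $\mathscr{C}' = \varnothing$ (D-reducibility) or, if a contraction set $c(K)$ is given, compute the restrictions $\mathscr{D}$ of $3$-edge-colorings of $(I(K) \dotdiv c(K)) \cup R$ and check $\mathscr{D} \cap \mathscr{C}' = \varnothing$ (C-reducibility). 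Pseudocode for each of these routines is deferred to Section \ref{sect:code}, and the raw input files, output certificates, and all $\sim$6000 configurations are hosted on the GitHub repository cited in the abstract.

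The main obstacle is not conceptual but combinatorial and computational. Unlike the planar 4CT setting, the consistency closure must be performed with respect to projective signed matchings, which are strictly richer than their planar counterparts and drastically shrink the set of configurations whose naive closure is empty; this is the reason $\mathcal{K}$ is roughly an order of magnitude larger than the classical unavoidable sets. The bookkeeping of which matches may cross the crosscap (the $A_1$ part of the partition in Definition \ref{dfn:projmatching}) must be handled correctly, and the verification routine has to enumerate all valid partitions rather than a single planar nesting. Correctness of this enumeration is what must be audited carefully in the source code, and this is the point at which the computer-aided portion of the proof of Theorem \ref{mainth} concentrates its complexity.
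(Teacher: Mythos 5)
Your proposal is correct and matches the paper's own treatment: the paper presents Lemma \ref{reduc} as an immediate combination of Lemma \ref{reduc1} with the (computer-verified) fact that every member of $\mathcal{K}$ is a D- or C-reducible configuration under projective signed matchings. The reduction to that single claim, and the description of the verification routine, are exactly what the paper intends.
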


In fact, the set $\mathcal{K}$ is an unavoidable set of reducible configurations on the projective plane. We prove unavoidability by the discharging method later in this paper, see Lemma~\ref{T(v)>0}.

We have to check that for every single $C$-reducible configuration in $\mathcal{K}$, it is safe to make a reduction, i.e., we have to make sure that the resulting graph is not in $\mathcal{P}_0$ and that it is bridgeless. 
%In addition, we have to check that it does not have a bridge. 
As pointed out above, this is almost trivial for the planar case and the doublecross case (because at most four edges are necessary to contract for every single $C$-reducible configuration) but in our case, we need a lot of nontrivial work. 
The problem is that the following may happen, which would not happen in the proof of the 4CT. 

\begin{itemize}
    \item There are 201 configurations that need to have at least five edges contracted. 
    Indeed, there are $118, 71, 5, 7$ configurations where the contraction edge size is 5, 6, 7, and 8, respectively. See Table \ref{tab1}. 
    \item There are around 1000 configurations that contain pairs of vertices at a distance of exactly five (although none of our configurations contain pairs of vertices at distance six or more). 
\end{itemize}

\begin{table}[htbp]
    \centering
    \caption{The number of configurations that need the contraction size $k$ $(0 \leq k \leq 8)$ in $\mathcal{K}$.}\label{tab1}
    \begin{tabular}{ccccccccccccc}
        \toprule
         contraction size & 0 & 1 & 2 & 3 & 4 & 5 & 6 & 7 & 8 & 9$_{\geq}$ & total   \\
        \midrule
         \# of configurations & 1083 & 3104 & 284 & 732 & 302 & 118 & 71 & 5 & 7 & 0 & 5706 \\
        \bottomrule
    \end{tabular}
\end{table}

Addressing these two issues makes our proof much harder than that of the 4CT and that of the doublecross case.
We will deal with these difficulties in the following sections.

% =================================================
\subsection{Distance five in reducible configurations}
\label{sect:dist5}\showlabel{sect:dist5}
% =================================================

In the proof of the 4CT in \cite{RSST}, there are 633 reducible configurations, and all of them are of diameter at most four. 
On the other hand, in our set of reducible configurations $\mathcal{K}$, there are many configurations whose diameter is five. 

If a reducible configuration $K$ in a planar graph is of diameter at most four, $K$ is an induced subgraph (otherwise, there would be a separating cycle of order at most five with both sides having at least two vertices, which would not exist in a minimal counterexample). 
This is also the case for doublecross graphs. 
However, in the projective planar case, we have to deal with the case when a reducible configuration is not induced. 

As before, let $G'$ be a triangulation that is embedded in the projective plane, and $K$ be a reducible configuration in $G'$. 
Let us suppose that there are two vertices $u, v \in V(K)$ that are at distance five in $K$, and there is an edge $uv \in E(G')$. 
Then we obtain a cycle $C$ of length exactly six, and have one of the following possibilities:

\begin{enumerate}
    \item[(C1)] $C$ is a contractible cycle in $G'$. 
    In the original cubic graph $G$, $C$ corresponds to a 6-cut, which yields a contractible curve of order exactly six that crosses all edges in the 6-cut.
    \item[(C2)] $C$ is a noncontracible cycle in $G'$.
    In $G$, $C$ yields a noncontractible curve crossing exactly six edges dual to $E(C)$. 
\end{enumerate}

Using Lemma \ref{minc} and Lemma \ref{lem:6,7-cut}, which will be proved in Section \ref{sect:6,7-cut}, we get the following fact.

\begin{fact}\label{fact:cont-cycle}\showlabel{fact:cont-cycle}
    If $G'$ is a minimal counterexample, then $G'$ contains no cycles with any of the following properties:
    \begin{itemize}
        \item A contractible cycle of length at most four, that divides $G'$ into two nonempty components.
        \item A contractible cycle of length five, that divides $G'$ into two components, each of order at least two.
        \item A contractible cycle of length six, that divides $G'$ into two components, each of order at least four.
    \end{itemize}
\end{fact}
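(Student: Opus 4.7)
The plan is to translate contractible cycles in the triangulation $G'$ into edge cuts of the cubic graph $G$ via projective-planar duality, and then to invoke Lemma \ref{minc} for the first two items and Lemma \ref{lem:6,7-cut} for the third. The dictionary I would use is the standard one: a contractible cycle $C$ of length $k$ in $G'$ bounds a closed disk $\Delta\subset\Sigma$, and the edges of $G$ dual to $E(C)$ form an edge cut $F$ of size $k$ in $G$. Because every dual edge crosses only its own primal edge, a face $f$ of $G$ whose dual vertex lies strictly interior to $\Delta$ has its entire closure inside $\Delta$; therefore its bounding cycle (which is a cycle because $G$ is 2-connected, by Lemma \ref{minc}) lies entirely inside the corresponding component of $G-F$ and never uses an edge of $F$. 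Consequently, if both sides of $C$ carry at least one vertex of $V(G')-V(C)$, then each component of $G-F$ contains a cycle, so $F$ is a cyclic $k$-edge-cut.

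Given this dictionary, the first item is immediate: for $k\le 4$ with both sides nonempty, $F$ is a cyclic edge cut of size at most four, contradicting the cyclic 5-edge-connectivity guaranteed by Lemma \ref{minc}. For the second item, if $C$ has length five and each component of $G'-V(C)$ has at least two vertices, then $F$ is a cyclic 5-edge-cut of $G$, and Lemma \ref{minc} forces one component of $G-F$ to be a 5-cycle. But such a component corresponds to a single degree-five vertex of $G'$ lying strictly interior to the disk on its side of $C$ (namely, the pentagonal face of $G$), contradicting the hypothesis that both sides host at least two interior vertices.

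For the third item, assume $C$ is a contractible 6-cycle and both components of $G'-V(C)$ have order at least four. Then the disk $\Delta$ bounded by $C$ contains at least $4=l-2$ vertices of $G'$ strictly in its interior, so Lemma \ref{lem:6,7-cut} applies with $l=6$ and produces a reducible configuration $K$ whose free completion lies strictly inside $\Delta$. Combined with Lemma \ref{reduc1} and the safety analysis developed in Sections \ref{sect:smallcont} and \ref{sect:6,7-cut} (which certifies that the reduction induced by $K$ does not create a bridge and does not land in $\mathcal{P}_0$), this yields a strictly smaller member of $\mathcal{P}_1\setminus\mathcal{P}_0$ that is still not 3-edge-colorable, contradicting the minimality of $G$.

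The main obstacle does not lie in the present argument: items~1 and~2 are routine duality bookkeeping on top of Lemma \ref{minc}, and item~3 is essentially a one-line application of Lemma \ref{lem:6,7-cut}, where all the genuine combinatorial work on contractible 6-cycles is packaged. The only delicate point internal to this proof is the careful verification of the duality dictionary --- specifically, that a facial cycle of $G$ attached to a strictly-interior face cannot pick up any edge of $F$ --- but this is immediate from the fact that no edge of $G'$ incident to a vertex strictly inside $\Delta$ can lie on $C$.
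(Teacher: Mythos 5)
Your proposal is correct and follows the same route the paper takes: the paper derives this Fact directly from Lemma \ref{minc} (items one and two, via the duality between contractible cycles of $G'$ and cyclic edge cuts of $G$) and from Lemma \ref{lem:6,7-cut} with $l=6$ (item three), exactly as you do. The duality bookkeeping you spell out — that a face whose dual vertex lies strictly inside the disk contributes a cycle to its component of $G-F$, and that the 5-cycle side of a cyclic 5-cut corresponds to a single interior dual vertex — is the intended (and correct) justification that the paper leaves implicit.
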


To handle the two above cases (C1) and (C2), we show the following.
% \textbf{COMPUTER: cycle6 imply cycle5}

\begin{lem}\label{pairs6}\showlabel{pairs6}
    Let $C$ be a cycle in $G'$ of length $6$ as in (C1) and (C2).
\begin{enumerate}
    \item If $C$ is contractible, it divides $G'$ into two components, each of order at least four, and thus $G'$ is not a minimal counterexample.
    \item  If $C$ is noncontractible, there is a noncontractible cycle of length at most five in $G'$. That cycle corresponds to a non-contractible curve hitting at most five edges of the cubic dual graph $G$. 
\end{enumerate}
\end{lem}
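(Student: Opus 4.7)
The plan is to first observe that $C$ must be an induced 6-cycle in $G'$, and then handle the two cases separately. Since $u, v \in V(K)$ and $K$ is an induced subgraph of $G'$ (Definition \ref{dfn:appear}), any edge of $G'$ joining two vertices of $V(C) \subseteq V(K)$ must lie in $E(K)$. If $C$ had a chord $xy$ between two non-consecutive vertices of $C$, then $xy \in E(K)$ would yield a $u$-$v$ path in $K$ of length at most $4$, contradicting $d_K(u,v) = 5$. Hence $C$ is induced in $G'$.

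For Part~1, assume $C$ is contractible; let $\Delta$ denote the disk it bounds and $\Omega$ the complementary M\"obius band. The path $P$ lies inside $K$'s embedding disk $D$ and splits $D$ into two sub-disks $D_1, D_2$, while the edge $uv$, lying in $G'\setminus K$, is drawn in the infinite face of $K$. Thus (after possibly relabeling) $D_1 \setminus V(P) \subseteq \Delta$ while $D_2 \setminus V(P) \subseteq \Omega$. Each interior vertex $p_i$ of $P$ satisfies $\gamma_K(p_i) \geq 5$ and so has at least three neighbors off $P$, distributed between the two sides of $P$ in $K$; moreover, since $C$ is induced and $G'$ is a triangulation with all degrees $\geq 5$, at least one interior vertex of $\Delta$ must exist (to triangulate a chordless hexagon). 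Combining these structural facts with the shape of each $K \in \mathcal{K}$ of diameter exactly $5$, a finite case analysis shows that both $\Delta$ and $\Omega$ contain at least four vertices of $G'-V(C)$. Fact~\ref{fact:cont-cycle} then contradicts the minimality of $G'$.

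For Part~2, assume $C$ is non-contractible. Since $P$ lies in the disk $D$, it represents the trivial homotopy class of $u$-$v$ curves rel endpoints in $\Sigma$, and so the edge $uv$ must represent the nontrivial class in $\pi_1(\Sigma, \{u,v\})$, i.e.\ $uv$ is drawn ``around the crosscap''. The plan is to construct a path $Q$ from $u$ to $v$ in $G'$ of length at most $4$ that is homotopic to $P$ rel endpoints; then $Q + uv$ is a noncontractible cycle of length at most $5$, as required. Consider the two common neighbors $a, b$ of $u, v$ in $G'$ coming from the triangular faces on either side of $uv$; both $a, b$ lie outside $V(K)$ (else $d_K(u,v)\leq 2$). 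The length-$2$ paths $u{-}a{-}v$ and $u{-}b{-}v$ each bound a triangular face with $uv$ and are therefore homotopic to $uv$, so neither alone is in the correct class. We instead extend each length-$2$ path by one or two further edges into the triangulation on the ``$P$-side'' of the crosscap (using vertices of $V(G')\setminus V(K)$, the cyclic-5-edge-connectivity of $G$, and the minimum-degree condition on $G'$) to construct the required $Q$.

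The main obstacle is expected to be the combinatorial counting in Part~1 (establishing four vertices on each side of $C$) and the homotopy-class bookkeeping in Part~2 (ensuring the constructed $Q$ lies in the same class as $P$); both are handled by a careful case analysis over the finitely many diameter-$5$ configurations of $\mathcal{K}$, supported by the structural results of Sections~\ref{sect:6,7-cut} and~\ref{sect:rep}.
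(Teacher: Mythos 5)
Your proposal reproduces the overall strategy of the paper (reduce to a finite check over the diameter-$5$ configurations of $\mathcal{K}$, count vertices on each side of $C$ in the contractible case, and exhibit a short noncontractible cycle in the other case), but it stops exactly where the proof begins: both of your key claims are deferred to ``a finite case analysis'' that is never carried out. That case analysis \emph{is} the proof --- the paper performs it by computer over roughly 1000 distance-five pairs, working inside the free completion $S$ of each configuration and tracking which vertices of $S$ get identified. Worse, the specific claim you assert in Part~1, that both $\Delta$ and $\Omega$ always contain at least four vertices of $G'-V(C)$, is not true for every configuration: the paper's check leaves eight cases where this count cannot be concluded, and those are disposed of by exhibiting a separating $4$-cycle (contradicting Fact~\ref{fact:cont-cycle}) rather than by the $\ge 4$ count on the hexagon. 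So the argument you propose for Part~1 would fail as stated. Similarly, in Part~2 the existence of a $u$--$v$ path $Q$ of length at most $4$ homotopic to $P$ is exactly what must be verified configuration by configuration (the paper does it by locating the corresponding vertices $x_u,x_v,y_u,y_v$ in $S$ and measuring the resulting cycles); your sketch gives no mechanism that guarantees such a short path exists.

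There is also a logical slip at the outset: you justify that $C$ is chordless by citing that $K$ ``is an induced subgraph of $G'$'' via Definition~\ref{dfn:appear}. But this lemma is precisely the tool used (in Corollary~\ref{Kapper}) to \emph{establish} that $I(K)$ is induced; in the hypotheses of the lemma we only know $K$ is a subgraph with an extra edge $uv$ present, so you cannot assume inducedness of $K$ to rule out further chords. In short, the proposal is a plausible outline of the same route the paper takes, but the substantive content --- the per-configuration verification, including the eight exceptional cases --- is missing, and one of the intermediate claims is false in general.
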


\begin{proof}
    This proof depends on computer-check. In Section \ref{subsect:code-dist5}, we disclose the pseudo-code used to prove the lemma.
    By computer-check, there are around 1000 pairs that are at distance exactly five in reducible configurations in $\mathcal{K}$.
    Let $u, v$ be the two vertices that are at distance exactly five in a reducible configuration $K$. Let $S$ be a free completion of $K$ with its ring $R$. We now compare the free completion $S$ and its immersion in $G'$.

    Let the two vertices that constitute a triangle face with $u$ and $v$ in $G'$ be $x, y$. These vertices exist since $G'$ is a triangulation.
    In both contractible and non-contractible cases, a vertex of $R$ that is adjacent to $u$ in $S$ corresponds to $v$ (the same thing holds if the roles of $u$ and $v$ are swapped) by assuming that an edge $uv$ exists, which is denoted by $v'$ ($u'$, resp.). There are two vertices that correspond to $x$ in $S$, one is adjacent to $u$, and the other is adjacent to $v$. The same thing holds for $y$. The symbols $x_u,x_v,y_u,y_v$ denote a suitable vertex in $S$ described above, e.g. $x_u$ is a vertex of $S$ that corresponds to $x$ and is adjacent to $u$. By these correspondences, other correspondences can be obtained. For example, if a vertex that constitutes a triangle face with $x_u$ and $u$, but different from $v'$ exists, and a vertex that constitutes a triangle face with $x_v$ and $u'$, but different from $v$ exists in $S$, these two vertices correspond. We calculate these correspondences in the vicinity of $x$ and $y$.
    
    First, we explain the contractible case (C1), when a path of length five between $u$ and $v$, and the edge $uv$ constitute a contractible cycle $C$ of length six.
    We check whether $C$ is a cycle that contradicts Fact \ref{fact:cont-cycle}. To do so, we calculate the number of vertices of the two components separated by $C$. The cycle $C$ corresponds to the path that consists of a path between $u$ and $v$ and two edges $uv'$, $vu'$ in $S$. We calculate the number of vertices of the two components of $S$ divided by this path. We need to pay attention when counting the number of vertices since $R$ may not be a cycle but a circuit and a vertex of $R$ may correspond to a vertex of $K$. Hence, only counting the number of vertices may cause some overcounting. We need to be careful that double-counting does not happen. The specific counting scheme is explained in the pseudo-code in the Appendix, see  Section \ref{subsect:code-dist5}. 
    
%   A cycle that $x$ belongs to also exists. 
    Let us consider a cycle that is obtained by finding a path between $x_u$ and $x_v$ in $S$. It is contractible since $C$ is contractible and we know that the graph embedded between $C$ and the cycle is an annulus.  We check whether this cycle contradicts Fact \ref{fact:cont-cycle} by using the same method as $C$. We do the same thing about $y$.
    
    As we explained before, we calculated the correspondance of vertices in $S$ in the vicinity of $x$ and $y$. The correspondence of the two different vertices of $K$ in $S$ means that these two different vertices of $K$ are same. This contradicts Fact \ref{fact:cont-cycle} since the distance between them is at most five.
     
    In almost all of the cases, we conclude that the cycle would contradict Fact \ref{fact:cont-cycle}. There are, however, eight cases that we cannot conclude immediately. We show them in Figure \ref{fig:dist5}. For these eight cases, we actually confirm that there are separating 4-cycles, which would still contradict Fact \ref{fact:cont-cycle}. 

    Second, we explain the non-contractible case. A path of length five between $u$ and $v$, and an edge $uv$ constitutes a non-contractible cycle $C$ of length six. We find other cycles by obtaining a path between corresponding vertices in $S$ in a similar way to the contractible case. This cycle is non-contractible since it intersects the non-contractible cycle $C$ only once. We check whether or not the length of these cycles obtained above is at most five. If this is the case, this is a desired cycle.
     
    It turns out that all of the configurations would yield a non-contractible cycle of length at most five by this computer check.
\end{proof}

\begin{figure}[htbp]
  \centering
  \includesvg[width=.75\columnwidth]{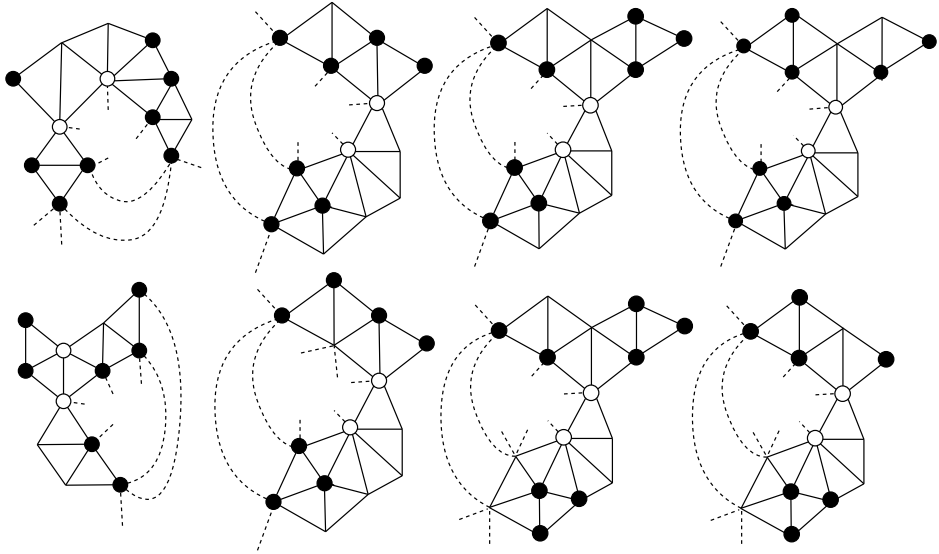}
  \caption{The eight cases that remain after computer-check in the proof of Lemma \ref{pairs6}. The dotted line represents edges when we assume two vertices of distance five are adjacent.}
  \label{fig:dist5}
\end{figure}

Therefore, for the first case, there is a contractible cycle of length at most six in $G'$ that would contradict Fact \ref{fact:cont-cycle} (i.e., a $\leq 6$-cut with one side embedded in the disk).
%On the other hand, in the latter case, we shall handle in Section \ref{sect:rep}.  

In the second case, we reduce to the case when the representativity of $G$ in $\Sigma$ is at most five (we shall handle in Section \ref{sect:rep}). As a corollary of Lemma \ref{reduc}, we have:

\begin{coro}\label{Kapper}\showlabel{Kappear}
 Let $G$ be a minimal counterexample that is embedded in the projective plane $\Sigma$. If the representativity of $G$ in $\Sigma$ is at least six, then none of our reducible configurations in $\mathcal{K}$ is a subgraph in $G$. 
\end{coro}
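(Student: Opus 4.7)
The plan is to argue contrapositively: assume some $K\in\mathcal{K}$ sits as a subgraph of the dual triangulation $G'$, and derive a contradiction using Lemma \ref{reduc} together with the representativity hypothesis. The argument splits into two steps: first, upgrade ``subgraph'' to ``appears'' in the sense of Definition \ref{dfn:appear}; second, apply Lemma \ref{reduc} and exclude both of its outcomes.

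For the first step, I need to verify that $G(K)$ is an induced subgraph of $G'$, that every finite face of $G(K)$ is a face of $G'$, and that the degree and orientation conditions hold. Any failure of the first two conditions produces a short cycle in $G'$: an extra edge $uv$ with $d_K(u,v)=d$ yields a cycle of length $d+1$, and an extra vertex inside a (triangular) finite face of $G(K)$ yields a separating short cycle with vertices on both sides. Since every configuration in $\mathcal{K}$ has diameter at most $5$, the cycles produced by extra edges have length at most $6$. For length at most $5$ the resulting cycle is either contractible and contradicts Fact \ref{fact:cont-cycle}, or noncontractible and contradicts the assumption $\mathrm{rep}(G)\ge 6$. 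For length $6$, which is exactly the distance-$5$ case, Lemma \ref{pairs6} dispatches both subcases: the contractible case violates Fact \ref{fact:cont-cycle} because both sides carry at least four vertices, while the noncontractible case produces a noncontractible cycle of length at most $5$, again contradicting $\mathrm{rep}(G)\ge 6$. The remaining degree and orientation clauses of Definition \ref{dfn:appear} then follow because $G'$ is a triangulation and $G(K)$ has no extra edges or interior face-vertices.

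For the second step, Lemma \ref{reduc} asserts that $G\dotdiv c(K)$ either has a bridge or lies in $\mathcal{P}_0$. By reducibility, any $3$-edge-coloring of $G\dotdiv c(K)$ extends to a $3$-edge-coloring of $G$, so $G\dotdiv c(K)$ is not $3$-edge-colorable. A bridge in $G\dotdiv c(K)$ pulls back to a cyclic edge-cut of size at most $4$ in $G$, contradicting the cyclic $5$-edge-connectivity of a minimal counterexample (Lemma \ref{minc}). The Petersen-like outcome is the genuine obstacle: landing in $\mathcal{P}_0$ would require a short list of separating configurations surrounding the ring of $K$, and the safety analyses of Sections \ref{sect:smallcont} and \ref{sect:largecont} dispose of each such case by exhibiting either a contractible cycle of length at most $5$ violating Fact \ref{fact:cont-cycle} or a noncontractible cycle of length at most $5$ contradicting $\mathrm{rep}(G)\ge 6$. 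Thus the main work lies in the Petersen-like case, and the hypothesis $\mathrm{rep}(G)\ge 6$ is exactly what lets us invoke those safety analyses without further bookkeeping: with no short separating cycles available, no reduction of any $K\in\mathcal{K}$ can manufacture the low edge-cuts needed to descend into $\mathcal{P}_0$, so $K$ cannot be a subgraph of $G$ in the first place.
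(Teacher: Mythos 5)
Your first step---upgrading ``subgraph'' to ``appears'' via the diameter-at-most-five property of the configurations in $\mathcal{K}$, Lemma \ref{pairs6}, Fact \ref{fact:cont-cycle}, and the hypothesis that the representativity is at least six---is exactly the paper's proof. Indeed, the paper's argument for this corollary consists of essentially only that step, and then delegates the rest to Lemma \ref{reduc} together with the safety machinery established elsewhere (ultimately Lemma \ref{lem:safely}).

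Your second step, however, rests on two claims that do not hold. First, a bridge in $G\dotdiv c(K)$ does \emph{not} pull back to a cyclic edge-cut of size at most $4$ in $G$: the bridge together with the deleted edges yields a cut of size at most $1+|c(K)|$, and $\mathcal{K}$ contains configurations with $|c(K)|$ as large as $8$ (Table \ref{tab1}), so the pulled-back cut can have size up to $9$ and cyclic $5$-edge-connectivity is no obstruction. This is precisely why the paper must verify separately, for every island in $\mathcal{K}$, that its contraction set is not contained in a cyclic $5$-edge-cut (Sections \ref{sect:smallcont} and \ref{sect:largecont}). Second, the assertion that representativity at least six prevents descent into $\mathcal{P}_0$ is false as a general principle: Lemma \ref{lem:5555} shows that conf(1) can genuinely collapse to $K_6$ after contracting one of its two admissible contraction sets, and the remedy is a careful choice between the two contraction sets, not the exhibition of a short contractible or noncontractible cycle. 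The safety analyses you invoke rest on the structure of $6$- and $7$-cuts (Lemma \ref{lem:6,7-cut}), vertex counting, and degree arguments, not merely on the absence of short cycles. If you replace your second step by a citation of the safety results (Lemmas \ref{lem:4cont3}, \ref{lem:cont4}, and \ref{lem:safely}), the argument is complete and coincides with the paper's; as written, your own justification for excluding both outcomes of Lemma \ref{reduc} does not go through.
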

    
\begin{proof}
Assume for a contradiction that one of the reducible configurations $I(K)$ is a subgraph in $G'$.     
We claim that $I(K)$ is an induced subgraph of the dual graph $G'$ (and hence $I(K)$ appears in $G'$). 
To this end, if the representativity of $G'$ (and $G$) in $\Sigma$ is at least six this holds because $I(K)$ is of diameter at most five in $G'$ and hence $I(K)$ is an induced subgraph
by Lemma \ref{pairs6}. 

%Finally assume that representativity of $G$ in $\Sigma$ is exactly six.  As above, $I(K)$ appears in $G'$ if diameter of $I(K)$ is at most four. On the other hand if diameter of $I(K)$ is exactly five, by Lemma \ref{pairs6}, either $I(K)$ is induced, or representativity of $G$ in $\Sigma$ is at most five, a contradiction. 

Hence $I(K)$ is induced and thus $K$ appears in $G$ which is a contradiction by Lemma \ref{reduc}. 
\end{proof}

\subsection{Bigger implies smaller}\label{subsect:imply}\showlabel{subsect:imply}

In Section \ref{sect:smaller-appendix} in the appendix, we prove that the reducibility of a configuration implies the reducibility of another configuration. We use Theorem \ref{thm:bigger to smaller degree} in the appendix by reducing the number of reducible configurations in $\mathcal{K}$. We prove the following claim.
\begin{claim}\label{clm:all-imply}\showlabel{clm:all-imply}
    Let $K$ be a configuration in $\mathcal{K}$ with no cut vertices.
    Let $V_0$ be a set of vertices of $G(K)$ such that $\gamma_K(v) = d_{G(K)}(v) + 3$.
    For each $S \subseteq V_0$, let $K_S$ be the configuration obtained from $K$ by replacing $\gamma_K(v)$ by $\gamma_{K_S}(v) = \gamma_K(v) - 1$ for each $v \in S$. Then $K_S$ can be safely reduced.\footnote{We say that $K_S$ is \emph{safely reducible} if it is reducible, and in addition, even when it is C-reducible, contractions of edges would not end up with $K_6$ after 2-3-vertex cut reductions of the resulting graph.}
\end{claim}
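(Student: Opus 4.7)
The plan is to establish reducibility of $K_S$ via iterated application of Theorem \ref{thm:bigger to smaller degree}, and then to verify the safety condition separately by an extension argument that transfers the potential production of $K_6$ back to $K$.

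For plain reducibility, I would iterate Theorem \ref{thm:bigger to smaller degree} over the vertices of $S$, lowering $\gamma$ by one at a single vertex $v$ at each step. Before the step, $\gamma(v) = d_{G(K)}(v) + 3$, so $v$ lies on the infinite face with three ring neighbors; after the step, $\gamma(v) = d_{G(K)}(v) + 2$, which still satisfies Definition \ref{dfn:conf}(ii) (note $\gamma(v) \geq 5$ is preserved since we started at $\gamma(v) \geq 6$). Since $K$ has no cut vertices, Definition \ref{dfn:conf}(i) is trivially satisfied for every intermediate configuration, so the hypotheses of Theorem \ref{thm:bigger to smaller degree} remain valid. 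Chaining $|S|$ such reductions yields reducibility of $K_S$.

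For the safety property when $K_S$ is C-reducible, I would argue by contradiction. Suppose $K_S$ appears in a triangulation $G'$ of the projective plane and that contracting $c(K_S)$ in $G'$ and applying low-vertex-cut reductions yields $K_6$. The plan is to extend $G'$ to a triangulation $G^{\#}$ of the projective plane in which $K$ itself appears, by inserting, for each $v \in S$, a new vertex of degree $3$ adjacent to $v$ and to two consecutive ring neighbors of $v$, and then locally retriangulating. This does not modify the contraction set, since $c(K) = c(K_S) \subseteq E(G(K)) = E(G(K_S))$ lies in the interior of the free completion. The inserted vertices have degree $3$ and are incident only with ring vertices of the free completion of $K_S$, so low-vertex-cut reductions in $G^{\#}$ will strip them off together with a subset of the ring, reproducing exactly the sequence of reductions that occurs in $G'$. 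The final quotient is therefore again $K_6$, contradicting the safe reducibility of $K$ that was verified when $K$ was placed in $\mathcal{K}$.

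The main obstacle is making the extension step $G' \to G^{\#}$ rigorous: one must show that the added degree-$3$ vertices can be placed so that $G^{\#}$ is a genuine simple triangulation of the projective plane (no multiple edges or loops), that $K$ appears in the strict sense of Definition \ref{dfn:appear}, and that no added vertex participates in a small separating cycle that would alter the sequence of low-vertex-cut reductions. Bounding the degree of each added vertex to exactly $3$ and keeping its neighborhood inside the ring of $K_S$ ensures that the contraction-and-reduction process in $G^{\#}$ factors through that in $G'$ in a controlled way, so the final quotient is preserved. Once this bookkeeping is done, the contradiction with safe reducibility of $K$ completes the argument.
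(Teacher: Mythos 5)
Your proposal has two genuine gaps, one in each half. For plain reducibility, you cannot simply ``chain $|S|$ applications'' of Theorem \ref{thm:bigger to smaller degree}: that theorem (and Corollary \ref{cor:bigger to smaller degree}) carries an essential side condition on the contraction set --- in the corollary's language, that not both of the edges $vx$ and $vz$ are contracted and that $x,z$ are not identified --- and the paper explicitly notes that examples show this condition is necessary. The absence of cut vertices in $K$ only takes care of Definition \ref{dfn:conf}(i); it says nothing about this contraction condition. The paper's own proof isolates exactly the vertices where the condition fails (the sets $V_1$ and $V_2$, plus the degenerate case where $c(K)=\{vr_2\mid v\in V_1\}$ and $S\supseteq V_1$, so that the reduction no longer produces a strictly smaller graph) and handles those $K_S$ by direct computer verification of reducibility; your argument silently assumes these cases do not occur.

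For the safety half, the extension $G'\to G^{\#}$ cannot deliver the contradiction you want. Each inserted vertex has degree $3$ in the triangulation $G^{\#}$, so $G^{\#}$ contains a separating triangle (equivalently, its cubic dual has a cyclic $3$-edge-cut) and is therefore not internally $6$-connected and not a minimal counterexample. But the safe reducibility of $K$ was only verified for appearances of $K$ in minimal counterexamples: the verification in Sections \ref{sect:smallcont}, \ref{sect:6,7-cut} and \ref{sect:largecont} leans on Lemma \ref{minc}, Lemma \ref{lem:6,7-cut} and the representativity bounds, none of which are available in $G^{\#}$. So ``contradicting the safe reducibility of $K$'' does not follow, and the obstacle you flag at the end is not mere bookkeeping --- it is the reason this transfer cannot work as stated. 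The paper avoids the issue entirely by not inheriting safety from $K$ at all: it re-runs the $K_6$-avoidance and distance-five checks directly on every derived configuration $K_S$ (the families $\mathcal{K}_1$, $\mathcal{K}_2$, $\mathcal{K}_{\text{imply1}}^{\ast}$, $\mathcal{K}_{\text{imply2}}^{\ast}$), invoking Lemma \ref{lem:cont4} for contraction size at most four and the Section \ref{sect:largecont} machinery otherwise.
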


In Claim \ref{clm:all-imply}, we add the condition that $K$ has no cut vertices simply because we did not use it for computer-check of discharging. 

As the proof of this claim requires many of the facts proved in this paper, we give it in the appendix (see Subsection \ref{subsect:imply1}).

% ============================================================
\section{Reducible configurations with contraction set of size $\leq 4$}
\label{sect:smallcont}\showlabel{sect:smallcont}
% ============================================================

Let $G$ be a minimal counterexample, and let $G'$ be the dual of $G$, which is a triangulation of the projective plane. 
If a C-reducible configuration appears in any non-3-edge-colorable graph, we can delete some set of edges to obtain a smaller non-3-edge-colorable graph.
Deletions in $G$ (which are contractions in $G'$) result in a smaller graph, but we still need to check whether the deleted graph satisfies the induction hypothesis in order to confirm that it is still a counterexample.
For planar graphs, the induction hypothesis is satisfied if the reduced graph is bridgeless. 
This is almost trivial since no reducible configuration needs more than $4$ edge contractions, and internal 6-connectivity holds for minimal counterexamples.

However, we need additional arguments for the projective planar case.
In addition to the request that the resulting cubic graph must be bridgeless, it should not be a member of $\mathcal{P}_0$. 
(Let us recall that the definition of ``minimal counterexample'' is the smallest non-three-edge-colorable graph in $\mathcal{P}_1 \setminus \mathcal{P}_0$, so reducing the graph to a member of $\mathcal{P}_0$ does not satisfy the induction hypothesis.)

It is easy to show that any configuration of contraction size at most $2$ meets the requirements after reduction.
From Lemma \ref{minc}, the following condition holds.

\begin{itemize}
    \item $G$ contains no cyclic edge-cut of size 4 or less. 
    \item For any cyclic edge-cut $F$ of size 5, one of the connected components of $G - F$ must be a 5-cycle.
\end{itemize}
If $G\dotdiv e \in \mathcal{P}_0$ for any $e \in E(G)$, $G \dotdiv e = P_{10}$.
Similarly, if $G \dotdiv e \dotdiv f \in \mathcal{P}_0$ for any $e,f \in E(G)$, $G \dotdiv e \dotdiv f$ is either $P_{10}$ or a snark of order 12 obtained by replacing a vertex in $P_{10}$ with a triangle.
Such a graph can be enumerated quite easily (the largest one is a 16-vertex cubic graph), and it can be shown that every one of them can be 3-edge-colored.

On the other hand, there are infinitely many cubic graphs that can be reduced to a graph in $\mathcal{P}_0$ after three or more edge deletions (and endpoint suppressions), making it impossible to enumerate all possible cubic graphs, as done in the above case.
Therefore, we need to do some case analysis.
Before that, let us define some new terms.

\begin{dfn}
  A \emph{projective island} $I$ is a projective planar graph with the following properties.
  \begin{enumerate}
    \renewcommand{\labelenumi}{(\roman{enumi})}
    \item $I$ is 2-connected.
    \item Every vertex of $I$ has degree two or three.
    \item Every vertex of degree two is incident to a face that is homeomorphic to a 2-dimensional disk.
  \end{enumerate}
\end{dfn}

Regarding projective islands, we define the terms \textit{appear} and being \textit{reducible} in the same way as for islands defined in Section \ref{sect:reducible configurations}.

Next, we define some special projective islands.

\begin{dfn}
    For every $y \geq 3$, let $V_{2y}$ be the cubic graph obtained by adding all main diagonal edges to the cycle $C_{2y}$. 
    Let the set of the added $y$ edges be denoted by $E_{\mathrm{cross}}$.
    Let $\Gamma_{y}^k$ be the set of projective islands obtained from $V_{2y}$ by subdividing some edges on the cycle $C_{2y}$ with $k$ new vertices of degree $2$. 
%    Let $\Gamma_{y}^k$ be the set of projective islands with ring size $k$, defined recursively as follows.
%    \begin{itemize}
%        \item $\Gamma_{y}^0 = \{V_{2y}\}$
%        \item $\Gamma_{y}^k = \{\text{the graph obtained by subdividing } e \text{ in } G \mid G \in \Gamma_{y}^{k-1}, e \in E(G), e \notin E_{\mathrm{cross}}\}$.
%    \end{itemize}
    Let $\Pi_y^k$ be the subset of projective islands in $\Gamma_y^k$ which satisfy the two conditions below:
    \begin{enumerate}
        \item For every pair of opposite edges $e,f \in C_{2y}$, at least one of $e,f$ has been subdivided. 
        %(If such $e,f$ exists, there is a cyclic 4-edge-cut in the island).
        \item Each path in $C_{2y}$ of length $s\le y-1$ joining two vertices on this cycle, contains at least $s-1$ subdivision vertices.
        %(If such $P$ exists, the projective island admits representativity strictly smaller than $y$.)
    \end{enumerate}
\end{dfn}

Suppose that we want to remove a set $E_3$ of three edges from $G$. We would like to understand when $G \dotdiv E_3$ is Petersen-like.
%Now, let $E_3$ be the set of 3 edges we want to delete from $G$.
We consider the following possibilities.

\begin{enumerate}
    \item \textbf{When $E_3$ is contained in a cyclic 6-edge-cut $F$ of $G$:} The 6-edge-cut will become a 3-edge-cut in $G \dotdiv E_3$.
    Then, one of the connected components of $G - F$ must be isomorphic to a graph in $\Pi_{3}^6$.
    \item \textbf{When 2 or 3 edges of $E_3$ are contained in a cyclic 5-edge-cut $F$ of $G$:} The 5-edge-cut will become a 2- or 3-edge-cut in $G \dotdiv E_3$.
    However, since one of the connected components of $G-F$ must be a 5-cycle, the order of $G$ can be upper-bounded. More precisely, $|G| \leq 18$ holds.
    \item \textbf{Otherwise:} $G \dotdiv E_3$ must be isomorphic to $P_{10}$, so $|G| \leq 16$ holds.
\end{enumerate}

Cases 2 and 3 create an upper bound for $|G|$, but for case 1, we need to use the reducibility checker for projective islands (in this case, we only require reducibility on the plane). 
There are 14 unique graphs in $\Pi_{3}^6$, all of which are either D-reducible or C-reducible with contraction size two or less.
Therefore, whenever a C-reducible configuration $I$ of contraction size three occurs in $G$ and all edges in $c(I)$ are contained in a particular cyclic 6-edge-cut of $G$, we can use the corresponding reducible projective island $I' \in \Pi_{3}^6$ instead. 
(Note that this argument holds for projective islands as well.)

This way, we can ensure that, even after the reduction, the induction hypothesis holds.

Therefore, the following holds.

\begin{lem}\label{lem:4cont3}\showlabel{lem:4cont3}
    Any planar or projective island $I$ that is C-reducible with contraction size at most three can be safely reduced.
\end{lem}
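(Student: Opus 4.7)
The plan is to handle the three possible contraction sizes $|c(I)| \in \{0,1,2\}$ together, and then devote the main effort to $|c(I)| = 3$, subdividing it according to how the three deleted edges $E_3 := c(I)$ lie with respect to the low cyclic edge-cuts of the ambient minimal counterexample $G$. Throughout I use Lemma \ref{minc}, which guarantees that $G$ is cyclically 5-edge-connected and that every cyclic 5-edge-cut has a 5-cycle on one side.

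For $|c(I)| \leq 2$, I would argue that if $G \dotdiv c(I) \in \mathcal{P}_0$, then $G$ is obtained from some graph in $\mathcal{P}_0$ by at most two inverse low-edge-cut reductions (i.e., by splitting at most two vertices into triangles, or by subdividing edges symmetrically, to restore cubicity). Combined with the cyclic 5-edge-connectivity of $G$, this forces $G$ to belong to a finite list whose largest member has at most $16$ vertices. Direct inspection, or the accompanying colorability checker, shows that every graph in this finite list is 3-edge-colorable, contradicting the assumption that $G$ is a counterexample.

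For $|c(I)| = 3$ I follow the trichotomy sketched in the preceding paragraphs of the section. In \textbf{Case 1}, $E_3$ is contained in a cyclic 6-edge-cut $F$ of $G$; then $F \setminus E_3$ becomes a cyclic 3-edge-cut in $G \dotdiv E_3$, and if the reduced graph lies in $\mathcal{P}_0$ then one side of $G - F$ must be isomorphic to a projective island in $\Pi_3^6$. The family $\Pi_3^6$ contains, up to isomorphism, exactly $14$ members, and a finite computer check (reducibility on the plane suffices) shows each is either D-reducible or C-reducible with contraction size at most two. Consequently, in the actual $G$ there is already a reducible (projective) island with contraction size at most two, to which the previous paragraph applies; performing that smaller reduction instead of the original one is safe. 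In \textbf{Case 2}, $E_3$ meets a cyclic 5-edge-cut $F$ of $G$ in two or three edges; by Lemma \ref{minc} the small side of $G - F$ is a 5-cycle, and together with the bound on how $P_{10}$ can be ``unfolded'' by at most three splits from the other side this yields $|V(G)| \leq 18$, after which the finitely many cubic projective-planar graphs obtained this way can be enumerated and verified 3-edge-colorable by machine. In \textbf{Case 3}, $E_3$ is disjoint from all such low cuts; then every new $\leq 3$-edge-cut of $G \dotdiv E_3$ must arise from the contractions themselves, forcing $G \dotdiv E_3$ to be exactly $P_{10}$, whence $|V(G)| \leq 16$ and the same finite check finishes the argument.

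The main obstacle is the verification that all $14$ members of $\Pi_3^6$ in Case 1 are reducible with contraction size at most two; this is not immediate and depends on running the reducibility checker on each member, which is the reason the family is singled out and precomputed. Bridgelessness of $G \dotdiv c(I)$ is comparatively easy and follows from cyclic 5-edge-connectivity of $G$, since a bridge in $G \dotdiv c(I)$ would pull back to an edge-cut of $G$ of size at most three, contradicting Lemma \ref{minc}.
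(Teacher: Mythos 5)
Your proposal follows essentially the same route as the paper: the same trichotomy for $|c(I)|=3$ (contained in a cyclic 6-edge-cut, meeting a cyclic 5-edge-cut in two or three edges, or neither), the same reduction of Case 1 to the $14$ members of $\Pi_3^6$ being D- or C-reducible with contraction size at most two, and the same finite bounds $|V(G)|\leq 18$ and $|V(G)|\leq 16$ in the other cases, with $|c(I)|\leq 2$ handled by the same small enumeration. The only quibble is that a bridge in $G\dotdiv c(I)$ pulls back to a cut of size at most four (the bridge plus the three deleted edges), not three, but this still contradicts Lemma \ref{minc}, so the argument is unaffected.
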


We can also do a case analysis for contraction size 4 in the same manner.
Next, let us define some graph families.

\begin{dfn}
    Let $P_{10}^{-}$ be the graph obtained by deleting an edge from $P_{10}$.
    Let $C$ be a cycle of length $8$ in $P_{10}^{-}$ including the two vertices of degree two.
    (There are two such cycles $C$ in the same graph, but they are equivalent through graph automorphism.)
    Let $\Delta^6$ be the set of projective islands that is obtained by subdividing some edges in the cycle $C$ with $4$ new vertices of degree two.
    Since there are two vertices of degree two in the original graph $P_{10}^{-}$, the constructed island's ring size is six.
    
    Let $\hat{\Pi}_3^6$ be a set of graphs $G$ that can be constructed by the following method:
    \begin{itemize}
        \item Pick a projective island $H \in \Pi_3^6$.
        \item Choose two edges $e,f \in E(H)$ sharing the same face but having no common endpoint.
        \item Subdivide $e$ and $f$. Let the newly formed vertex be $v_e$ and $v_f$, respectively.
        \item Connect $v_e$ and $v_f$ in $H$ to obtain $G$.
        Since $e$ and $f$ share the same face, the resulting graph $G$ is still embeddable in the projective plane.
    \end{itemize}
\end{dfn}

Now, let $E_4$ be the set of four edges we want to delete from $G$. 
Let us assume that no cyclic 5-cut $F$ with $E_4 \subset F$ exists. 
(This assumption is to avoid making a bridge in $G - E_4$.) 
Again we would like to understand when $G \dotdiv E_4$ is Petersen-like.

\begin{enumerate}
    \item \textbf{When $E_4$ is contained in a cyclic 7-edge-cut $F$ of $G$:} The 7-edge-cut will become a 3-edge-cut in $G \dotdiv E_3$.
    Then, one of the connected components of $G - F$ must be isomorphic to a graph in $\Pi_{3}^7$.
    \item \textbf{When $E_4$ is contained in a cyclic 6-edge-cut $F$ of $G$:} The 6-edge-cut will become a 2-edge-cut in $G \dotdiv E_3$.
    Then, one of the connected components of $G - F$ must be isomorphic to a graph in $\Delta^6$.
    \item \textbf{When $E_4$ is contained in a cyclic 5-edge-cut $F$ of $G$:} This would not happen by our assumption. 
    \item \textbf{When three edges of $E_4$ are  contained in a cyclic 6-edge-cut $F$ of $G$:}
    The 6-edge-cut will become a 3-edge-cut in $G \dotdiv E_3$, and there is another edge in $E_4$ somewhere else.
    Then, one of the connected components of $G - F$ must be either isomorphic to a graph in $\Pi_3^6$ or $\hat{\Pi}_3^6$.
    \item \textbf{When two or three edges of $E_4$ are contained in a cyclic 5-edge-cut $F$ of $G$:}
    Because of Lemma \ref{minc}, the size of $G$ can be upper-bounded. Precisely, $|G| \leq 20$ holds.
    \item \textbf{Otherwise:} $G - E_4$ must be isomorphic to $P_{10}$, so $|G| \leq 18$ holds.
\end{enumerate}

Therefore, all we need to check is the reducibility of graphs in $\Pi_3^7$, $\Delta^6$, and $\hat{\Pi}_3^6$. 
All projective islands in these three families are either D or C-reducible.
There are, however, two projective islands in $\Delta^6$ and three projective islands in $\Pi_3^7$, where a four-edged contraction is needed for C-reducibility.
In these cases, by showing the existence of non-contractible curves hitting at most two edges, we can show that the representativity of the resulting graph becomes two or less, which would not result in $\mathcal{P}_0$.

We would also need to check that the resulting graph is in $\mathcal{P}_1$, i.e. the resulting graph is bridgeless.
For the two projective islands $I$ in $\Delta^6$, we can see that it is impossible to find a 5-edge cut $F$ containing the four edges of $c(I)$.
This implies that even after deleting $c(I)$ in the projective island $I$, the resulting cubic graph does not have a bridge. 
For the three projective islands $I$ in $\Pi_3^7$, two of the islands have the risk of forming a bridge after deleting $c(I)$.
However, we see that in both of the cases, the bridge can be avoided by finding a different projective island in $\Delta^6$.
Therefore we can conclude that it will not become a Petersen-like graph. 
The detailed descriptions of these projective islands, together with their contraction of edges and the non-contractible curve showing low representativity, are shown in Figures \ref{fig:delta-6-dangerous} and \ref{fig:pi-3-7-dangerous}.

\begin{figure}[htbp]
  \centering
  \includegraphics[width=6.5cm]{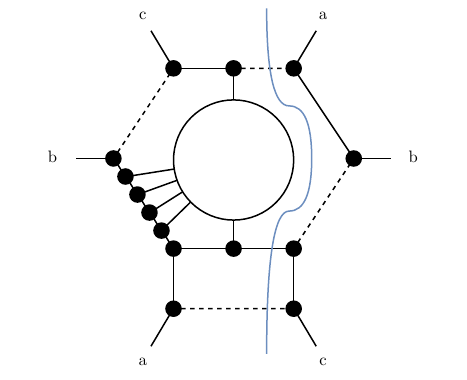}
  \includegraphics[width=6.5cm]{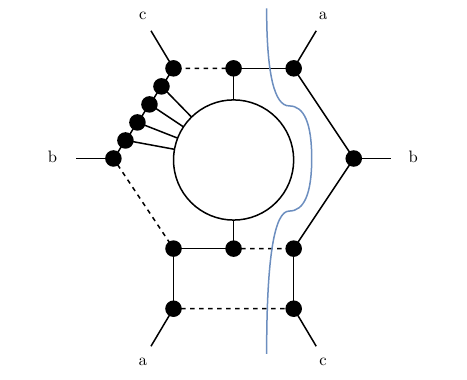}
  \caption{Two C-reducible islands in $\Delta^6$, having contraction size $4$. 
  The contraction edges are depicted by dotted lines and
  the non-contractible curve is represented with a blue curved line.}
  \label{fig:delta-6-dangerous}
\end{figure}

\begin{figure}[htbp]
  \centering
  \includegraphics[width=5cm]{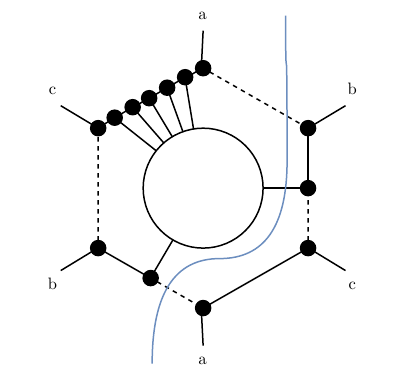}
  \includegraphics[width=5cm]{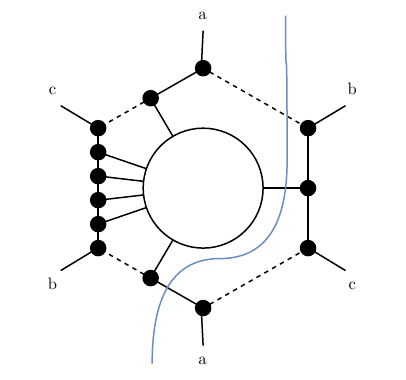}
  \includegraphics[width=5cm]{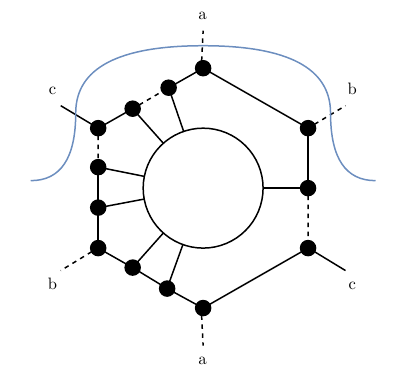}
  \caption{Three C-reducible islands in $\Pi_3^7$, having contraction size $\geq 4$.
  The contraction edges are depicted by dotted lines and
  the non-contractible curve is represented with a blue curved line.}
  \label{fig:pi-3-7-dangerous}
\end{figure}

The two cases where a bridge may occur are shown in Figure \ref{fig:pi-3-7-bridge}.

\begin{figure}[htbp]
  \centering
  \includegraphics[width=6.5cm]{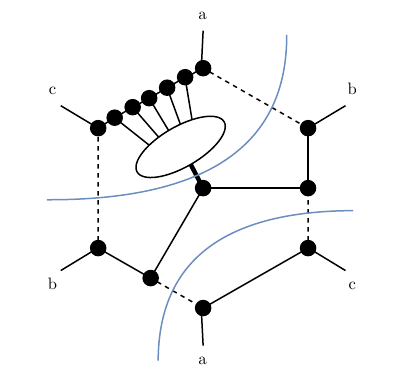}
  \includegraphics[width=6.5cm]{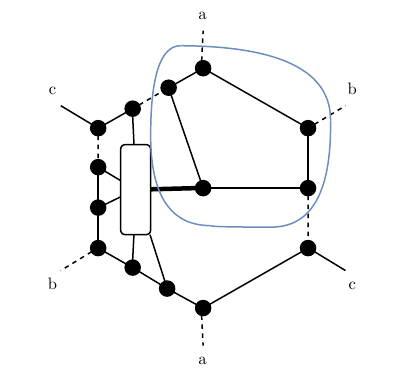}
  \caption{Two cases where a bridge (shown in bold lines) may occur in a previous 5-edge-cut (represented with a dotted curve) in $\Pi_3^7$. 
  In both cases, a larger projective island in $\Delta^6$ is contained (as shown in the image, surrounded by a blue closed curve).}
  \label{fig:pi-3-7-bridge}
\end{figure}

From these facts, the following proposition holds.

\begin{proposition}\label{lem:4cont2}\showlabel{lem:4cont2}
    Let $I$ be a $C$-reducible (projective or planar) island of contraction size at most four.
    If $I$ appears in $G$ and $c(I)$ is not contained in any cyclical 5-edge-cut of $G$, it can be safely reduced.
\end{proposition}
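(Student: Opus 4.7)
The plan is to proceed by case analysis on the contraction size $k=|c(I)|$. For $k\le 3$ the statement is exactly Lemma~\ref{lem:4cont3}, so the substantive work is the case $k=4$. Write $E_4 = c(I)$ and suppose, towards a contradiction, that $G \dotdiv E_4$ is bridgeless and lies in $\mathcal{P}_0$, so after a (possibly empty) sequence of low edge-cut reductions we reach $P_{10}$. Any cyclic $j$-edge-cut of $G\dotdiv E_4$ that is actually used in reducing to $P_{10}$ lifts to a cyclic $(j+s)$-edge-cut of $G$, where $s\in\{0,1,2,3,4\}$ counts the edges of $E_4$ the lifted cut contains. Combined with the cyclic $5$-edge-connectivity of $G$ from Lemma~\ref{minc}, and the hypothesis that $E_4$ is not contained in any cyclic $5$-edge-cut of $G$, this rigidly controls how $E_4$ can interact with the cyclic cut structure of $G$.

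I would then split into the six scenarios listed in the paragraphs preceding the statement: $E_4$ is contained in a cyclic $7$-edge-cut; $E_4$ is contained in a cyclic $6$-edge-cut; $E_4$ is contained in a cyclic $5$-edge-cut (excluded by hypothesis); exactly three edges of $E_4$ lie in a cyclic $6$-edge-cut; two or three edges of $E_4$ lie in a cyclic $5$-edge-cut; or none of the above. In the first, second, and fourth scenarios the ``small side'' of the offending cut must be isomorphic to a projective island in $\Pi_3^7$, $\Delta^6$, or $\Pi_3^6\cup\hat\Pi_3^6$ respectively. Each of these is a finite family, and I would run the reducibility-on-the-plane checker to verify that every member is either D-reducible or C-reducible with contraction size at most $3$, so that Lemma~\ref{lem:4cont3} applies to the alternative reduction obtained by replacing $I$ by this projective island. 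For the two elements of $\Delta^6$ and three elements of $\Pi_3^7$ whose alternative contraction size happens to be $4$ (Figures~\ref{fig:delta-6-dangerous} and \ref{fig:pi-3-7-dangerous}), I would exhibit the marked non-contractible curve hitting at most two edges of the reduced graph: the representativity of the reduced embedding then drops to at most $2$, while every member of $\mathcal{P}_0$ has a projective-planar embedding of representativity at least $3$ (since $P_{10}$ embeds with representativity $5$ and low edge-cut reductions plant only planar pieces). Hence the reduced graph cannot be Petersen-like.

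In the fifth and sixth scenarios the hypotheses on how $E_4$ intersects small cyclic cuts, together with the ``$5$-cycle side'' part of Lemma~\ref{minc}, force $|V(G)|\le 20$ and $|V(G)|\le 18$ respectively. Here the plan is a finite verification: enumerate all cyclically $5$-edge-connected cubic projective-planar graphs of that order (via standard generators) and check by direct 3-edge-coloring that every graph in $\mathcal{P}_1\setminus\mathcal{P}_0$ of such bounded order is 3-edge-colorable, contradicting the minimal-counterexample assumption on $G$.

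The main obstacle is the bridge issue flagged in the text: for two of the three C-reducible islands in $\Pi_3^7$ of contraction size $4$, the proposed reduction might create a bridge in $G$ and thereby leave $\mathcal{P}_1$ rather than merely avoiding $\mathcal{P}_0$. I would resolve this as in Figure~\ref{fig:pi-3-7-bridge}: each such island is strictly contained in a larger projective island belonging to $\Delta^6$ which is itself safely C-reducible in the sense of the above argument, and substituting this larger alternative reduction both avoids the bridge and keeps us outside $\mathcal{P}_0$. A secondary, more mechanical check is that the $\Delta^6$ and $\Pi_3^6\cup\hat\Pi_3^6$ alternatives never themselves trigger the excluded case of lying inside a cyclic $5$-edge-cut of $G$, which follows from counting how many edges of $c(I)$ a hypothetical $5$-edge-cut could contain in each of the six scenarios.
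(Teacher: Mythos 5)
Your proposal follows essentially the same route as the paper's own argument: the same six-way case analysis on how $c(I)$ meets small cyclic cuts, the same finite families $\Pi_3^7$, $\Delta^6$, and $\Pi_3^6\cup\hat{\Pi}_3^6$ checked by the planar reducibility tester, the same representativity-drop argument for the five alternatives of contraction size four, and the same substitution of a larger island from $\Delta^6$ (Figure \ref{fig:pi-3-7-bridge}) to avoid the bridge. (One cosmetic slip: the face-width of $P_{10}$ in the projective plane is $3$, not $5$; but all your argument needs is that a graph admitting an embedding of representativity at most $2$ is $3$-edge-colorable and hence not in $\mathcal{P}_0$.)
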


For every reducible island $I \in \mathcal{K}$, we have checked whether $c(I)$ could be contained in any cyclical 5-edge-cut of $G$ and confirmed that no such $c(I)$ exists (this means that after deleting $c(I)$, the resulting graph has no bridge).
The checking is conducted using computer programs, and we explain the details in Section \ref{sect:largecont}.

Thus, we obtain the following result for islands of contraction size at most four in $\mathcal{K}$.

\begin{lem}\label{lem:cont4}\showlabel{lem:cont4}
    Let $I$ be a C-reducible island of contraction size at most four in $\mathcal{K}$. 
    Then, it can be safely reduced.
\end{lem}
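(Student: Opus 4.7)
The plan is to split the argument by contraction size. For every C-reducible island $I\in\mathcal{K}$ with $|c(I)|\le 3$, safe reducibility is immediate from Lemma \ref{lem:4cont3}, which handles this range unconditionally (the case analysis there already rules out a Petersen-like outcome and bridge-creation after at most three deletions). So only the case $|c(I)|=4$ needs additional work.

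For the case $|c(I)|=4$, I would invoke Proposition \ref{lem:4cont2}, which already gives safe reducibility whenever $c(I)$ is not contained in any cyclical 5-edge-cut of $G$. Thus the whole proof reduces to verifying, for every $I\in\mathcal{K}$ with contraction size $4$, that no cyclical 5-edge-cut of $G$ contains the four edges of $c(I)$. Here I would use the structural information from Lemma \ref{minc}: $G$ is cyclically 5-edge-connected, and any cyclic 5-edge-cut $F$ of $G$ must have a 5-cycle on one of the two sides of $G-F$. Consequently, if $c(I)\subseteq F$ for some such $F$, the fifth edge of $F$ must close off a 5-cycle that lies either entirely inside the disk bounded by the ring of $I$ (and thus inside the near-triangulation defining $I$), or partly outside with a very constrained interface along the ring. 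In either subcase the local combinatorial pattern is determined and can be checked against the known structure of $I$.

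The verification is carried out by computer, following the routine described in Section \ref{sect:largecont}: for each $I\in\mathcal{K}$ with $|c(I)|=4$, I would enumerate the possible placements of four edges of $c(I)$ inside a putative cyclic 5-edge-cut whose small side is a 5-cycle, and rule out each placement using either the girth/connectivity constraints on a minimal counterexample or the explicit local structure of the near-triangulation defining $I$. The main obstacle is not conceptual but one of exhaustiveness, ensuring that the enumeration covers every way in which the four contraction edges could sit in a 5-edge-cut, and in particular that no bridge is created after the deletion. Once this verification is complete, the lemma follows by combining Lemma \ref{lem:4cont3} for $|c(I)|\le 3$ with Proposition \ref{lem:4cont2} for $|c(I)|=4$.
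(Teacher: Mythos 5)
Your proposal matches the paper's own argument: the paper derives Lemma \ref{lem:cont4} by applying Proposition \ref{lem:4cont2} together with a computer verification (described in Section \ref{sect:largecont}) that for no $I\in\mathcal{K}$ is $c(I)$ contained in a cyclic 5-edge-cut of $G$, with Lemma \ref{lem:4cont3} already covering contraction size at most three unconditionally. Your decomposition by contraction size and your identification of the 5-edge-cut check as the only remaining obstacle are exactly the paper's route.
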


% ====================================================
\section{Discharging process}
\label{sect:discharging}\showlabel{sect:discharging}
% ====================================================

% TODO: T_0 (initial charge), T(final charge)
Let $G'$ be an internally 6-connected triangulation of the projective plane. We start by assigning to each vertex $v \in V(G')$ the value 
$$
    T_0(v) := 10(6 - d(v)),
$$
which we call the \emph{initial charge at $v$}. It follows by Euler's formula (see Lemma \ref{lem:120} below) that 
$$
    \sum_{v\in V(G')} T_0(v) = 60.
$$
Now we redistribute the charge among vertices by applying the \emph{discharging rules} that are shown in Section \ref{sect:rule} in the appendix. There are 169 rules (compared to 33 rules for the proof of the 4CT and that for the doublecross case).

Discharging rules are formally defined as follows.

\begin{dfn}\label{dfn:rule}\showlabel{dfn:rule}
  A \emph{rule} is six-tuple $R = (G(R), \beta_R, \delta_R, r(R), s(R), t(R))$, so that
  \begin{enumerate}
      \renewcommand{\labelenumi}{(\roman{enumi})}
      \item $G(R) = (V(R), E(R))$ is a near-triangulation, and for each $v \in V(R)$, $G(R) - v$ is connected.
      \item $\beta_R \colon V(R) \rightarrow \mathbb{N}$, $\delta_R \colon V(R) \rightarrow \mathbb{N} \cup \{\infty\}$, such that $5 \leq \beta_R(v) \leq \delta_R(v)$ for each $v \in V(R)$.
      \item $r(R)$ is a positive integer.
      \item $s(R), t(R) \in V(R)$ are distinct adjacent vertices.
  \end{enumerate}
  A configuration $K$ \emph{obeys} the rule $R$ if $G(K) = G(R)$ and every vertex $v \in V(K)$ satisfies $\beta_R(v) \leq \gamma_K(v) \leq \delta_R(v)$.
\end{dfn}

A rule is a near-triangulation in which each vertex $v$ is given a range of possible degrees, the interval $[\beta_R(v), \delta_R(v)]$. The value $r(R)$ represents the amount of charge that is sent from the vertex $s(R)$ to its neighbor $t(R)$. We have $r(R) = 1$ or $r(R)=2$ for all our rules. We describe the degree range $[\beta_R, \delta_R]$ by the same convention as in configurations. There are three cases. When $\beta_R(v) = \delta_R(v)$, we describe $\beta_R(v)$ by one of the shapes shown in Figure \ref{fig:shapes}. When $5 = \beta_R(v) < \delta_R(v)$, we describe $\delta_R(v)$ as the vertex shape and set $-$ next to the vertex. When $\beta_R(v) < \delta_R(v) = \infty$, we describe $\beta_R(v)$ as the vertex shape and set $+$ next to the vertex. All our rules have one of these three types of degree ranges. 

The set of 169 rules exhibited in Figure \ref{fig:rules} in the appendix is denoted by $\mathcal{R}$. 
An edge with an arrow represents the direction in which a charge moves. Thus, $s(R), t(R)$ are two vertices that are incident with an edge with an arrow.
The number of arrows in the figure represents $r(R)$.

We calculate the amount of charge sent along an edge $st$ in $G'$ by checking which rules can be used on this edge. For a rule $R\in \mathcal{R}$, $r(R)$ gives the amount of charge to send, $s(R)$ is the vertex that sends the charge, and $t(R)$ is the vertex that receives charge. We apply a rule $R$ when a configuration $K$, which obeys $R$ appears in $G'$. Note that each\footnote{The very first rule (R1) is the only exception that is used only once.} rule $R\in \mathcal{R}$ can be applied at the edge $st$ in two ways, where the triangles in $G(R)$ have the same orientation as in $G'$, and when they have opposite orientation.\footnote{Of course, there is no global orientation on the projective plane; we just consider orientation of triangles locally around the edge $st$.}

\begin{dfn}\label{dfn:amount}\showlabel{dfn:amount}
  For adjacent vertices $u, v \in V(G')$, we define $\phi(u,v)$ as the sum of the values $r(R)$ over all rules $R\in \mathcal{R}$ that are applied with $s(R) = u$, and $t(R) = v$. This is the \emph{charge sent from $u$ to $v$}.
\end{dfn}

\begin{dfn}\label{dfn:finalcharge}\showlabel{dfn:finalcharge}
    For a vertex $u \in V(G')$, we set
    \[
      T(u) := T_0(u) + \sum_{v \sim u} (\phi(v, u) - \phi(u,v)).
    \]
    The obtained value is said to be the \emph{final charge} at $u$.
\end{dfn}

The discharging method uses the following easy observation.

\begin{lem}
\label{lem:120}\showlabel{lem:120}
$\sum_{u \in V(G')} T(u) = 60$.
\end{lem}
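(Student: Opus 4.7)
The plan is to split the identity into two independent pieces: first, show that the \emph{initial} total charge $\sum_v T_0(v)$ already equals $60$; second, verify that the discharging step is \emph{charge-preserving}, so the sum is not altered by the rules. Once both pieces are in place, the lemma follows immediately by linearity.

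For the initial-charge computation, I would use Euler's formula together with the assumption that $G'$ is a triangulation of the projective plane. Since the Euler characteristic of the projective plane is $1$, we have $|V(G')| - |E(G')| + |F(G')| = 1$, and the triangulation hypothesis forces $3|F(G')| = 2|E(G')|$. Eliminating $|F(G')|$ yields $|E(G')| = 3(|V(G')|-1)$, hence by the handshake identity $\sum_{v \in V(G')} d(v) = 2|E(G')| = 6|V(G')|-6$. Plugging this in gives
\[
\sum_{v \in V(G')} T_0(v) \;=\; 10\sum_{v}(6 - d(v)) \;=\; 60|V(G')| - 10\bigl(6|V(G')|-6\bigr) \;=\; 60.
\]

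For the discharging part, the key observation is a standard double-counting: in the double sum $\sum_{u} \sum_{v \sim u}\bigl(\phi(v,u) - \phi(u,v)\bigr)$, every ordered adjacent pair $(x,y)$ contributes the value $\phi(x,y)$ exactly twice, once with a plus sign (when $u=y$, $v=x$, as the incoming charge $\phi(v,u)$ at $y$) and once with a minus sign (when $u=x$, $v=y$, as the outgoing charge $\phi(u,v)$ at $x$). These two contributions cancel, so the entire redistribution sum vanishes. Combining this with the initial-charge calculation gives $\sum_{u \in V(G')} T(u) = \sum_{u \in V(G')} T_0(u) = 60$, as required.

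There is no real obstacle here: the argument is essentially the standard Euler-formula bookkeeping that underlies every discharging proof. The only subtleties to double-check are (i) that $G'$ is indeed a simple triangulation so that $3|F| = 2|E|$ holds without a correction term (guaranteed by the hypotheses stated in the excerpt, since $G'$ is the dual of a polyhedrally embedded cubic graph), and (ii) that the definition of $\phi(u,v)$ as a sum over applied rules does not introduce any non-antisymmetric contribution, which is clear from Definition \ref{dfn:amount} because each rule firing sends charge from exactly one endpoint to the other and is counted symmetrically on both sides of the edge.
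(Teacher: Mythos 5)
Your proof is correct and follows essentially the same route as the paper: the identical Euler-formula computation giving $|E(G')|=3n-3$ and hence $\sum_v T_0(v)=60$, combined with the observation that discharging preserves total charge. The only difference is that you spell out the antisymmetric double-counting explicitly where the paper simply asserts conservation, which is a harmless elaboration rather than a different argument.
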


\begin{proof}
By applying any discharging rule, the total sum of all charges remains the same. Thus, $\sum_{u \in V(G')} T(u) = \sum_{u \in V(G')} 10(6-d(u))$. If $n$ is the number of vertices of $G'$, then Euler's formula for the triangulation $G'$ implies that $\tfrac12 \sum_{u\in V(G')}d(u) = |E(G)| = 3n - 3$, and thus we have:
 \[
 \sum_{u \in V(G')} T(u) = \sum_{u \in V(G')} 10(6-d(u)) = 60n - 10\sum_{u \in V(G')}d(u) =
 60n-20(3n-3) = 60.
 \]
\end{proof}

Lemma \ref{lem:120} implies that there is a vertex $u$ with $T(u) > 0$. Known proofs of the 4CT show that for any vertex $u$ with $T(u)>0$, there is a reducible configuration in the vicinity of $u$. This yields a contradiction to the assumption that we have a minimum counterexample. Indeed we have a similar result on the projective plane, but there is some difference.

% \textbf{COMPUTER: discharging 7 $\leq$ degree $\leq$ 11}
\begin{lem}\label{T(v)>0}\showlabel{T(v)>0}
  Let $G$ be a minimal counterexample. Then there is a vertex $v$ of the dual graph $G'$ with $T(v) >0$, and there must exist a subgraph $K'$ of $G'$ which contains either $v$ or at least one of its neighbors, and is isomorphic to one of the reducible configurations of $\mathcal{K}$ (but $K'$ may not be an induced subgraph of $G'$)   
\end{lem}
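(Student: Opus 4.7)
The first claim is immediate from Lemma \ref{lem:120}: since $\sum_{u\in V(G')}T(u)=60>0$, at least one vertex $v$ satisfies $T(v)>0$. For the second claim I argue by contradiction: fix such a $v$ and suppose that no subgraph of $G'$ containing $v$ or a neighbor of $v$ is isomorphic to a configuration in $\mathcal{K}$. The plan is to show that under this assumption the discharging rules in $\mathcal{R}$ force $T(v)\le 0$, contradicting the choice of $v$.

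The key observation is that only rules $R\in\mathcal{R}$ whose near-triangulation $G(R)$ can be embedded in the neighborhood of $v$ contribute to the quantities $\phi(u,v)-\phi(v,u)$ on edges $uv$ incident with $v$. Since every $G(R)$ has bounded size, the value
\[
   T(v) \;=\; T_0(v)+\sum_{u \sim v}\bigl(\phi(u,v)-\phi(v,u)\bigr)
\]
is determined by the isomorphism type and degree sequence of a bounded-radius neighborhood of $v$ in $G'$. The verification thus reduces to a finite enumeration: iterate over $d(v)$ (vertices of sufficiently high degree are immediate, since $T_0(v)=10(6-d(v))$ becomes so negative that the linearly bounded amount of incoming charge cannot compensate), then over cyclic sequences of neighbor degrees, and then over the local structure extending far enough to cover every relevant rule. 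For each candidate local picture, either a configuration in $\mathcal{K}$ appears as a subgraph containing $v$ or a neighbor of $v$---in which case that picture is discarded and yields the desired $K'$---or $T(v)$ is computed by summing the contributions $\pm\,r(R)$ over every rule $R$ applicable at an edge incident with $v$ in either of its two local orientations, and we verify that $T(v)\le 0$.

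This enumeration is too large to perform by hand and is carried out by a computer program whose pseudocode appears in Section \ref{sect:code}. The main obstacle, and the reason the set $\mathcal{K}$ must contain about $5700$ configurations while $\mathcal{R}$ contains $169$ rules, is that on the projective plane the initial total charge of $60$ is only half of the spherical value $120$ used in the 4CT, while at the same time most planar reducible configurations fail to be reducible in the projective setting; this leaves very little slack for the discharging argument and forces a carefully tuned pairing of $\mathcal{K}$ and $\mathcal{R}$. Finally, the statement explicitly permits $K'$ to be non-induced because the subgraph-isomorphism check used by the verifier does not demand inducedness; the distinction between induced and non-induced occurrences is then resolved downstream via Corollary \ref{Kapper} and Lemma \ref{pairs6}.
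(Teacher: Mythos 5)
Your proposal is correct and follows essentially the same route as the paper: the existence of $v$ with $T(v)>0$ comes from Lemma \ref{lem:120}, and the unavoidability claim is established contrapositively by a computer-verified finite enumeration of bounded local neighborhoods, with the extreme degrees handled by hand (the paper does $d(v)=5,6$ exactly in Section \ref{sect:deg5and6} and $d(v)\ge 12$ via the charge bounds of Lemma \ref{lem:12}, matching your observation that $T_0(v)$ dominates for large degree). Your remarks on why $K'$ need not be induced and how that is deferred to Corollary \ref{Kapper} also agree with the paper.
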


So the difference is that $K'$ may not be an induced subgraph of $G'$. 
%But this issue is handled in Corollary \ref{Kapper}. 
But, if $K'$ is not induced, this is because there are two vertices in $K'$ that are of distance at least five in $K'$. This issue was already handled by Corollary \ref{Kapper}. 
%as case 1 in Lemma \ref{pairs6} in Subsection \ref{sect:dist5}, we can obtain a contractible cycle of length five, which would not exist in a minimal counterexample. Thus we may assume that $K'$ is induced. 

We show Lemma \ref{T(v)>0} by computer check, but when $d(v)=5$ or $d(v)=6$ or $d(v) \geq 12$ (in $G'$), we can show this lemma without the help of computers. For other cases, computer-free proof would be too long.

% ====================================================
\subsection{Final charge at vertices of degree 5 or 6 in $G'$}
\label{sect:deg5and6}
\showlabel{sect:deg5and6}
% ====================================================

We now consider the final charge $T(v)$ at vertices of degree at most six in the dual graph $G'$. We show, without using computer,  that $T(v)=0$ when $d(v) \leq 6$. 
A set of reducible configurations used here for the computer check-free proof is given in Figure \ref{fig:confs_degree56}. The configurations in Figure \ref{fig:confs_degree56} is contained in $\mathcal{K}$.
We denote a configuration in Figure \ref{fig:confs_degree56} by conf(i) $(1 \leq i \leq 24)$ in the natural order, from top to bottom in a row, from left to right in a column.

We only use the rules for vertices of degree $5$ and $6$. 
%by checking the reducibility of configurations. 
They are collected in Figure \ref{fig:rules_degree56}. We denote the rule in Figure \ref{fig:rules_degree56} by rule(i) $(1 \leq i \leq 8)$ in the natural order. Only rule(1) gives value $r(R) = 2$.
The rules of our discharging method are stronger than that of the 4CT since many configurations that are reducible for planar graphs are not reducible for projective planar graphs.

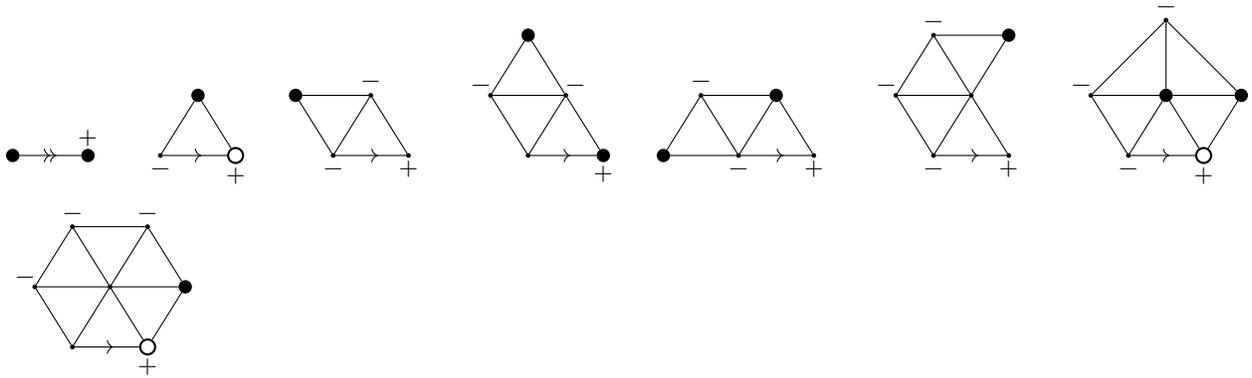
\begin{figure}[htbp]
  \tikzset{deg5/.style={thick, circle, draw, fill=black, inner sep=1.5pt,}}
  \tikzset{deg6/.style={thick, circle, draw, fill=black, inner sep=0pt,}}
  \tikzset{deg7/.style={thick, circle, draw, fill=white, inner sep=2pt,}}
  \tikzset{->-/.style={decoration={
      markings,
      mark=at position .6 with {\arrow{>}}}, postaction={decorate}}}
  % rule1
  \begin{tikzpicture}[baseline=1.5cm]
    \node [deg5] at (0, 0) (a) {};
    \node [deg5] at (1.0, 0) (b) {};
    \node [above = 0.15 cm of b, anchor=center] (b+) {$+$};
    \draw [decoration={markings, mark=at position .5 with {\arrow{>}};, mark=at position .6 with {\arrow{>}}; }, postaction={decorate}] (a) -- (b);
  \end{tikzpicture}
  % rule2
  \begin{tikzpicture} [baseline=1.5cm]
    \node [deg6] at (0, 0) (a) {};
    \node [below = 0.15 cm of a, anchor=center] () {$-$};
    \node [deg7] at (1.0, 0) (b) {};
    \node [below = 0.15 cm of b, anchor=center] () {$+$};
    \node [deg5] at (0.5, 0.8) (c) {};
    \draw [->-] (a) -- (b);
    \foreach \u / \v in {a/c, b/c}
        \draw (\u) -- (\v);
  \end{tikzpicture}
  % rule3
  \begin{tikzpicture} [baseline=1.5cm]
    \node [deg6] at (0.5, 0) (a) {};
    \node [below = 0.15 cm of a, anchor=center] () {$-$};
    \node [deg6] at (1.5, 0) (b) {};
    \node [below = 0.15 cm of b, anchor=center] () {$+$};
    \node [deg6] at (1.0, 0.8) (c) {};
    \node [above = 0.15 cm of c, anchor=center] () {$-$};
    \node [deg5] at (0, 0.8) (d) {};
    \draw [->-] (a) -- (b);
    \foreach \u / \v in {a/c, a/d, b/c, c/d}
        \draw (\u) -- (\v);
  \end{tikzpicture}
  % rule4
  \begin{tikzpicture} [baseline=1.5cm]
    \node [deg6] at (0.5, 0) (a) {};
    \node [deg5] at (1.5, 0) (b) {};
    \node [below = 0.15 cm of b, anchor=center] () {$+$};
    \node [deg6] at (1.0, 0.8) (c) {};
    \node [above right = 0.15 cm of c, anchor=center] () {$-$};
    \node [deg6] at (0, 0.8) (d) {};
    \node [above left = 0.15 cm of d, anchor=center] () {$-$};
    \node [deg5] at (0.5, 1.6) (e) {};
    \draw [->-] (a) -- (b);
    \foreach \u / \v in {a/c, a/d, b/c, c/d, c/e, d/e}
        \draw (\u) -- (\v);
  \end{tikzpicture}
  % rule5
  \begin{tikzpicture} [baseline=1.5cm]
    \node [deg6] at (1.0, 0) (a) {};
    \node [below = 0.15 cm of a, anchor=center] () {$-$};
    \node [deg6] at (2.0, 0) (b) {};
    \node [below = 0.15 cm of b, anchor=center] () {$+$};
    \node [deg5] at (1.5, 0.8) (c) {};
    \node [deg6] at (0.5, 0.8) (d) {};
    \node [above = 0.15 cm of d, anchor=center] () {$-$};
    \node [deg5] at (0, 0) (e) {};
    \draw [->-] (a) -- (b);
    \foreach \u / \v in {a/c, a/d, a/e, b/c, c/d, d/e}
        \draw (\u) -- (\v);
  \end{tikzpicture}
  % rule6
  \begin{tikzpicture} [baseline=1.5cm]
    \node [deg6] at (0.5, 0) (a) {};
    \node [below = 0.15 cm of a, anchor=center] () {$-$};
    \node [deg6] at (1.5, 0) (b) {};
    \node [below = 0.15 cm of b, anchor=center] () {$+$};
    \node [deg6] at (0, 0.8) (c) {};
    \node [above left = 0.15 cm of c, anchor=center] () {$-$};
    \node [deg6] at (1.0, 0.8) (d) {};
    \node [deg6] at (0.5, 1.6) (e) {};
    \node [above = 0.15 cm of e, anchor=center] () {$-$};
    \node [deg5] at (1.5, 1.6) (f) {};

    \draw [->-] (a) -- (b);
    \foreach \u / \v in {a/c, a/d, b/d, c/d, c/e, d/e, d/f, e/f}
        \draw (\u) -- (\v);
  \end{tikzpicture}
  % rule7
  \begin{tikzpicture} [baseline=1.5cm]
    \node [deg6] at (0.5, 0) (a) {};
    \node [below = 0.15 cm of a, anchor=center] () {$-$};
    \node [deg7] at (1.5, 0) (b) {};
    \node [below = 0.15 cm of b, anchor=center] () {$+$};
    \node [deg6] at (0, 0.8) (c) {};
    \node [above left = 0.15 cm of c, anchor=center] () {$-$};
    \node [deg5] at (1.0, 0.8) (d) {};
    \node [deg5] at (2.0, 0.8) (e) {};
    \node [deg6] at (1.0, 1.8) (f) {};
    \node [above = 0.15 cm of f, anchor=center] () {$-$};
    \draw [->-] (a) -- (b);
    \foreach \u / \v in {a/c, a/d, b/d, b/e, c/d, c/f, d/e, d/f, e/f}
        \draw (\u) -- (\v);
  \end{tikzpicture}
  % rule8
  \begin{tikzpicture} [baseline=1.5cm]
    \node [deg6] at (0.5, 0) (a) {};
    \node [deg7] at (1.5, 0) (b) {};
    \node [below = 0.15 cm of b, anchor=center] () {$+$};
    \node [deg6] at (0, 0.8) (c) {};
    \node [above left = 0.15 cm of c, anchor=center] () {$-$};
    \node [deg6] at (1.0, 0.8) (d) {};
    \node [deg5] at (2.0, 0.8) (e) {};
    \node [deg6] at (0.5, 1.6) (f) {};
    \node [above = 0.15 cm of f, anchor=center] () {$-$};
    \node [deg6] at (1.5, 1.6) (g) {};
    \node [above = 0.15 cm of g, anchor=center] () {$-$};
    \draw [->-] (a) -- (b);
    \foreach \u / \v in {a/c, a/d, b/d, b/e, c/d, c/f, d/e, d/f, d/g, e/g, f/g}
        \draw (\u) -- (\v);
  \end{tikzpicture}
  \caption{Rules used for discharging at vertices of degree 5 or 6. \label{fig:rules_degree56}}
\end{figure}
\subfile{tikz/confdeg56}

\begin{lem}
  \label{lem:lemdeg5deg6}
  \showlabel{lem:lemdeg5deg6}
  Let $G' = (V, E)$ be an internally 6-connected triangulation of the projective plane. Let $v \in V$ be a vertex of degree 5 or 6 in $G'$.
  Let $I_k$ $(O_k)$ be the amount of charge for $v$ to receive $($send$)$ by rule($k$) $(1 \leq k \leq 8)$ in Figure $\ref{fig:rules_degree56}$.
  If none of the configurations in Figure $\ref{fig:confs_degree56}$ appears in $G'$, then:
  \begin{enumerate}
      \renewcommand{\labelenumi}{$(\roman{enumi})$}
      \item $I_1 = O_2 + O_3 + O_5$ 
      \item $I_3 = O_4$
      \item $I_4 + I_5 = O_6 + O_7$, and
      \item $I_6 = O_8$.
  \end{enumerate}
\end{lem}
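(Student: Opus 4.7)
The plan is to verify each identity by examining all local configurations around $v$ that can trigger the relevant rules, and to establish a bijective correspondence between incoming and outgoing charge transfers of the specified types. Each rule in Figure \ref{fig:rules_degree56} is a near-triangulation of small radius with prescribed vertex-degree ranges, so an application of a rule at $v$ is equivalent to the appearance of a specific local pattern around $v$. Thus, each of the four identities reduces to a structural statement: for every instance of an ``incoming'' rule in the neighborhood of $v$, there is a matching instance of an ``outgoing'' rule, and vice versa, unless a configuration of Figure \ref{fig:confs_degree56} is realized at $v$.

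I would treat the identities in order. Identity (i) is the delicate one, since $r(R_1) = 2$ while $r(R_2) = r(R_3) = r(R_5) = 1$: each incoming application of rule(1) must be paired with \emph{two} outgoing units among rule(2), rule(3), rule(5). Fix a neighbor $u$ of $v$ for which rule(1) fires with $s(R) = u$, $t(R) = v$. The rule pins down the degrees of $u$, $v$, and the two common neighbors of $u$ and $v$ in $G'$; extending one more step into the triangulation (in both orientations around the edge $uv$) produces a short list of possibilities for the next ring of vertices. For each such extension, I would check directly that either it triggers the outgoing rule(2), rule(3), or rule(5) at an adjacent edge of $v$ of total weight $2$, or it realizes one of the configurations in Figure \ref{fig:confs_degree56}. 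The assumption of the lemma then forces the first alternative, giving the forward direction; the converse is argued symmetrically by starting from an outgoing instance and extending inward toward $v$.

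Identities (ii)–(iv) admit the same methodology but are considerably simpler, because all the rules involved carry weight $1$, so the bijection is a direct edge-by-edge matching. In each case I would enumerate the small set of degree patterns around $v$ compatible with the relevant rules (using $d(v) \in \{5,6\}$ and the rigidity coming from $G'$ being a triangulation) and check that, outside the configurations of Figure \ref{fig:confs_degree56}, every local pattern triggering the left-hand side also triggers the right-hand side and conversely. Equivalently, one defines an injective pairing on the set of rule-applications at $v$ and observes that it is surjective precisely when none of the 24 forbidden configurations appears.

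The main obstacle will be the sheer case analysis. Each rule must be considered in both orientations at the edge where it applies, and for the vertices of degree 5 or 6 there are several local degree sequences around $v$ compatible with the hypotheses. For every such sequence we must check both directions of the bijection, and we must verify that the 24 configurations in Figure \ref{fig:confs_degree56} together cover exactly the local patterns that would otherwise break some identity. I expect the cleanest way to organize the argument is a table, listing for each identity the triggering local patterns and, alongside, either the matching outgoing rule or the specific forbidden configuration from Figure \ref{fig:confs_degree56} that prevents that pattern from arising under our hypothesis.
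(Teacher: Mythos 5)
Your plan is correct and matches the paper's argument in substance: the paper also proves each identity by a local counting argument at $v$, expressing each $I_k$ and $O_k$ as the cardinality of a class of degree-patterns on short fans of consecutive neighbors of $v$ (quadruples, 5-tuples, 6-tuples), and showing that the residual class — the patterns that would break the identity — is exactly the set of patterns realizing one of conf(1)–conf(24), hence empty by hypothesis. The one bookkeeping device worth adopting from the paper is that it counts \emph{oriented} tuples, so each degree-5 neighbor triggering rule(1) contributes two tuples (one per orientation around $v$); this automatically accounts for $r(R_1)=2$ and lets all four identities be read off as set partitions rather than requiring an explicit two-to-one pairing.
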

\begin{proof}
  We first prove (i). We consider a set of quadtuple $(x_1, x_2, x_3, x_4)$ of neighbors of $v$ such that all of them are distinct and, $x_i, x_{i+1} (1 \leq i \leq 3)$ are adjacent. This set is denoted by  $A$. The following formulas hold.
  \begin{eqnarray}
      I_1 &=& |\{(x_1,x_2,x_3,x_4) \in A \mid \deg_{G'}(x_1) = 5 \}| \nonumber \\
      O_2 &=& |\{(x_1,x_2,x_3,x_4) \in A \mid \deg_{G'}(x_1) = 5, \deg_{G'}(x_2) \geq 7 \}| \nonumber \\
      O_3 &=& |\{(x_1,x_2,x_3,x_4) \in A \mid \deg_{G'}(x_1) = 5, \deg_{G'}(x_2) \leq 6, \deg_{G'}(x_3) \geq 6 \}| \nonumber \\
      O_5 &=& |\{(x_1,x_2,x_3,x_4) \in A \mid \deg_{G'}(x_1) = 5, \deg_{G'}(x_2) \leq 6, \deg_{G'}(x_3) = 5, \nonumber \\
      && \deg_{G'}(x_4) \geq 6 \}| \nonumber
  \end{eqnarray}
  conf(1), conf(2), conf(3), conf(4) do not appear in $G'$, so $|\{(x_1,x_2,x_3,x_4) \in A \mid \deg_{G'}(x_1) = 5, \deg_{G'}(x_2) \leq 6, \deg_{G'}(x_3) = 5, \deg_{G'}(x_4) = 5 \}| = 0$. Hence $I_1 = O_2 + O_3 + O_5$ holds.

  We then prove (ii). $I_3 = O_4 = 0$ when $\deg_{G'}(v) = 5$. We consider a set of quadtuple $(y_1, y_2, y_3, y_4)$ such that all of them are distinct, $y_1, y_2, y_3$ are neighbors of $v$, $y_i, y_{i+1} (1 \leq i \leq 2)$ are adjacent and $y_4$ is adjacent $y_1, y_2$ but different from $v$. This set is denoted by $B$.
  \begin{eqnarray}
      I_3 &=& |\{(y_1, y_2, y_3, y_4) \in B \mid \deg_{G'}(y_1) \leq 6, \deg_{G'}(y_2) \leq 6, \deg_{G'}(y_4) = 5 \}| \nonumber \\
      O_4 &=& |\{(y_1, y_2, y_3, y_4) \in B \mid \deg_{G'}(y_1) \leq 6, \deg_{G'}(y_2) \leq 6, \deg_{G'}(y_3) \geq 5, \nonumber \\
      && \deg_{G'}(y_4) = 5 \}| \nonumber
  \end{eqnarray}
  So $I_3 = O_4$ holds.

  We prove (iii). We consider a set of five-tuple $(z_1, z_2, z_3, z_4, z_5)$ such that all of them are distinct, $z_1, z_2, z_3$ are neighbors of $v$, $z_i, z_{i+1} (1 \leq i \leq 2)$ are adjacent, $z_4$ is adjacent $z_1, z_2$ but different from $v$ and $z_5$ is adjacent $z_2, z_4$ but different from $z_1$. This set is denoted by $C$.
  \begin{eqnarray}
      I_4 &=& |\{(z_1, z_2, z_3, z_4, z_5) \in C \mid \nonumber \\
      && \deg_{G'}(z_1) = 6, \deg_{G'}(z_2) \leq 6, \deg_{G'}(z_4) \leq 6, \deg_{G'}(z_5) = 5 \}| \nonumber \\
      I_5 &=& |\{(z_1, z_2, z_3, z_4, z_5) \in C \mid \nonumber \\
      && \deg_{G'}(z_1) = 5, \deg_{G'}(z_2) \leq 6, \deg_{G'}(z_4) \leq 6, \deg_{G'}(z_5) = 5 \}| \nonumber \\
      O_6 &=& |\{(z_1, z_2, z_3, z_4, z_5) \in C \mid \nonumber \\
      && \deg_{G'}(z_1) \leq 6, \deg_{G'}(z_2) = 6, \deg_{G'}(z_3) \geq 6, \deg_{G'}(z_4) \leq 6, \deg_{G'}(z_5) = 5 \}| \nonumber \\
      O_7 &=& |\{(z_1, z_2, z_3, z_4, z_5) \in C \mid \nonumber \\
      && \deg_{G'}(z_1) \leq 6, \deg_{G'}(z_2) = 5, \deg_{G'}(z_3) \geq 7, \deg_{G'}(z_4) \leq 6, \deg_{G'}(z_5) = 5 \}| \nonumber \\
  \end{eqnarray}  
  conf(2), conf(3), conf(8), conf(9), conf(10) do not appear in $G'$, so $|\{(z_1, z_2, z_3, z_4, z_5) \in D \mid \deg_{G'}(z_1) \leq 6, \deg_{G'}(z_2) = 6, \deg_{G'}(z_3) = 5, \deg_{G'}(z_4) \leq 6, \deg_{G'}(z_5) = 5 \}| = 0$.
  conf(1), conf(4), conf(5), conf(6), conf(7) do not appear in $G'$, so $|\{(z_1, z_2, z_3, z_4, z_5) \in D \mid \deg_{G'}(z_1) \leq 6, \deg_{G'}(z_2) = 5, \deg_{G'}(z_3) \leq 6, \deg_{G'}(z_4) \leq 6, \deg_{G'}(z_5) = 5 \}| = 0$.
  Hence $I_4 + I_5 = O_6 + O_7$.
  
  We finally prove (iv). $I_6 = O_8 = 0$ when $\deg_{G'}(v) = 5$. We consider a set of six-tuple $(w_1, w_2, w_3, w_4, w_4, w_6)$ such that all of them are distinct, $w_1, w_2, w_6$ are neighbors of $v$, $w_i(2 \leq i \leq 6)$ and $v$ are neighbors of $w_1$ such that $\deg_{G'}(w_1) = 6$ and $w_i, w_{i+1} (2 \leq i \leq 5)$ are adjacent. This set is denoted by $D$. The following formulas hold:
  \begin{eqnarray}
      I_6 &=& |\{(w_1, w_2, w_3, w_4, w_5, w_6) \in D \mid \nonumber \\
      && \deg_{G'}(w_2) \leq 6, \deg_{G'}(w_3) \leq 6, \deg_{G'}(w_4) \leq 6, \deg_{G'}(w_5) = 5 \}| \nonumber \\
      O_8 &=& |\{(w_1, w_2, w_3, w_4, w_5, w_6) \in D \mid \nonumber \\
      && \deg_{G'}(w_2) \leq 6, \deg_{G'}(w_3) \leq 6, \deg_{G'}(w_4) \leq 6, 
      \deg_{G'}(w_5) = 5 ,\deg_{G'}(w_6) \geq 7\}| \nonumber
  \end{eqnarray}
  conf(2), conf(3), conf(8), conf(11), conf(12), conf(13) do not appear in $G'$, so $|\{(w_1, w_2, w_3, w_4, w_5, w_6) \in D \mid \deg_{G'}(w_2) \leq 6, \deg_{G'}(w_3) \leq 6, \deg_{G'}(w_4) \leq 6, \deg_{G'}(w_5) = 5, \deg_{G'}(w_6) \leq 6\}| = 0$. Hence we have that $I_6 = O_8$.
\end{proof}

\begin{lem}
  \label{lem:deg5deg6}
  \showlabel{lem:deg5deg6}
  Let $G' = (V, E)$ be an internally 6-connected triangulation. Let $v \in V$ be a vertex of degree 5 or 6.
  Suppose that configurations that are shown in Figure $\ref{fig:confs_degree56}$ do not appear in $G'$. Then,  $T(v) = 0$.
\end{lem}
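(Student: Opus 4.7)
The strategy is to unfold the definition of the final charge and then use the four balance identities (i)--(iv) of Lemma~\ref{lem:lemdeg5deg6} to collapse the algebraic sum to zero. Writing
\[
T(v) \;=\; T_0(v) \;+\; \sum_{k=1}^{8}(I_k - O_k),
\]
I would first argue that only the eight rules in Figure~\ref{fig:rules_degree56} can possibly contribute to the sum. Every other rule $R \in \mathcal R$ requires a pivot vertex of degree at least $7$ at the position that $v$ would have to occupy, so when $d(v) \in \{5,6\}$ no such rule fires with $v$ as $s(R)$ or $t(R)$. The quantities $I_k,O_k$ are understood to include the weight $r(R)$, so $I_1$ already carries the factor $2$ attached to rule~$1$ and each of $I_2,\ldots,I_8$ and $O_1,\ldots,O_8$ carries a weight of $1$.

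Next I would split on the value of $d(v)$. When $d(v) = 5$ we have $T_0(v) = 10$, and those rules in Figure~\ref{fig:rules_degree56} whose diagram forces degree $6$ at the position of $v$ contribute nothing, giving $I_3 = I_6 = O_4 = O_8 = 0$ (the same vanishing that was used inside the proof of Lemma~\ref{lem:lemdeg5deg6}). When $d(v) = 6$ the initial charge is $T_0(v) = 0$, and analogously the rules requiring degree $5$ at $v$'s position contribute nothing. In each subcase I would list the surviving terms of $\sum_{k=1}^8(I_k - O_k)$ and rewrite the list as a single linear combination of identities (i)--(iv): identity (i) pairs the weight-$2$ inflow of rule~$1$ against the three weight-$1$ outflows of rules $2$, $3$, $5$; identity (ii) matches rule~$3$ against rule~$4$; identity (iii) matches rules~$4$ and $5$ against rules~$6$ and $7$; and identity (iv) matches rule~$6$ against rule~$8$. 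Adding them with coefficient~$1$ each, every surviving $I_k$ and $O_k$ cancels, yielding $\sum(I_k - O_k) = -T_0(v)$ and hence $T(v) = 0$.

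The main obstacle is bookkeeping rather than conceptual content. One must verify that (a) rule~$1$ is indeed applied only once per matching configuration so that its weight-$2$ contribution interacts correctly with the weight-$1$ outflows in identity (i); (b) for each rule $R$ with $k \geq 2$, both orientations of the near-triangulation $G(R)$ at an edge incident with $v$ have been uniformly counted into $I_k$ or $O_k$; and (c) no firing of a rule in the vicinity of $v$ has escaped the classification into the four balance families. These checks rely exactly on the assumption that none of the configurations in Figure~\ref{fig:confs_degree56} appears in $G'$, the same hypothesis that powers Lemma~\ref{lem:lemdeg5deg6}; once they are discharged, the lemma follows directly from the four identities by addition.
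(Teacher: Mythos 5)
Your proposal follows the same route as the paper's proof: expand $T(v)$ into $T_0(v)$ plus the in/out contributions of the eight degree-$5/6$ rules, split on $d(v)\in\{5,6\}$, and cancel using identities (i)--(iv) of Lemma~\ref{lem:lemdeg5deg6}. One correction to the bookkeeping in the degree-$5$ case: the term $O_1$ is not cancelled by any of the four identities (they involve only $I_1,\dots,I_6$ and $O_2,\dots,O_8$), so the claim that ``every surviving $I_k$ and $O_k$ cancels'' cannot literally yield $-T_0(v)$; instead one must evaluate $O_1$ directly as $2\cdot 5=10=T_0(v)$, since rule~(1) makes a degree-$5$ vertex send charge $2$ to each of its five neighbors, and it is exactly this term that absorbs the initial charge. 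With that one term made explicit, your argument coincides with the paper's.
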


\begin{proof}
  When $\deg_{G'}(v) = 5$,
  \[
    T(v) = 10 + I_1 + I_4 - O_1 - O_2 - O_3 - O_5 - O_6 - O_7
  \]
  since other $I_k, O_k$ do not affect charge of $v$. $O_1 = 10$ since $v$ has five neighbors. By Lemma  \ref{lem:lemdeg5deg6} (i), (iii) and $I_5 = 0$, $T(v) = 0$.
  When $\deg_{G'}(v) = 6$,
  \[
    T(v) = I_1 + I_3 + I_4 + I_5 + I_6 - O_2 - O_3 - O_4 - O_5 - O_6 - O_7 - O_8, 
  \]
  since other $I_k, O_k$ do not affect charge of $v$. 
  By Lemma  \ref{lem:lemdeg5deg6} (i), (ii), (iii) and (iv), $T(v) = 0$.
\end{proof}

% \begin{lem}
% \label{lem:deg6}\showlabel{lem:deg6}
% Suppose that $v$ is a vertex of degree $6$ in $G$. Then the final charge of $v$ is equal to~$0$. The vertex can send charge $5$ or $6$ to each neighbor whose degree is not $5$. But they are in....
% \end{lem}

% ================================================
\subsection{Maximum charge that a vertex can send}
\label{sect:maxcharge}
\showlabel{sect:maxcharge}
% ================================================

Let $n$ be a positive integer. We want to figure out the case that a vertex sends charge $n$ to a vertex of degree at least $7$. We calculate it by combining rules of $\mathcal{R}$ and then by checking that none of our reducible configurations of $\mathcal{K}$ is a subgraph. We use computer-check to calculate it. For this purpose, we provide a precise definition and a pseudo-code in Section \ref{subsect:code-disch} in the appendix\footnote{In Section \ref{subsect:code-disch} we use the same notation and terms as rules to explain the case when a vertex sends charge $n$ (e.g., a configuration \emph{obeys} the case).}. 
We now show the following.

% \textbf{COMPUTER: charge send}
\begin{lem}\label{lem:vsends}\showlabel{lem:vsends}
Assume that none of our reducible configurations in the set $\mathcal{K}$ is a subgraph in $G'$. Then, any vertex $v \in V(G')$ can send charge at most $6$ to any one of its neighbors.  
%whose degree is at least $7$. 
If it sends charge $6$ ($5$ respectively), then we have one of the cases shown in Figure \ref{fig:proj6} (Figure \ref{fig:proj5}) in the appendix. 
If a vertex of degree $5$ sends charge $4$, then we have one of the cases in Figure \ref{fig:proj4_deg5} in the appendix.
\end{lem}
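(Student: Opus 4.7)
The plan is to enumerate, by computer, every way in which multiple rules of $\mathcal{R}$ can simultaneously fire on a single directed edge $v \to w$ of $G'$, and for each such combination either bound the total charge by $6$ or exhibit a subgraph forced to be an element of $\mathcal{K}$. Given a multiset $\mathcal{S}$ of rules with $s(R) = v$ and $t(R) = w$ for every $R \in \mathcal{S}$, the amount of charge transferred is $\sum_{R \in \mathcal{S}} r(R)$ by Definition \ref{dfn:amount}. For $\mathcal{S}$ to actually be realized in $G'$, the near-triangulations $G(R)$ must be glued around the edge $vw$ in a consistent way (taking into account the two possible orientations of each rule, as noted after Definition \ref{dfn:finalcharge}), so one forms the combined near-triangulation $K^{\ast} = K^{\ast}(\mathcal{S})$ and intersects the intervals $[\beta_R, \delta_R]$ vertex by vertex to obtain joint degree constraints $[\beta^{\ast}, \delta^{\ast}]$.

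For each candidate $(K^{\ast}, \beta^{\ast}, \delta^{\ast})$, the search performs three checks. First, the joint degree intervals must be nonempty and $K^{\ast}$ must embed around $vw$ with coherently oriented triangular faces on each side. Second, if any island $I(K)$ with $K \in \mathcal{K}$ appears as a subgraph of $K^{\ast}$ compatible with $[\beta^{\ast}, \delta^{\ast}]$, then by the hypothesis of the lemma this case cannot occur in $G'$ and $\mathcal{S}$ is discarded; Claim \ref{clm:all-imply} is invoked here to cover the boundary vertices of $K^{\ast}$ whose degree is only bounded below. Third, if $\mathcal{S}$ survives both previous checks, record the sum $\sum_{R \in \mathcal{S}} r(R)$. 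The search terminates because each individual rule's near-triangulation has bounded radius around $s(R)t(R)$, so once $|\mathcal{S}|$ grows past a small threshold the glued structure inevitably contains a reducible configuration of $\mathcal{K}$. Reading off the surviving subsets then yields both the upper bound of $6$ and the explicit list of extremal configurations depicted in Figures \ref{fig:proj6}, \ref{fig:proj5}, and \ref{fig:proj4_deg5}.

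The main obstacle is purely combinatorial: with $169$ rules, each occurring in two orientations, the naive search space is astronomical, and every candidate must be tested against the roughly $5700$ configurations in $\mathcal{K}$. Aggressive pruning is essential — abandoning a partial combination the moment its intersected degree intervals become inconsistent, or as soon as a subgraph isomorphic to some $I(K)$ with $K \in \mathcal{K}$ is detected — together with a canonical form for the glued near-triangulation in order to factor out symmetries around the edge $vw$. A fully computer-free proof appears out of reach, for essentially the same reason that the Four Color Theorem itself is not known to admit one; once the computer has produced the complete list of surviving extremal combinations, the statement of the lemma amounts to reading off those combinations whose charge equals $6$, $5$, or (when $d(v) = 5$) $4$. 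The detailed implementation is the one presented in the pseudo-code of Section \ref{subsect:code-disch}.
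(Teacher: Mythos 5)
Your proposal matches the paper's approach: the paper likewise proves Lemma \ref{lem:vsends} by a computer search that combines the rules of $\mathcal{R}$ acting on a single edge, checks that no reducible configuration of $\mathcal{K}$ appears as a subgraph of the glued structure, and reads off the maximum charge and the extremal cases, with the implementation given as Algorithm \ref{alg:enum_send} in Section \ref{subsect:code-disch}. Nothing essential is missing.
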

We prove Lemma \ref{lem:vsends} by executing Algorithm \ref{alg:enum_send} which is described in the appendix.

We denote the case in row $i$ and column $j$ in Figure \ref{fig:proj6} by send6($i$, $j$). We use the same notation to denote the case in Figure \ref{fig:proj5} by send5($i$, $j$) and  Figure \ref{fig:proj4_deg5} by send4($i$, $j$), respectively.  

Using Lemma \ref{lem:vsends} we prove the following.

\begin{lem}\label{lem:edgesends}\showlabel{lem:edgesends}
Assume that none of our reducible configurations in the set $\mathcal{K}$ is a subgraph in $G'$. Let $vuw$ be a triangle face in $G'$, and degree of $w$ be at least 7. Then, $\phi(u,w) + \phi(v,w) \leq 10$.  If the equality holds, $\phi(u,w) = 6$ and $\phi(v,w) = 4$ (or the symmetric case with $u,v$ interchanged) or $\phi(u,w) = \phi(v,w) = 5$. Figure~\ref{fig:6+4=10} shows all cases when the first of these possibilities happens.
\end{lem}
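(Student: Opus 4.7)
The plan is to reduce the statement to a finite case analysis driven by Lemma \ref{lem:vsends}. By that lemma, $\phi(u,w)\le 6$ and $\phi(v,w)\le 6$, so the sum is at most $12$, and it suffices to rule out the cases with total $11$ or $12$ and to classify which $(6,4)$-pairings (up to the symmetric $(4,6)$) actually realize equality. Concretely, the pairs $(\phi(u,w),\phi(v,w))$ that we must exclude are $(6,6)$, $(6,5)$, and $(5,6)$, while the pairs $(6,4), (4,6), (5,5)$ must be examined to confirm that they exhaust the equality case.

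First I would fix the triangle $uvw$ and the vertex $w$ of degree $\ge 7$. For every case \texttt{send6}$(i,j)$ in Figure \ref{fig:proj6} I would identify the pair of vertices corresponding to the sender/receiver and read off the ``mandatory'' local picture around $u$, $w$, and the common triangle partner. Doing the same for every \texttt{send5}$(i,j)$ and \texttt{send4}$(i,j)$, I would then glue two such pictures along the triangle $uvw$: the two embeddings must agree on $u$, $v$, $w$, and on the two triangles of $G'$ that share the edges $uw$ and $vw$ (which determines the degrees and positions of several further neighbors). For each glued pair I check two things: (i) the prescribed degree constraints from the two cases are mutually consistent, and (ii) the resulting near-triangulation does not contain any configuration of $\mathcal{K}$ as a subgraph (up to the distance-$5$ proviso already handled in Corollary \ref{Kapper}). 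Since $\mathcal{K}$ is the unavoidable set we are assuming does not appear, whenever (i) holds but (ii) fails, we obtain a contradiction and rule out that pair.

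Running this enumeration over all pairs from $\{\texttt{send6}\}\times\{\texttt{send6},\texttt{send5}\}$ eliminates every combination with total charge $\ge 11$, proving the inequality $\phi(u,w)+\phi(v,w)\le 10$. For the equality case, the same procedure applied to $\{\texttt{send6}\}\times\{\texttt{send4}\}$ leaves exactly the configurations depicted in Figure \ref{fig:6+4=10}, while the pairing $\{\texttt{send5}\}\times\{\texttt{send5}\}$ survives in the residual $(5,5)$ possibility (which we are not asked to enumerate explicitly). Thus the surviving equality cases are $(6,4)$, $(4,6)$ as in Figure \ref{fig:6+4=10}, together with $(5,5)$, as claimed.

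The main obstacle is the combinatorial bookkeeping for the case analysis. The individual merges of two local pictures along a triangle are routine, but the number of pairs is large (comparable to $|\text{send6}|\cdot(|\text{send6}|+|\text{send5}|+|\text{send4}|)$), and the check that the glued picture embeds a member of $\mathcal{K}$ has to be delegated to the same computer subroutine used to prove Lemma \ref{lem:vsends}. The pseudocode in Section \ref{subsect:code-disch} is readily adapted: instead of enumerating single-vertex senders, one enumerates ordered pairs $(u,v)$ sharing a triangle with $w$ and iterates the reducibility test on the union. The rest of the argument is bookkeeping to verify that the only surviving realizations of equality are those in Figure \ref{fig:6+4=10} and the symmetric $(5,5)$ case.
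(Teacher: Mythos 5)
Your proposal is correct and follows essentially the same strategy as the paper: reduce to the case classification of Lemma \ref{lem:vsends} and eliminate the combinations summing to $11$ or $12$ by checking that the merged local pictures around $u$ and $v$ either are inconsistent or force a reducible configuration of $\mathcal{K}$. The paper carries this out by hand rather than by exhaustive enumeration, using the observation that every send6 case forces $d(v)=5$ (which rules out $(6,6)$ outright) followed by a short case analysis on the degree of the second common neighbor $x$ of $v$ and $w$ to conclude $\phi(v,w)\le 4$.
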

\begin{proof}
    By Lemma \ref{lem:vsends}, we need to show $\phi(v,w) \leq 4$ when $\phi(u,w) = 6$. There are three cases in which a vertex sends charge $6$ by Lemma \ref{lem:vsends}. In any case, $d(u)=6$ or $7$ and $d(v) = 5$. A vertex of degree $5$ does not send charge $6$, so $\phi(v,w) \leq 5$. Let $x$ be a vertex that is adjacent to both $v$ and $w$, and that is different from $u$. 

    % case 6+4=10
    The degree of $x$ can be $5$ only when $u$ sends charge $6$ in send6(1, 2) since conf(1) appears in all other cases. % conf appears
    When send6(1, 2) happens and the degree of $x$ is $5$, $\phi(v,w)$ is at most 4 by Lemma \ref{lem:vsends}.

    When the degree of $x$ is $6$, $\phi(v, w)$ is exactly $4$ when $u$ sends charge $6$ in send6(1, 1) and in send6(1, 2), but $\phi(v, w)$ is at most $4$ by Lemma \ref{lem:vsends}.

    When the degree of $x$ is at least $7$, $\phi(v, w)$ is at most 3 by Lemma  \ref{lem:vsends}. 

    % case (5+5=10)
    % When $\phi(v,w) = 5$, $d(x) = 5$ since $d(u) = 6,7$   If $d(x) = 5$, it must be send6(1, 2), since otherwise % 5555 appears.
    % Suppose $\phi(v,w) = 5$. It must be send5(1, 2) since $d(u)=6$ and $d(x)=5$, but degree of other vertices are different so $\phi(v,w) \leq 4$.
\end{proof}

\tikzset{deg5/.style={thick, circle, draw, fill=black, inner sep=1.5pt,}}
\tikzset{deg6/.style={thick, circle, draw, fill=black, inner sep=0pt,}}
\tikzset{deg7/.style={thick, circle, draw, fill=white, inner sep=2pt,}}
\tikzset{deg8/.style={thick, rectangle, draw, fill=white, inner sep=2pt,}}
\tikzset{deg9/.style={thick, regular polygon, regular polygon sides=3, rotate=180, draw, fill=white, inner sep=1pt,}}
\tikzset{deg10/.style={thick, regular polygon, draw, fill=white, inner sep=2pt,}}
\tikzset{->-/.style={decoration={
    markings,
    mark=at position .6 with {\arrow{>}}}, postaction={decorate}}}
\tikzset{->>-/.style={decoration={
    markings, 
    mark=at position .5 with {\arrow{>}};, 
    mark=at position .6 with {\arrow{>}};}, postaction={decorate}}}

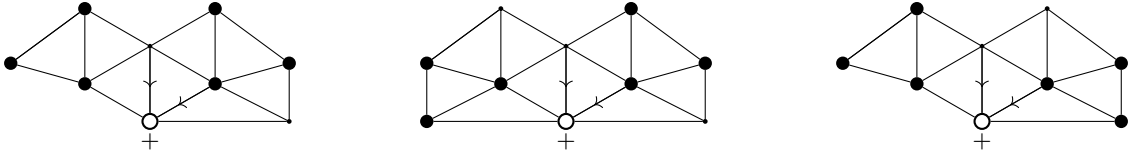
\begin{figure}[htbp]
\begin{minipage}{0.33\linewidth}
\centering
\captionsetup{width=.95\linewidth}
\begin{tikzpicture} [baseline=1.5cm]
    \node [deg6] at (2.151, 1.0) (v0) {};
    \node [deg7] at (2.151, 0.0) (v1) {};
    \node [below = 0.15 cm of v1, anchor=center] (v1+) { $+$ };
    \node [deg5] at (1.285, 0.5) (v2) {};
    \node [deg5] at (1.285, 1.5) (v3) {};
    \node [deg5] at (3.017, 0.5) (v4) {};
    \node [deg5] at (3.017, 1.5) (v5) {};
    \node [deg5] at (0.3, 0.774) (v6) {};
    \node [deg5] at (4.002, 0.774) (v7) {};
    \node [deg6] at (4.002, 0.0) (v8) {};
    \draw [->-] (v0) -- (v1);
    \draw [->-] (v4) -- (v1);
    \foreach \u / \v in {v0/v1, v0/v2, v0/v3, v0/v4, v0/v5, v1/v2, v1/v4, v2/v3, v4/v5, v3/v6, v2/v6, v3/v6, v4/v7, v5/v7, v1/v8, v4/v8, v7/v8}
        \draw (\u) -- (\v);
\end{tikzpicture}
\end{minipage}
\begin{minipage}{0.33\linewidth}
\centering
\captionsetup{width=.95\linewidth}
\begin{tikzpicture} [baseline=1.5cm]
    \node [deg6] at (2.151, 1.0) (v0) {};
    \node [deg7] at (2.151, 0.0) (v1) {};
    \node [below = 0.15 cm of v1, anchor=center] (v1+) { $+$ };
    \node [deg5] at (1.285, 0.5) (v2) {};
    \node [deg6] at (1.285, 1.5) (v3) {};
    \node [deg5] at (3.017, 0.5) (v4) {};
    \node [deg5] at (3.017, 1.5) (v5) {};
    \node [deg5] at (0.3, 0.774) (v6) {};
    \node [deg5] at (4.002, 0.774) (v7) {};
    \node [deg6] at (4.002, 0.0) (v8) {};
    \node [deg5] at (0.3, 0) (v9) {};
    \draw [->-] (v0) -- (v1);
    \draw [->-] (v4) -- (v1);
    \foreach \u / \v in {v0/v1, v0/v2, v0/v3, v0/v4, v0/v5, v1/v2, v1/v4, v2/v3, v4/v5, v3/v6, v2/v6, v3/v6, v4/v7, v5/v7, v1/v8, v4/v8, v7/v8, v1/v9, v2/v9, v6/v9}
        \draw (\u) -- (\v);
\end{tikzpicture}
\end{minipage}
\begin{minipage}{0.33\linewidth}
\centering
\captionsetup{width=.95\linewidth}
\begin{tikzpicture} [baseline=1.5cm]
    \node [deg6] at (2.151, 1.0) (v0) {};
    \node [deg7] at (2.151, 0.0) (v1) {};
    \node [below = 0.15 cm of v1, anchor=center] (v1+) { $+$ };
    \node [deg5] at (1.285, 0.5) (v2) {};
    \node [deg5] at (1.285, 1.5) (v3) {};
    \node [deg5] at (3.017, 0.5) (v4) {};
    \node [deg6] at (3.017, 1.5) (v5) {};
    \node [deg5] at (0.3, 0.774) (v6) {};
    \node [deg5] at (4.002, 0.774) (v7) {};
    \node [deg5] at (4.002, 0.0) (v8) {};
    \draw [->-] (v0) -- (v1);
    \draw [->-] (v4) -- (v1);
    \foreach \u / \v in {v0/v1, v0/v2, v0/v3, v0/v4, v0/v5, v1/v2, v1/v4, v2/v3, v4/v5, v3/v6, v2/v6, v3/v6, v4/v7, v5/v7, v1/v8, v4/v8, v7/v8}
        \draw (\u) -- (\v);
\end{tikzpicture}
\end{minipage}
\caption{When $w$ is the vertex with two arrows and $u$ is an arrow starting vertex of degree 6 and $v$ is an arrow starting vertex of degree 5, $\phi(u,w) = 6, \phi(v,w) = 4$.}
\label{fig:6+4=10}
\end{figure}

% ====================================================
%\subsection{Final charge of vertices of degree $\geq 12$}
%\label{sect:deg11}
%\showlabel{sect:deg11}
% ====================================================

Using these lemmas, we obtain the following result.

\begin{lem}\label{lem:12}\showlabel{lem:12}
Any vertex $v$ of degree at least 12 has final charge at most\/~$0$.
\end{lem}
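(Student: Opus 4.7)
The plan is to combine the trivial bound on $T_0(v)$ with Lemma~\ref{lem:edgesends} applied around the link of $v$. Recall that the initial charge is $T_0(v) = 10(6-d(v))$, so if $d(v) \geq 12$, then $T_0(v) \leq -60$. Since $\phi(v,u) \geq 0$ for every neighbor $u$, to establish $T(v) \leq 0$ it suffices to prove
\[
    \sum_{u \sim v} \phi(u,v) \;\leq\; 10(d(v) - 6).
\]

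The key observation is that since $G'$ is a triangulation, the neighbors of $v$ form a cyclic sequence $u_1, u_2, \ldots, u_{d(v)}$ (indices mod $d(v)$), and each consecutive pair $u_i, u_{i+1}$ forms a triangle face with $v$. Because $d(v) \geq 12 \geq 7$, Lemma~\ref{lem:edgesends} applies to each triangle $v u_i u_{i+1}$, giving
\[
    \phi(u_i, v) + \phi(u_{i+1}, v) \;\leq\; 10.
\]
Summing these inequalities over all $d(v)$ consecutive pairs around the cyclic link of $v$, each term $\phi(u_i, v)$ is counted exactly twice (once in the pair with $u_{i-1}$, once in the pair with $u_{i+1}$), so
\[
    2 \sum_{i=1}^{d(v)} \phi(u_i, v) \;\leq\; 10\, d(v),
    \qquad\text{hence}\qquad
    \sum_{u \sim v} \phi(u,v) \;\leq\; 5\, d(v).
\]

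Combining this with $\phi(v, u) \geq 0$, we obtain
\[
    T(v) \;\leq\; T_0(v) + \sum_{u \sim v} \phi(u,v) \;\leq\; 10(6 - d(v)) + 5\, d(v) \;=\; 60 - 5\, d(v),
\]
and the right-hand side is at most $0$ precisely when $d(v) \geq 12$. There is essentially no obstacle here beyond invoking the correct lemma; the only thing to verify is that Lemma~\ref{lem:edgesends} is applicable to every face around $v$, which follows since the recipient vertex (namely $v$ itself) has degree $\geq 7$, and that the link of $v$ is indeed a cycle, which is immediate from $G'$ being an internally $6$-connected (in particular, $3$-connected) triangulation.
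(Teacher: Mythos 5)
Your proof is correct and follows essentially the same route as the paper: the paper's one-line argument also invokes Lemma~\ref{lem:edgesends} to conclude that the average charge sent to $v$ across its incident edges is at most $5$, which is exactly what your double-counting over the triangular faces around $v$ establishes. You have merely spelled out the averaging step and the applicability of the lemma in more detail.
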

\begin{proof}
    By Lemma \ref{lem:edgesends}, the average charge the vertices around $v$ send to $v$ is at most $5$. This implies that $T(v)\le0$.
\end{proof}

% \textbf{COMPUTER: charge sent}
\begin{lem}\label{lem:onesidesend}\showlabel{lem:onesidesend}
Let $H$ be an internally 6-connected triangulation in the plane (or the projective plane). Assume that none of our reducible configurations in the set $\mathcal{K}$ is a subgraph of $H$.
Let $vuw$ be a triangle face in $H$ and assume that degree of $w$ is at least\/~$9$.
\begin{itemize}
    \item If the degree of $u$ is $5$, then $\phi(u, v) \leq 4$. If the equality holds, we have one of the first three cases shown in Figure \ref{fig:proj4_deg5} in the appendix.
    \item If the degree of $u$ is $6$, then $\phi(u, v) \leq 3$. If the equality holds, we have a situation shown in Figure~\ref{fig:proj3_deg6_oneside}.
    \item If the degree of $u$ is $7$, then $\phi(u, v) \leq 2$.
    \item If the degree of $u$ is at least $8$, then $\phi(u, v) = 0$.
\end{itemize}
\end{lem}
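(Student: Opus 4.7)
My plan is to reduce the lemma to the classification already established in Lemma~\ref{lem:vsends}, strengthened by the additional hypothesis that the vertex $w$ opposite to the edge $uv$ in the face $vuw$ has degree at least~$9$. The overall structure will be a case analysis on $\deg(u)$, with each case following from a filtered version of the enumeration behind Lemma~\ref{lem:vsends}.

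First, I would observe that each discharging rule $R\in\mathcal{R}$ contributing to $\phi(u,v)$ (that is, with $s(R)=u$ and $t(R)=v$) specifies a near-triangulation $G(R)$ in which the edge $st$ is incident with two triangular faces. The third vertex of one of these faces plays the role of $w$, and the rule prescribes a degree range $[\beta_R(w),\delta_R(w)]$ at that position. Hence the rule may be applied in the situation of the lemma only if $\delta_R(w)\ge 9$, and likewise for any combination of rules whose near-triangulations share the edge $uv$. This single observation eliminates most of the high-charge rules outright, since the rules in $\mathcal{R}$ delivering large $r(R)$ typically prescribe specific small degrees at every nearby position.

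To make this quantitative I would rerun Algorithm~\ref{alg:enum_send}, the same enumeration used to prove Lemma~\ref{lem:vsends}, with the extra constraint $\deg(w)\ge 9$ imposed at the $w$-position of the edge $uv$. For each fixed value of $\deg(u)\in\{5,6,7\}$ this enumerates every possible combination of rules in $\mathcal{R}$ that realizes a target total charge $c$ along the edge $uv$; exactly as in Lemma~\ref{lem:vsends}, any enumerated configuration that contains a member of $\mathcal{K}$ is discarded by the standing hypothesis. Reading off the maximum surviving $c$ in each degree class yields the stated bounds, and the equality cases are precisely the configurations that survive the filter: the first three panels of Figure~\ref{fig:proj4_deg5} when $\deg(u)=5$, the single configuration of Figure~\ref{fig:proj3_deg6_oneside} when $\deg(u)=6$, and an analogous list realizing $\phi(u,v)\le 2$ when $\deg(u)=7$. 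When $\deg(u)\ge 8$, no rule of $\mathcal{R}$ with $s(R)$ of such degree has a $w$-position whose range extends to $9$ once one also avoids all configurations in $\mathcal{K}$, so the total contributed charge is $\phi(u,v)=0$.

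The main obstacle is the sheer volume of case analysis for $\deg(u)\in\{5,6\}$, which is handled by computer verification; but since the machinery is precisely that of Lemma~\ref{lem:vsends} with one additional constraint inserted at the $w$-position, no new algorithmic ingredients are required. The only genuinely delicate step is confirming that the surviving configurations coincide with those listed in the statement, which amounts to checking that the enumeration produces no unlisted equality case and that each listed case really is attainable by some legal combination of rules.
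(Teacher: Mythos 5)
Your proposal matches the paper's own treatment: the paper verifies Lemma~\ref{lem:onesidesend} precisely by executing Algorithm~\ref{alg:enum_send} (the same enumeration underlying Lemma~\ref{lem:vsends}) with the hypothesis $\deg(w)\ge 9$ imposed, discarding any case containing a configuration from $\mathcal{K}$, and reading off the maximum charge and the surviving equality cases. Your reduction to that filtered enumeration is exactly the intended argument.
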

We check Lemma \ref{lem:onesidesend} by executing an Algorithm \ref{alg:enum_send} in the appendix and check the maximum amount of charge that a vertex sends when the hypothesis of Lemma \ref{lem:onesidesend} satisfies. 

\tikzset{deg5/.style={thick, circle, draw, fill=black, inner sep=1.5pt,}}
\tikzset{deg6/.style={thick, circle, draw, fill=black, inner sep=0pt,}}
\tikzset{deg7/.style={thick, circle, draw, fill=white, inner sep=2pt,}}
\tikzset{deg8/.style={thick, rectangle, draw, fill=white, inner sep=2pt,}}
\tikzset{deg9/.style={thick, regular polygon, regular polygon sides=3, rotate=180, draw, fill=white, inner sep=1pt,}}
\tikzset{deg10/.style={thick, regular polygon, draw, fill=white, inner sep=2pt,}}
\tikzset{->-/.style={decoration={
    markings,
    mark=at position .6 with {\arrow{>}}}, postaction={decorate}}}
\tikzset{->>-/.style={decoration={
    markings, 
    mark=at position .5 with {\arrow{>}};, 
    mark=at position .6 with {\arrow{>}};}, postaction={decorate}}}

\begin{figure}[htbp]
\begin{tabular}{ccccc}
\begin{minipage}[t]{0.2\hsize}
\centering
% (page, row, col) = (0, 4, 0)
\begin{tikzpicture} []
    \node [deg6] at (0.8, 0.3) (v0) {};
    \node [deg7] at (1.8, 0.3) (v1) {};
    \node [above = 0.15 cm of v1, anchor=center] (v1+) { $+$ };
    \node [deg6] at (1.3, 1.166) (v2) {};
    \node [deg6] at (0.3, 1.166) (v3) {};
    \node [deg5] at (2.3, 1.166) (v4) {};
    \node [deg5] at (0.8, 2.032) (v5) {};
    \node [deg5] at (1.8, 2.032) (v6) {};
    \draw [->-] (v0) -- (v1);
    \foreach \u / \v in {v0/v1, v0/v2, v0/v3, v1/v2, v1/v4, v2/v3, v2/v4, v2/v5, v2/v6, v3/v5, v4/v6, v5/v6}
        \draw (\u) -- (\v);
\end{tikzpicture}
\end{minipage}
&
\begin{minipage}[t]{0.2\hsize}
\centering
% (page, row, col) = (1, 1, 0)
\begin{tikzpicture} []
    \node [deg6] at (0.8, 0.3) (v0) {};
    \node [deg7] at (1.8, 0.3) (v1) {};
    \node [above = 0.15 cm of v1, anchor=center] (v1+) { $+$ };
    \node [deg5] at (1.3, 1.166) (v2) {};
    \node [deg5] at (0.3, 1.166) (v3) {};
    \node [deg5] at (1.126, 2.151) (v4) {};
    \draw [->-] (v0) -- (v1);
    \foreach \u / \v in {v0/v1, v0/v2, v0/v3, v1/v2, v2/v3, v2/v4, v3/v4}
        \draw (\u) -- (\v);
\end{tikzpicture}
\end{minipage}
&
\begin{minipage}[t]{0.2\hsize}
\centering
% (page, row, col) = (1, 1, 3)
\begin{tikzpicture} []
    \node [deg6] at (0.8, 0.3) (v0) {};
    \node [deg7] at (1.8, 0.3) (v1) {};
    \node [above = 0.15 cm of v1, anchor=center] (v1+) { $+$ };
    \node [deg5] at (1.3, 1.166) (v2) {};
    \node [deg5] at (0.3, 1.166) (v3) {};
    \node [deg5] at (2.24, 1.508) (v4) {};
    \node [deg6] at (1.126, 2.151) (v5) {};
    \draw [->-] (v0) -- (v1);
    \foreach \u / \v in {v0/v1, v0/v2, v0/v3, v1/v2, v1/v4, v2/v3, v2/v4, v2/v5, v3/v5, v4/v5}
        \draw (\u) -- (\v);
\end{tikzpicture}
\end{minipage}
&
\begin{minipage}[t]{0.2\hsize}
\centering
% (page, row, col) = (1, 2, 1)
\begin{tikzpicture} []
    \node [deg6] at (0.8, 0.3) (v0) {};
    \node [deg7] at (1.8, 0.3) (v1) {};
    \node [above = 0.15 cm of v1, anchor=center] (v1+) { $+$ };
    \node [deg5] at (1.3, 1.166) (v2) {};
    \node [deg6] at (0.3, 1.166) (v3) {};
    \node [deg5] at (2.24, 1.508) (v4) {};
    \node [deg5] at (1.126, 2.151) (v5) {};
    \draw [->-] (v0) -- (v1);
    \foreach \u / \v in {v0/v1, v0/v2, v0/v3, v1/v2, v1/v4, v2/v3, v2/v4, v2/v5, v3/v5, v4/v5}
        \draw (\u) -- (\v);
\end{tikzpicture}
\end{minipage}
&
\begin{minipage}[t]{0.2\hsize}
\centering
% (page, row, col) = (2, 0, 4)
\begin{tikzpicture} []
    \node [deg6] at (0.8, 0.3) (v0) {};
    \node [deg7] at (1.8, 0.3) (v1) {};
    \node [above = 0.15 cm of v1, anchor=center] (v1+) { $+$ };
    \node [deg6] at (1.3, 1.166) (v2) {};
    \node [deg5] at (0.3, 1.166) (v3) {};
    \node [deg5] at (0.8, 2.032) (v4) {};
    \node [deg5] at (1.8, 2.032) (v5) {};
    \draw [->-] (v0) -- (v1);
    \foreach \u / \v in {v0/v1, v0/v2, v0/v3, v1/v2, v2/v3, v2/v4, v2/v5, v3/v4, v4/v5}
        \draw (\u) -- (\v);
\end{tikzpicture}
\end{minipage}
\\
\end{tabular}
\caption{A vertex of degree $6$ sends charge 3 in these cases.}
\label{fig:proj3_deg6_oneside}
\end{figure}

\tikzset{deg5/.style={thick, circle, draw, fill=black, inner sep=1.5pt,}}
\tikzset{deg6/.style={thick, circle, draw, fill=black, inner sep=0pt,}}
\tikzset{deg7/.style={thick, circle, draw, fill=white, inner sep=2pt,}}
\tikzset{deg8/.style={thick, rectangle, draw, fill=white, inner sep=2pt,}}
\tikzset{deg9/.style={thick, regular polygon, regular polygon sides=3, rotate=180, draw, fill=white, inner sep=1pt,}}
\tikzset{deg10/.style={thick, regular polygon, draw, fill=white, inner sep=2pt,}}
\tikzset{->-/.style={decoration={
    markings,
    mark=at position .6 with {\arrow{>}}}, postaction={decorate}}}
\tikzset{->>-/.style={decoration={
    markings, 
    mark=at position .5 with {\arrow{>}};, 
    mark=at position .6 with {\arrow{>}};}, postaction={decorate}}}

\begin{figure}[htbp]
\begin{minipage}{0.33\linewidth}
\centering
\captionsetup{width=.95\linewidth}
\begin{tikzpicture} [baseline=1.5cm]
    \node [deg9] at (0.5, 0) (a) {};
    \node [below = 0.4 cm of a, anchor=center] (a+) { $+$ };
    \node [deg9] at (1.5, 0) (b) {};
    \node [below = 0.4 cm of b, anchor=center] (b+) { $+$ };
    \node [deg5] at (0.1, 0.9) (c) {};
    \node [deg5] at (1.0, 0.8) (d) {};
    \node [deg5] at (1.9, 0.9) (e) {};
    \node [deg6] at (1.0, 1.6) (f) {};
    \draw [->-] (d) -- (a);
    \draw [->-] (d) -- (b);
    \foreach \u / \v in {a/b, a/c, a/d, b/d, b/e, c/d, c/f, d/e, d/f, e/f}
        \draw (\u) -- (\v);
\end{tikzpicture}
\caption{When $u$ is the vertex with two arrows and $v,w$ are two vertices of degree at least $9$, $\phi(u,v) + \phi(u,w) = 8$.}
\label{fig:twoedge8}
\end{minipage}
\begin{minipage}{0.33\linewidth}
\centering
\captionsetup{width=.95\linewidth}
\begin{tikzpicture} [baseline=1.5cm]
    \node [deg9] at (0.5, 0) (a) {};
    \node [below = 0.4 cm of a, anchor=center] (a+) { $+$ };
    \node [deg9] at (1.5, 0) (b) {};
    \node [below = 0.4 cm of b, anchor=center] (b+) { $+$ };
    \node [deg5] at (0.1, 0.9) (c) {};
    \node [deg5] at (1.0, 0.8) (d) {};
    \node [deg6] at (1.9, 0.9) (e) {};
    \node [deg5] at (1.0, 1.6) (f) {};
    \draw [->-] (d) -- (a);
    \draw [->-] (d) -- (b);
    \foreach \u / \v in {a/b, a/c, a/d, b/d, b/e, c/d, c/f, d/e, d/f, e/f}
        \draw (\u) -- (\v);
\end{tikzpicture}
\caption{When $u$ is the vertex with two arrows and $v,w$ are two vertices of degree at least $9$, $\phi(u,v) + \phi(u,w) = 7$.}
\label{fig:twoedge7}
\end{minipage}
\begin{minipage}{0.33\linewidth}
\centering
\captionsetup{width=.95\linewidth}
\begin{tikzpicture} [baseline=1.5cm]
    \node [deg9] at (1.3, 0) (a) {};
    \node [below = 0.4 cm of a, anchor=center] (a+) { $+$ };
    \node [deg9] at (0.5, 0.6) (b) {};
    \node [below = 0.4 cm of b, anchor=center] (b+) { $+$ };
    \node [deg9] at (2.1, 0.6) (c) {};
    \node [below = 0.4 cm of c, anchor=center] (c+) { $+$ };
    \node [deg5] at (1.3, 0.9) (d) {};
    \node [deg5] at (0.8, 1.5) (e) {};
    \node [deg5] at (1.8, 1.5) (f) {};
    \draw [->-] (d) -- (a);
    \draw [->-] (d) -- (b);
    \draw [->-] (d) -- (c);
    \foreach \u / \v in {a/b, a/c, a/d, b/d, b/e, c/d, c/f, d/e, d/f, e/f}
        \draw (\u) -- (\v);
\end{tikzpicture}
\caption{When $u$ is the vertex with three arrows and $v,w,x$ are three vertices of degree at least $9$, $\phi(u,v) + \phi(u,w) + \phi(u,x) = 10$.}
\label{fig:threeedge10}
\end{minipage}
\end{figure}

\begin{lem}\label{lem:twoedgesends}\showlabel{lem:twoedgesends}
Let $H$ be internally 6-connected triangulation in the plane (or the projective plane). Assume that none of our reducible configurations in the set $\mathcal{K}$ is a subgraph of $H$.
Let $vuw$ be a triangle face in $H$ and assume the degree of $v, w$ is at least $9$.
\begin{itemize}
    \item If degree of $u$ is $5$, then $\phi(u, v) + \phi(u, w) \leq 8$. If $\phi(u, v) + \phi(u, w) = 8 (, 7)$, they are in Figure \ref{fig:twoedge8}(, \ref{fig:twoedge7}) respectively. 
    \item If degree of $u$ is $6, 7$, then $\phi(u, v) + \phi(u, w) \leq 4$.
    \item If degree of $u$ is at least $8$, then $\phi(u, v) + \phi(u, w) = 0$.
\end{itemize}
\end{lem}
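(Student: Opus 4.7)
The plan is to reuse the computer-enumeration framework already developed for Lemmas \ref{lem:vsends} and \ref{lem:onesidesend}, but extend it to enumerate \emph{pairs} of rule applications that simultaneously contribute charge from the common vertex $u$ to the two distinct neighbors $v$ and $w$ that, together with $u$, form a triangular face in $G'$. Concretely, for each pair $(R_1,R_2)\in\mathcal{R}\times\mathcal{R}$ with $s(R_i)=u$ and $t(R_1)=v$, $t(R_2)=w$, I would attempt to glue $G(R_1)$ and $G(R_2)$ along the edge $uv$ and along the edge $uw$ into a common near-triangulation $H_0$ centered at $u$, subject to the degree window $[\beta,\delta]$ inherited from the two rules on each vertex they share, and to the additional constraints $d_{G'}(v)\ge 9$ and $d_{G'}(w)\ge 9$. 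For every such valid gluing, I would then certify that some configuration of $\mathcal{K}$ is forced to appear inside $H_0$ unless the combined charge $r(R_1)+r(R_2)$ does not exceed the claimed bounds.

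I would first dispose of the cases $d(u)\ge 8$ and $d(u)\in\{6,7\}$: by inspection of the rule list $\mathcal{R}$ in Section \ref{sect:rule}, no rule has a vertex of degree at least $8$ as the sender $s(R)$, so the case $d(u)\ge 8$ is immediate. For $d(u)\in\{6,7\}$, Lemma \ref{lem:onesidesend} already caps $\phi(u,v)$ and $\phi(u,w)$ individually at $3$ and $2$ respectively when the opposite endpoint of the sending edge has degree $\ge 9$; a direct enumeration of the joint applications on the two edges of the triangle at $u$ must then be shown to cap the sum at $4$ (rather than the trivial $6$ or $4$ obtained by summing the one-sided bounds), and this is exactly the type of computation that Algorithm \ref{alg:enum_send} is designed for—one only rules out pairs for which the two rules jointly force a vertex structure around $u$ that contains a reducible configuration of $\mathcal{K}$.

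The meat of the lemma is the case $d(u)=5$. Here I would enumerate all pairs of rules sending charge across two consecutive edges incident to $u$ under the constraint that the two far endpoints have degree $\ge 9$, and collect all maximal pairs according to the total $r(R_1)+r(R_2)$. The output of this enumeration should produce exactly the configurations drawn in Figure \ref{fig:twoedge8} for sums equal to $8$ and in Figure \ref{fig:twoedge7} for sums equal to $7$, and nothing with total $\ge 9$, thereby establishing $\phi(u,v)+\phi(u,w)\le 8$. As in previous lemmas, any candidate pair whose gluing contains one of the roughly $6000$ configurations of $\mathcal{K}$ is discarded, because by Lemma \ref{reduc} these cannot be subgraphs of the dual of a minimal counterexample.

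The main obstacle I expect is the combinatorial blow-up: the set $\mathcal{R}$ contains $169$ rules, and a given pair of rule applications does not determine the degrees of all joint neighbors of $u$, so several degree profiles must be extended and each has to be cross-checked against $\mathcal{K}$. This is exactly why the argument is deferred to the computer-verified Algorithm \ref{alg:enum_send}; the human step is just to confirm that the correct constraints (the degree-$\ge 9$ condition on both $v$ and $w$, and the triangle incidence at $u$) are fed into the enumeration, and that the figures produced by the algorithm coincide with Figures \ref{fig:twoedge8} and \ref{fig:twoedge7}. With this verification in hand, the three bulleted bounds all follow by simply reading off the maximum total charge found by the enumeration in each of the three degree regimes for $u$.
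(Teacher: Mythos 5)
Your overall strategy---push the lemma onto the same computer-enumeration machinery used for Lemmas \ref{lem:vsends} and \ref{lem:onesidesend}---is reasonable in spirit, but it is not what the paper does, and as written it contains a genuine flaw. The quantity $\phi(u,v)$ is defined (Definition \ref{dfn:amount}) as the \emph{sum of $r(R)$ over all rules} applied with $s(R)=u$, $t(R)=v$; since every rule has $r(R)\in\{1,2\}$, a single edge can receive charge up to $6$ only through the simultaneous application of several rules. Your enumeration ``for each pair $(R_1,R_2)\in\mathcal{R}\times\mathcal{R}$'' with total $r(R_1)+r(R_2)$ therefore computes the wrong quantity: it caps the sum at $4$ a priori and cannot reproduce the bound $8$ (or the equality cases of Figures \ref{fig:twoedge8} and \ref{fig:twoedge7}) in the degree-$5$ case. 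What you would actually need to enumerate is all \emph{sets} of rule applications that can coexist on the two edges $uv$ and $uw$ simultaneously, including the two local orientations in which each rule can be applied --- which is precisely the combinatorial explosion the paper avoids by proving this lemma \emph{by hand}.

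Concretely, the paper's proof is a short case analysis layered on top of the already-verified building blocks: $\phi(u,v),\phi(u,w)\le 4$ for $d(u)=5$ from Lemma \ref{lem:vsends} gives the bound $8$ immediately, and the equality/near-equality cases are pinned down by reading off the explicit lists send4$(i,j)$ and send6$(i,j)$ (Figures \ref{fig:proj4_deg5} and \ref{fig:proj6}) and excluding degree patterns that force one of the small configurations conf(1), conf(3), conf(15) to appear. The $d(u)=6$ case is the delicate one: the one-sided bound from Lemma \ref{lem:onesidesend} only gives $3+3=6$, and the improvement to $4$ is obtained by showing that whenever $\phi(u,v)=3$ (one of the five situations of Figure \ref{fig:proj3_deg6_oneside}), every degree assignment to the remaining neighbors of $u$ that would allow $\phi(u,w)\ge 2$ forces one of conf(14)--conf(23). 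Your plan never confronts this step: you assert that ``a direct enumeration \dots must then be shown to cap the sum at $4$,'' but that is exactly the content to be proved, and your pairwise enumeration would not detect it. If you want to keep a computer-based route, you must enumerate full local degree profiles around $u$ (as Algorithm \ref{alg:enum_send} does for a single edge), sum $\phi$ over all applicable rules on both edges, and test each completed profile for containment of a member of $\mathcal{K}$; alternatively, follow the paper and derive the lemma by hand from Lemmas \ref{lem:vsends} and \ref{lem:onesidesend} together with the published case lists.
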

\begin{proof}
     We consider the case when degree of $u$ is $5$. We denote the neighbors of $u$ other than $v,w$ by $x,y,z$ such that $v,w,x,y,z$ appear in the clockwise order of neighbors of $u$. $\phi(u,v), \phi(u,w) \leq 4$ by Lemma \ref{lem:vsends}. So $\phi(u,v) + \phi(u,w) \leq 8$.
     When $\phi(u,v) + \phi(u,w) \geq 7$ either of $\phi(u,v), \phi(u,w)$ is 4. We assume $\phi(u,v) = 4$ without loss of generality. The degree of $z$ must be $5$ and the degree of $y$ must be at most $6$ by Lemma \ref{lem:vsends}. The degree of $x$ is at most $6$  otherwise $\phi(u,w) = 2$, which implies $\phi(u,v) + \phi(u,w) \leq 6$.
     
     We next consider the case when the degree of $y$ is $5$. The degree of $x$ is not $5$ otherwise, conf(1) appears % conf appears 5555 
     When the degree of $x$ is $6$, $\phi(u,v) = 4, \phi(u,w) = 3$ by our rules and Lemma \ref{lem:vsends}.

     We now consider the case when the degree of $y$ is $6$. When the degree of $x$ is $5$, $\phi(u,v) = \phi(u,w) = 4$. 
     When the degree of $x$ is $6$, $\phi(u, v)$ can be exactly $4$ only when send4(1, 3) happens. Let $s$ be a vertex that is adjacent to both $x, y$ and that is different from $u$. When the degree of $s$ is $5$, $\phi(u,w)$ must be $3$, but then conf(3) appears % conf appears
     Let $t$ be a vertex that is adjacent to $x,s$ and that is different from $y$. When the degree of $t$ is $5$ and the degree of $s$ is $6$, $\phi(u,w)$ must be $3$, but then conf(15) appears % conf appears 556665
     In other cases, $\phi(u, w)$ is $2$ so $\phi(u, v) + \phi(u, w) \leq 6$.
     
     When the degree of $u$ is at least $7$, $\phi(u,v), \phi(u,w) \leq 2$ by Lemma \ref{lem:onesidesend}. This implies $\phi(u,v) + \phi(u,w) \leq 4$.
     
     We consider the case when the degree of $u$ is $6$. We denote the neighbors of $u$ other than $v,w$ by $x_1,x_2,x_3,x_4$ such that $v,w,x_1,x_2,x_3,x_4$ appear in the clockwise order of neighbors of $u$. By Lemma \ref{lem:onesidesend}, $\phi(u,v),\phi(u,w) \leq 3$, so we need to show that $\phi(u,w)$ is at most 1 when $\phi(u,v) = 3$. There are five cases $\phi(u, v)$ is exactly $3$ as shown in Figure \ref{fig:proj3_deg6_oneside}. Let $x_5$ be a vertex that is adjacent to $x_3,x_4$ and that is different from $u$. In each case, one of the followings happens.
     \begin{enumerate}
         \item $d(x_3) = d(x_4) = 6, d(x_5) = 5$
         \item $d(x_3) = d(x_4) = 5$
         \item $d(x_3) = 6, d(x_4) = d(x_5) = 5$
         \item $d(x_3) = 5, d(x_4) = 6, d(x_5) = 5$
     \end{enumerate}
     
     When $d(x_1)= 5$ and $d(x_2) \leq 6$, a reducible configuration appears in each case; (conf(14), conf(15) in case 1, conf(2), conf(3) in case 2, conf(3), conf(15) in case 3, conf(16), conf(17) in case 4) % conf appears
     Therefore, $d(x_2) \geq 7$ when $d(x_1) = 5$. In this case, $\phi(u,w) = 1$.

     When $d(x_1) = 6$, we denote the neighbors of $x_1$ other than $x_2, u, w$ by $x_6, x_7, x_8$ such that $x_2, u, w, x_6, x_7, x_8$ appear in the clockwise order of neighbors of $x_1$. When $\phi(u, w)$ is at least $2$, one of the followings happens.
     \begin{enumerate}
         \renewcommand{\labelenumi}{\roman{enumi}}
         \item $d(x_2) = d(x_8) = 5$,
         \item at least one of $d(x_2), d(x_8)$ is $5$ under the constraint $d(x_2), d(x_8) \leq 6$, $d(x_7) = 5$,
         \item at least one of $d(x_2), d(x_8), d(x_7)$ is $5$ under the constraint $d(x_2), d(x_8), d(x_7) \leq 6$, $d(x_6) = 5$.
     \end{enumerate}
     A reducible configuration appears in all of the combinations of degrees of $x_3,x_4,x_5$ enumerated above and degrees of $x_2,x_8,x_7,x_6$ enumerated here; (conf(18), conf(19), conf(21), conf(22), conf(23) in case 1, conf(14), conf(15), conf(16), conf(17) in case 2, conf(3), conf(14), conf(18) in case 3, conf(14), conf(15), conf(16), conf(20) in case 4) % conf appears
     Therefore, $\phi(u,w) \leq 1$.

     When $d(x_1) = 7$, $\phi(u, w) = 0$.

     We finally consider the case when the degree of $u$ is $7$ or $8$, but Lemma \ref{lem:onesidesend} directly applies. 
\end{proof}

\begin{lem}\label{lem:threeedgesends}\showlabel{lem:threeedgesends}
Let $H$ be an internally 6-connected triangulation in the plane (or the projective plane). Assume that none of our reducible configurations in the set $\mathcal{K}$ is a subgraph of $H$.
Let $u$ be a vertex in $H$ and $v, w, x$ be the neighbors of $u$ so that $v,w$ are adjacent and $w,x$ are adjacent. Assume that the degree of $v, w, x$ is at least $9$.
\begin{itemize}
    \item If the degree of $u$ is $5$, then $\phi(u, v) + \phi(u, w) + \phi(u, x) \leq 10$. If the equality holds, they are in Figure \ref{fig:threeedge10}.
    \item If the degree of $u$ is $6$ ($7$), then $\phi(u, v) + \phi(u, w) + \phi(u, x) \leq 6$ ($4$).
    \item If the degree of $u$ is at least 8, then $\phi(u, v) + \phi(u, w) + \phi(u, x) = 0$.
\end{itemize}
\end{lem}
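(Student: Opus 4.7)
The plan is to mirror the case analysis used in the proof of Lemma \ref{lem:twoedgesends}, splitting by the value of $d(u)$ and leaning on Lemma \ref{lem:onesidesend} to bound each term $\phi(u,v), \phi(u,w), \phi(u,x)$ individually. The cases $d(u)\ge 8$ and $d(u)=7$ are essentially immediate: when $d(u)\ge 8$, Lemma \ref{lem:onesidesend} gives that each of the three charges is $0$; when $d(u)=7$, each is at most $2$, so the sum is at most $6$. To sharpen the $d(u)=7$ bound to $4$, I would observe that the few configurations in which a degree-$7$ vertex sends $2$ to a degree-$\ge 9$ neighbor all impose degree restrictions on the remaining neighbors of $u$ incident with the two triangles $uvw$ and $uwx$, and that at most two of the three edges $uv$, $uw$, $ux$ can simultaneously be in such a configuration without forcing a reducible configuration from $\mathcal{K}$ to appear near $u$.

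For $d(u)=6$, I would label the three remaining neighbors of $u$ as $y_1,y_2,y_3$, so that the cyclic order around $u$ is $v,w,x,y_1,y_2,y_3$. By Lemma \ref{lem:onesidesend}, each of $\phi(u,v),\phi(u,w),\phi(u,x)\leq 3$, and the cases with value exactly $3$ are the five patterns displayed in Figure \ref{fig:proj3_deg6_oneside}. Attempting to combine two of these patterns across the edge $uw$ would force a specific local structure at $w$ and at the neighbors $y_1, y_2, y_3$ of $u$, and I would go through the short list of combinations exactly as in the $d(u)=6$ paragraph of the proof of Lemma \ref{lem:twoedgesends}, exhibiting in each offending case one of the reducible configurations conf($k$) of Figure \ref{fig:confs_degree56} (or another element of $\mathcal{K}$).

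The main case is $d(u)=5$. Label the remaining neighbors of $u$ by $y,z$, with cyclic order $v,w,x,y,z$. By Lemma \ref{lem:onesidesend} each of $\phi(u,v),\phi(u,w),\phi(u,x)$ is at most $4$, and the equality cases are restricted to the first three send4 patterns of Figure \ref{fig:proj4_deg5}. For the sum to reach $10$, the triple $(\phi(u,v),\phi(u,w),\phi(u,x))$ must be one of $(4,4,2),(4,2,4),(2,4,4),(4,3,3),(3,4,3),(3,3,4)$. Using Lemma \ref{lem:vsends}, I would translate each of these candidate triples into a local pattern of degrees at $v,w,x,y,z$ and at their immediate neighbors, and check that anything exceeding $10$ forces a reducible configuration from $\mathcal{K}$ at one of the two outer endpoints. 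The surviving equality-$10$ configurations are precisely those I expect to appear in Figure \ref{fig:threeedge10}.

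The main obstacle is the same one that made Lemma \ref{lem:twoedgesends} long: the send4 and send3 patterns on the two triangles $uvw$ and $uwx$ interact nontrivially across the shared edge $uw$, and the number of candidate combinations grows quickly. Because the enumeration is finite but large, I would handle it in the same way as earlier lemmas in this section: run a variant of Algorithm \ref{alg:enum_send} (recorded in the appendix) to enumerate all local patterns around $u$ in which $\phi(u,v)+\phi(u,w)+\phi(u,x)$ exceeds each of the claimed bounds $10,6,4,0$ for the respective values of $d(u)$, and verify by computer that every such pattern contains a member of $\mathcal{K}$ as a subgraph; the residual nonzero-charge cases for $d(u)=5$ then read off exactly as Figure \ref{fig:threeedge10}.
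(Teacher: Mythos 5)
Your proposal misses the one observation that makes this lemma short, and as a result every case except $d(u)\ge 8$ is left open and deferred to an unexecuted enumeration. The key point is that the \emph{middle} edge $uw$ is special: both triangles incident with $uw$ have their third vertices ($v$ and $x$) of degree at least $9$, and every discharging rule beyond the basic rule(1) requires a low-degree vertex in one of the two triangles flanking the edge along which charge is sent. Hence $\phi(u,w)$ equals exactly the base amount: $2$ when $d(u)=5$ and $0$ when $d(u)\ge 6$. Once this is in hand, the whole lemma reduces to applying Lemma \ref{lem:onesidesend} to the two outer edges $uv$ and $ux$ only: for $d(u)\ge 6$ one gets $\phi(u,v)+\phi(u,w)+\phi(u,x)\le 3+0+3=6$, $\le 2+0+2=4$, and $=0$ respectively, and for $d(u)=5$ one gets $\le 4+2+4=10$ with equality forcing $\phi(u,v)=\phi(u,x)=4$, which by Lemma \ref{lem:vsends} pins the degrees of the remaining neighbors $y,z$ to $5$ --- exactly Figure \ref{fig:threeedge10}.

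By contrast, your per-edge bounds from Lemma \ref{lem:onesidesend} alone give only $9$ for $d(u)=6$ and $6$ for $d(u)=7$, and your proposed sharpenings (``at most two of the three edges can simultaneously be in such a configuration'') are asserted rather than proved; they are precisely the content you would need to establish, and they are most cleanly established by the observation above rather than by combining send patterns across the shared edge $uw$. Your list of candidate equality triples for $d(u)=5$ also includes $(4,3,3)$ and permutations with $\phi(u,w)\in\{3,4\}$, which cannot occur since $\phi(u,w)=2$ identically in this situation; this is a symptom of the same missing idea. A computer enumeration in the style of Algorithm \ref{alg:enum_send} could in principle close all of these gaps, but as written the proposal does not contain a proof of any of the three bounds for $d(u)\in\{5,6,7\}$.
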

\begin{proof}
    When the degree of $u$ is at least $6$, $\phi(u,w) = 0$, so Lemma \ref{lem:onesidesend} implies the lemma. We consider the case when the degree of $u$ is $5$. We denote the neighbors of $u$ other than $v,w,x$ by $y,z$ such that $v,w,x,y,z$ appear in the clockwise order of neighbors of $u$. Then, $\phi(u,w) = 2$ and $\phi(u,v), \phi(u,x) \leq 4$ by Lemma \ref{lem:onesidesend}. This implies $\phi(u,v) + \phi(u,w) + \phi(u,x) \leq 10$. If the equality holds, $\phi(u,v) = \phi(u,x) = 4$. In this case, the degree of $y$ and $z$ must be $5$ by Lemma \ref{lem:vsends}. This is the only case where the equality holds, as shown in Figure \ref{fig:threeedge10}.
\end{proof}

% ====================================================
\section{Looking at the smaller side}
\label{sect:smaller}
\showlabel{sect:smaller}
% ====================================================

In this section, we show Lemma \ref{lem:conf-in-T}, which states that a reducible configuration appears strictly inside the disk that is bounded by a cycle with some specified conditions. 
This lemma is necessary to handle 6,7-edge-cuts in Section \ref{sect:6,7-cut} and low representativity cases in Section \ref{sect:rep}. 

% configuration appear in planar-neartriangulation
\begin{lem}\label{lem:conf-in-T}\showlabel{lem:conf-in-T}
Let $T$ be an internally 6-connected near-triangulation in the plane, which is a subgraph of $G'$. Let $C$ be an induced cycle in $T$, which bounds the infinite region of $T$. Assume that there is no vertex in $V(T -C)$ which is adjacent to at least four consecutive vertices of $C$ and there are more than $\frac{18}{5} \cdot |C| - 12$ edges in $T$ between $C$ and $T - C$. Then a reducible configuration in the set $\mathcal{K}$ appears in $T$.
\end{lem}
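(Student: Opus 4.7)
The plan is to argue by contradiction via a localized discharging argument carried out inside $T$. Assume no reducible configuration of $\mathcal{K}$ appears in $T$; we will deduce $e(C, T-C) \le \tfrac{18}{5}|C| - 12$, contradicting the hypothesis.

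First, for the near-triangulation $T$ (whose outer face is bounded by the $|C|$-cycle and whose other faces are triangles), Euler's formula together with the identity $2|E(T)| = 3(|F(T)|-1) + |C|$ gives, for the initial charge $T_0(v) = 10(6 - d_T(v))$,
\[
 \sum_{v \in V(T)} T_0(v) \;=\; 60 + 20|C|.
\]
Apply the rules of $\mathcal{R}$ to the triangular faces of $T$. Since each rule transfers charge between two adjacent vertices of $T$, this total is preserved: $\sum_{v \in V(T)} T(v) = 60 + 20|C|$.

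The key structural step is that any vertex $v$ whose discharging neighborhood — a ball of radius at most five, bounding the support of every rule and every configuration in $\mathcal{K}$ — lies entirely inside $T$ must satisfy $T(v) \le 0$. Indeed, Lemma~\ref{T(v)>0} says that $T(v) > 0$ forces a reducible configuration of $\mathcal{K}$ to appear near $v$; since its entire support would then sit inside $T$, this contradicts our standing assumption. Consequently all positive residual charge is concentrated in a bounded-depth collar $N$ of vertices near $C$, and summing over $N$ yields
\[
 \sum_{v \in N} T(v) \;\ge\; 60 + 20|C|.
\]

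The remaining task is an upper bound on this collar sum in terms of $|C|$ and $e(C, T-C)$. I would combine the local charge-transfer estimates from Section~\ref{sect:discharging} — Lemmas~\ref{lem:vsends}, \ref{lem:edgesends}, \ref{lem:onesidesend}, \ref{lem:twoedgesends}, and \ref{lem:threeedgesends} — with the two structural hypotheses (internal $6$-connectivity and the prohibition of any interior vertex being adjacent to four consecutive vertices of $C$) to show, by a degree-based case analysis at each vertex of $C$ and its immediate neighborhood, that the residual charge carried by $N$ can be charged to the crossing edges at a rate of at most $\tfrac{18}{5}$ per edge with an additive slack of $12$. Substituting this upper bound into the displayed inequality forces $e(C,T-C) \le \tfrac{18}{5}|C| - 12$, giving the sought contradiction.

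The main obstacle is this final case analysis. Near $C$ the discharging rules are only partially supported — one of the two triangles sharing an edge of $C$ lies outside $T$, so some rules cannot fire on that edge — and one must carefully determine which rules still contribute and with what magnitudes, while propagating the "internal" bounds inward through the collar. The hypothesis that no interior vertex has four consecutive neighbors on $C$ is precisely what rules out the fan-like near-boundary configurations which would otherwise let a single interior vertex act as a charge sink for a long arc of $C$ and inflate the coefficient beyond $\tfrac{18}{5}$.
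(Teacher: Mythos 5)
Your overall strategy --- restrict the discharging to $T$, use unavoidability (Lemma~\ref{T(v)>0}) to force $T(v)\le 0$ at interior vertices, and push the contradiction to a boundary accounting --- is the same as the paper's, but the proposal stops exactly where the proof actually lives, and the accounting scheme you sketch does not produce the stated inequality.

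First, the decomposition. The paper does not bound a ``collar sum'' from above; it isolates the \emph{interior} charge. By Euler's formula (Lemma~\ref{lem:charge-T-C}), with $n=|C|$ and $k$ the number of crossing edges, $\sum_{v\in V(T)\setminus V(C)}T_0(v)=60-20n+10k$ --- note that $k$ already appears here with a positive coefficient. It then bounds the charge \emph{leaving} the interior toward $C$ by $5k-2n-b$ (Lemmas~\ref{lem:to-C} and~\ref{lem:charge-to-C}): the average outflow per crossing edge is at most $5$, with an extra saving of $2$ or $5$ at each vertex $u_{i+1}$ of $C$ according to whether $v_i\ne v_{i+1}$ or $v_i=v_{i+1}$. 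Subtracting gives interior final charge at least $5k-18n+60>0$ under the hypothesis, so some interior vertex has positive final charge. Your version instead starts from the total $60+20|C|$ and asserts that the collar charge ``can be charged to the crossing edges at a rate of at most $\tfrac{18}{5}$ per edge with an additive slack of $12$''; combining that with $\sum_{v\in N}T(v)\ge 60+20|C|$ yields $60+20n\le \tfrac{18}{5}k+12$, which is not the inequality $k\le\tfrac{18}{5}n-12$ you need. The correct per-edge rate in the actual argument is $5$, not $\tfrac{18}{5}$; the coefficient $\tfrac{18}{5}$ only emerges after the interior initial charge $10k$ is netted against the outflow $5k$.

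Second, the boundary device. You correctly identify that the rules are only partially supported near $C$, but you propose no remedy. The paper's remedy is essential: it extends $T$ to an internally 6-connected plane triangulation $T'$ by filling the outer face so that every vertex of $C$ has degree at least $12$. This simultaneously (a) makes every rule application well-defined, (b) lets it invoke Lemmas~\ref{lem:onesidesend}--\ref{lem:threeedgesends}, whose hypotheses require the receiving vertices to have degree at least $9$, to get the crucial averages of $5$, $8$, and $10$ for vertices of $T-C$ adjacent to one, two, or three consecutive vertices of $C$, and (c) guarantees that the reducible configuration produced by the positive-charge vertex avoids $C$ entirely, since no configuration in $\mathcal{K}$ contains a vertex of degree $12$ or more, so the configuration lies in $T$ as claimed. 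Without this extension your appeal to Lemma~\ref{T(v)>0} for interior vertices and your final localization of the configuration inside $T$ are both unjustified. The hypothesis about no interior vertex seeing four consecutive vertices of $C$ is used precisely to cap the per-vertex contribution at the three-edge case of Lemma~\ref{lem:threeedgesends}, not merely to exclude ``charge sinks'' in a qualitative sense.
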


We use the following lemma to show Lemma \ref{lem:conf-in-T}.
We let $n=|C|$ and denote the number of edges between $C$ and $T - C$ by $k$.

\begin{lem}\label{lem:charge-T-C}\showlabel{lem:charge-T-C}
\[
 \sum_{v \in V(T) - V(C)} 10 \cdot (6 - d(v)) = 60 - 20n + 10k.
\]
\end{lem}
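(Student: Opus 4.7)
The plan is to derive the identity directly from Euler's formula applied to the plane near-triangulation $T$, using the fact that every bounded face of $T$ is a triangle while the unbounded face is bounded by $C$. This is ultimately a bookkeeping argument: compute the number of vertices, edges and faces of $T$, and then convert a degree sum over $V(T)\setminus V(C)$ into a linear combination of $n$ and $k$.

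First I would partition $E(T)$ into three sets: the $n$ edges of $C$, the $k$ edges joining $V(C)$ to $V'\!:=V(T)\setminus V(C)$, and the set $E_{V'}$ of edges with both endpoints in $V'$. Writing $n'=|V'|$, Euler's formula gives $|V(T)|-|E(T)|+|F(T)|=2$. Since each of the $|F(T)|-1$ bounded faces is a triangle and the unbounded face is bounded by the $n$ edges of $C$, the standard edge–face double count $2|E(T)|=3(|F(T)|-1)+n$ lets me eliminate $|F(T)|$ and solve for $|E(T)|$ in terms of $n$ and $n'$, yielding a closed-form expression $|E(T)|=2n+3n'-3$. Comparing with $|E(T)|=n+k+|E_{V'}|$ then pins down $|E_{V'}|$.

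Next I would compute $\sum_{v\in V'}d_T(v)$ by the handshaking principle applied to the subgraph induced on $V'$ together with the $k$ cross-edges: every edge of $E_{V'}$ contributes $2$ to this sum, while every edge of the $k$ cross-edges contributes $1$. Substituting the formula for $|E_{V'}|$ obtained above gives $\sum_{v\in V'}d_T(v)$ as an explicit linear expression in $n$, $n'$, $k$. Finally, $\sum_{v\in V'}10(6-d_T(v))=60n'-10\sum_{v\in V'}d_T(v)$; the $n'$ contributions cancel and after simplification the right-hand side becomes $60-20n+10k$, as desired.

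No step is really an obstacle here; the only thing to be slightly careful about is that $C$ bounds the infinite face of $T$ (so $C$ itself contributes $n$ to the edge–face count on the outer side) and that $C$ is a cycle (so the $n$ edges of $C$ are counted exactly once). The assumptions of internal 6-connectivity and of the degree restriction on vertices of $V'$ are not needed for this particular identity; they only enter in the subsequent argument where Lemma \ref{lem:charge-T-C} is combined with discharging to locate a reducible configuration.
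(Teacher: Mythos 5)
Your proposal is correct and follows essentially the same route as the paper: both arguments rest on Euler's formula combined with the edge--face double count $2|E(T)|=3(|F(T)|-1)+n$ to get $|E(T)|=3|V(T)|-3-n$, and then on counting the $k$ edges between $C$ and $T-C$. The only (immaterial) difference is bookkeeping: the paper computes the total charge over $V(T)$ and subtracts the contribution of $V(C)$ via $\sum_{v\in V(C)}d(v)=2n+k$, whereas you compute $\sum_{v\in V'}d(v)$ directly from the internal and cross edges; the two computations are trivially equivalent.
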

\begin{proof}
The sum of the size of the faces of $T$ is $3 (|F(T)| - 1) + n$. So $2|E(T)| = 3 (|F(T)| - 1) + n$ holds.
$|V(T)| - |E(T)| + |F(T)| = 2$ holds by Euler's formula. The combination of these two formulas implies $|E(T)| = 3|V(T)| - 3 - n$, so
\[
  \sum_{v \in V(T)} 10 \cdot (6 - d(v)) = 60 |V(T)| - 20 |E(T)| = 20 (3 + n). 
\]
The number of edges between $C$ and $T - C$ is $k$.
This implies $\sum_{v \in V(C)} d(v) = 2 |V(C)| + k = 2n + k$. So
\[
  \sum_{v \in V(C)} 10 \cdot (6 - d(v)) = 60 |V(C)| - 10 (2n + k) = 40n - 10k.
\]
Therefore \[
  \sum_{v \in V(T) \setminus V(C)} 10 \cdot (6 - d(v)) = 20 (3 + n) - (40n - 10k) = 60 - 20n + 10k.
\]
\end{proof}

We extend the embedding of $T$ to an embedding of an internally 6-connected triangulation $T'$, by adding more vertices and edges inside the face bounded by $C$, in such a way that every vertex in $V(C)$ has degree of at least 12 in $T'$ and newly added vertices have degree of at least $5$.

Let vertices of $C$ be $u_0u_1 \dots u_{n-1}$ so that $u_i$ and $u_{i+1}$ are adjacent. We sometimes use an index $i$ which is out of range to denote $u_i$, which means the index $i$ module $n$ (e.g. $u_{n}$ means $u_0$.).  For each $0 \leq i < n$, there is a vertex of $V(T-C)$ that is adjacent to both $u_i$ and $u_{i+1}$. We denote these vertices by $v_i$ ($0 \leq i < n$). We denote a set of vertices of $T - C$ that are adjacent to $u_i$ by $v_{i-1} = v_{i,0}, v_{i,1}, \dots, v_{i,n_i-1} = v_i$ so that $v_{i,j}$ and $v_{i,j+1}$ are adjacent $(0 \leq j < n_i - 1)$ , where $n_i$ is the number of vertices of $T - C$ that are adjacent to $u_i$.

If $v_i = v_{i+1} = v_{i+2}$, $v_i$ would be a vertex of $T - C$ which is adjacent to at least four consecutive vertices of $C$, which would contradict the hypothesis of Lemma \ref{lem:conf-in-T}. So this would not happen. 
 
We now calculate the total amount of charge sent from $T - C$ to $C$. In order to do so, we calculate the amount of charge sent by each edge between $T - C$ and $C$, and we compute the sum. We divide into the two cases, depending on whether $v_i$ and $v_{i+1}$ are same or not. We only need the upper bound of the amount of charge sent by each edge, which is achieved when degree of $v_i$ is $5$ here, but later we need the amount of charge sent when degree of $v_i$ is at least $6$. So we consider all these cases here.

\begin{lem}\label{lem:to-C}\showlabel{lem:to-C}
    For any $0 \leq i < n$, the following holds.
    \begin{enumerate}
        \renewcommand{\labelenumi}{(\roman{enumi})}
        \item Assume $v_{i-1} \neq v_i$ and $v_i \neq v_{i+1}$. 
        \begin{itemize}
            \item If the degree of $v_i$ is $d \le 7$, then  
            $
            \phi(v_i,u_i) + \phi(v_i,u_{i+1}) + \sum_{j=1}^{n_{i+1}-2} \phi(v_{i+1,j}, u_{i+1}) \leq 5 \cdot (n_{i+1} - 2) + f_d
            $ holds, where $f_5=8,f_6=4,f_7=4$.
            \item When the degree of $v_i$ is at least $8$,
            $
            \phi(v_i,u_i) + \phi(v_i,u_{i+1}) + \sum_{j=1}^{n_{i+1}-2} \phi(v_{i+1,j}, u_{i+1}) \leq 5 \cdot (n_{i+1}-2)
            $ holds.
        \end{itemize}
        \item Assume $v_{i-1} \neq v_i$ and $v_i = v_{i+1}$.
        \begin{itemize}
            \item When the degree of $v_i$ is $d \le 7$, $\phi(v_i,u_i) + \phi(v_i,u_{i+1}) + \phi(v_i,u_{i+2}) + \sum_{j=1}^{n_{i+2}-2} \phi(v_{i+2,j}, u_{i+2}) \leq 5 \cdot (n_{i+2}-2) + g_d$ holds, where $g_5=10,g_6=6,g_7=4$.
            \item When the degree of $v_i$ is at least $8$, $\phi(v_i,u_i) + \phi(v_i,u_{i+1}) + \phi(v_i,u_{i+2}) + \sum_{j=1}^{n_{i+2}-2} \phi(v_{i+2,j}, u_{i+2}) \leq 5 \cdot (n_{i+2}-2)$ holds.
        \end{itemize}
    \end{enumerate}
\end{lem}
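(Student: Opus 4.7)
The strategy is to split the sum into two pieces and bound them separately. By construction of $T'$, every vertex of $C$ has degree at least $12$ in $T'$, so all of $u_i,u_{i+1},u_{i+2}$ are ``high-degree'' vertices to which the bounds of Section~\ref{sect:maxcharge} apply.

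For the first piece I would isolate the contribution of $v_i$ to its neighbors on $C$. In case~(i) the triangle $u_iv_iu_{i+1}$ satisfies the hypotheses of Lemma~\ref{lem:twoedgesends}, which immediately yields $\phi(v_i,u_i)+\phi(v_i,u_{i+1})\le f_{d(v_i)}$ with $f_5=8$, $f_6=f_7=4$ and $f_{\ge 8}=0$. In case~(ii), where $v_i=v_{i+1}$ is a common neighbor of $u_i,u_{i+1},u_{i+2}$, I would apply Lemma~\ref{lem:threeedgesends} to the fan of three triangles at $v_i$ to obtain
\[
\phi(v_i,u_i)+\phi(v_i,u_{i+1})+\phi(v_i,u_{i+2})\le g_{d(v_i)},
\]
with $g_5=10$, $g_6=6$, $g_7=4$, $g_{\ge 8}=0$. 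Together these account for the additive constant on the right-hand side of each claim.

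For the second piece I would bound $\sum_{j=1}^{n_{i+1}-2}\phi(v_{i+1,j},u_{i+1})$ (respectively the analogous sum with $u_{i+2}$ in case~(ii)) by $5(n_{i+1}-2)$ (resp.\ $5(n_{i+2}-2)$). The vertices $v_{i+1,0},\dots,v_{i+1,n_{i+1}-1}$ form the fan of $T-C$-neighbors of $u_{i+1}$, any two consecutive of which are adjacent and close a triangle with $u_{i+1}$. Since $d_{T'}(u_{i+1})\ge 12\ge 7$, Lemma~\ref{lem:edgesends} provides
\[
\phi(v_{i+1,j},u_{i+1})+\phi(v_{i+1,j+1},u_{i+1})\le 10
\]
for every $j$. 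Pairing the intermediate indices $1,2,\dots,n_{i+1}-2$ consecutively gives the required bound when $n_{i+1}-2$ is even.

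The main obstacle will be the parity case where $n_{i+1}-2$ is odd, since a single unpaired term only admits the crude bound $\le 6$ from Lemma~\ref{lem:vsends}, overshooting the target by $1$. To close this gap I would extend the pairing to the boundary fan-neighbor $v_{i+1}=v_{i+1,n_{i+1}-1}$, which is adjacent to the last intermediate $v_{i+1,n_{i+1}-2}$ in the fan around $u_{i+1}$, and then either (a) invoke Lemma~\ref{lem:onesidesend} at the triangle $v_{i+1,n_{i+1}-2}\,u_{i+1}\,v_{i+1}$ when $d(v_{i+1})\ge 9$, giving a direct bound of $5$ on the leftover, or (b) in the borderline subcase in which both $v_{i+1,n_{i+1}-2}$ and $v_{i+1}$ have small degree, appeal to the classification of ``send $\ge 6$'' extremal cases in Figure~\ref{fig:proj6} from Lemma~\ref{lem:vsends} to conclude that the local degree pattern forces one of the reducible configurations of $\mathcal{K}$ to appear, contradicting our standing assumption. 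Case~(ii) then follows by an entirely parallel argument, replacing $u_{i+1},n_{i+1}$ by $u_{i+2},n_{i+2}$ in the fan analysis.
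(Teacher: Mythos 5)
Your decomposition into a boundary contribution (bounded by $f_d$ or $g_d$ via Lemmas \ref{lem:twoedgesends} and \ref{lem:threeedgesends}) plus a middle fan sum (bounded by pairing consecutive terms via Lemma \ref{lem:edgesends}) is the right shape and matches the paper's first step. The gap is in your claim that the middle sum $\sum_{j=1}^{n_{i+1}-2}\phi(v_{i+1,j},u_{i+1})$ can always be forced down to $5(n_{i+1}-2)$. It cannot: when $n_{i+1}-2$ is odd the extremal alternating pattern $6,4,6,\dots,4,6$ realizes $5(n_{i+1}-2)+1$, and this pattern is not excluded by reducibility (the ``send $6$'' and ``$6+4=10$'' situations are exactly the surviving cases catalogued in Figures \ref{fig:proj6} and \ref{fig:6+4=10}). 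Your patch (a) needs $d(v_{i+1})\ge 9$, which is not available, and your patch (b) is an unsubstantiated appeal to a reducible configuration appearing. Worse, pairing the leftover with $\phi(v_{i+1},u_{i+1})=\phi(v_{i+1,n_{i+1}-1},u_{i+1})$ borrows a term that is not in the sum being bounded: it is charged to the next block in the telescoping of Lemma \ref{lem:charge-to-C}, so this would double count.

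The paper's actual mechanism is a coupling between the two pieces rather than two independent bounds: it shows the middle sum is at most $5(n_{i+1}-2)+1$, and that exceeding $5(n_{i+1}-2)$ forces $\phi(v_{i+1,1},u_{i+1})=6$ and $\phi(v_{i+1,2},u_{i+1})=4$; the equality classification of Lemma \ref{lem:edgesends} then pins down the degrees of $v_i$ and its neighbors ($d(v_i)=5$, $d(v_{i+1,1})=6$, $d(v_{i,n_i-2})=5$, etc.), and feeding these into the equality cases of Lemma \ref{lem:twoedgesends} (Figure \ref{fig:twoedge8}) shows the boundary term drops to $7$, so the total is still $\le 5(n_{i+1}-2)+8$. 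Note also that the exceptional case requires $d(v_i)=5$, which is why the bounds with $f_6,f_7$ and $f_{\ge 8}$ go through cleanly, and in case (ii) the exceptional pattern would force $u_i$ or $u_{i+1}$ to have degree $5$, impossible since vertices of $C$ have degree at least $12$ in $T'$ --- a point your ``entirely parallel argument'' would also need. Without this transfer of slack from the fan sum to the boundary term, the stated constants $f_5=8$ and $g_5=10$ cannot be proved.
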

\begin{proof}
First, we show (i). 
We have $\sum_{j=1}^{n_{i+1}-2} \phi(v_{i+1,j}, u_{i+1}) \leq 5 \cdot (n_{i+1}-2) + 1$ by Lemma \ref{lem:vsends} and Lemma \ref{lem:edgesends}. 
$\sum_{j=1}^{n_{i+1}-2} \phi(v_{i+1,j}, u_{i+1}) \leq 5 \cdot (n_{i+1}-2)$ unless $\phi(v_{i+1,1},u_{i+1}) = 6$ and $\phi(v_{i+1,2},u_{i+1}) = 4$, in which csae 
$d(v_i) = 5$, $d(v_{i+1,1}) = 6$, $d(v_{i,n_i-2}) = 5$, and degree of the vertex that is adjacent to $v_i, v_{i+1,1}, v_{i,n_i-2}$ is $5$ 
%when $\phi(v_{i+1,1},u_{i+1}) = 6$ and $\phi(v_{i+1,2},u_{i+1}) = 4$ 
by Lemma \ref{lem:edgesends}. In this case, $\phi(v_i,u_i) + \phi(v_i,u_{i+1}) = 7$ by Lemma \ref{lem:twoedgesends}.
For all other cases, $\phi(v_i,u_i) + \phi(v_i,u_{i+1}) \leq 8$ by Lemma \ref{lem:twoedgesends}. 
So, $\phi(v_i,u_i) + \phi(v_i,u_{i+1}) + \sum_{j=1}^{n_{i+1}-2} \phi(v_{i+1,j}, u_{i+1}) \leq 5(n_{i+1} - 2) + 8$ holds.

When degree of $v_i$ is at least $6$, $\phi(v_i,u_i) + \phi(v_i,u_{i+1}) \leq f_{d(v_i)}$ by Lemma \ref{lem:twoedgesends}. Therefore, $\phi(v_i,u_i) + \phi(v_i,u_{i+1}) + \sum_{j=1}^{n_{i+1}-2} \phi(v_{i+1,j}, u_{i+1}) \leq 5 \cdot (n_{i+1}-2) + f_{d(v_i)}$.

Second, we show (ii).
We have $\sum_{j=1}^{n_{i+2}-2} \phi(v_{i+2,j}, u_{i+2}) \leq 5 \cdot (n_{i+2}-2) + 1$ by Lemmas \ref{lem:vsends} and \ref{lem:edgesends}. 
$\sum_{j=1}^{n_{i+2}-2} \phi(v_{i+2,j}, u_{i+2}) \leq 5 \cdot (n_{i+2}-2)$ unless $\phi(v_{i+2,1}, u_{i+2}) = 6$ and $\phi(v_{ i+2,2}, u_{i+2}) = 4$. 
Suppose $\phi(v_{i+2,1}, u_{i+2}) = 6$ and $\phi(v_{i+2,2}, u_{i+2}) = 4$. By Lemma \ref{lem:edgesends}, $d(v_{i+2,1}) = 6$, $d(v_i) = 5$ and either $u_i$ or $u_{i+1}$ must be of degree $5$, but this does not happen since the degree of $u_i,u_{i+1},u_{i+2}$ each is at least 12.
$\phi(v_i,u_i) + \phi(v_i,u_{i+1}) + \phi(v_i,u_{i+2}) \leq 10$ by Lemma \ref{lem:threeedgesends}.
So, $\phi(v_i,u_i) + \phi(v_i,u_{i+1}) + \phi(v_i,u_{i+2}) + \sum_{j=1}^{n_{i+2}-2} \phi(v_{i+2,j}, u_{i+2}) \leq 5 \cdot (n_{i+2}-2) + 10$.

When the degree of $v_i$ is at least $6$, we have $\phi(v_i,u_i) + \phi(v_i, u_{i+1}) + \phi(v_i,u_{i+2}) \leq g_{d(v_i)}$ by Lemma \ref{lem:threeedgesends}. Therefore, $\phi(v_i,u_i) + \phi(v_i, u_{i+1}) + \phi(v_i,u_{i+2}) + \sum_{j=1}^{n_{i+2}-2} \phi(v_{i+2,j}, u_{i+2}) \leq 5 \cdot (n_{i+2}-2) + g_{d(v_i)}$.
\end{proof}

\begin{lem}\label{lem:charge-to-C}\showlabel{lem:charge-to-C}
    Let $a, b$ be the number of the integer $i$ $(0 \leq i < n)$ so that (i) or (ii) in Lemma \ref{lem:to-C} holds, respectively.
    The total amount of charge sent from $T - C$ to $C$ by the discharging rules of $\mathcal{R}$ is at most $5k - 2n - b$ if none of our configurations in $\mathcal{K}$ appears in $T$. 
\end{lem}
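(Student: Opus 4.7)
The plan is to apply Lemma \ref{lem:to-C} index by index, using a partition of the indices $\{0,\dots,n-1\}$ that accounts for each edge between $C$ and $T-C$ exactly once, and then to collapse the resulting sum.

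First I would set $S=\{i : v_i=v_{i+1}\}$. By the hypothesis that no vertex of $T-C$ is adjacent to four consecutive vertices of $C$, the set $S$ contains no two consecutive indices (modulo $n$), so $v_i=v_{i+1}=v_{i+2}$ is impossible. Let $I_0=\{i : v_{i-1}\neq v_i\text{ and }v_i\neq v_{i+1}\}$, so $|I_0|=a$ and $|S|=b$, as in the statement. Every index of $\{0,\dots,n-1\}$ lies in exactly one of $I_0$, $S$, or $\{i : v_{i-1}=v_i\}=\{j+1 : j\in S\}$; in particular $a+2b=n$.

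Next, apply Lemma \ref{lem:to-C}(i) at every index $i\in I_0$ and Lemma \ref{lem:to-C}(ii) at every index $i\in S$; this is legal precisely because no configuration of $\mathcal{K}$ appears in $T$. I would then verify the bookkeeping claim that each edge $v_{j,m}u_j$ from $T-C$ to $C$ is counted in exactly one of the left-hand sides. Concretely: case (i) at $i\in I_0$ accounts for the last edge at $u_i$ (the term $\phi(v_i,u_i)$) together with all but the last edge at $u_{i+1}$; case (ii) at $i\in S$ forces $n_{i+1}=1$ and accounts for the last edge at $u_i$, the unique edge at $u_{i+1}$, and all but the last edge at $u_{i+2}$. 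Indices $i$ with $v_{i-1}=v_i$ are skipped, which is consistent because their only edge at $u_i$ has already been covered by the preceding block.

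For the numerical bound, each $i\in I_0$ contributes at most $5(n_{i+1}-2)+8$, using $f_5=8$ as the worst of $f_5,f_6,f_7$ and noting that degree $\geq 8$ yields a strictly smaller bound. Similarly, each $i\in S$ contributes at most $5(n_{i+2}-2)+10$, using $g_5=10$. The multiset of indices $j$ for which $n_j$ appears is exactly $\{0,\dots,n-1\}\setminus\{i+1:i\in S\}$, and each omitted $j=i+1$ (with $i\in S$) satisfies $n_j=1$; hence $\sum_{j\text{ used}}n_j=k-b$. With $a+b$ blocks total, summing gives
\[
5(k-b)-10(a+b)+8a+10b \;=\; 5k-5b-2a \;=\; 5k-5b-2(n-2b) \;=\; 5k-2n-b,
\]
which is the desired bound. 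The main (though routine) obstacle is the edge-counting verification above; the rest is a straightforward combination of the inequalities from Lemma \ref{lem:to-C} with the combinatorial identity $a+2b=n$.
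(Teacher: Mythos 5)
Your proof is correct and follows essentially the same route as the paper's: partition the edges between $C$ and $T-C$ into blocks according to cases (i) and (ii) of Lemma \ref{lem:to-C}, observe that each case-(i) block falls short of the ``5 per edge'' baseline by 2 and each case-(ii) block by 5, and combine with $a+2b=n$. The paper states this more tersely; your version just makes the edge-accounting and the identity $\sum_{j\text{ used}} n_j = k-b$ explicit, and both are verified correctly.
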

\begin{proof}
    We can easily see $a + 2b = n$ holds. We know that the average of the amount of charge sent through each edge is at most $5$ by the two cases in Lemma \ref{lem:to-C}. For case (i), the total amount of charge is smaller by 2 compared to the case when the average charge through each edge is exactly $5$. For case (ii), the total amount of charge is smaller by 5 compared to the case when the average charge through each edge is exactly $5$.
    Therefore, the amount of the total charge sent from $T - C$ to $C$ by the discharging rules of $\mathcal{R}$ is at most $5k - (2a + 5b) = 5k - 2n - b$.
\end{proof}

\begin{proof}[Proof of Lemma \ref{lem:conf-in-T}]
     The amount of initial charge accumulated in $T - C$ is $60 - 20n + 10k$ by Lemma \ref{lem:charge-T-C}.
     The total amount of charge sent from $T - C$ to $C$ by the discharging rules of $\mathcal{R}$ is at most $5k - 2n - b$ by Lemma \ref{lem:charge-to-C}. 
     Thus the sum of charge accumulated in vertices of $T - C$ is positive after applying rules since $(60 - 20n + 10k) - (5k - 2n - b) \geq 5k - 18n + 60 > 0$ by the hypothesis of this lemma. This implies that there must be a vertex of positive final charge in $V(T) - V(C)$, and hence a reducible configuration appears in $T'$. None of our reducible configurations has a vertex of degree at least 12, so one of our reducible configurations must appear in $T$.
\end{proof}

It turns out that Lemma \ref{lem:conf-in-T} is quite useful for other cases, including the planar, the doublecross, and the apex cases. Indeed, the following holds.

\begin{coro}
Let $T$ be an internally 6-connected near-triangulation in the plane and let $C$ be an induced cycle in $T$, which bounds the infinite region of $T$. Let $\mathcal{R}, \mathcal{K}$ be rules, reducible configurations for $T$, respectively. 

Assume that 
\begin{enumerate}
    \item 
    there is no vertex in $V(T -C)$ which is adjacent to at least four consecutive vertices of $C$, 
    \item 
    there are more than $\frac{18}{5} \cdot |C| - 12$ edges in $T$ between $C$ and $T - C$, 
    \item 
    %discharging rules $\mathcal{R}$ for $T$ assume that 
    from $\mathcal{R}$,  every vertex that is in $V(T-C)$ sends charge at most five to any vertex of $C$, 
    \item 
    from $\mathcal{R}$, any vertex in $V(T -C)$ which is adjacent to exactly two consecutive vertices of $C$ sends charge at most 8 together to $C$, 
    \item 
    no vertex in $\mathcal{K}$ is of degree at least 12 in $G'$, and 
    \item 
    from $\mathcal{R}$, any vertex in $V(T -C)$ which is adjacent to exactly three consecutive vertices of $C$ sends charge at most 10 together to $C$. 
\end{enumerate}
 Then, there is a vertex in $V(T-C)$ that ends up with the final charge positive. 
\end{coro}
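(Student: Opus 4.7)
The plan is to replay the proof of Lemma \ref{lem:conf-in-T} almost verbatim, using hypotheses (3), (4), and (6) as drop-in replacements for the per-vertex charge bounds that, in the original projective-plane proof, came from Lemmas \ref{lem:vsends}, \ref{lem:edgesends}, \ref{lem:twoedgesends}, and \ref{lem:threeedgesends}. The only role of those lemmas in that proof was to upper-bound the total charge leaving $V(T-C)$ across $C$, and the abstract hypotheses have been chosen precisely so that this bound still goes through.

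First I would extend $T$ to a near-triangulation $T'$ by inserting vertices and edges into the face of $T$ bounded by $C$ (the infinite region from $T$'s point of view), so that every vertex of $V(C)$ has degree at least $12$ in $T'$ and every newly added vertex has degree at least $5$. Hypothesis (5) ensures that no added high-degree boundary vertex can lie in a configuration of $\mathcal{K}$, so the rules of $\mathcal{R}$ apply coherently without introducing spurious transfers. Then, by the same Euler-formula computation as in Lemma \ref{lem:charge-T-C}, the total initial charge on $V(T)\setminus V(C)$ equals $60-20n+10k$, where $n=|C|$ and $k$ is the number of edges of $T$ between $C$ and $T-C$.

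Next I would bound the total outflow from $V(T)\setminus V(C)$ to $V(C)$. Label the vertices of $C$ cyclically as $u_0,\ldots,u_{n-1}$ and for each $i$ let $v_i\in V(T-C)$ be the third vertex of the triangle $u_iu_{i+1}v_i$. Hypothesis (1) forbids $v_{i-1}=v_i=v_{i+1}$, so each distinct such $v_i$ is adjacent either to exactly two consecutive vertices of $C$ (say $a$ of them in total) or to exactly three consecutive vertices of $C$ (say $b$ of them), with $a+2b=n$ because every edge $u_iu_{i+1}$ sits in a unique interior triangle. The naive per-edge bound of $5$ from hypothesis (3) yields a crude total outflow of $5k$. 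Hypothesis (4) saves at least $2$ at each of the $a$ two-adjacent vertices (replacing $5+5=10$ by $\le 8$), and hypothesis (6) saves at least $5$ at each of the $b$ three-adjacent vertices (replacing $5+5+5=15$ by $\le 10$); the ``middle'' neighbors of any $u_i$, which are adjacent to only one vertex of $C$, require only the crude bound. Summing, the total outflow is at most $5k-2a-5b=5k-2n-b$, mirroring Lemma \ref{lem:charge-to-C}.

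Finally, subtracting the outflow from the initial charge leaves the total final charge on $V(T)\setminus V(C)$ at least
\[
(60-20n+10k) - (5k-2n-b) \;=\; 60-18n+5k+b \;\ge\; 5k-18n+60,
\]
which is strictly positive by hypothesis (2). Since any edge of $T'$ from $V(T)\setminus V(C)$ to a vertex added outside $C$ would have to cross $C$ and therefore does not exist, the charge bookkeeping on $V(T)\setminus V(C)$ is unaffected by the extension, so some vertex of $V(T)\setminus V(C)$ must end with positive final charge, as required. The one delicate point is the accounting in step three: one must carefully align the shared-vertex convention from Lemma \ref{lem:to-C} with the abstract hypotheses so that each boundary edge and each savings term is counted exactly once. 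This is a straightforward translation of the projective-plane count into the abstract setting, but it is the only place where the argument could reasonably go astray.
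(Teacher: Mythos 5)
Your proposal is correct and follows essentially the same route as the paper: the corollary is stated there precisely as an abstraction of the proof of Lemma \ref{lem:conf-in-T}, and your argument replays that proof (the Euler-formula charge count of Lemma \ref{lem:charge-T-C}, the $a+2b=n$ bookkeeping of Lemma \ref{lem:charge-to-C}, and the final inequality $60-18n+5k+b>0$) with hypotheses (3), (4), and (6) substituting for Lemmas \ref{lem:vsends}--\ref{lem:threeedgesends}. The accounting in your third step is aligned correctly with the paper's shared-vertex convention, so no gap remains.
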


% ====================================================
\section{Cuts of size 6 or 7}
\label{sect:6,7-cut}\showlabel{sect:6,7-cut}
% ====================================================

Let us recall that $G$ is a minimal counterexample and $G'$ is the dual of the embedding of $G$. We also assume that the representativity of the embedding of both $G$ and $G'$ is at least three. In this section, we will show that one side of $G$ divided by a 6-edge-cut or a 7-edge-cut is relatively small. By Lemma \ref{minc}, we know that one side of $G$ divided by a cut of order at most five is small.
Let us observe that edges in a cut of $G$ correspond to a circuit $C$ in $G'$. Throughout this section, we shall consider $G'$. The goal of this section is to prove Lemma \ref{lem:6,7-cut} that roughly says that the disk bounded by a circuit of length six or seven contains at most three or four vertices (strictly inside). 

The proof of this lemma needs a lot of case analysis, unfortunately. 
The problem is that when we find a $C$-reducible configuration $K$ inside the circuit $C$ in $G'$, after contractions in $K$, it is still possible that the resulting graph would end up with  $K_6$, after low-vertex cut reductions. There are a lot of cases we have to handle, and for some cases, we do need some computer checks, see Subsection \ref{sect:ccheck}. 

To this end, we first handle conf(1), which is a special case of Lemma \ref{lem:6,7-cut}, because the ring of conf(1) is of length exactly six, and conf(1) consists of four vertices (which would violate Lemma \ref{lem:6,7-cut} below). The configuration conf(1) has two different contractions of size exactly six for C-reducibility. We prove that $G'$ would not result in $K_6$ after one of the contractions.   
%The ring size of conf(1) is $6$, which can be regarded as a special cycle of length $6$.

% list0001.conf
\begin{figure}
    \centering
    \includesvg[height=4cm]{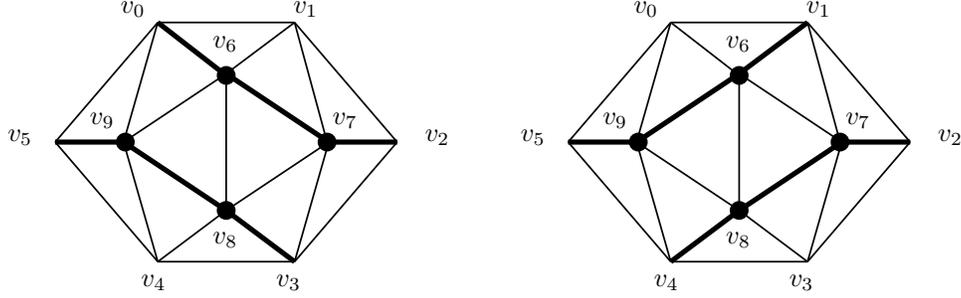}
    \caption{A free completion of $K$ with ring, where $K$ is a configuration that consists of four vertices of degree five. The bold line represents the contraction used for C-reducibility.}
    \label{fig:5555}
\end{figure}

\begin{lem}\label{lem:5555}\showlabel{lem:5555}
    If conf(1) appears in $G'$, then after some edge contraction and low-vertex cut reductions, $G'$ results in a graph in $\mathcal{P}_1 \setminus \mathcal{P}_0$.
\end{lem}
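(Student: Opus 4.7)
The plan is to work locally around conf(1) in $G'$. Write $K$ for conf(1); its ring has size $6$, and the statement of the lemma implicitly uses that $K$ admits two distinct contraction sets $X_1, X_2$ of size $6$ witnessing C-reducibility. If $\langle G'/X_i\rangle \neq K_6$ for at least one $i\in\{1,2\}$, then by Lemma \ref{reduc1} the graph $G \dotdiv c(I(K))$, where $I(K)$ is the island of $K$, lies in $\mathcal{P}_1\setminus\mathcal{P}_0$, and the lemma follows. So I assume for contradiction that \emph{both} contractions yield $K_6$ after the sequence of low-vertex cut reductions.

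The first move is to dispose of the easy escape. If any of the D-reducible configurations shown in Figure \ref{fig:5555conf_pre} appears in $G'$ (each is obtained from conf(1) by adding one further vertex of degree at most $6$), then one may use that larger configuration instead of conf(1) itself. Since these configurations are D-reducible, no edge contractions are needed, the reduction $G \dotdiv c(I)$ is trivially safe, and we are done. So I may assume none of these extensions appears in $G'$. This forces lower bounds on the degrees of the ring $R$ and of its immediate exterior: whenever adjoining a vertex of degree $\le 6$ to the ring would produce one of the five pictures in Figure \ref{fig:5555conf_pre}, the corresponding vertex must actually have degree at least $7$. These local constraints will be the input for the structural step.

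The main step is a structural analysis of what $\langle G'/X_i\rangle = K_6$ imposes. Observe that $K_6$ has exactly $6$ vertices of degree $5$ and $15$ edges, and is embedded on the projective plane essentially uniquely. Contracting $X_i$ fuses the four inner degree-$5$ vertices of $K$ with some subset of ring vertices of $R$, after which low-vertex cut reductions can only peel off planar pieces across $(\le 3)$-vertex cuts (constrained by Lemma \ref{minc} and Fact \ref{fact:cont-cycle}). Thus the $6$ ring vertices of $R$, together with the at most $2$ extra vertices produced by the contraction, must collapse onto $V(K_6)$. I would enumerate the folding patterns of $R$ onto the hexagonal face of $K_6$ compatible with the planar embedding of $K$ and the cyclic order on $R$ (there are only a handful, one per automorphism class of the action of each $X_i$ on $R$). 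For each pattern, I show that the degree lower bounds from the previous paragraph, combined with the local configuration data of $K$, force one of the forbidden D-reducible extensions of Figure \ref{fig:5555conf_pre} to appear, contradicting our standing assumption; in the remaining cases, $G'$ would contain a separating cycle forbidden by Fact \ref{fact:cont-cycle} or would itself already be the dual of the Petersen graph, contradicting $G \in \mathcal{P}_1\setminus\mathcal{P}_0$.

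The main obstacle will be the bookkeeping in the last step: $X_1$ and $X_2$ act differently on $R$, so one has to check simultaneously that at least one of them fails to produce $K_6$. The saving grace is the rigidity of $K_6$ on the projective plane, which pins down the cyclic order of the identified ring vertices, so that the compatibility requirement between the planar orientation of $K$ and the collapse pattern of $X_i$ leaves only a very short list of configurations to verify by hand.
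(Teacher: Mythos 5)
Your opening moves coincide with the paper's: you take the two size-six contraction sets of conf(1), and you use the D-reducible extensions of Figure \ref{fig:5555conf_pre} to force a ring vertex of degree at least $7$ (this is exactly Claim \ref{clm:deg7}). From there, however, your ``main step'' is a plan rather than an argument, and the plan as stated has a concrete flaw. You assert that after contracting $X_i$ the six ring vertices of $R$, plus the two super-vertices created by the contraction, ``must collapse onto $V(K_6)$.'' That is not true: low-vertex cut reductions can \emph{delete} ring vertices outright (a ring vertex caught on the small side of a $2$- or $3$-vertex cut of $G'/X_i$ is peeled off, not identified with anything), so the enumeration of ``folding patterns of $R$ onto the hexagonal face of $K_6$'' does not capture all the ways $\langle G'/X_i\rangle$ could become $K_6$. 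The actual proof has to track, for each candidate $2$- or $3$-cut arising from a path in the exterior $H''$ together with contracted edges, which vertices get deleted versus identified; this is where essentially all the work lies, and it is absent from your proposal.

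The second gap is in your claimed trichotomy of outcomes for the residual cases. The paper's argument does not close every branch by finding a Figure \ref{fig:5555conf_pre} extension or a cycle forbidden by Fact \ref{fact:cont-cycle}; instead it fixes a vertex $v_0$ of degree $\ge 7$, first rules out the chords $v_0v_2$ and $v_0v_3$ (your proposal never establishes such non-adjacency facts, which are needed to control the cuts), and then shows that the degree of $v_0$ in $\langle G'/C_i\rangle$ cannot drop to $5$ except in two completely determined graphs (Figure \ref{fig:5555case}); in those, one exhibits either a degree-$4$ vertex (contradicting Lemma \ref{minc}) or an occurrence of conf(3), a \emph{C-reducible} configuration with contraction size~$1$ that is used in place of conf(1). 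Your fallback ``$G'$ would itself already be the dual of the Petersen graph'' does not occur, and nothing in your sketch anticipates having to switch to a different reducible configuration in the terminal cases. So while the skeleton is right, the proof of the lemma is essentially the case analysis you have deferred, and the deferral rests on a premise (survival of all ring vertices) that fails.
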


\begin{proof}
    The two contractions for C-reducibility of conf(1) are 
    $C_1 = \{v_0v_6, v_6v_7, v_7v_2, v_5v_9, v_9v_8, v_8v_3\}$ or $C_2 = \{v_1v_6, v_6v_9, v_9v_5, v_2v_7, v_7v_8, v_8v_4\}$ in Figure \ref{fig:5555}.
    We may assume that all vertices of the ring are different since otherwise, the representativity of the embedding fo $G'$ would result in at most two by contracting edges of $C_1$ or $C_2$, which in turn means that the resulting graph would not be in $\mathcal{P}_1 \setminus \mathcal{P}_0$.

    We show the following two claims. 
    \begin{claim}\label{clm:deg7}\showlabel{clm:deg7}
        At least one of $v_0, v_1$ is of degree at least 7.
    \end{claim}
    \begin{proof}
        The configurations shown in Figure \ref{fig:5555conf} are D-reducible, which can be safely reduced. If both of the degrees of $v_0, v_1$ are at most six, one of these configurations appears. We use the reducibility of these configurations, instead of conf(1) in this case. 
    \end{proof}
    We may assume that the degree of $v_0$ is at least $7$. 
    \begin{claim}\label{clm:v0v2-v0v3}\showlabel{clm:v0v2-v0v3}
        None of the edges $v_0v_2, v_0v_3$ exists.
    \end{claim}
    \begin{proof}
        Suppose that $v_0v_i$ ($i=2,3$) exists. Then, the cycle $v_0v_6v_7v_i$ exists. If the cycle is contractible, that would contradict Lemma \ref{minc}. If the cycle is noncontractible, the representativity of the resulting embedding of $G' / C_1$ would result in at most 2, which certifies $\langle G' / C_1 \rangle \neq K_6$.
    \end{proof}
    
    In order to finish our proof, we need to consider the following two cases.
    \begin{enumerate}
        \item \textbf{The degree of $v_0$ is $7$.}\\
        We first contract edges of $C_2$. If a cycle of length two or three that gets rid of $v_0$ in $G' / C_2$ exists, edges of the cycle constitute a contractible cycle of length five or six, with $v_1v_6, v_6v_9, v_9v_5$ in $G'$. This implies that there is a contractible cycle of length four or five that contains $v_0v_1, v_0v_5$.  In this case, the degree of $v_0$ must be of at most six by Lemma \ref{minc}, which is a contradiction. Therefore, $v_0$ is not deleted in $G' / C_2$. Unless $v_0$ is in some 2,3-cycle in $G' / C_2$, the degree of $v_0$ is exactly $4$ in $\langle G' / C_2 \rangle$, which certifies $\langle G' / C_2 \rangle \neq K_6$.  Assume that $v_0$ is in some 2,3-cycle in $G' / C_2$. A set of edges of the cycle constitutes a contractible cycle of length five or six, with $v_2v_7, v_7v_8, v_8v_4$ in $G'$, so the distance between $v_0$ and $v_2$ is one or two. The case when the distance is exactly one would contradict Claim \ref{clm:v0v2-v0v3}, so assume that the distance is exactly two. There is a cycle $v_0v_6v_7v_2w$ where $w$ is the vertex so that contractible cycle $v_2v_7v_8v_4v_0w$ exists in $G'$.
        The cycle $v_0v_6v_7v_2w$ is noncontractible since otherwise, this would contradict Lemma \ref{minc}. 
        
        We now contract edges of $C_1$. The representativity of the resulting embedding of $G' / C_1$ is at most two, so $\langle G' / C_1 \rangle \neq K_6$.
        \item \textbf{The degree of $v_0$ is at least $8$.}\\
        We contract edges of $C_1$. Suppose that $v_0$ is in a contractible cycle of length five or six, that contains edges $v_3v_8,v_8v_9,v_9v_5$. By Claim \ref{clm:v0v2-v0v3}, we only need to consider the case when the cycle $v_0v_5v_9v_8v_3w$ exists, where $w$ is the vertex so that contractible cycle $v_5v_9v_8v_3wv_0$ exists. If the cycle is contractible, the cycle $v_0v_6v_8v_3w$ would contradict Lemma \ref{minc}. Otherwise, the representativity of the resulting embedding of $G'$ is at most two, which is a contradiction.
        
        The vertex $v_0(,v_2)$ is adjacent to at least four(, two) vertices outside $S=\{v_0, \dots, v_9\}$ since the degree of $v_0(, v_2)$ is at least eight(, five). Hence the degree of $v_0$ in $G' / C_1$ is at least eight. Suppose $\langle G' / C_1 \rangle = K_6$. Then a contractible cycle of length five or six that contains edges of $v_0v_6, v_6v_7, v_7v_2$ in $G'$, which would result in a cycle of length two or three in $G' / C_1$ that would reduce the degree of $v_0$, exists.
        
        When the length of the cycle is six, the cycle of length five that contains $v_0v_1, v_1v_2$ exists. By Lemma \ref{minc}, we only need to consider the following cases: (1) two edges that incidents with vertices of the cycle exist, or (2) a vertex that is adjacent to all vertices on the cycle. 
        In case (1), at most one of the two edges incident with $v_0$, or $v_2$ exists, since the degree of $v_1$ in $G'$ is at least five. 
        Therefore, the degree of $v_0$ in $\langle G' / C_1 \rangle$ is at least six.
        In case (2), it is possible that the degree of $v_0$ in $\langle G' / C_1 \rangle$ is five. That would occur when the degree of $v_0, v_2$ in $G'$ is exactly $8, 5$ respectively. Assuming $\langle G' / C_1 \rangle = K_6$, $G'$ is one of the following graphs in Figure \ref{fig:5555case}. The vertex of degree exactly four or, the configuration conf(3), whose contraction size is 1, appears in $G'$. In the second case, we use the configuration instead of conf(1).
        
        Finally, when the length of the cycle is five, the vertex $w$ that is adjacent to $v_0,v_1,v_2$ exists by Lemma \ref{minc}, since the degree of $v_1$ in $G'$ is at least five. In this case, the degree of $v_1$ in $\langle G' / C_1 \rangle$ is at least six, which certifies $\langle G' / C_1 \rangle \neq K_6$.
    \end{enumerate}
\end{proof}

\begin{figure}[htbp]
  \tikzset{deg5/.style={thick, circle, draw, fill=black, inner sep=1.5pt,}}
  \tikzset{deg6/.style={thick, circle, draw, fill=black, inner sep=0pt,}}
  % \begin{figure}[htbp]
%   \tikzset{deg5/.style={thick, circle, draw, fill=black, inner sep=1.5pt,}}
%   \tikzset{deg6/.style={thick, circle, draw, fill=black, inner sep=0pt,}}

\begin{tabular}{ccc}
% 5-55566
\begin{minipage}[t]{0.3\hsize}
\centering
\begin{tikzpicture} [baseline=1.5cm]
    \node [deg5] at (0.5, 0) (a) {};
    \node [deg5] at (1.5, 0) (b) {};
    \node [deg5] at (0, 0.8) (c) {};
    \node [deg5] at (1.0, 0.8) (d) {};
    \node [deg6] at (2.0, 0.8) (e) {};
    \node [deg6] at (1.0, 1.8) (f) {};
    \foreach \u / \v in {a/b, a/c, a/d, b/d, b/e, c/d, c/f, d/e, d/f, e/f}
        \draw (\u) -- (\v);
\end{tikzpicture} 
\end{minipage} &
% 5-55556
\begin{minipage}[t]{0.3\hsize}
\centering
\begin{tikzpicture} [baseline=1.5cm]
    \node [deg5] at (0.5, 0) (a) {};
    \node [deg5] at (1.5, 0) (b) {};
    \node [deg5] at (0, 0.8) (c) {};
    \node [deg5] at (1.0, 0.8) (d) {};
    \node [deg5] at (2.0, 0.8) (e) {};
    \node [deg6] at (1.0, 1.8) (f) {};
    \foreach \u / \v in {a/b, a/c, a/d, b/d, b/e, c/d, c/f, d/e, d/f, e/f}
        \draw (\u) -- (\v);
\end{tikzpicture}
\end{minipage} &
% 5-55555
\begin{minipage}[t]{0.3\hsize}
\centering
\begin{tikzpicture} [baseline=1.5cm]
    \node [deg5] at (0.5, 0) (a) {};
    \node [deg5] at (1.5, 0) (b) {};
    \node [deg5] at (0, 0.8) (c) {};
    \node [deg5] at (1.0, 0.8) (d) {};
    \node [deg5] at (2.0, 0.8) (e) {};
    \node [deg5] at (1.0, 1.8) (f) {};
    \foreach \u / \v in {a/b, a/c, a/d, b/d, b/e, c/d, c/f, d/e, d/f, e/f}
        \draw (\u) -- (\v);
\end{tikzpicture}
\end{minipage} \\
\end{tabular}

% \caption{The reducible configurations used in the proof of Lemma \ref{lem:5555}.}
% \label{fig:5555conf}
% \end{figure}
  \caption{The reducible configurations used in the proof of Lemma \ref{lem:5555}.}
  \label{fig:5555conf}
\end{figure}
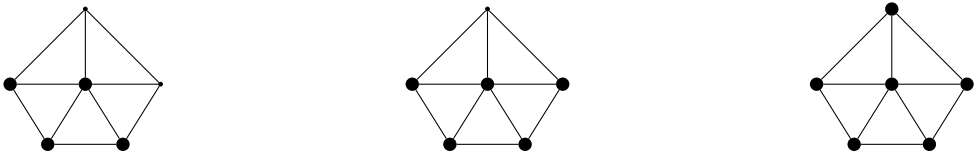

\begin{figure}[htbp]
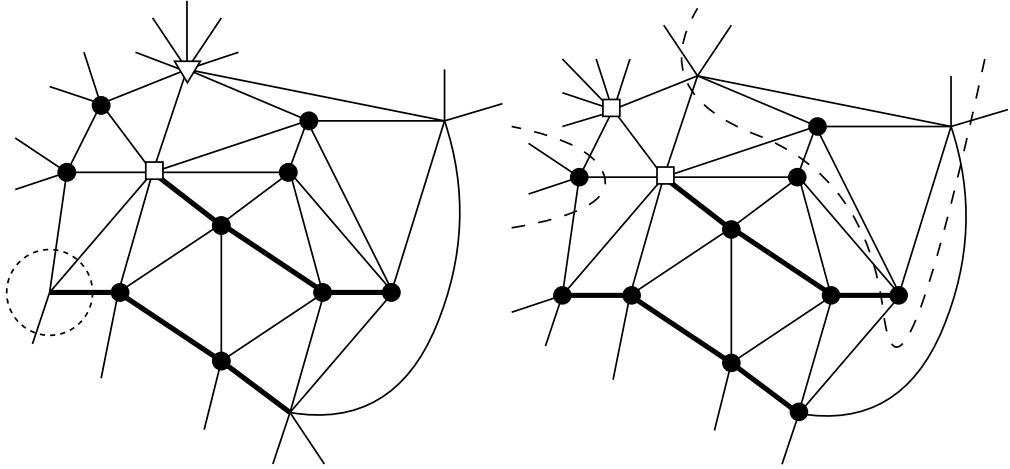

  \centering
  \includesvg[width=0.4\hsize]{./img/largecont/5555case1.svg}
  \includesvg[width=0.4\hsize]{./img/largecont/5555case2.svg}
  \caption{Two cases that conf(1) becomes $K_6$ after contraction. Each half-edge is connected to the half-edge on the opposite side, which crosses the crosscap. The bold line represents edges to be contracted. The dotted line shows the vertex of degree 4 in the left picture and conf(3) in the right picture.}
  \label{fig:5555case}
\end{figure}

\begin{lem}\label{lem:6,7-cut}\showlabel{lem:6,7-cut}
    Let $C$ be a separating circuit of length $l\in\{6, 7\}$ in $G'$ bounding a disk $D$. Suppose that no edge in $G'[C]$ inside $D$ is separating vertices inside $D$.
    \begin{itemize}
        \item If two of the connected components of $G' - C$ have at least $l-2$ vertices each, then a reducible configuration $K$ appears in $G'$ and the graph obtained from $G'$ by contracting edges of $c(K)$ is in $\mathcal{P}_1 \setminus \mathcal{P}_0$.
        \item If at most $l-3$ vertices are strictly inside the disk $D$, then $D$ is one of the graphs in Figure \ref{fig:6cut} ($l=6$) or Figure \ref{fig:7cut} ($l=7$). 
    \end{itemize}
\end{lem}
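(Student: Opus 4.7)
I will treat the two bullets separately: the large-interior bullet will be driven by Lemma~\ref{lem:conf-in-T} applied to the disk $D$, and the small-interior bullet will be handled by direct enumeration.

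For the large-interior bullet, I let $T$ denote the near-triangulation consisting of $C$ together with the interior of $D$, and aim to apply Lemma~\ref{lem:conf-in-T} to $T$. The first two hypotheses---$C$ induced in $T$, and no interior vertex adjacent to four consecutive vertices of $C$---follow from Lemma~\ref{minc} together with the non-separating-chord hypothesis of the lemma: any chord or short sub-cycle with two non-trivial sides would contradict minimality via Fact~\ref{fact:cont-cycle}, and any chord that merely bounds an empty sub-disk can be shortcut without loss. The remaining hypothesis requires $k>\tfrac{18}{5}l-12$, where $k$ is the number of edges between $C$ and $T-C$; Euler's formula combined with the minimum-degree-$5$ bound on interior vertices will give this inequality in every subcase except the extremal regime in which the interior has exactly $l-2$ vertices all of degree exactly~$5$. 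In that corner case conf(1) is directly visible inside $D$, and I will invoke Lemma~\ref{lem:5555} together with the D-reducible configurations of Figure~\ref{fig:5555conf_pre} to exhibit a reducible $K\in\mathcal{K}$ by hand. In all other subcases Lemma~\ref{lem:conf-in-T} supplies such a $K$ whose positively-charged vertex lies strictly inside~$D$.

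Once $K$ is produced strictly inside $D$, I will conclude $G'/c(K)\in\mathcal{P}_1\setminus\mathcal{P}_0$ by the following topological observation. The contraction set $c(K)$ is supported entirely in $D$, so the M\"obius-band complement of $D$ is untouched by the contraction and still carries the full nonplanar topology of $G'$. That complement contains at least $l+(l-2)=2l-2\geq 10$ vertices forming a nonplanar subgraph, and the $2$- and $3$-vertex-cut reductions encoded in $\langle\cdot\rangle$ can only strip off planar side-pieces of such cuts. Hence $\langle G'/c(K)\rangle$ retains this nonplanar portion, has strictly more than six vertices, and so differs from $K_6$; dually the cubic graph is not the Petersen graph, certifying membership in $\mathcal{P}_1\setminus\mathcal{P}_0$.

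For the small-interior bullet ($n\leq l-3\leq 4$), I will enumerate by hand all near-triangulations of a disk of perimeter $l\in\{6,7\}$ with at most $l-3$ interior vertices, subject to minimum interior degree $\geq 5$, the short-cycle prohibitions of Fact~\ref{fact:cont-cycle}, and the non-separating chord hypothesis. The candidate list is short and matches exactly the graphs in Figures~\ref{fig:6cut} and~\ref{fig:7cut}. The hardest step of the whole lemma will be the tight edge-count regime in the large-interior case: the discharging bound in Lemma~\ref{lem:conf-in-T} saturates precisely when the interior is conf(1)-shaped, and closing that gap requires the bespoke case analysis of Lemma~\ref{lem:5555} and its D-reducible companions rather than the generic discharging machinery of Lemma~\ref{lem:conf-in-T} alone.
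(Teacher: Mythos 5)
Your strategy for producing the configuration $K$ (apply Lemma~\ref{lem:conf-in-T} to the disk side, dispose of the degenerate cases by Lemma~\ref{minc} and the chord hypothesis, and treat conf(1) separately via Lemma~\ref{lem:5555}) matches the skeleton of the paper's cases 1--4 in Section~\ref{subsect:common-6,7cut}, and your enumeration for the small-interior bullet is what the paper does. But the heart of the lemma is the safety claim that $G'/c(K)$, after low-vertex-cut reductions, lies in $\mathcal{P}_1\setminus\mathcal{P}_0$, and your ``topological observation'' for that step does not work. First, ``retains a nonplanar portion'' cannot certify that the result is not $K_6$: the graph $K_6$ \emph{is} the nonplanar dual of the Petersen graph on the projective plane, so preserving nonplanarity is exactly compatible with collapsing to the forbidden outcome. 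Second, the low-vertex-cut reductions are \emph{not} confined to stripping pieces of the disk $D$. A $2$- or $3$-vertex cut in $G'/c(K)$ can be formed by a contracted path inside $D$ concatenated with a path or chord living in the M\"obius-band side $H''$; such a mixed cut deletes vertices of $C$ and of $G'-H'$, and contraction can also identify non-adjacent vertices of $C$ with one another. This is precisely why the paper needs Claim~\ref{clm:chord-H''}, the distance-combination enumeration of Table~\ref{table:comp67cut} with the computer-checked Claim~\ref{clm:comp-cut}, the counting Claims~\ref{clm:rm4or6}--\ref{clm:5cycK6}, and the full case analyses of Sections~\ref{subsect:6-cut} and~\ref{subsect:7-cut}; in case (e) of the $6$-cut analysis the graph really can head toward $K_6$ and is only rescued because $G$ then belongs to $\Gamma_3^k$, which was handled in Section~\ref{sect:smallcont}. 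You also omit the loop/bridge issue (Claims~\ref{clm:loop-6,7cycle} and~\ref{clm:loopless}), which is the one place the minimality of $C$ is actually used.

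A secondary inaccuracy: the extremal regime is not where the edge-count bound of Lemma~\ref{lem:conf-in-T} saturates on conf(1). For conf(1) one has $k=10>\tfrac{18}{5}\cdot 6-12$, so the discharging lemma applies and \emph{finds} conf(1); the difficulty with conf(1) is that its contraction set has size six and the post-contraction safety analysis is delicate, which is why Lemma~\ref{lem:5555} exists. The cases where the inequality of Lemma~\ref{lem:conf-in-T} fails are the paper's cases 1--3, and they all land in the second bullet (at most $l-3$ interior vertices), not in a conf(1)-shaped interior. So the proposal has the right opening moves but is missing the main content of the proof, namely the control of mixed interior/exterior low cuts after contraction.
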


\begin{figure}[htbp]
  \centering
  \includesvg[width=0.65\hsize]{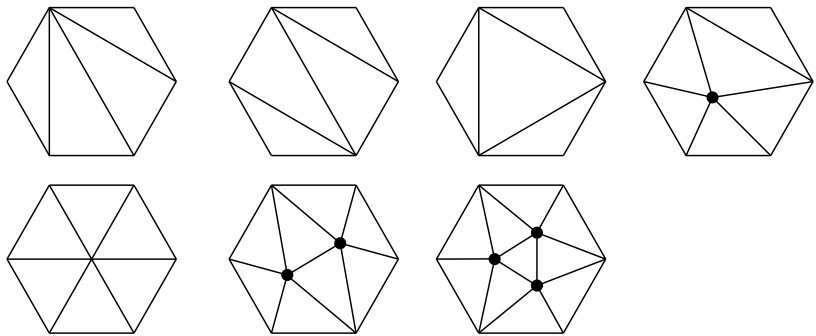}
  \caption{The disks bounded by a separating circuit of length six.}
  \label{fig:6cut}
\end{figure}

\begin{figure}[htbp]
  \centering
  \includesvg[width=0.7\hsize]{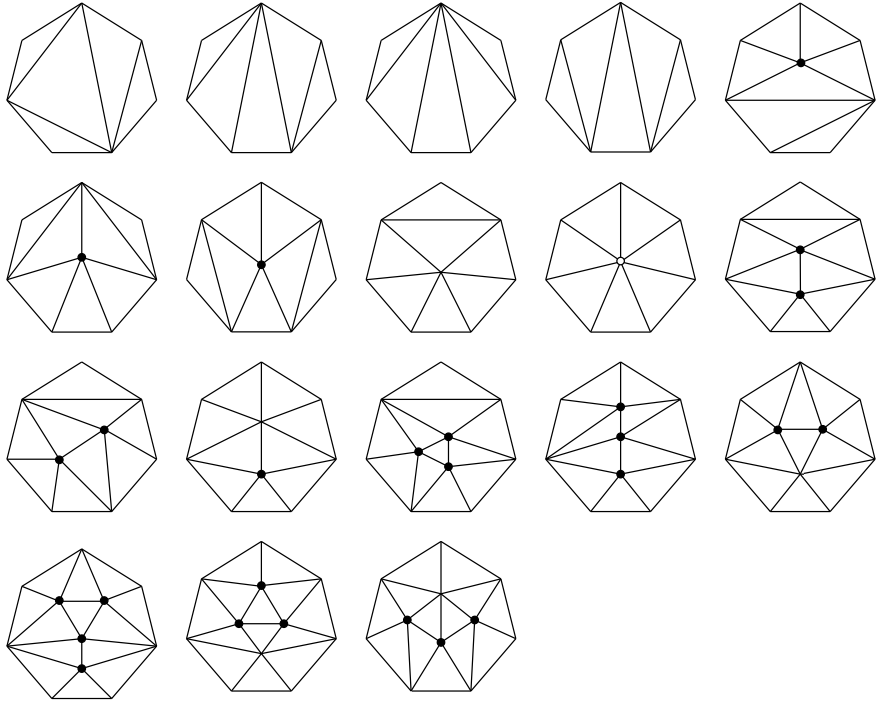}
  \caption{The disks bounded by a separating circuit of length seven.}
  \label{fig:7cut}
\end{figure}

Lemma \ref{lem:6,7-cut} states that one component of $G'$ divided by a circuit of length $l=6,7$ has at most $l-3$ vertices. Indeed, if the component is embedded inside the disk, we have the precise characterization. 
%We exclude the trivial case by the condition that no proper subset of $C$ is separating.
In the rest of this section, we denote vertices of $C$ by $u_i$ $(0 \leq i < l)$.

\subsection{Computer check}\label{sect:ccheck}\showlabel{sect:ccheck}
% computer-check

Now we explain the use of computer in the proof of Lemma \ref{lem:6,7-cut}. In our computer check, we assume that $C$ is a cycle such that the disk bounded by $C$ is minimal (in terms of the number of vertices contained in this disk) while satisfying the first condition of Lemma \ref{lem:6,7-cut} (i.e., at least $l-2$ vertices in each component of $G'-C$). Note that the length of $C$ can be six or seven. The minimality condition implies all cycles of length $k$ (where $k=6$ or $7$) in the disk bounded by $C$ bounds the disk with at most $k-3$ vertices strictly inside.
We use Lemma \ref{lem:conf-in-T} to find a reducible configuration $W$ in the disk bounded by $C$ in $G'$ later in this section, but we have to be careful not to get $K_6$ in $G'$ after contractions in $K$ (when $K$ is C-reducible) and the low-vertex cut reductions in the resulting graph. 
We want to know whether or not low-vertex cut reductions that get rid of vertices outside the disk bounded by $C$ occur.
In order to figure out, we check the resulting distance between any two vertices of $C$ in the disk after contractions in $K$. If the distance of some two vertices of $C$ is 0 or 1, such low-vertex cut reductions may happen.
When some two vertices of $C$ become identified with each other after contractions in $K$, these two vertices are in the ring of $K$.
When some vertices on $C$ become adjacent after contractions in $K$, these two vertices are in the ring of $K$, or one of them is in the ring, and the other is adjacent to a vertex of the ring. 

Let us explain how to check the argument above, using a specific example. In fact, this example corresponds to (6cut-1) enumerated in Table \ref{table:comp67cut}. We use $d_G(\cdot,\cdot)$ to denote the distance between two vertices in a graph $G$. We sometimes omit $G$ when the underlying graph $G$ is clear.
Let us assume $|C|=6$ and $d(u_0, u_2) = d(u_3,u_5) = 0$ after contraction in $K$. 
Note that these four vertices belong to a ring of $K$.
For each configuration $K \in \mathcal{K}$, we enumerate four vertices $a, b, c, d$ in the ring such that $a, b, c, d$ are in the clockwise order listed in the ring of $K$, and $d(a, b) = d(c, d) = 0$ after contracting edges of $c(K)$.
In this case, we have several constraints that two vertices of the ring of $K$ are connected by a path of specified length outside $K$ (e.g. a path of length two or four between $a$ and $b$ exists). We check by computer whether or not all of such constraints are valid. 

In doing so, we now explain two criteria to ensure validity of the process.
Let $K$ be a configuration. Let $S$ be a free completion of $K$ with the ring. We assume that a path $P$ of length $k$ between two vertices $u, v$ in the ring (and also in $C$) exists outside $K$. The path $P$ is the part of $C$. There are two paths that connect $u$ and $v$ along the ring of $K$. One of them constitutes the cycle with $P$ that bounds the disk where no vertex of the configuration $K$ exists strictly inside. We denote the path by $Q$ and $q = |Q|$.
The symbol $R$ denotes one of the shortest paths between $u$ and $v$ in $S$ and $m = |R|$. We consider two cycles $D = C - P + Q$ and $E = P + R$. 
% Note that if either $D$ or $E$ is not a cycle but a circuit, we can easily obtain a separating cycle of length at most five, which would contradict Lemma \ref{minc}. 

Two criteria we use are the following.

\begin{itemize}
    \item The cycle contained in $D$ or $E$ contradicts Lemma \ref{minc}.
    \item The cycle contained in $D$ or $E$ contradicts the minimality of $C$, but we do not apply it if the length of $C$ is six, and the length of $D$ or $E$ is seven.
    % The cycle contained in $D$ or $E$ is of the length of $k(=6,7)$ and bounds the disk $D'$ that has a smaller number of the vertices strictly inside than that of the disk bounded by $C$, but at least $k-2$ vertices are strictly inside the disk $D'$, which contradicts the minimality of $C$.
\end{itemize}
By definition, $D$ is a cycle. 
When $E$ is not a cycle but a circuit, it is a contractible circuit, so it contains several cycles. The length of such cycles is at most $|E|-2$.
In the case where $|E|=7$ and the length of such cycle is five, at most one vertex is strictly inside the disk bounded by $E$ by Lemma \ref{minc}. In other cases (i.e. $|E| \leq 6$), the length of the cycle is at most four, so no vertex is strictly inside the disk bounded by $E$. Hence, we can get a contradiction by assuming $E$ is a cycle and counting the number of vertices in the disk bounded by $E$, so we assume $D$ and $E$ is a cycle later.

In the second criterion, we add the condition that the length of $C$ is not six, or the length of $D$ or $E$ is not seven. We need this condition in the first part of the proof in Section \ref{subsect:7-cut}.

We now explain the two criteria more precisely. We call the subroutine \emph{forbiddenCycle}($u$, $v$, $k$). The subroutine returns true if and only if there is a cycle that leads to a contradiction.
\begin{itemize}
    \item When $q = k$, it is possible $P$ is $Q$, so return false.
    \item When $q < k$, the length of $D$ is shorter than that of $C$. Since the length of $C$ is $l$, the length of $D$ is at most $l-1 \leq 6$. The size of components divided by $D$ is at least four since $D$ includes all vertices of the configuration $K$ in one component and the size of the other component is larger than that of the component divided by $C$, which is at least $l-2 \geq 4$. 
    When $l=6$, $D$ contradicts Lemma \ref{minc}. When $l=7$, $D$ contradicts the minimality of $C$.
    \item When $q > k$, we use $\Delta$ to denote the disk bounded by $E$. We check whether or not the order of $E$ is at most $l \leq 7$. Also, we calculate the size of vertices strictly inside $\Delta$.
    Let $s$ and $t$ be the number of vertices in $\Delta$ that are in the ring of $K$, and in $K$ respectively. At least $\lceil (s-(k-1))/2 \rceil+t$ vertices are in $\Delta$ since some vertices in the ring may belong to $P$, and a vertex in the ring may be same as some other vertex in the ring. The order of the other component divided by $E$ is larger than that of a component divided by $C$, which is at least $l-2$. The cycle $E$ would contradict Lemma \ref{minc} or the minimality of $C$ unless $C=E$ depending on the order of $E$ calculated. 
%    Unless $R$ is the path that is different from $Q$, but connects $u,v$ in the ring, $E$ is contained in the disk bounded by $C$. The cycle $E$ contradicts the minimality of $C$ depending on the size calculated.
    % It is a very loose lower bound, so it sometimes tells us a non-valid case is valid. We detect the case where ($s=2,3,m=3,k=2$ or $s=2,3,s+t\geq4,m=4,k=2$ or $k=1,q=2$) and all vertices of $Q$ except $a,b$ incident with at most four vertices of the ring of $K$ and $K$. In this case, we can find a cycle that contradicts Lemma \ref{minc} or Lemma \ref{lem:6,7-cut} for $l=6$ whether the vertex of $P$ is in the ring or not. 
\end{itemize}

We now explain another case when the criteria can be applied. 
Let $K$ be a configuration, and $S$ be a free completion of $K$ with the ring. Let $u,v$ be two vertices in the ring. We assume $u \in V(C), v \not \in V(C)$, and $v$ is adjacent to $w$, which is not in $S$ but in $C$. We also assume that a path $P$ of length $k$ between two vertices $u, w$ exists outside $K$. The path $P$ is the part of $C$. There are two paths that connect $u$ and $v$ along the ring of $K$. One of them constitutes the cycle with $P + vw$ that bounds the disk where no vertex of the configuration is strictly inside. 
We denote the path by $Q$ and $q = |Q|$.
The symbol $R$ denotes one of the shortest paths between $u$ and $v$ in $S$ and $m = |R|$. We use two cycles $D = C - P + Q + vw, E = P + R + vw$. Again even if $E$ is not a cycle but a circuit, we can get a contradiction by assuming $E$ is a cycle and calculating the number of vertices.
% easily obtain a separating cycle of length at most five, which would contradict Lemma \ref{minc}. 
We use the same criteria described before.
The important point here is that both $D$ and $E$ are contained in the disk bounded by $C$ since $v$ is strictly inside the disk bounded by $C$, while $v \in V(D), v \in V(E)$. This leads to a contradiction of the minimality of $C$, depending on the number calculated as before. 
We call the subroutine \emph{forbiddenCycleOneEdge}($u$, $v$, $k$) used in this situation.

% We explain the second criterion more precisely. We call the subroutine \emph{hasSmallerCut}($u$, $v$, $k$). The subroutine returns true if only if there is a cycle of length $l$ that contradicts the minimality of $C$.
% \begin{itemize}
%     \item When $P + R$ is a cycle of length $l$, we check $P + R$ is not $C$. 
%     The cycle $P + R$ may be $C$ if $R$ is the path in the ring that connects $u$ and $v$, which is different from $Q$. If not, $P + R$ is not $C$ since no vertex of $K$ belongs to $C$. We calculate the size of vertices that exist in the disk bounded by $P + R$ in the same way as above. If the size is at least $l-2$, $P + R$ contradicts the minimality of $C$.
% \end{itemize}

% 説明すること
% 1. 6,7-cycle C 上で起こってほしくない 距離の組み合わせ
% 2. ある configuration K で ring の頂点がどのように縮約されると 1 の状態になるのか。
% 3. 2のときに 呼び出す subroutine

We check the combinations of the distance of vertices of $C$ in $G'$ enumerated as Table \ref{table:comp67cut}, from (6cut-1) to (7cut-4).  We show the visual representations of each case in Figure \ref{fig:check6,7cut}.
The graph $H'$ represents the disk bounded by $C$, which is formally defined later.
We also show which contraction for each configuration leads to such combinations of the distances. When we use $r_i$ to denote a vertex of the ring of $K$, it means $K$ appears and $r_i = u_i$. Now, we explain the kinds of subroutines called to check validity in each situation.

When $u_i$ and $u_j$ ($i\neq j$) belong to the ring of $K$, there are two paths of length $|i-j|$, $l-|i-j|$ between $r_i$ and $r_j$ outside $K$.
Therefore, we run two subroutines fobiddenCycle($r_i$, $r_j$, $|i-j|$), fobiddenCycle($r_i$, $r_j$, $l-|i-j|$) for $K$. If one of them returns true, it is not valid.

When $u_i$ belongs to the ring of a configuration $K$ and $u_j$ is incident with a vertex $v$ in the ring of $K$, there are two paths of length $|i-j|, l-|i-j|$ between $r_i$ and $u_j$. Therefore we run two subroutines fobiddenCycleOneEdge($r_i$, $v$, $|i-j|$), fobiddenCycleOneEdge($r_i$, $v$, $l-|i-j|$) for $K$. If one of them returns true, it is not valid.

\begin{table}[htbp]
    \caption{The combinations of the distance in $H' / c(K)$ and the situation in $S / c(K)$ that happens, where $K$ is a configuration and $S$ is a free completion of $K$ with ring.}
    \label{table:comp67cut}
    \centering
    \begin{tabular}{ccc}
    \toprule
    & 
    the combinations of the distance in $H' / c(K)$ & 
    the situation to check in $S / c(K)$ \\
    \midrule
    (6cut-1) &
    $d_{H' / c(K)}(u_0, u_2) = d_{H' / c(K)}(u_3, u_5) = 0$ & 
    $d_{S / c(K)}(r_0, r_2) = d_{S / c(K)}(r_3, r_5) = 0$ \\
    \midrule
    (6cut-2) &
    $d_{H' / c(K)}(u_0, u_2) = d_{H' / c(K)}(u_2, u_4) = 0$ & 
    $d_{S / c(K)}(r_0, r_2) = d_{S / c(K)}(r_2, r_4) = 0$ \\
    \midrule
    (7cut-1) & 
    \begin{tabular}{c}
        $d_{H' / c(K)}(u_0, u_3) = d_{H' / c(K)}(u_0, u_5)$ \\
        $= d_{H' / c(K)}(u_3, u_5) = 0$
    \end{tabular} & 
    \begin{tabular}{c}
        $d_{S / c(K)}(r_0, r_3) = d_{S / c(K)}(r_0, r_5)$ \\
        $= d_{S / c(K)}(r_3, r_5) = 0$
    \end{tabular} \\
    \midrule
    (7cut-2) & 
    $d_{H' / c(K)}(u_0, u_3) = d_{H' / c(K)}(u_4, u_6) = 0$ &
    $d_{S / c(K)}(r_0, r_3) = d_{S / c(K)}(r_4, r_6) = 0$ \\
    \midrule
    (7cut-3) &
    \begin{tabular}{c}
        $d_{H' / c(K)}(u_0, u_3) = 0$ \\
        $d_{H' / c(K)}(u_0, u_5) = d_{H' / c(K)}(u_3, u_5) = 1$
    \end{tabular} &
    \begin{tabular}{c}
        $d_{S / c(K)}(r_0, r_3) = 0$ \\
        $d_{S / c(K)}(r_0, r_5) = d_{S / c(K)}(r_3, r_5) = 1$, or \\
        a vertex $v$ of the ring is adjacent to $u_5$ \\
        satisfies $d_{S / c(K)}(r_0, r_3) = 0$ \\
        $d_{S / c(K)}(v, r_0) = d_{S / c(K)}(v, r_3) = 0$, or \\
        vertices $v_1, v_2$ of the ring are adjacent to $u_5$ \\
        satisfies $d_{S / c(K)}(r_0, r_3) =$ \\
        $d_{S / c(K)}(r_0, v_1) = d_{S / c(K)}(r_3, v_2) = 0$.
    \end{tabular} \\
    \midrule
    (7cut-4) & 
    $d_{H' / c(K)}(u_0, u_3) = 0, d_{H' / c(K)}(u_4, u_6) = 1$ &
    \begin{tabular}{c}
        $d_{S / c(K)}(r_0, r_3) = 0, d_{S / c(K)}(r_4, r_6) = 1$, or \\
        a vertex $v$ of the ring incidents with $u_4$ \\
        satisfies $d_{S / c(K)}(r_0, r_3) = 0, d_{S / c(K)}(v, r_6) = 0$.
    \end{tabular} \\
    \bottomrule
    \end{tabular}
\end{table}
\begin{figure}[htbp]
    \centering
    \includesvg[width=0.7\hsize]{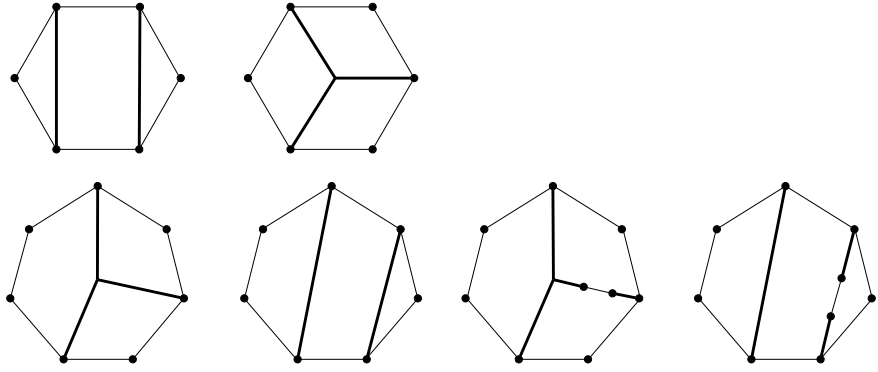}
    \caption{The six cases that were checked by computer. The graphs in the upper row correspond to (6cut-1,2) in Table \ref{table:comp67cut}, and those in the lower row correspond to (7cut,1,2,3,4) in Table \ref{table:comp67cut} from left to right. In the graphs in the upper row, $u_0$ is the vertex in the upper left, while in the lower row, $u_0$ is always the vertex at the top. The remaining vertices $u_1,u_2,...$ start after $u_0$ in the counter-clockwise order. The bold line represents contraction edges of $K$.}
    \label{fig:check6,7cut}
\end{figure}

We show the results. Only conf(1) is feasible in (6cut-1). We have already handled conf(1) in Lemma \ref{lem:5555}.
% One case is detected as valid in (6cut-2), and two cases are detected as valid in (7cut-3) by computer check.
% We show these three cases in Figure \ref{fig:6,7cut-comp-remain}. We find a contradiction or another configuration that can be safely reduced in each case in Lemma \ref{lem:6,7-cut-comp-remain}. 
There are no valid cases in (6cut-2), (7cut-1,2,3,4).

From these results, we can conclude the following.
\begin{claim}\label{clm:comp-cut}\showlabel{clm:comp-cut}
    The combinations of distance enumerated in Table \ref{table:comp67cut} from (6cut-1) to (7cut-4) in $G' / c(K)$ would not occur for $K \in \mathcal{K}$ except $conf(1)$.
\end{claim}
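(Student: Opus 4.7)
The plan is to establish Claim \ref{clm:comp-cut} by an exhaustive computer-assisted case analysis, using the machinery built up in the preceding discussion as subroutines. For each configuration $K \in \mathcal{K}\setminus\{\mathrm{conf}(1)\}$ and each of the six distance patterns (6cut-1), (6cut-2), (7cut-1)--(7cut-4) described in Table \ref{table:comp67cut}, I would iterate over all admissible assignments of the cut vertices $u_0,\dots,u_{l-1}$ to positions in the ring of $K$ (or to vertices adjacent to the ring, in the cases where the corresponding distance is $1$). An assignment is admissible if it respects the cyclic order of $C$ and agrees with the prescribed collapses ($d=0$) or near-collapses ($d=1$) after contracting the edges in $c(K)$ inside the free completion $S$ of $K$. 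These assignments are finite and easily enumerated from the combinatorial description of $K$ and $S$.

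For each admissible assignment, I would invoke \emph{forbiddenCycle} and \emph{forbiddenCycleOneEdge} exactly as defined just before the claim. Recall that for two ring vertices $u,v \in V(C)$ that correspond to positions in $K$'s ring, the cyclic structure of $C$ provides a path $P$ of length $k$ outside $K$ between them; the ring arc $Q$ of length $q$ inside $S$ gives the outer cycle $D = (C - P) \cup Q$, and a shortest internal path $R$ in $S$ of length $m$ gives the inner cycle $E = P \cup R$. The subroutine returns true precisely when $D$ or $E$ either (i) has length at most $5$, or length exactly $6$ with at least $4$ vertices on each side, contradicting Lemma \ref{minc}, or (ii) is strictly shorter than $C$ (excluding the trivial $l=6,\ |D|=|E|=7$ case), contradicting the minimality assumption on $C$. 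The case where one endpoint of $P$ lies on $C$ but not in the ring of $K$, being adjacent to a ring vertex, is handled by \emph{forbiddenCycleOneEdge} analogously, with the extra incident edge added to both $D$ and $E$.

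The proof then reduces to checking that for every admissible assignment arising from every $K \neq \mathrm{conf}(1)$ and every pattern in Table \ref{table:comp67cut}, at least one of the relevant subroutine calls returns true. Conf(1) is the sole exception, and it has already been handled separately in Lemma \ref{lem:5555}. Running the enumeration over the full set $\mathcal{K}$ produces the assertion of the claim; the pseudocode for the enumeration and the subroutines is provided in Section \ref{sect:code}.

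The main obstacle I expect is careful bookkeeping rather than mathematical depth. First, the contractions $c(K)$ can identify distinct ring vertices and alter adjacencies, so when counting vertices strictly inside the disk bounded by $E$ one must correctly account for possible coincidences among the $s$ ring vertices and $t$ interior vertices; the formula $\lceil (s-(k-1))/2 \rceil + t$ used in the definition of \emph{forbiddenCycle} is exactly the conservative lower bound that makes the check sound. Second, in pattern (7cut-3) one must branch over whether $u_5$ itself belongs to the ring, is adjacent to a single ring vertex, or is adjacent to two ring vertices $v_1,v_2$, and analogously for (7cut-4); missing any of these subcases would leave a gap in the argument. Beyond these, once the enumeration is implemented correctly, the verification is purely mechanical.
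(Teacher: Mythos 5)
Your proposal is correct and follows essentially the same route as the paper: an exhaustive computer enumeration, for each $K\in\mathcal{K}$ and each pattern in Table \ref{table:comp67cut}, of the admissible placements of the cut vertices on (or adjacent to) the ring of $K$, with each placement discharged by the subroutines \emph{forbiddenCycle} and \emph{forbiddenCycleOneEdge} via a contradiction with Lemma \ref{minc} or with the minimality of $C$, and with conf(1) as the sole survivor in (6cut-1), already disposed of by Lemma \ref{lem:5555}. You also correctly flag the two delicate bookkeeping points the paper relies on — the conservative vertex-count bound $\lceil (s-(k-1))/2\rceil + t$ and the extra branching in (7cut-3) and (7cut-4) — so there is nothing to add.
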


\subsection{The common part of the proof for $l=6,7$.}
\label{subsect:common-6,7cut}
\showlabel{subsect:common-6,7cut}

This section contains the common part of the proof of Lemma \ref{lem:6,7-cut} for $l=6,7$. After that, we describe a specific part of the proof for each case.

%\medskip

%\emph{The common part of the Proof of Lemma \ref{lem:6,7-cut} for $l=6,7$}.

%\medskip

We consider the two cases depending on whether or not $C$ is a cycle, but we discuss the common things first. After that, we discuss the case where $C$ is not a cycle, which is easier, and the other case separately.
We obtain the graph $H', H''$ in the following way whether $C$ is a cycle or not.

\begin{enumerate}
    \renewcommand{\labelenumi}{(\roman{enumi})}
    \item
    The case that $C$ is a cycle. \\
    When $C$ is a cycle, $C$ is contractible. There are two graphs obtained from $G'$ divided by $C$. One of them is planar, and the other one is projective planar. Let $A, B$ be the planar, and the projective planar one respectively. The graph $H'$ is $A+C$, $H''$ is $B+C$. When $|A| \geq l-2$, we choose $C$ such that $C$ is minimal subject to this condition. (i.e. no cycle of length $k(=6,\text{or} 7)$ contained in the disk bounded by $C$ bounds the disk at least $k-2$ vertices are strictly inside.). 
    \item
    The case that $C$ is not a cycle. \\
    % contractible
    When $C$ contains no noncontractible cycle, $C$ consists of several cycles of length at most five. That contradicts the hypothesis that no proper subset of $C$ is separating.

    % noncontractible
    When $C$ contains a noncontractible cycle, the vertex $u_i$ cannot be same as $u_{i+1}$, nor $u_{i+2}$ since it implies that the representativity of the embedding of $G'$ is at most two, which contradicts the assumption that the representativity of $G'$ is at least three.
    We assume $u_0=u_3$ without loss of generality. 
    Suppose that other two vertices $u_i, u_j$ of $C$ are same such that $0 < i < 3 < j$. When $i=1,j=4$, circuit $u_0u_5u_1u_2$ for $l=6$, or $u_0u_5u_5u_1u_2$ for $l=7$ is separating. Otherwise, the circuit which consists of the walk from $u_i$ to $u_0$ in $C$ and the walk from $u_j$ to $u_3$ in $C$ is separating.
    This contradicts the hypothesis that no proper subset of $C$ is separating. 
    Therefore, we can assume a circuit $C$ such that $u_0=u_3$ and no other vertices are same.
    There are two planar graphs obtained from $G'$ by deleting $C$. 
    Let $A, B$ be the planar graphs such that $|V(A)| \leq |V(B)|$. The graph $H'$ is $A+C$, $H''$ is $B+C$.
    The graph $H'$ is not planar, but splitting the vertex $u_0(=u_3)$ into $u'_0$ and $u'_3$ such that $u'_0$ is adjacent to $u_5,u_1$ and $u'_3$ is adjacent to $u_2,u_4$ makes $H'$ a planar near-triangulation. Now, we consider the four cases regarding $H'$. Note that when we refer to $H'$ in the case analysis below, we use the planar near-triangulation obtained in this way instead of $H'$.
\end{enumerate}

The graph $H'$ can be regarded as an internally 6-connected near-triangulation in the plane, and $C$ is the cycle of $H'$ that bounds the infinite region of $H'$.
We have the following four cases. Three of them are easy. The last one is the most difficult.
\begin{enumerate}
    \item $C$ is not an induced cycle in $H'$.\\
    There are two kinds of chords in $C$: a chord between two vertices of distance two or three in $C$.
    A chord between two vertices of distance two(, or three) implies $C$ consists of a cycle of length $l-1$(, $l-2$) and of length three(, four), respectively. The cycles are also in $G'$. We use Lemma \ref{minc} or the minimality of $C$. In case (ii), %we know the graph bounded by the cycle in $H'$ 
    by the hypothesis $|V(A)| \leq |V(B)|$, $H'$ is characterized. In case (i), we can characterize $H'$ unless $l=7$ and a chord exists between two vertices of distance two. However, if that happens, a cycle of length six contradicts the minimality of $C$.
    % at most two vertices are in $G' - H'$, which is a projective planar graph divided by $C$, so we do not care about this case.
    \item Case 1 does not occur, and there is a vertex in $H' - C$ which is adjacent to at least four consecutive vertices of $C$.\\
    A cycle of length $l-1$ exists in $H'$. That characterizes $H'$ by the same discussion as in case 1.
    \item Neither case 1 nor 2 occurs, and there are at most $\frac{18}{5}l - 12$ edges between $C$ and $H' - C$.\\
    When $l=6$, there are at most nine edges between $C$ and $H' - C$. By the condition that neither case 1 nor 2, there is only one possibility of $H'$, which is the last one in Figure \ref{fig:6cut}.
    When $l=7$, there are at most $13$ edges between $C$ and $H' - C$. By the condition that neither case 1 nor 2 occurs, there is a cycle of length at most six in $H'$, which characterizes $H'$. We have several candidates for $H'$. When a reducible configuration appears in $H'$, we go to case 4 (and we deal with this situation there). Only one case remains, which is the last one in Figure \ref{fig:7cut}.
    \item Neither case 1 nor 2 occurs, and there are more than $\frac{18}{5}l - 12$ edges between $C$ and $H' - C$.\\
    We use Lemma \ref{lem:conf-in-T}. We can find a reducible configuration in $H'$. We get a smaller graph and apply induction by contracting several edges of the reducible configuration.
    However, we have to be careful not to get a self-loop or $K_6$ after low-vertex cut reductions by contracting edges. (A self-loop corresponds to a bridge, and $K_6$ corresponds to the Petersen graph in $G$.)  We discuss this case below.
\end{enumerate}
In cases 1,2,3, we know the characterization of $H'$. We enumerate all characterizations in Figure \ref{fig:6cut}, \ref{fig:7cut}. All graphs in Figure \ref{fig:6cut}, \ref{fig:7cut} have at most $l-3$ vertices excluding vertices of the cycle.

Now we discuss case 4. All of our reducible configurations have at least four vertices, so the number of vertices in $H'$ excluding $C$ is at least four. When $l=7$, only conf(1) of our configurations has just 4 vertices, but we have already handled conf(1) in Lemma \ref{lem:5555}. So we assume that at least five vertices are in $H'$ excluding $C$. Also, we do not have to consider the case when the number of vertices in $G' - H'$ is at most $l-3$ in case (i) since $G' - H'$ is projective planar. We can also assume the same thing in case (ii) by the hypothesis $|V(A)| \leq |V(B)|$. In what follows, we assume that 
\begin{quote}
    $G'-H'$ has at least $l-2$ vertices. 
\end{quote}

We enumerate all of the possible low-vertex-cut reductions after contracting edges. A contractible cycle of length two or three after contracting edges leads to a low-vertex cut reduction. 
The edges to be contacted are only in $H'$ and not the edges of $C$ by Lemma \ref{lem:conf-in-T}. 
%We assume a low-vertex cut reduction within $H'$ has already happened since we do not know the structure inside of $H'$. 
We only consider a 2,3-vertex cut that contains vertices of $G' \setminus H'$. 
We show an example. Let $a, b$ be two vertices of $C$.
When the path between $a$ and $b$ in $H'$ after contracting edges and the path between $a$ and $b$ in $H''$ constitutes a contractible cycle of length two or three, then a low-vertex cut reduction happens. 
Therefore, we analyze how two vertices of $C$ are connected in $H''$. We can easily show the following claims.
\begin{claim}\label{clm:chord-H''}\showlabel{clm:chord-H''}
A chord between two vertices of $C$ on $H''$, which possibly becomes a 2,3-vertex cut after contraction, exists only when $l=7$ and the distance between endpoints of the chord is exactly two in $C$.
\end{claim}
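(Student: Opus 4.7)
Let $e = u_iu_j$ be a chord of $C$ in $H''$ with $d = d_C(u_i, u_j)$; I will show that if $e$ can take part in a $2$- or $3$-vertex cut of $G'/c(K)$ after the contractions, then $l = 7$ and $d = 2$. The analysis splits into three cases according to $d$.

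The case $d = 1$ is immediate: the chord $e$ would be parallel to the edge $u_iu_j$ of $C$, which is impossible since $G'$ is simple (recorded in Section~\ref{sect:minimal}).

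Suppose $d = 2$, and let $P_{ij}$ be the length-$2$ subarc of $C$ from $u_i$ to $u_j$. Then $T = e \cup P_{ij}$ is a triangle in $G'$. Since $e$ lies in $H''$ while $P_{ij}$ lies on $\partial D$, $T$ is not a face of $G'$, so it is separating; if $T$ is contractible, this already violates Fact~\ref{fact:cont-cycle}. When $l = 6$, there is an additional obstruction: the cycle $C' = (C \setminus P_{ij}) \cup e$ has length $5$, and the side of $C'$ that contains $H' - C$ still has at least $l - 2 = 4$ vertices by the standing hypothesis of case 4 in Section~\ref{subsect:common-6,7cut}, contradicting Lemma~\ref{minc}. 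When $l = 7$, the analogous cycle $C'$ has length $6$, which is not immediately forbidden, giving exactly the exceptional situation that the claim allows.

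For $d \geq 3$, I argue that no cycle of length at most $3$ in $G'/c(K)$ can pass through $e$. Let $Z = e \cup Q$ be such a cycle, so $Q$ is a walk of length at most $2$ between $u_i$ and $u_j$ in $G'/c(K) - e$. If $Q$ lies in $H''$, then $Q$ together with the shorter arc of $C$ of length $d$ yields a cycle in $G'$ of length at most $d + 2 \leq l - 1$; one of its sides still contains the whole of $H' - C$, contradicting either the minimality of $C$ or, when the length is small, Fact~\ref{fact:cont-cycle}. If $Q$ lies in $H'/c(K)$, then lifting $Q$ through the contracted edges of $c(K)$ gives a walk in $H'$ from $u_i$ to $u_j$; adjoining the shorter arc of $C$ produces a cycle in $G'$ of length strictly less than $l$ that separates $H' - C$ from the other side of $C$, again contradicting the minimality of $C$. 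The mixed case, in which $Q$ alternates between $H''$ and $H'/c(K)$, is reduced to the above by splitting $Q$ at its crossings with $C$ and applying the corresponding argument to each segment.

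The main obstacle is the case $d \geq 3$: one has to rule out every combination of contractions in $H'$ and additional edges in $H''$ that could cooperate to shorten the return path through $e$ to length at most $2$. The argument hinges on the minimality of the separating circuit $C$ (guaranteed in case 4 of Section~\ref{subsect:common-6,7cut}), together with Lemma~\ref{minc} and Fact~\ref{fact:cont-cycle}, which together force every short alternative cycle to contradict the choice of $C$ itself.
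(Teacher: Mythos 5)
Your conclusion matches the paper's, but two of your steps do not hold up, and your route for $d\ge 3$ is not the one the paper uses (the paper's entire proof is that any chord other than the permitted type already yields a separating cycle of length at most $5$ in $G'$, contradicting Lemma \ref{minc}; the minimality of $C$ plays no role in this claim). In the case $d=2$ you assert that $T=e\cup P_{ij}$ ``is not a face of $G'$'' because $e$ lies in $H''$; that does not follow. If $u_{i+1}$ has no neighbour in $H''-C$, this triangle is a face of $G'$ --- and in fact it \emph{must} be one, since a non-facial contractible triangle is excluded by Fact \ref{fact:cont-cycle}. Taken literally, your assertion would also forbid the chord when $l=7$ and $d=2$, contradicting your own conclusion that this is the allowed exception. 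The only genuine obstruction for $d=2$ is the complementary cycle $C'=e\cup(\text{long arc})$ of length $l-1$: for $l=6$ it is a $5$-cycle with $H'-C$ and $u_{i+1}$ on one side and at least $l-2$ vertices of $H''-C$ on the other (you must check \emph{both} sides, not just the one containing $H'-C$), contradicting Lemma \ref{minc}; for $l=7$ it has length $6$ and nothing is contradicted.

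For $d\ge3$ (i.e.\ $d=3$), the paper shows the chord cannot exist at all: $e$ with the short arc is a $4$-cycle, $e$ with the long arc is a cycle of length $l-d+1\le5$, and whichever of the two cuts a disk off the M\"obius band, a short check of the vertex counts on both sides of each violates Fact \ref{fact:cont-cycle}. Your substitute argument about cycles of $G'/c(K)$ through $e$ breaks at the key step: lifting a path $Q$ of length at most $2$ from $H'/c(K)$ back to $H'$ does \emph{not} produce a cycle of length less than $l$, since the lift may traverse up to $|c(K)|\ge5$ contracted edges and the resulting cycle can be much longer than $l$. The appeal to the minimality of $C$ also misfires twice: the cycles you build through $e$ are not contained in the disk bounded by $C$, which is what the minimality condition quantifies over, and minimality only constrains cycles of length $6$ or $7$ with sufficiently many vertices strictly inside, neither of which you establish. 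Finally, in the sub-case where $Q$ lies in the outer part, the cycle formed by $Q$ and the short arc may bound an empty region on that side, so Fact \ref{fact:cont-cycle} does not apply without further argument.
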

The chord that is not described above leads to 2,3,4, or 5-vertex-cut in $G'$, which contradicts Lemma \ref{minc} so Claim \ref{clm:chord-H''} holds.

The result of the computer check in Section \ref{sect:largecont} implies the following claim. 
\begin{claim}\label{clm:loop-6,7cycle}\showlabel{clm:loop-6,7cycle}
For a C-reducible configuration $K \in \mathcal{K}$, assume that $K$ appears in $G'$, and a loop appears after contracting edges of $c(K)$. Then, a contractible cycle of length $k\in \{6,7\}$ with the following properties exists.
\begin{itemize}
    \item The cycle contains an edge in $G'$ that becomes a loop after contraction, and 
    \item the cycle separates $G'$ into two components, each of which has at least $k-2$ vertices.
\end{itemize}
\end{claim}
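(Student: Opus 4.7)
Suppose a loop is created after contracting $c(K)$. Then there exists an edge $e=xy \in E(G')\setminus c(K)$ together with a path $P$ inside the edge-induced subgraph $G'[c(K)]$ connecting $x$ to $y$. The closed walk $P + e$ contains a cycle $C$ in $G'$, of length at most $|c(K)|+1 \le 9$ since configurations in $\mathcal{K}$ satisfy $|c(K)| \le 8$ (Table~\ref{tab1}). The plan is to rule out all possibilities for $|C|$ except $6$ and $7$, and within those, to rule out the non-contractible case and the ``unbalanced'' contractible case where one component of $G' - C$ has fewer than $k-2$ vertices; what remains is then exactly the assertion of the claim.

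First I would handle short cycles. If $|C| \le 4$, any contractible separating cycle of this length contradicts Lemma~\ref{minc}; a non-contractible one would force the representativity of $G'$ to be at most $4$, which contradicts our standing assumption at this stage of the proof (cf.~Corollary~\ref{Kapper} and the low-representativity analysis of Section~\ref{sect:rep}). The case $|C|=5$ is similar: Fact~\ref{fact:cont-cycle} forces one side of a contractible 5-cycle to contain at most one vertex, but the coexistence of the configuration $K$ and the chord $e$ provides enough vertices on each side to contradict Lemma~\ref{minc}. Thus, no $C$ of length $\le 5$ occurs.

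Next I would handle long cycles. The cases $|C|\in\{8,9\}$ require $|c(K)| \ge 7$, which holds for only $12$ configurations of $\mathcal{K}$ (see Table~\ref{tab1}). For each such configuration, the computer check of Section~\ref{sect:largecont} enumerates all paths of length $7$ and $8$ inside $G'[c(K)]$ and verifies that either no closing edge $e$ outside $c(K)$ is admissible given the structural constraints of the configuration together with internal $6$-connectivity, or the resulting long cycle can be shortcut through $c(K)$ to a shorter cycle already handled above, or of length $6$ or $7$ as below.

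The remaining case is $|C|\in\{6,7\}$, and the only thing left to verify is that the component structure of $G'-C$ has at least $k-2$ vertices on each side, and that $C$ is contractible. If one side is small, Lemma~\ref{lem:6,7-cut} characterizes it as one of the explicit graphs of Figures~\ref{fig:6cut}, \ref{fig:7cut}, and each such small side admits an alternative reducible configuration (cf.~the treatment of conf(1) in Lemma~\ref{lem:5555}) whose contraction replaces the one through $c(K)$; the non-contractible subcase reduces to the low-representativity analysis of Section~\ref{sect:rep}. The main obstacle throughout is the sheer combinatorial bookkeeping across the $201$ configurations with $|c(K)|\ge 5$ and the many candidate paths $P$ inside each; this is precisely why the argument is handed off to the computer procedure described in Section~\ref{sect:code}, and the claim is essentially the certified output of that check.
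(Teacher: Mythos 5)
Your setup (a loop after contracting $c(K)$ forces an edge $e\notin c(K)$ whose endpoints are joined by a path $P$ with $E(P)\subseteq c(K)$, hence a cycle of length at most $|c(K)|+1\le 9$) and your disposal of lengths at most $5$ via Lemma~\ref{minc} match the paper, whose entire proof is a pointer to the computer check of Section~\ref{sect:largecont}: that check enumerates exactly these paths between ring vertices, adds the hypothetical closing edge, and tests whether Lemma~\ref{minc} or Lemma~\ref{lem:6,7-cut} applies. The gap is in your treatment of the surviving cycles of length $6$ or $7$. The claim is a clean implication --- \emph{if} a loop appears, \emph{then} a contractible $k$-cycle with both sides of order at least $k-2$ exists --- and it is consumed as such in Claim~\ref{clm:loopless} and again in the algorithm of Section~\ref{sect:algorithm}. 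To prove it you must show that in every surviving case the cycle really is contractible with both sides large, or else that the closing edge cannot exist at all. What you offer for the bad subcases (``one side is small, so switch to an alternative reducible configuration''; ``non-contractible, so reduce to Section~\ref{sect:rep}'') proves a different statement, namely that \emph{some} reduction remains available; it does not exhibit the asserted cycle, and on that reading the claim could simply be false in those subcases, breaking everything downstream that cites it. The check has to certify that the small-side and non-contractible outcomes are impossible (by counting vertices on each side inside the free completion, as in the \emph{forbiddenCycle} routines), not route around them.

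A second, related problem is circularity: you invoke Lemma~\ref{lem:6,7-cut} to characterize the small side, but Lemma~\ref{lem:6,7-cut} is itself proved using this claim (through Claim~\ref{clm:loopless}). The paper breaks the loop by taking the ambient separating cycle to be minimal and observing that any new $6$- or $7$-cycle produced by the loop analysis would contradict that minimality (see the discussion after Claim~\ref{clm:loopless} and the Remark closing Section~\ref{sect:6,7-cut}); your write-up never mentions minimality, so the ordering of the two results is not sound as stated. Finally, your appeal to a ``standing representativity assumption'' to kill short non-contractible cycles needs to be made explicit: Section~\ref{sect:6,7-cut} only assumes representativity at least three, so the exclusion of non-contractible cycles of length at most five must be imported from the context in which the claim is applied (representativity at least six in Lemma~\ref{3-5}).
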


\begin{claim}\label{clm:loopless}\showlabel{clm:loopless}
After the contraction of the edges of a configuration that appears in the disk bounded by the cycle $C$ of length 6 or 7, $G'$ becomes a loopless graph.
\end{claim}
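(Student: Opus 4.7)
The plan is to argue by contradiction, combining Claim \ref{clm:loop-6,7cycle} with the minimality property imposed on $C$ in Section \ref{sect:ccheck}. Suppose that contracting $c(K)$ for some reducible configuration $K \in \mathcal{K}$ that appears inside the disk $D$ bounded by $C$ produces a loop in $G'/c(K)$. Then Claim \ref{clm:loop-6,7cycle} supplies a contractible cycle $C'$ in $G'$ of some length $k \in \{6,7\}$ that contains an edge $e$ whose endpoints are identified by the contraction, and such that each of the two components of $G' - C'$ has at least $k - 2$ vertices.

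The first step is to locate $C'$ strictly inside $D$. Since $e$ becomes a loop, its two endpoints must be joined by a path of edges belonging to $c(K)$; and because $C'$ is a cycle containing $e$, this path is precisely $C' - e$. Hence every edge of $C' - e$ lies in $c(K) \subseteq E(K)$, and every vertex of $C'$ is incident to some edge of $c(K)$ and therefore lies in $V(K)$. Because $K$ appears in $G'$ inside $D$, the set $V(K)$ is disjoint from $V(C)$ and lies in the open interior of $D$; thus $C'$ is a contractible cycle lying strictly inside $D$, and the disk $D'$ that it bounds is strictly contained in the interior of $D$.

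The final step is to invoke the minimality condition recorded in Section \ref{sect:ccheck}, which states that any cycle of length $k \in \{6,7\}$ lying in the disk bounded by $C$ bounds a disk containing at most $k-3$ vertices strictly inside. Applied to $C'$, this forces the interior of $D'$ to contain at most $k-3$ vertices, directly contradicting the lower bound of $k-2$ vertices strictly inside $D'$ guaranteed by Claim \ref{clm:loop-6,7cycle}. This contradiction proves that no loop can appear after the contraction.

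The main obstacle is the verification that the cycle produced by Claim \ref{clm:loop-6,7cycle} really sits inside the disk $D$, that is, that every edge of $C' - e$ belongs to $c(K)$ (rather than being some other contracted edge elsewhere in $G'$) and hence that every vertex of $C'$ is a strict-interior vertex of $D$. Once this localization is in place, the minimality of $C$ closes the argument without further case analysis.
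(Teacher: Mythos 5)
Your overall strategy (invoke Claim~\ref{clm:loop-6,7cycle} to produce a $6$- or $7$-cycle through the loop edge and then contradict the minimality of $C$) is the same as the paper's, but the localization step on which you hang everything contains a genuine gap. You assert that $c(K)\subseteq E(K)$, so that every vertex incident with a contracted edge lies in $V(K)$ and hence strictly in the interior of the disk $D$. That is not how contraction edges work here: $c(K)$ consists of edges of the \emph{free completion} $S$ of $K$, and these are routinely incident with ring vertices of $S$ (for conf(1), the contraction $C_1=\{v_0v_6, v_6v_7, v_7v_2, v_5v_9, v_9v_8, v_8v_3\}$ in Lemma~\ref{lem:5555} uses the ring vertices $v_0,v_2,v_3,v_5$). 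When $K$ appears in the disk bounded by $C$, its ring vertices need not be interior to $D$; they can coincide with vertices of $C$ itself --- this is exactly the situation analyzed in Table~\ref{table:comp67cut}, where $r_i=u_i$. Consequently the contraction can identify two vertices of $C$, and the edge that becomes a loop can be a chord of $C$ lying in $H''$, i.e.\ \emph{outside} the disk. Your claim that ``every vertex of $C'$ lies in $V(K)$'' and that $C'$ sits strictly inside $D$ therefore fails, and the exterior-chord case is simply not covered by your argument.

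The paper's proof splits into precisely these two cases. In case (i), the loop edge is a chord of $C$ in $H''$; Claim~\ref{clm:chord-H''} forces it to join two vertices at distance exactly two on $C$, while all remaining edges of the cycle supplied by Claim~\ref{clm:loop-6,7cycle} lie in the disk, so the disk bounded by the new cycle has strictly fewer interior vertices than that bounded by $C$ and minimality yields the contradiction. In case (ii), the loop edge lies in the disk and the argument proceeds essentially as you describe, except that the new cycle need only be contained in the \emph{closed} disk (its vertices may lie on $C$), which still suffices for the interior-vertex count. To repair your proof you must add case (i) and weaken the ``strictly inside $D$'' assertion in case (ii) accordingly.
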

\begin{proof}
    The edges to be contracted are strictly contained in the disk bounded by $C$. So the edge in $G'$ that becomes a loop after contraction must be one of the following: (i) an edge that yields a chord of $C$ in $H''$, (ii) an edge that is contained in the disk bounded by $C$. Let $e$ denote the edge described in (i) or (ii).
    A cycle in $G'$ that contains $e$ and other edges of the cycle are contracted, must exist in both the cases, which is denoted by $D$.
    % computer-check
    By Claim\ref{clm:loop-6,7cycle}, $D$ is of length $k(=6,7)$ and separates $G'$ into two components, each of which has at least $k-2$ vertices. 
    In case (i), $e$ is a chord between two vertices of distance exactly two in $C$ by Claim\ref{clm:chord-H''}. The edges of $D$, except $e$, are strictly contained in the disk bounded by $C$. Hence the order of vertices strictly inside the disk bounded $D$ is smaller than that of $C$, which contradicts the minimality of $C$.
    In case (ii), the contradiction occurs in the same way as case (i). 
\end{proof}

Indeed, this is the only place that we need the minimality of $C$ for the proof of Lemma \ref{lem:6,7-cut} (i.e, we take the cycle $C$ that satisfies the second condition in Lemma \ref{lem:6,7-cut}, and subject to that, the number of vertices inside the disk $D$ bounded by $C$ is as small as possible). Thus we may obtain a new cycle $C'$ that would lead to a contradiction to the minimality of $C$ only when Claim \ref{clm:chord-H''} occurs. 

By the same argument as used in the proof of Claim \ref{clm:loopless}, we can justify using Lemma \ref{lem:6,7-cut} to prove Lemma \ref{pairs6} in Section \ref{sect:dist5}.

\begin{claim}\label{clm:adj-cont}\showlabel{clm:adj-cont}
Assuming that a loop does not appear after contraction in $G'$, $u_i$ and $u_{i+1}$ are not identified after contraction.
\end{claim}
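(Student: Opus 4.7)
The plan is to argue by contradiction, using the standing assumption (just established in Claim~\ref{clm:loopless}) that no loop is created by the contraction.

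First I would observe that the contraction edges all lie strictly inside $H'$: by the setup (Lemma~\ref{lem:conf-in-T} provides a reducible configuration $K$ contained strictly in the interior of the disk bounded by $C$), the set $c(K)$ of contraction edges is disjoint from $E(C)$, and in particular the edge $u_i u_{i+1}$ of $C$ is not contracted. Consequently this edge persists in the contracted graph $\langle G'/c(K)\rangle$ (up to becoming a loop).

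Now suppose for contradiction that $u_i$ and $u_{i+1}$ are identified after the contraction of $c(K)$. Then there is a path $P$ in $H'$ between $u_i$ and $u_{i+1}$ all of whose edges lie in $c(K)$, so that $P$ collapses to a single vertex upon contraction. Adjoining the uncontracted edge $u_i u_{i+1} \in E(C)$ to $P$ yields a cycle in $G'$, and since the edges of $P$ are all contracted while $u_i u_{i+1}$ is not, the image of this cycle in $G'/c(K)$ is precisely a loop on the vertex obtained by identifying $u_i$ with $u_{i+1}$. This contradicts the hypothesis that no loop appears after contraction, completing the proof.

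The argument is essentially a one-line observation once Claim~\ref{clm:loopless} is in place; the only thing to be careful about is that the path $P$ of contracted edges realizing the identification exists and lies entirely inside $H'$, which follows because $c(K)\subseteq E(H')\setminus E(C)$.
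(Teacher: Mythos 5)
Your proof is correct and is exactly the intended (one-line) argument: the paper states Claim~\ref{clm:adj-cont} without proof, relying implicitly on the fact that the edge $u_iu_{i+1}\in E(C)$ is never contracted, so identifying its endpoints would create a loop, contradicting Claim~\ref{clm:loopless}. Nothing further is needed.
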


The following claims show the criteria to detect $\langle G' / c(K) \rangle$ is $K_6$ easily. Let us remind that $C$ is of length $l=6, 7$. 

\begin{claim}\label{clm:rm4or6}\showlabel{clm:rm4or6}
Assume that $C$ is a cycle in $G'$. In order for the resulting graph of $G'$ to be $K_6$, at least $2l-8$ vertices of $C$ and $G' - H'$ must be identified by contraction or deleted by low-vertex cut reductions.
\end{claim}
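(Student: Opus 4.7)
My plan is a direct counting argument based on the cardinalities of the vertex sets involved. Since $|V(C)| = l$, since $V(C)$ and $V(G'-H')$ are disjoint (because $H'$ contains $C$ together with the interior of the disk $C$ bounds), and since the case under analysis is the one (established earlier in this section) where $|V(G'-H')| \geq l-2$, I have $|V(C) \cup V(G'-H')| \geq 2l - 2$.

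Next, under the hypothesis $\langle G'/c(K) \rangle \cong K_6$, the final graph has exactly $6$ vertices. I would classify each vertex of $V(C) \cup V(G'-H')$ by its fate during the reduction $G' \mapsto G'/c(K) \mapsto \langle G'/c(K)\rangle$ into one of three mutually exclusive classes: (i) identified with some other vertex via the edge contractions of $c(K)$ (possibly together with further identifications cascaded by them); (ii) deleted by one of the low-vertex cut reductions applied in passing from $G'/c(K)$ to $\langle G'/c(K)\rangle$; (iii) surviving as a singleton vertex of $\langle G'/c(K)\rangle$ that is neither merged with any other vertex nor deleted.

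The key observation is that distinct vertices of class (iii) yield distinct vertices of the final graph $K_6$, so class (iii) contains at most $6$ elements. Combining this with the previous cardinality bound, class (i) together with class (ii) contains at least $(2l-2) - 6 = 2l - 8$ vertices, which is exactly the bound asserted by the claim.

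I do not anticipate a substantive obstacle: the reasoning is pure counting. The only subtle point worth a single sentence of care is the bookkeeping when a single merge class of $G'/c(K)$ contains several representatives from $V(C) \cup V(G'-H')$; each such representative is counted once in class (i), while the class as a whole occupies only one of the six slots of the final graph, so the counting inequality only becomes stronger in that situation.
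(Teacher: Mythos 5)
Your proposal is correct and is essentially identical to the paper's own argument: the paper likewise notes that $C$ and $G'-H'$ together contain at least $l+(l-2)=2l-2$ vertices while $K_6$ has only $6$, so at least $2l-8$ must be identified or deleted. Your extra bookkeeping about merge classes only strengthens the inequality and introduces no gap.
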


The number of vertices in $C$ is $l$ and at least $l-2$ vertices are in $G' - H'$ by the assumption, so at least $2l-2$ vertices are in $G'$. The order of $K_6$ is six, so Claim \ref{clm:rm4or6} holds.  
We use the following claim to count the number of vertices identified by contraction.

\begin{claim}\label{clm:numIdent}\showlabel{clm:numIdent}
Let $D$ be a cycle of length $k$ $(3 \leq k \leq 7)$ in $G'$. The maximum number of vertices in $D$ that are identified by contraction, such that no adjacent vertices of $D$ are identified, is $\lfloor k/2 \rfloor - 1$. 
\end{claim}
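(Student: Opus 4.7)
My plan is to reduce the claim to a purely combinatorial statement about non-crossing partitions of the cyclic vertex sequence of $D$ whose blocks are independent sets of $D$, and then bound the number of identifications by an induction on arc lengths.

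First I would set up the partition $\mathcal{P}$ of $V(D)$ whose blocks are the equivalence classes of vertices of $D$ that get merged by the contraction. The hypothesis that no two adjacent vertices of $D$ are identified translates to the requirement that every block of $\mathcal{P}$ be an independent set of the cycle $D$, and the quantity to bound is precisely $\sum_{B\in\mathcal{P}}(|B|-1)=k-|\mathcal{P}|$. The crucial topological input is that $\mathcal{P}$ is a non-crossing partition of the cyclic order on $V(D)$. This is immediate in the setting in which Claim~\ref{clm:numIdent} is applied, namely inside the planar disk $\Delta$ bounded by the outer cycle $C$ of Lemma~\ref{lem:6,7-cut}: every contracted edge lies strictly inside $\Delta$, so the components of the contracted subgraph that meet $V(D)$ are represented by pairwise disjoint planar trees lying in the subdisk bounded by $D$, with leaves on $D$; disjoint trees in a disk automatically yield a non-crossing partition of their leaves.

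For the combinatorial bound, if $\mathcal{P}$ consists only of singletons the number of identifications is $0$, so we may fix a block $B$ with $|B|=m\geq 2$. The $m$ vertices of $B$ cut $D$ into $m$ arcs of lengths $l_0,\ldots,l_{m-1}$, each $l_j\geq 1$ by independence of $B$, with $\sum_j l_j=k-m$. By non-crossing, the restriction of $\mathcal{P}$ to each arc is a non-crossing partition of a path into independent sets, and a short induction on path length shows that such a partition of a path on $l$ vertices produces at most $\lceil l/2\rceil-1$ identifications. Summing yields
\[
   \text{(identifications)} \;\leq\; (m-1) + \sum_{j=0}^{m-1}\bigl(\lceil l_j/2\rceil - 1\bigr) \;=\; \frac{k-m+t}{2}-1,
\]
where $t\in\{0,1,\ldots,m\}$ is the number of odd $l_j$'s. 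For $k$ even this already gives at most $k/2-1=\lfloor k/2\rfloor-1$. For $k$ odd, a parity check (the sum of $m$ integers with $t$ odd entries has parity $t\pmod 2$, which must match $k-m\pmod 2$, forcing $k$ to be even when $t=m$) shows $t\leq m-1$, and hence the bound $(k-1)/2-1=\lfloor k/2\rfloor-1$. In either case we obtain the announced value.

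The main obstacle I anticipate is the topological step of justifying non-crossing: one must check that every identification can be realised by a contraction tree inside a planar disk bounded by $D$, so that distinct blocks correspond to non-interleaving trees. In all invocations of Claim~\ref{clm:numIdent} in the proof of Lemma~\ref{lem:6,7-cut} the cycle $D$ lies inside the planar disk $\Delta$ of the outer circuit $C$ and the contraction edges sit strictly inside $\Delta$, so this step is automatic; nevertheless, one should be explicit about this hypothesis when citing the claim.
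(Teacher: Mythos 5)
Your proof is correct. The paper states Claim \ref{clm:numIdent} with no proof at all (it is introduced only with ``We use the following claim to count the number of vertices identified by contraction''), so there is no argument of the authors' to compare against; your write-up fills a genuine gap rather than paralleling an existing one. The two ingredients you isolate are exactly the right ones, and it is worth stressing that the non-crossing property is not a cosmetic hypothesis: with independence alone the bound fails already for $k=5$ (the crossing blocks $\{u_0,u_2\}$ and $\{u_1,u_3\}$ give two identifications) and for $k=6$ (the two colour classes of $C_6$ give four), so the claim is only true because the identifications are realised by connected subgraphs of contraction edges lying in the disk bounded by $D$ — precisely the setting of Section \ref{subsect:common-6,7cut}, where $c(K)$ is strictly inside the disk bounded by $C$ and does not meet $E(C)$. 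Your reduction to $\sum_B(|B|-1)=k-|\mathcal{P}|$ matches how the claim is used alongside Claim \ref{clm:rm4or6} (which counts the net decrease in vertex number), the path bound $\lceil l/2\rceil-1$ is easily verified by the induction you sketch, and the parity argument handling odd $k$ is sound. The only cosmetic point: the claim as literally stated is a ``maximum'', so one should also note achievability via the independent set $\{u_0,u_2,\dots\}$, though only the upper bound is ever used in the paper.
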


The following claim also shows the criteria to detect $\langle G' / c(K) \rangle$ is $K_6$.

\begin{claim}\label{clm:5cycK6}\showlabel{clm:5cycK6}
Let $D=d_0d_1d_2d_3d_4$ be a contractible cycle in $G'$ such that the number of vertices that are strictly contained in the disk $\Delta$ bounded by $D$ is at least two. By Lemma \ref{minc}, the edge $d_id_{i+2}$ passes through the crosscap for each $i$, $0 \leq i < 5$.

If $d(d_i,d_{i+2})$ in $G'$ is at most one in $\Delta$ after contraction and low-vertex cut reductions, $G'$ does not become $K_6$.
\end{claim}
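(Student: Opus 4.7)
The argument will use the five chord edges $d_id_{i+2}$ that, by the preceding consequence of Lemma~\ref{minc}, lie on the non-disk side of $D$ and pass through the crosscap. Together these chords form the non-contractible $5$-cycle $d_0d_2d_4d_1d_3$, which encodes the crosscap of the embedding. My strategy is to show that every such chord edge is preserved by the contractions and by the low-vertex cut reductions, and then turn the distance bound $d(d_i,d_{i+2})\le 1$ inside $\Delta$ into a loop or a multi-edge, which is impossible in the simple graph $K_6$.

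The preservation step is the main technical point. No chord is contracted, since contracted edges belong to the configuration being reduced and not to the non-disk side of $D$. For a low-vertex cut reduction, the deleted planar component $H'$ must be embeddable in a disk of the projective plane, whereas the subgraph on the M\"obius-band side of $D$ together with its boundary ring is $K_5$ on the five ring vertices (five boundary edges and five chords), which is non-planar; consequently the chord edges always lie on the non-planar side of any $2$- or $3$-vertex cut of $\langle G'/c(K)\rangle$ and are retained by the reduction.

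Once preservation is established, a two-case analysis finishes the proof. If $d(d_i,d_{i+2})=0$ inside $\Delta$ after the reductions, then $d_i$ and $d_{i+2}$ have been merged into a single vertex of $\langle G'/c(K)\rangle$ (identifications arise only from contractions, since low-vertex cut reductions delete rather than merge vertices), and the preserved chord $d_id_{i+2}$ becomes a loop at this vertex. If $d(d_i,d_{i+2})=1$ inside $\Delta$, then there is an edge joining $d_i$ and $d_{i+2}$ entirely inside $\Delta$ in the reduced graph, which together with the preserved chord on the other side of $D$ yields a pair of parallel edges. In either case $\langle G'/c(K)\rangle$ is non-simple and hence distinct from the simple graph $K_6$. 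The main obstacle is verifying the preservation claim rigorously, since a priori one might worry that a small vertex cut could place a chord on the deleted planar side; the key observation that rules this out is that the chord structure together with the ring carries the $K_5$ (and hence the crosscap) on the M\"obius-band side, pinning that side to be non-planar after any cut of size at most three. Once this is settled, the contradiction with $K_6$ being simple is automatic.
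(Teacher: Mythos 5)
Your argument is correct and supplies exactly the justification the paper leaves implicit (Claim \ref{clm:5cycK6} is stated there without proof): the chord $d_id_{i+2}$ on the M\"obius-band side survives the contractions and the low-vertex cut reductions, and together with the identification (distance $0$) or the edge inside $\Delta$ (distance $1$) it produces a loop or a pair of parallel edges forming a non-contractible $2$-cycle, so the terminal graph is non-simple (indeed has representativity at most $2$) and cannot be $K_6$. Your preservation step via the $K_5$ spanned by the five chords and the ring is the right reason a deleted planar (disk) side can never strictly contain a chord, so the proof stands as essentially the intended one.
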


We finished explaining the common things of cases (i) and (ii).
In the rest of this section, the symbol $K$ denotes the configuration that appears in $H'$.

\subsection{6-cuts}\label{subsect:6-cut}

\begin{proof}[Proof of Lemma \ref{lem:6,7-cut} for $l = 6$]
We will first discuss the case when $C$ is not a cycle.
%\subsubsection{$C$ is not a cycle}
From the discussion above, $C$ is a noncontractible closed walk so that $u_0=u_3$ and other vertices are distinct. 
Suppose $\langle G' / c(K) \rangle = K_6$.
No two vertices of $C$ are of distance at most one and constitute a 2,3-vertex cut with a path in $H''$ since the representativity of the resulting embedding of $G'$ becomes at most two if that happens.
% A set of two vertices of $C$ so that the distance between them in $H'$ may possibly become at most $1$ after contracting edges is $\{(u_1, u_4), (u_1, u_5), (u_2, u_4), (u_2, u_5)\}$ since the representativity becomes at most 2 if other pairs of vertices have distance at most $1$ after contracting edges. Even when these pairs of vertices have a distance at most $1$, that does not form a contractible cycle by combining a path in $H''$.
Therefore, no 2,3-vertex cut that contains vertices of $G' - H'$ appears. By the hypothesis that there are at least four vertices in $G' - H'$, the number of vertices of $H'$ must become two, but at least three vertices remain after contraction by Claim \ref{clm:numIdent}.

Suppose now that $C$ is a cycle.
%\subsubsection{$C$ is a cycle}
\label{sect:6-cycle}
We show the claims about how two vertices of $C$ are connected by a path of length two in $H''$. We also use Claim \ref{clm:dis2-u0-u2} in the proof of the case when $l=7$.

\begin{claim}\label{clm:dis2-u0-u2}\showlabel{clm:dis2-u0-u2}
Suppose that there is a vertex $v \in G' - H'$ such that $v$ is adjacent to $u_i$ and $u_{i+2}$ and there is a contractible 4-cycle $vu_iu_{i+1}u_{i+2}$. The cycle bounds a disk that has no vertex strictly inside since it is impossible that the cycle bounds a disk that contains $H'$ by Lemma \ref{minc}. By Claim \ref{clm:chord-H''}, the edge $vu_{i+1}$ exists.
\end{claim}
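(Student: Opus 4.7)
The plan is to derive the claim from three ingredients: the triangulation structure of $G'$, Fact~\ref{fact:cont-cycle} (equivalently, Lemma~\ref{minc}), and Claim~\ref{clm:chord-H''}. Let $Q$ denote the $4$-cycle $v u_i u_{i+1} u_{i+2}$, which is contractible by hypothesis. In the projective-plane embedding of $G'$, $Q$ separates the surface into an open disk $D$ and a M\"obius strip $M$; the task is to rule out vertices inside $D$ and then use the triangulation property to force the presence of the diagonal $vu_{i+1}$.

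First I would locate $H' - V(C)$ relative to $Q$. Because $v \in H'' - V(C)$ and the path $u_i u_{i+1} u_{i+2}$ runs along the common boundary $C$ of $H'$ and $H''$, the disk $D$ cut off by $Q$ lies on the $H''$-side of $C$, while $H'-V(C)$ sits on the $M$-side. By the standing assumption at the start of Case~4, $H' - V(C)$ contains at least four vertices, so the $M$-side is nonempty. Applying Fact~\ref{fact:cont-cycle}, a contractible cycle of length four cannot separate $G'$ into two nonempty components; since the $M$-side is already nonempty, $D$ contains no vertex of $G'$ strictly inside.

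Since $G'$ is a triangulation, the vertex-free $4$-gon region $D$ must be triangulated by one of the two diagonals of $Q$, namely $vu_{i+1}$ or $u_iu_{i+2}$. The edge $u_iu_{i+2}$, if present, would be a chord of $C$ lying in $H''$ between two vertices at distance exactly two along $C$. In the $l=6$ setting Claim~\ref{clm:chord-H''} rules out any chord of $C$ in $H''$, so this diagonal cannot exist. Therefore the remaining diagonal $vu_{i+1}$ must be the edge triangulating $D$, which is precisely the conclusion.

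The main technical subtlety is justifying that the disk $D$ bounded by $Q$ lies on the $H''$-side of $C$ rather than the $H'$-side, so that $H'-V(C)$ is forced into $M$; this rests on $C$ being contractible in the cycle case (so that $H'$ fills its disk interior) and on $v$ lying strictly outside that interior, meaning the shortcut through $v$ can only cut off a small neighborhood within $H''$. Once this topological placement is settled, the rest is a routine combination of Fact~\ref{fact:cont-cycle} with the triangulation property and Claim~\ref{clm:chord-H''}. For the $l=7$ applications of the claim, Claim~\ref{clm:chord-H''} does permit distance-two chords, so an auxiliary case analysis would be needed there; but within the current subsection ($l=6$) this complication does not arise.
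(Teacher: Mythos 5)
Your argument coincides with the paper's own reasoning, which is compressed into the statement of the claim itself: Lemma~\ref{minc} (via Fact~\ref{fact:cont-cycle}) forces the contractible $4$-cycle $vu_iu_{i+1}u_{i+2}$ to bound a vertex-free disk, the triangulation property then forces one of the two diagonals of that quadrilateral to be present, and Claim~\ref{clm:chord-H''} excludes the chord $u_iu_{i+2}$, leaving $vu_{i+1}$. The one point you flag as a subtlety --- that the disk bounded by the $4$-cycle cannot be the side containing $H'$ --- is handled in the paper precisely by the appeal to Lemma~\ref{minc} (otherwise both components of $G'$ minus the $4$-cycle would be nonempty), rather than by the topological heuristic you sketch, but the route and conclusion are the same.
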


\begin{claim}\label{clm:dis2-u0-u3}\showlabel{clm:dis2-u0-u3}
Suppose that there is a vertex $v \in G' - H'$ such that $v$ is adjacent to $u_i$ and $u_{i+3}$ and there are two contractible 5-cycles: $vu_iu_{i+1}u_{i+2}u_{i+3}$ and $vu_{i+3}u_{i+4}u_{i+5}u_i$. There are at most two vertices in $G' - H'$ to meet Lemma \ref{minc}. That contradicts the hypothesis that at least four vertices are in $G' - H'$.
\end{claim}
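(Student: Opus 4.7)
The plan is to invoke Fact~\ref{fact:cont-cycle} on each of the two contractible $5$-cycles $S_1 = vu_iu_{i+1}u_{i+2}u_{i+3}$ and $S_2 = vu_{i+3}u_{i+4}u_{i+5}u_i$ to bound the number of vertices of $G'-H'$. Denote by $D$ the closed disk bounded by $C$ in $\mathbb{R}P^2$, and let $M = \mathbb{R}P^2\setminus \operatorname{int}(D)$ be the M\"obius band on the opposite side; then $v\in M$ and the two edges $u_iv$ and $vu_{i+3}$ form a path $P$ from $u_i$ to $u_{i+3}$ in $M$. First I would verify the topological picture: since both $S_1$ and $S_2$ bound disks in $\mathbb{R}P^2$, the path $P$ must separate $M$ into two sub-regions $N_1$ and $N_2$, where $\partial N_1 = S_1$ and $\partial N_2 = S_2$. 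Exactly one of $N_1,N_2$ is a small disk lying entirely in $M$, while the other is a M\"obius band whose union with $\bar{D}$ forms the large disk bounded by the other $5$-cycle.

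Next, I would use the standing assumption that we are in the difficult case of the proof of Lemma~\ref{lem:6,7-cut}, so $H'-C$ contains at least $5$ vertices. For each $k\in\{1,2\}$, the side of $S_k$ that contains $\operatorname{int}(\bar{D})$ contains at least these $5$ interior vertices of $H'$ plus two further vertices of $C\setminus V(S_k)$, hence at least $7$ vertices strictly inside. By Fact~\ref{fact:cont-cycle}, applied to the contractible $5$-cycle $S_k$, the opposite side, namely $\operatorname{int}(N_k)$, must then contain \emph{at most one} vertex of $G'$.

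Finally I would count the vertices of $G'-H'$. Every vertex of $G'-H'$ lies in $M$ and so is either on the theta-graph $S_1\cup S_2$ or strictly in $\operatorname{int}(N_1)\cup \operatorname{int}(N_2)$; the only vertex of the theta-graph that is not on $C$ (and hence not in $H'$) is $v$ itself. Combining with the previous bound,
\[
|V(G'-H')|\;\le\;|\{v\}|+|\operatorname{int}(N_1)\cap V(G')|+|\operatorname{int}(N_2)\cap V(G')|\;\le\;1+1+1\;=\;3,
\]
which already contradicts the standing hypothesis $|V(G'-H')|\ge l-2 = 4$, yielding the claim (the tighter bound of two in the statement can presumably be obtained by observing that the two small disks cannot simultaneously be non-empty without creating a further forbidden short cycle through $v$, but this refinement is not needed for the contradiction).

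The main obstacle I anticipate is the topological bookkeeping: one must confirm that both $5$-cycles can indeed simultaneously bound disks in $\mathbb{R}P^2$ (which happens precisely when $v$ lies in a disk-neighbourhood of $C$ inside $M$ so that $P$ is homotopic rel endpoints to one of the two arcs of $C$), and one must correctly identify which side of each $S_k$ plays the role of the ``small'' side for the application of Fact~\ref{fact:cont-cycle}. Once this topological setup is pinned down, the counting argument and the contradiction are immediate.
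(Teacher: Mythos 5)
Your proof is correct and follows essentially the same route as the paper: the claim's justification is simply to apply Lemma \ref{minc} (via Fact \ref{fact:cont-cycle}) to each of the two contractible $5$-cycles, whose small sides are exactly the two regions of $M\setminus P$, and then count. The only difference is that you obtain the bound $3$ rather than the paper's $2$ (the sharper bound follows from your own parenthetical remark, e.g.\ because two nonempty small sides would force $\deg(v)=4$ or a separating contractible $4$-cycle), but as you note, $3<4$ already yields the required contradiction.
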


% We show the claim about the number of vertices.

% \begin{claim}\label{clm:rm4or6}\showlabel{clm:rm4or6}
% Since $C$ is a cycle of length 6 and at least four vertices are in $G' \setminus H'$, at least 4 vertices of $C$ and $G' \setminus H'$ must be identified or erased by contraction and 2,3-cut reduction for $G'$ to become $K_6$.
% \end{claim}

Based on the claims above, we divide the proof into the following five cases. We set $i$ as some value between $0$ and $5$ in the cases. We describe the cases in Figure \ref{fig:proof6cut}.

\begin{figure}[htbp]
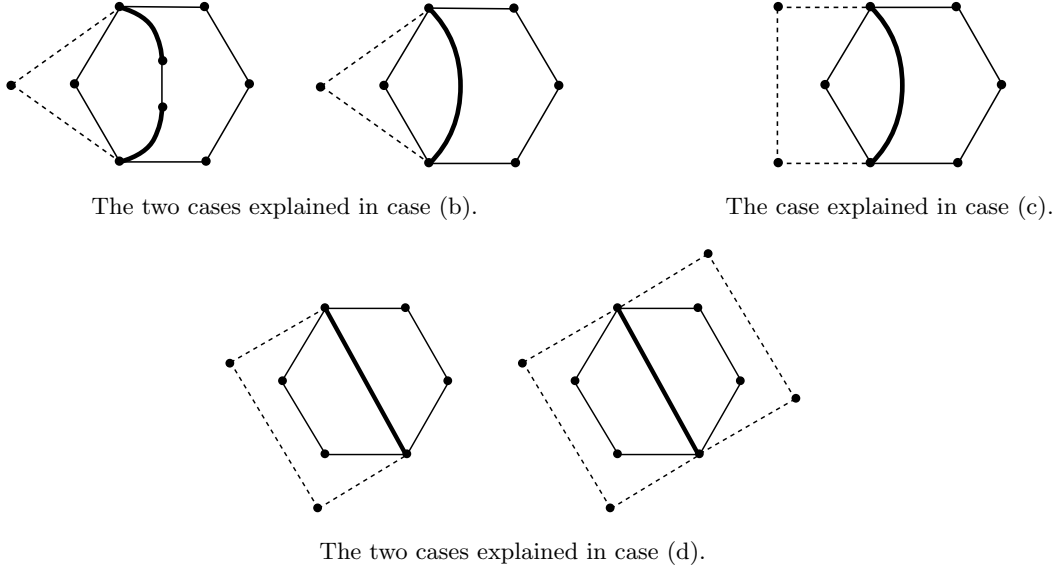

  \centering
  \begin{tabular}{c}
  \captionsetup[subfigure]{labelformat=empty, subrefformat=empty, skip=10pt}
  \begin{minipage}{0.6\hsize}
      \centering
      \includesvg[height=2.2cm]{./img/67cut/proof6cutb.svg}
      \subcaption{The two cases explained in case (b).}
      \label{fig:6cut(b)}
  \end{minipage} %\\ 
%  \addlinespace[10pt]
%  \captionsetup[subfigure]{labelformat=empty, subrefformat=empty, skip=10pt}
  \begin{minipage}{0.36\hsize}
      \centering
      \includesvg[height=2.2cm]{./img/67cut/proof6cutc.svg}
      \subcaption{The case explained in case (c).}
      \label{fig:6cut(c)}
  \end{minipage} \\
  \addlinespace[10pt]
  \captionsetup[subfigure]{labelformat=empty, subrefformat=empty, skip=10pt}
  \begin{minipage}{\hsize}
      \centering
      \includesvg[height=3.5cm]{./img/67cut/proof6cutd.svg}
      \subcaption{The two cases explained in case (d).}
      \label{fig:6cut(d)}
  \end{minipage} \\
  % \addlinespace[10pt]
  % \captionsetup[subfigure]{labelformat=empty, subrefformat=empty, skip=10pt}
  % \begin{minipage}{\hsize}
  %     \centering
  %     \includesvg[height=4.5cm]{./img/67cut/proof6cut(e).svg}
  %     \subcaption{The two cases explained in case (e).}
  %     \label{fig:6cut(e)}
  % \end{minipage} \\
  \end{tabular}
  \caption{The figures that describe the case analysis in Section \ref{sect:6-cycle}. The hexagon represents $C$, where $u_i$ is the vertex on the upper left, $u_{i+1},u_{i+2},...$ are in counter-clockwise order. The bold line represents the path that consists of contraction edges in $H'$. The dotted line represents the path in $H''$.}\label{fig:proof6cut}
\end{figure}

\begin{enumerate}
    \renewcommand{\labelenumi}{(\alph{enumi})}
    \item There are no 2,3-vertex cuts that contain vertices of $G' - H'$.\\
    The graph $G'$ does not become $K_6$ after contraction since at most two vertices are identified by Claim \ref{clm:numIdent}, but at least four vertices must be deleted or identified by Claim \ref{clm:adj-cont}.

    \item A path of length two between $u_i$ and $u_{i+2}$ in $H''$ exists, and a cycle of length two, or three that contains the path appears after contraction. No other 2,3-vertex cut appears.\\
    The vertex that is adjacent to $u_i,u_{i+2}$ is denoted by $v$. The edge $vu_{i+1}$ exists by Claim \ref{clm:dis2-u0-u2}.

    When $u_{i+2}$ is identified with $u_i$ and the cycle $vu_i$ appears after contraction, $u_{i+1}$ is deleted. In addition, at most one vertex is identified by Claim \ref{clm:numIdent}, so at most three vertices are deleted or identified, so $\langle G' / c(K) \rangle \neq K_6$ by Claim \ref{clm:rm4or6}.

    When the cycle $vu_iu_{i+2}$ appears after contraction, $u_{i+1}$ are deleted. In addition, at most one vertex is identified by Claim \ref{clm:numIdent}, so at most two vertices are deleted or identified, so $\langle G' / c(K) \rangle \neq K_6$ by Claim \ref{clm:rm4or6}.

    \item A path of length three between $u_i$ and $u_{i+2}$ in $H''$ exists, and a cycle of length three that contains the path appears after contraction. No other 2,3-cut appear.\\
    The path is denoted by $u_iv_1v_2u_{i+2}$. In this case $u_{i+2}$ is identified with $u_i$. 
    The cycle $u_iu_{i+1}u_{i+2}v_2v_1$ exists, so $\langle G' / c(K) \rangle \neq K_6$ by Claim \ref{clm:5cycK6}.
    
    Therefore, the cycle $u_iu_{i+1}u_{i+2}v_2v_1$ bounds the planar region that consists of at most one vertex by Lemma \ref{minc}. The possible pairs of vertices that may be identified in this situation are $(u_{i+3}, u_{i+5})$ and $(u_i, u_{i+4})$ by Claim \ref{clm:adj-cont} but neither of them is identified by (6cut-1), (6cut-2) in Claim \ref{clm:comp-cut}. Therefore, $u_{i+2}$ is the only vertex of $C$ that is identified, and $u_{i+1}$ and at most one vertex of $G' - H'$ are deleted by the three-vertex cut. Hence at most three vertices are deleted or identified, so $\langle G' / c(K) \rangle \neq K_6$ by Claim \ref{clm:rm4or6}.

    % When $u_{i+4}$ is identified with $u_i$, 
    % The vertex $u_i$ is adjacent to four vertices $v_1,v_2,u_{i+3},u_{i+5}$ in $\langle G' / c(K) \rangle$. Suppose $\langle G' / c(K) \rangle = K_6$.
    % The vertex $u_i$ in $\langle G' / c(K) \rangle$ is adjacent to just one vertex other than the four vertices above, so just two of the edges $v_1u_{i+5}, v_2u_{i+3}, u_{i+3}u_{i+5}$ exist in $G'$. Two edges $v_1u_{i+5}, v_2u_{i+3}$ exist by Claim \ref{clm:chord-H''}. The graph $G'$ does not become $K_6$ since 5-cycle $v_1v_2u_{i+3}u_{i+4}u_{i+5}$ exists and by Lem \ref{minc}.
    
    \item A path of length three between $u_i$ and $u_{i+3}$ in $H''$ exists, and a cycle of length three that contains the path appears after contraction.\\
    The path is denoted by $u_iv_1v_2u_{i+3}$. In this case, $u_{i+3}$ is identified with $u_i$, and no other vertices of $C$ are not identified by Claim \ref{clm:adj-cont}. Suppose $\langle G' / c(K) \rangle = K_6$. The degree of $u_i$ after contraction is five. The vertex $u_i$ is adjacent to four vertices $u_{i+4},u_{i+5},v_1,v_2$ in $\langle G' / c(K) \rangle$, so exactly one of the edges $v_1u_{i+5}$ or $v_2u_{i+4}$ exists. We assume that $v_1u_{i+5}$ exists, .w.l.o.g.
    The cycle $v_1v_2u_{i+3}u_{i+4}u_{i+5}$ exists, so $\langle G' / c(K) \rangle \neq K_6$ by Claim \ref{clm:5cycK6}.

    When another 2,3-vertex cut exists, this is a cycle of length three that contains another path $u_iv_3v_4u_{i+3}$ in $H''$ such that the cycle $v_1v_2u_{i+3}v_4v_3u_i$ bounds the disk that contains $H'$ by Claim \ref{clm:adj-cont} and Claim \ref{clm:dis2-u0-u3}. (If $v_1$ or $v_2$ equals $v_3$ or $v_4$, this contradicts Lemma \ref{minc} or the representativity becomes at most two after contraction, so we do not have to deal with these cases). In the same discussion before, exactly one of the edges $v_1v_3$ or $v_2v_4$ exists. We assume that $v_1v_3$ exists, w.l.o.g. The cycle $v_1v_2u_{i+3}v_4v_3$ exists, so $\langle G' / c(K) \rangle \neq K_6$ by Claim \ref{clm:5cycK6}.
    
    \item The number of 2,3-vertex cuts that contain vertices of $G' - H'$ is at least two.\\
    From the discussion before, pairs of vertices that are connected by a path in $H''$, which becomes 2,3-cut are 
    (I) $(u_i,u_{i+2}), (u_{i+2}, u_{i+4}), (u_{i+4}, u_{i+6})$,
    (II) $(u_i,u_{i+2}), (u_{i+2}, u_{i+4})$, or
    (III) $(u_i, u_{i+2}), (u_{i+3}, u_{i+5})$.
    Suppose $\langle G' / c(K) \rangle = K_6$.
    
    When the vertices of $C$ that remain in $\langle G' / c(K) \rangle$ are contained in a triangle, there is a noncontractible cycle in $\langle G' / c(K) \rangle (= K_6)$ that consists of vertices that were originally in $G' - H'$.
    A path of length two(, three) between $u_i,u_{i+2}$ constitutes a four(, five) -cycle with the path $u_iu_{i+1}u_{i+2}$, so 2,3-vertex cut that contains such a path becomes at most one(, two) edges that incident with vertices of $G' - H'$ respectively. 
    Therefore, unless three paths of length three exist in case (I), which is excluded by (6cut-2) of Claim \ref{clm:comp-cut}, at most five edges that incident with vertices of $G' - H'$ are deleted.
    By combining the fact that the degree of all vertices of $K_6$ is five, the noncontractible cycle of length three, whose sum of degree is at most 15 exists in $G'$. In fact, $G$ belongs to $\Gamma_3^{k} (3 \leq k \leq 8)$. We already handled $\Gamma_3^{k}$ in Section \ref{sect:smallcont}.

    Otherwise, two 3-vertex cuts each of which contains a path in $H''$ of length two appear in cases (II), and (III). In these cases, at most three vertices are deleted or identified so $\langle G' / c(K) \rangle \neq K_6$ by Claim \ref{clm:rm4or6}. 
    % \item Three 2,3-cuts that contain vertices of $G' \setminus H'$ appear.\\
    % We assume there is a path in $H''$ that connects two vertices for each pair $(u_i, u_{i+2}), (u_{i+2}, u_{i+4}), (u_{i+4}, u_i)$ from the discussion before. We can simply use the same discussion as the case (e) unless at least two paths in $H''$ have a length of 3. These two paths connect $u_0,u_2$ and $u_2,u_4$, without loss of generality. Three vertices $u_0,u_2,u_4$ are identified, so the remaining vertices of $H'$ after contraction and 2,3-cut reduction constitute only one vertex in $K_6$. Five vertices in $K_6$ that are originally in $G' \setminus H'$. The sum of the degree of those vertices in $G' \setminus H'$ is at most $31 (=5 \times 5 + 6)$ since 2,3-cut reduction erases at most 6 edges that incident with vertices of $G' \setminus H'$. There are five noncontractible cycles in $K_6$ that consist of vertices in $G' \setminus H'$. When the sum of the degree of vertices in each cycle is at least 20, the sum of the degree of five vertices is $(20 \times 5) / 3 = 33.3 \cdots$. The reason for division by 3 is that we count three times for each vertex. Therefore at least one cycle of length 3 in $G' \setminus H'$ has the sum of degree of at most 19. \textbf{We use Lemma ?? We handle these noncontractible curves (rep 3, ring size 7) in Section ??.}
\end{enumerate}
\end{proof}

\subsection{7-cuts}\label{subsect:7-cut}

\begin{proof}[Proof of Lemma \ref{lem:6,7-cut} for $l=7$]
\quad \par
When a chord of $C$ exists in $H''$, the distance between endpoints of the chord in $C$ is exactly two by Claim \ref{clm:chord-H''}, so there is a cycle of length six. In this case, we apply the same case analysis explained in Section\ref{subsect:6-cut} to this cycle. 
The different point from Section\ref{subsect:6-cut} is that the cycle is not minimal since $C$ is minimal. However, we use the minimality only when we execute computer checks. When we check the cases (6cut-1), and (6cut-2), we do not use the minimality of cycles of length seven. Hence we can apply the same case analysis here.

%We discuss the case (ii) now, which is easier.
We will first discuss the case when $C$ is not a cycle.
%\subsubsection{$C$ is not a cycle}
From the discussion before, $C$ is a noncontractible closed walk so that $u_0=u_3$ and other vertices of $C$ are distinct. 
Pairs of vertices of $C$ so that the distance between them in $H'$ may possibly become at most one and constitutes a contractible cycle by combining a path in $H''$ after contraction, are $(u_4, u_6)$ or $(u_0, u_5)$ since the representativity becomes at most two if other pair of vertices is at distance one.
These pairs of vertices are not identified for the same reason.
When a vertex $v \in H''$ is adjacent to $u_4, u_6$, the edge $vu_5$ exists.
When the cycle $vu_4u_6$ appears after contraction, $u_5$ is deleted and in addition, at most two vertices are deleted by Claim \ref{clm:numIdent}, so at most three vertices are deleted or identified. It is same for the case $(u_0, u_5)$.
When no 2,3-vertex cut appears, at most two vertices are deleted by Claim \ref{clm:numIdent}. In each case, $\langle G' / c(K) \rangle \neq K_6$ since at least five vertices of $C$ and $G' - H'$ must be deleted or identified.

%\subsubsection{$C$ is a cycle}
\label{sect:7-cycle}
Suppose now that $C$ is a cycle.
We show the claim that is similar to Claim \ref{clm:dis2-u0-u3} in the proof for $l=6$.
\begin{claim}\label{clm:dis2-u0-u3-7cut}\showlabel{clm:dis2-u0-u3-7cut}
Suppose that there is a vertex $v \in G' - H'$ such that $v$ is adjacent to $u_i$ and $u_{i+3}$ and there are two contractible cycles: $D_1 = vu_iu_{i+1}u_{i+2}u_{i+3}$ and $D_2 = vu_{i+3}u_{i+4}u_{i+5}u_{i+6}u_i$. Suppose that $D_1$ bounds a disk that contains $H'$. If $d_{H' / c(K)}(u_i,u_{i+3})$ is at most one, $G'$ does not become $K_6$ after contraction and low-vertex cut reductions by Claim \ref{clm:5cycK6}.
\end{claim}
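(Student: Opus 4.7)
The plan is to apply Claim \ref{clm:5cycK6} directly to the contractible 5-cycle $D_1$, using the fact that the pair $(u_i, u_{i+3})$ plays the role of one of the ``distance-2'' pairs on that cycle. Relabel $D_1 = d_0d_1d_2d_3d_4$ with $d_0 = v$, $d_1 = u_i$, $d_2 = u_{i+1}$, $d_3 = u_{i+2}$, $d_4 = u_{i+3}$. Then $(d_4, d_1) = (u_{i+3}, u_i)$ is exactly the pair $(d_j, d_{j+2})$ with $j=4$ (indices mod $5$), so it fits the template of Claim \ref{clm:5cycK6}.

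The first thing I would check is that the closed disk $\Delta$ bounded by $D_1$ (the one containing $H'$) has at least two vertices strictly in its interior. This is immediate: the three vertices $u_{i+4}, u_{i+5}, u_{i+6}$ of $C$ lie in $H' \subseteq \Delta$ and are disjoint from $V(D_1)$, giving at least three interior vertices (even before counting any vertices of $A$). Hence the structural premise of Claim \ref{clm:5cycK6} is satisfied, and Lemma \ref{minc} then tells us that each chord $d_jd_{j+2}$ of $D_1$ passes through the crosscap, exactly as the statement of Claim \ref{clm:5cycK6} records.

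Next I would verify the distance hypothesis of Claim \ref{clm:5cycK6} for the pair $(d_1, d_4)$. By assumption, $d_{H'/c(K)}(u_i, u_{i+3}) \le 1$. Since the contraction edges $c(K)$ lie inside $H'$ and $H' \subseteq \Delta$, any witnessing path of length $\le 1$ in $H'/c(K)$ between $u_i$ and $u_{i+3}$ stays inside $\Delta$; moreover, low-vertex cut reductions remove only planar parts separated from $\Delta$ by small cuts and therefore do not affect the subgraph strictly inside $\Delta$. Consequently $d(d_1, d_4) \le 1$ in $\Delta$ after contraction of $c(K)$ and the subsequent low-vertex cut reductions, which is precisely the hypothesis of Claim \ref{clm:5cycK6} for the index $j=4$.

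Applying Claim \ref{clm:5cycK6} then yields the desired conclusion that $G'$ does not reduce to $K_6$, completing the proof. The main (minor) obstacle is purely bookkeeping: one must justify that restricting to $\Delta$ is legitimate, i.e., that the short $u_i$-to-$u_{i+3}$ connection inherited from $H'/c(K)$ survives intact inside $\Delta$ after the global low-vertex cut reductions. This follows from the observation above that such reductions act outside $\Delta$ and leave its interior graph (and hence its internal distances) unchanged.
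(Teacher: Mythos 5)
Your proposal is correct and matches the paper's treatment: the paper offers no argument beyond the inline citation of Claim \ref{clm:5cycK6}, and you supply exactly the bookkeeping that citation presupposes (relabelling $D_1$ as the 5-cycle, noting $u_{i+4},u_{i+5},u_{i+6}$ give the required two interior vertices, and transferring the distance-$\le 1$ condition from $H'/c(K)$ into $\Delta$). The only slightly loose spot is the assertion that low-vertex cut reductions never touch the interior of $\Delta$ (they can delete planar pieces inside it), but this does not affect the survival of the $u_i$--$u_{i+3}$ identification or adjacency, so the application of Claim \ref{clm:5cycK6} goes through as you describe.
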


We divide the proof into the following seven cases. We set $i$ as some value from 0 to 6 in the cases. We describe the cases in Figure \ref{fig:7cut}. 

\begin{figure}[htbp]
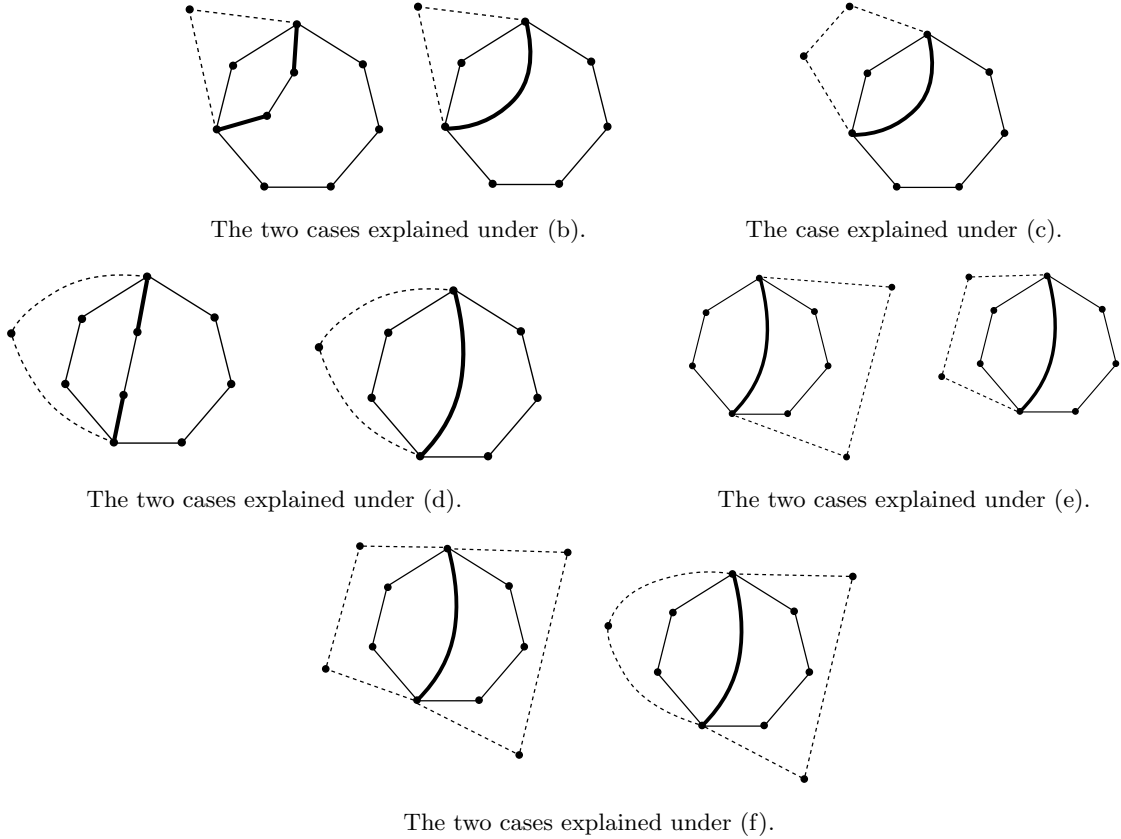

  \centering
  \begin{tabular}{c}
  \captionsetup[subfigure]{labelformat=empty, subrefformat=empty, skip=10pt}
  \begin{minipage}{0.5\hsize}
      \centering
      \includesvg[height=2.5cm]{./img/67cut/proof7cutb.svg}
      \subcaption{The two cases explained under (b).}
      \label{fig:7cut(b)}
  \end{minipage} %\\ 
%  \addlinespace[10pt]
%  \captionsetup[subfigure]{labelformat=empty, subrefformat=empty, skip=10pt}
  \begin{minipage}{0.3\hsize}
      \centering
      \includesvg[height=2.5cm]{./img/67cut/proof7cutc.svg}
      \subcaption{The case explained under (c).}
      \label{fig:7cut(c)}
  \end{minipage} \\
  \addlinespace[10pt]
  \captionsetup[subfigure]{labelformat=empty, subrefformat=empty, skip=10pt}
  \begin{minipage}{0.5\hsize}
      \centering
      \includesvg[height=2.5cm]{./img/67cut/proof7cutd.svg}
      \subcaption{The two cases explained under (d).}
      \label{fig:7cut(d)}
  \end{minipage} %\\
%  \addlinespace[10pt]
%  \captionsetup[subfigure]{labelformat=empty, subrefformat=empty, skip=10pt}
  \begin{minipage}{0.5\hsize}
      \centering
      \includesvg[height=2.5cm]{./img/67cut/proof7cute.svg}
      \subcaption{The two cases explained under (e).}
      \label{fig:7cut(e)}
  \end{minipage} \\
  \addlinespace[10pt]
  \captionsetup[subfigure]{labelformat=empty, subrefformat=empty, skip=10pt}
  \begin{minipage}{\hsize}
      \centering
      \includesvg[height=3.2cm]{./img/67cut/proof7cutf.svg}
      \subcaption{The two cases explained under (f).}
      \label{fig:7cut(f)}
  \end{minipage} \\
  \end{tabular}
  \caption{The figures that describe the case analysis in Section \ref{sect:7-cycle}. The heptagon represents $C$, where $u_i$ is the vertex on the upper center, $u_{i+1},u_{i+2},...$ are in counter-clockwise order. The bold line represents the path that consists of contraction edges in $H'$. The dotted line represents the path in $H''$.}\label{fig:proof7cut}
\end{figure}

\begin{enumerate}
    \renewcommand{\labelenumi}{(\alph{enumi})}
    \item There are no 2,3-vertex-cuts that contain vertices of $G' - H'$.\\
    The graph $G'$ does not become $K_6$ after contraction since at most two vertices are identified by Claim \ref{clm:numIdent}, but at least six vertices must be deleted or identified by Claim \ref{clm:adj-cont}.
    \item A path of length two between $u_i$ and $u_{i+2}$ in $H''$ exists, and a cycle of length two, or three that contains the path appears after contraction. No other 2,3-vertex cut appears.\\
    The vertex that is adjacent to $u_i,u_{i+2}$ is denoted by $v$. The edge $vu_{i+1}$ exists by Claim \ref{clm:dis2-u0-u2}. 

    When $u_{i+2}$ is identified with $u_i$ and the cycle $vu_i$ appears after contraction, $u_{i+1}$ is deleted. In addition, at most one vertex is identified by Claim \ref{clm:numIdent}, so at most three vertices are deleted or identified. Hence $\langle G' / c(K) \rangle \neq K_6$ by Claim \ref{clm:rm4or6}.

    When the cycle $vu_iu_{i+2}$ appears after contraction, $u_{i+1}$ is  deleted. In addition, at most two vertices are identified by Claim \ref{clm:numIdent}, so at most three vertices are deleted or identified.  Hence $\langle G' / c(K) \rangle \neq K_6$ by Claim \ref{clm:rm4or6}.

    \item A path of length three between $u_i$ and $u_{i+2}$ in $H''$ exists, and a cycle of length three that contains the path appears after contraction. No other 2,3-vertex-cut appears.\\
    The path is denoted by $u_iv_1v_2u_{i+2}$. In this case $u_{i+2}$ is identified with $u_i$. When the contractible 5-cycle $u_iu_{i+1}u_{i+2}v_2v_1$ bounds the disk that contains $H'$, $\langle G' / c(K) \rangle \neq K_6$ by Claim \ref{clm:5cycK6}.
    Therefore, the cycle $u_iu_{i+1}u_{i+2}v_2v_1$ bounds the disk that contains at most one vertex strictly inside, by Lemma \ref{minc}. The vertex $u_{i+1}$ and at most one vertex are deleted. In addition, at most one vertex is identified by Claim \ref{clm:numIdent}, so at most four vertices are deleted or identified. By Claim \ref{clm:rm4or6}, $\langle G' / c(K) \rangle \neq K_6$.
    
    \item A path of length two between $u_i$ and $u_{i+3}$ in $H''$ exists, and a cycle of length two, or three that contains the path appears after contraction. No other 2,3-vertex-cut appears.\\
    The vertex that is adjacent to $u_i, u_{i+3}$ is denoted by $v$. The cycle $u_iu_{i+1}u_{i+2}u_{i+3}v$ bounds the disk that contains at most one vertex of $G' - H'$ strictily inside, by Claim \ref{clm:dis2-u0-u3-7cut} and Lemmma \ref{minc}.

    When $u_{i+3}$ is identified with $u_i$ and the cycle $vu_i$ appears after contraction, the distance between $u_i$ and $u_{i+5}$ or $u_{i+4}$ and $u_{i+6}$ is at most one by Lemma \ref{minc}. We have already handled all of the cases by (7cut-1), (7cut-2), (7cut-3), and (7cut-4) in Claim \ref{clm:comp-cut}. 
    % $u_{i+1},u_{i+2}$ and at most 1 vertex of $G' - H'$ are erased. In addition, at most 1 vertex is identified by Claim \ref{clm:numIdent}, so at most 5 vertices are erased or identified. By Claim \ref{clm:rm4or6}, $\langle G' / c(K) \rangle \neq K_6$.

    When the cycle $vu_iu_{i+3}$ appears after contraction, $u_{i+1},u_{i+2}$ and at most one vertex in $G' - H'$ are deleted. In addition, at most one vertex is identified by Claim \ref{clm:numIdent}, so at most four vertices are deleted or identified. By Claim \ref{clm:rm4or6}, $\langle G' / c(K) \rangle \neq K_6$.

    \item A path of length three between $u_i$ and $u_{i+3}$ in $H''$ exists, and a cycle of length three that contains the path appears after contraction. No other 2,3-vertex-cut appears.\\
    The path is denoted by $u_iv_1v_2u_{i+3}$. The symbol $D$ denotes the cycle $u_iu_{i+1}u_{i+2}u_{i+3}v_2v_1$. We assume that $u_{i+3}$ is identified with $u_i$.
    
    When $D$ bounds the disk that contains $H'$, $u_i$ is adjacent to four vertices $u_{i+1},u_{i+2},v_1,v_2$ in $\langle G' / c(K) \rangle$. Suppose $\langle G' / c(K) \rangle = K_6$. Exactly one of the edges $v_1u_{i+1}$ or $v_2u_{i+2}$ exists since the degree of $u_i$ must be five. We assume that $v_1u_{i+1}$ exists , w.l.o.g. The cycle $u_{i+1}u_{i+2}u_{i+3}v_2v_1$ exists, so $\langle G' / c(K) \rangle \neq K_6$ by Claim \ref{clm:5cycK6}.

    When $D$ bounds the disk that does not contain $H'$, four vertices $u_i,u_{i+4},u_{i+5},u_{i+6}$ constittues a circuit in $G' / c(K)$. 
    % The distance between $u_i$ and $u_{i+5}$ or $u_{i+4}$ and $u_{i+6}$ is at most 1 by Lemma \ref{minc}. 
    We have already handled all of the cases by (7cut-1), (7cut-2), (7cut-3), and (7cut-4) in Claim \ref{clm:comp-cut}. 
    
    \item Two paths of length two, or three between $u_i$ and $u_{i+3}$ in $H''$ exist, and two cycles of length two, or three that contain the paths appear after contraction.\\
    In the following explanation, $v_i(1 \leq i \leq 4)$ are pairwise different since if some of them are same, either that would contradict Lemma \ref{minc} or the representativity of the resulting embedding would become at most two after contraction.
    When there are two paths $u_iv_1v_2u_{i+3}$, $u_iv_3v_4u_{i+3}$, and $u_{i+3}$ is identified with $u_i$, $u_i$ is adjacent to four vertices $v_1,v_2,v_3,v_4$ in $\langle G' / c(K) \rangle$. Suppose $\langle G' / c(K) \rangle = K_6$. The degree of $u_i$ is five, so either $v_1v_3$ or $v_2v_4$ exists. We assume that $v_1v_3$ exists, w.l.o.g. The cycle $v_1v_3v_4u_{i+3}v_2$ exists, so $\langle G' / c(K) \rangle \neq K_6$ by Claim \ref{clm:5cycK6}.

    When there are two paths $u_iv_1v_2u_{i+3}$, $u_iv_3u_{i+3}$, and $u_{i+3}$ is identified with $u_i$, the cycle $u_iv_1v_2u_{i+3}v_3$ exists. By Claim \ref{clm:5cycK6}, $\langle G' / c(K) \rangle \neq K_6$.
    
    \item The number of 2,3-vertex cuts that contain vertices of $G' - H'$ is at least two.\\
    From the discussion above, pairs of vertices that are connected by a path in $H''$, which becomes a 2,3-vertex cut, are 
    (I) $(u_i,u_{i+3}), (u_{i+3}, u_{i+5}), (u_{i+5}, u_{i+7})$,
    (II) $(u_i,u_{i+3}), (u_{i+3}, u_{i+5})$,
    (III) $(u_i,u_{i+3}), (u_{i+4}, u_{i+6})$,
    (V) $(u_i,u_{i+2}), (u_{i+2}, u_{i+4})$, or
    (VI) $(u_i,u_{i+2}), (u_{i+3}, u_{i+5})$.
    Suppose $\langle G' / c(K) \rangle = K_6$.
    The case $d_{H' / c(K)}(u_i, u_{i+3}) = 0$ is impossible by (7cut-1), (7cut-2), (7cut-3), and (7cut-4) in Claim \ref{clm:comp-cut}.

    When the vertices of $C$ that remain in $\langle G' / c(K) \rangle$ are contained in a triangle, there is a noncontractible cycle in $\langle G' / c(K) \rangle (=K_6)$ that consists of vertices that were originally in $G' - H'$.
    A path of length two between $u_i,u_{i+3}$ constitutes a five-cycle with the path $u_iu_{i+1}u_{i+2}u_{i+3}$, so a 2,3-vertex-cut that contains such a path gets rid of at most two edges incident with vertices of $G' - H'$.
    A path of length two(, three) between $u_i,u_{i+2}$ constitutes a four(, five) cycle with the path $u_iu_{i+1}u_{i+2}$, so a 2,3-vertex cut that contains such a path gets rid of at most one(, two) edges incident with vertices of $G' - H'$ respectively.
    Therefore, unless paths of length three in $H''$ exist between $u_{i+3}$ and $u_{i+5}$ and between $u_{i+5}$ and $u_{i+7}$ in case (I), which is excluded by (7cut-1) in Claim \ref{clm:comp-cut}, at most five edges incident with vertices of $G' - H'$ are deleted.
    By combining the fact that the degree of all vertices of $K_6$ is five, the noncontractible cycle of length three, whose sum of degree is at most 15, exists in $G'$. In fact, $G$ belongs to $\Gamma_3^{k} (3 \leq k \leq 8)$. We have already handled $\Gamma_3^{k}$ in Section \ref{sect:smallcont}.

    Otherwise, one or two 3-vertex cuts, each of which contains a path in $H''$ of length two appear.
    When the number is one, $d_{H' / c(K)}(u_i,u_{i+2})=1$ in cases (V), (VI). In this case, at most five vertices are deleted or identified, so $\langle G' / c(K) \rangle \neq K_6$ by Claim \ref{clm:rm4or6}.
    When the number is two, case (II), (III), (V), or (VI). In these cases, at most five(, five, three, or three) vertices are deleted or identified so $\langle G' / c(K) \rangle \neq K_6$ by Claim \ref{clm:rm4or6}. 
\end{enumerate}
\vskip -5mm
\end{proof}

In Lemma \ref{lem:6,7-cut}, we do not handle the case that $C$ is a cycle of length $l$, and at most $l-3$ vertices are in a connected component of $G' - C$, which does not constitute a disk with $C$. In this case, the representativity of the embedding of $G'$ is at most four, which is handled in Section \ref{sect:rep}.

\medskip

{\bf Remark: } As pointed out above, we only get a new cycle $C$ that would lead to a contradiction to the minimality of $C$ only when Claim \ref{clm:chord-H''} occurs. 

 It turns out that this is necessary for our algorithm in Theorem \ref{algo}.

% ====================================================
\section{Low representativity}
\label{sect:rep}\showlabel{sect:rep}
% ====================================================

Let us recall that $G$ is a minimal counterexample. 
Throughout this section, we assume that the representativity of the embedding of $G$ is $y$. 
Let $\gamma$ be the non-contractible curve that certifies this representativity, and $F$ be the set of edges of $G$ that intersects $\gamma$. 
By definition, $|F| = y$. 
In this case, we are interested in the case when  $3 \leq y \leq 5$. 
The main result of this section, Lemma \ref{lem:repfinal}, says that either at least one island of a reducible configurations appears in $G - C$, where $C$ is a cycle defined in Lemma \ref{lem:repcycle}, or there is a projective island $I$ that can be safely reduced in $G$. 
To this end, we need a series of lemmas. 

\begin{lem}\label{lem:repcycle}\showlabel{lem:repcycle}
     $G$ has a cycle that contains all endpoints of $F$.
\end{lem}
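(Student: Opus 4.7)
The plan is to construct the desired cycle explicitly as the combinatorial boundary of a Möbius-band neighborhood of $\gamma$ built from face boundaries of $G$. Write $V^*\subseteq V(G)$ for the set of endpoints of $F$, so the goal is to exhibit a cycle of $G$ containing $V^*$.

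First I would normalize $\gamma$. Assuming $\gamma$ is chosen with $|F|=y$ minimal, a standard local shortcut argument shows that $\gamma$ crosses each face of $G$ at most once and that no vertex of $G$ is incident with two edges of $F$: otherwise, if a vertex $v$ had two $F$-edges $e,e'$ at it, replacing the short arc of $\gamma$ from $e$ to $e'$ near $v$ by an arc through the opposite sector produces a homotopic (hence non-contractible) curve crossing $y-1$ edges, contradicting representativity. Under this normalization $|V^*|=2y$, every vertex in $V^*$ has exactly one $F$-edge, and the faces $f_1,\dots,f_y$ through which $\gamma$ successively passes are pairwise distinct.

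Next I would build the candidate cycle. List the crossings of $\gamma$ as $e_1,\dots,e_y$ in order and let $f_j$ be the face of $G$ through which $\gamma$ travels from $e_j$ to $e_{j+1}$ (indices mod $y$). Since the embedding of a minimal counterexample is polyhedral by Lemma~\ref{minc} and the representativity is at least $3$, each $f_j$ is bounded by a cycle $\partial f_j$ containing $e_j$ and $e_{j+1}$. Removing these two edges from $\partial f_j$ leaves two internally disjoint arcs $P_j$ and $Q_j$, one on each side of $\gamma$ inside $f_j$. Concatenate them into $W := P_1 P_2 \cdots P_y\, Q_1 Q_2 \cdots Q_y$. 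A regular neighborhood of $\gamma$ in the projective plane is a Möbius band, so one lap along the ``$P$-side'' of $\gamma$ lands on the ``$Q$-side''; combinatorially, this forces $P_y$ to terminate at the endpoint of $e_1$ opposite the starting point of $P_1$, and $Q_y$ then closes up $W$. Hence $W$ is a closed walk in $G\setminus F$ visiting all $2y$ vertices of $V^*$.

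The last step is to show that $W$ is actually a cycle. For $v\in V^*$: since $v$ is incident with exactly one $F$-edge, the two edges used by $W$ at $v$ must be the two non-$F$ edges, so $W$ visits $v$ exactly once. For $v\notin V^*$ suppose it were visited twice; then $v$ would lie on $\partial f_j$ and $\partial f_k$ for some $j\ne k$. Each of the two boundaries contributes two consecutive edges at $v$, but $v$ has degree three, so at least one edge at $v$ lies on both boundaries and hence separates $f_j$ from $f_k$. If $|j-k|=1$ cyclically, polyhedrality forces this shared edge to be $e_{j+1}\in F$, placing $v\in V^*$---a contradiction. If $|j-k|\ge 2$, the shared edge is dual to a chord of the non-contractible cycle $\gamma'$ of length $y$ in the dual triangulation $G'$; adding that chord splits $\gamma'$ into two cycles whose $\mathbb{Z}/2$-homology classes sum to $[\gamma']\ne 0$, so exactly one of them is non-contractible, of length at most $\max(|j-k|,\,y-|j-k|)+1\le y-1$ (using $y\le 5$), contradicting the representativity. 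Thus $W$ is a cycle in $G$ containing $V^*$.

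The main obstacle will be this last step: the careful handling of the Möbius twist when closing up $W$, and the conversion of a repeated non-$V^*$ vertex into a representativity-violating chord of $\gamma'$. The range $y\in\{3,4,5\}$ is used crucially---for $y=3$ there are no non-consecutive pairs among $f_1,f_2,f_3$, so that subcase is vacuous, while for $y\in\{4,5\}$ the bound $\max(|j-k|,y-|j-k|)+1\le y-1$ produces the strictly shorter non-contractible cycle in $G'$ that yields the contradiction.
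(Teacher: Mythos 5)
Your construction produces exactly the cycle the paper uses --- the boundary of the face of $G-F$ containing $\gamma$ --- but you verify that it is a cycle by a different mechanism. The paper deletes $F$, shows the resulting subcubic planar graph $H=G-F$ is connected and bridgeless (hence $2$-connected) by rerouting $\gamma$ through any would-be small piece using cyclic $5$-edge-connectivity, and then simply takes the boundary of the outer face of the induced embedding of $H$. You instead assemble the boundary walk from the $y$ face boundaries $\partial f_1,\dots,\partial f_y$ (with the M\"obius twist correctly forcing a single closed walk $P_1\cdots P_yQ_1\cdots Q_y$) and rule out repeated vertices by converting any repetition into a chord of the dual cycle $\gamma'=f_1\cdots f_y$ in $G'$, which yields a non-contractible cycle of length at most $y-1$ and contradicts the representativity. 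Both routes rest on the same two inputs (polyhedrality via Lemma \ref{minc} and the minimality of $y$); yours is more local and explicit, the paper's is shorter because $2$-connectedness of $H$ settles all boundary walks at once.

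One step is under-justified: for $v\in V^*$ you assert that $W$ visits $v$ exactly once ``because the two edges used by $W$ at $v$ must be the two non-$F$ edges.'' A closed walk may traverse an edge twice, so this does not by itself exclude a second visit: $v$ could also lie on $\partial f_k$ for a third face $f_k\in\{f_1,\dots,f_y\}$, namely the face at $v$ lying between its two non-$F$ edges, in which case $W$ would pass through $v$ twice, using both non-$F$ edges each time. This configuration is excluded by exactly the argument you give for $v\notin V^*$: such an $f_k$ is either adjacent in $G'$ to a non-consecutive face of $\gamma'$ (a chord, hence a non-contractible cycle of length $\le y-1$), or shares two edges with a consecutive one (a $2$-cycle in $G'$, hence representativity at most $2$). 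So the fix is already contained in your proof; it just needs to be invoked for this case as well. Finally, the bound $\max(|j-k|,\,y-|j-k|)+1\le y-1$ requires only that $f_j,f_k$ be non-consecutive on $\gamma'$, not $y\le 5$.
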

\begin{proof}
    Let $\Phi_G$ be an embedding of $G$ in the projective plane.
    We obtain the planar graph $H$ by deleting all edges of $F$ from $G$.
    Let $\Phi_H$ be the restriction of $\Phi_G$ to $H$.  
    We first show that $H$ is connected.
    Suppose that $H$ has a component $C$ so that $V(H - C) \neq \emptyset$. The component $C$ contains some endpoints of $F$ since $G$ is cyclically 5-edge-connected. 
    But in this case, we can easily find a new curve $\gamma'$ across $C$ and get a non-contractible curve that intersects $\Phi_G$ less than $y$ times. This would contradict the assumption that the representativity of $G$ is $y$.
    We next show that $H$ is bridgeless. 
    Suppose $H$ has a bridge. Let $C$ be an induced subgraph of $H$ separated by the bridge. $C$ contains at least two endpoints of $F$ since $G$ is cyclically 5-edge connected. But again, we can find a noncontractible curve that intersects $\Phi_G$ less than $y$ times in the same way as above.
    
    We thus obtain a closed walk $C'$ of $H$ that surrounds the infinite region in the embedding $\Phi_H$, since $H$ is 2-edge-connected, which means that $H$ is 2-connected. 
    Any endpoint of $F$ appears in $C'$ since any edge of $F$ intersects $\gamma$. Moreover, since $H$ is 2-connected, every vertex of $C'$ appears just once in $C'$. So $C'$ is a cycle as desired. 
%    Suppose that the same vertex appears at least twice in $C'$. The degree of the vertex is two or three, so at least one edge that incidents with the vertex appears at least twice in $C$. This edge must be a bridge of $H$, which is a contradiction. Thus all vertices of $C$ appear just once in $C'$, so $C'$ is a cycle that satisfies the statement of this lemma.
\end{proof}

We now try to find one reducible island in $G$.
Let $C$ be the cycle defined in Lemma \ref{lem:repcycle}.
$C + F$ is one of the projective islands in $\Pi_y^k$, where $k$ is the number of edges between $C$ and $G - C$. 
($\Pi_y^k$ is a set of projective islands of ring size $k$, defined in Section \ref{sect:smallcont}.)
Let us denote this projective island by $I$.
If $I$ is reducible and $G - c(I) \in \mathcal{P}_1 \setminus \mathcal{P}_0$, we can use $I$ to reduce $G$.

In the rest of this section, with $k$ under a certain threshold (for $y = 3,4,5$, $k \leq 3,8,12$, respectively), we shall show that every projective island in $\Pi_y^k$ is reducible. 
When $k$ exceeds this threshold, we apply the discharging method and obtain a reducible configuration in an island $I^*$. which appears in $G - I$ by Lemma \ref{lem:conf-in-T}.

\begin{lem}\label{lem:bridge-of-proj-island}\showlabel{lem:bridge-of-proj-island}
    Suppose that a projective island $I \in \Pi_y^k$ appears in $G$, and that $I$ is C-reducible with contraction size exactly four.
    Let $U$ be the set of vertices of degree two in $I$. For each $u \in U$, we now add an edge between a new vertex $v_{\infty}$ and $u$ to obtain a projective planar graph $J$.

    Then $I$ can be safely reduced, i.e. $G \dotdiv c(I)$ is not Petersen-like, unless the following happens:
    \begin{itemize}
        \item $J$ has two faces $f_1, f_2$, both containing $v_{\infty}$, and 
        \item there is a curve $\gamma$ with endpoints on $f_1$ and $f_2$ respectively, and $\gamma$ does not intersect any edge in $E(J) \setminus c(I)$. Moreover, $\gamma$ passes through all edges in $c(I)$ exactly once.
    \end{itemize}
\end{lem}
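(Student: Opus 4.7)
The plan is to prove the contrapositive: assume $G \dotdiv c(I)$ is Petersen-like and exhibit the curve $\gamma$. By minimality of $G$, any graph in $\mathcal{P}_1$ smaller than $G$ which is not 3-edge-colorable must lie in $\mathcal{P}_0$; consequently Lemma \ref{low-red} guarantees that $G \dotdiv c(I)$ reduces to $P_{10}$ through a sequence of low-edge-cut reductions. In the generic case the first of these reductions exposes a cyclic edge-cut $F'$ of size at most $3$ in $G \dotdiv c(I)$ separating a planar side $P$ from a non-planar (Petersen-reducible) side $N$. The degenerate case $G \dotdiv c(I) = P_{10}$ forces $|V(G)|=18$, and there I would conclude by direct enumeration of the few possible ways to ``decorate'' $P_{10}$ with the $8$ endpoints of $c(I)$ consistently with the projective-plane embedding of $G$.

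The first step in the generic case is to lift $F'$ back to $G$. The suppressions performed by $\dotdiv c(I)$ identify $F'$ with a set of at most $|F'|$ edges of $G$; together with the subset $c(I)_{\mathrm{cross}} \subseteq c(I)$ of contraction edges crossing the induced vertex partition, these form an edge-cut $\widetilde F$ of $G$. Since the lifted side $\widetilde P$ retains the $2$-connected planar cubic structure of $P$ and $\widetilde N$ retains the $P_{10}$-skeleton of $N$, both sides contain cycles, so $\widetilde F$ is a cyclic cut. Cyclic $5$-edge-connectivity (Lemma \ref{minc}) then gives $|\widetilde F| \ge 5$, forcing $|c(I)_{\mathrm{cross}}| \ge 5 - |F'| \ge 2$.

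The next step is topological. Because $G$ sits inside $J$ in the projective plane with the sole addition of $v_\infty$ and the ring edges $v_\infty u$, the non-$c(I)$ edges of $\widetilde F$ are ring edges incident to $v_\infty$. I would realize $\widetilde F$ as a simple closed curve $\sigma$ bounding the lifted planar side $\widetilde P$ and crossing exactly the edges of $\widetilde F$. Cutting $\sigma$ at its crossings with ring edges and perturbing slightly into the faces of $J$ surrounding $v_\infty$ decomposes $\sigma$ into arcs whose endpoints lie in faces of $J$ meeting $v_\infty$; the arc that carries all crossings with $c(I)_{\mathrm{cross}}$ is my candidate for $\gamma$. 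If $|c(I)_{\mathrm{cross}}|<4$, I would iterate the reduction argument on the non-planar remainder so that each further low-edge-cut reduction forces fresh edges of $c(I)$ into its lifted cut, and concatenate the resulting arcs into a single curve crossing every edge of $c(I)$ exactly once.

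The main obstacle I expect is controlling this iterative concatenation: it must be verified that each additional boundary curve can be attached to the previous arcs inside the faces around $v_\infty$ without either traversing an edge of $E(J) \setminus c(I)$ or re-crossing an edge of $c(I)$ already covered, while keeping the concatenation's endpoints on faces incident to $v_\infty$. This relies on the rigid structure of projective islands in $\Pi_y^k$—in particular on the fact that the four contraction edges of a $C$-reducible $I$ form a single topological ``belt'' in the embedding—together with the explicit small cases depicted in Figures \ref{fig:delta-6-dangerous}--\ref{fig:pi-3-7-bridge}, which supply both motivation and base cases for the induction.
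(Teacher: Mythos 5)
Your contrapositive set-up is reasonable, but you have missed the tool that turns this lemma into a short argument, and the route you substitute for it has a genuine gap. The paper's proof simply invokes Proposition \ref{lem:4cont2}: if $I$ cannot be safely reduced, then $c(I)$ must be contained in a \emph{single} cyclic $5$-edge-cut $F'$ of $G$. Since $|c(I)|=4$, there is exactly one extra edge $e\in F'\setminus c(I)$; one checks that $e\notin E(I)$ (otherwise $G-I$ would be disconnected, yielding a small cyclic cut or small representativity and contradicting Lemma \ref{minc}), and then the closed curve dual to $F'$, cut open at its crossing with $e$, is precisely the arc $\gamma$ of the statement: it crosses each edge of $c(I)$ exactly once, crosses nothing else of $J$, and its endpoints lie in faces incident with $v_{\infty}$ because $e$ lies outside $I$. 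No iteration over a reduction sequence is needed.

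Your construction instead lifts, one at a time, the cuts exposed by the low-edge-cut reductions of $G\dotdiv c(I)$ down to $P_{10}$. Each lifted cut $\widetilde F$ is only guaranteed to contain $|c(I)_{\mathrm{cross}}|\ge 5-|F'|\ge 2$ contraction edges, and you give no argument that the union of all the lifted cuts covers all four edges of $c(I)$: a contraction edge both of whose endpoints stay on the same side of every cut in the sequence is never picked up. The subsequent ``concatenation'' of several dual arcs into one simple curve that crosses every edge of $c(I)$ exactly once and no edge of $E(J)\setminus c(I)$ is essentially the whole content of the lemma, and you acknowledge rather than close that step. Finally, you assert that the non-$c(I)$ edges of $\widetilde F$ are ring edges incident with $v_{\infty}$; edges of $\widetilde F$ lying in $E(I)\setminus c(I)$ would make $\gamma$ cross forbidden edges, and they must be excluded by an argument (the paper's small-cut/representativity contradiction), not by fiat. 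As written, the proposal does not establish the conclusion.
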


\begin{proof}
    We prove the contraposition of the lemma.
    Let us assume that it is not safe to reduce $I$ from $G$.
    By Proposition \ref{lem:4cont2}, $G - c(I)$ contains a bridge.
    Since $G$ is cyclically 5-edge-connected, there must be a 5-edge-cut of $G$ containing $c(I)$. 
    Let that 5-edge-cut be $F'$, and $e_{\mathrm{bridge}} \in F'$ be the one edge not in $c(I)$.
    If $e_{\mathrm{bridge}} \in E(I)$, $G - I$ is disconnected, and we can find either a small cyclical cut or a small representativity of $G$, which is a contradiction.
    Therefore, $e_{\mathrm{bridge}} \notin E(I)$. 
    In this case, we can find a curve $\gamma$ as described in the lemma. This completes the proof.
\end{proof}

\begin{lem}\label{lem:lowrepcomp}\showlabel{lem:lowrepcomp}
    Suppose that a projective island $I = C + F$ in $\Pi_y^k$ appears in $G$, where either $y = 3$, and $k \leq 3$, or $y = 4, k \leq 8$, or $y = 5, k \leq 12$.
    Then $I$ can be safely reduced in $G$, i.e. $G \dotdiv c(I)$ is in $\mathcal{P}_1 \setminus \mathcal{P}_0$.
\end{lem}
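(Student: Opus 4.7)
The plan is to prove the lemma by a finite enumeration of $\Pi_y^k$ for each admissible pair $(y,k)$, followed by a computer-verified reducibility check combined with the safety arguments from Section~\ref{sect:smallcont}. The threshold values $k\le 3, 8, 12$ for $y=3,4,5$ are chosen so that every enumerated projective island turns out to be either D-reducible or C-reducible with contraction size at most four, placing the verification squarely within the reach of the tools already developed.

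First, for each admissible $(y,k)$ I would list all non-isomorphic embedded projective islands $I=C+F\in\Pi_y^k$. The definition of $\Pi_y^k$ (the opposite-edge subdivision condition together with the path-length condition) restricts the possibilities substantially, so this enumeration is short and can be generated systematically. For each listed island I would invoke the projective reducibility checker to decide whether $I$ is D-reducible or C-reducible and, in the latter case, to extract the contraction set $c(I)$. It is important to use the projective version of reducibility rather than the planar one, since $I$ is embedded in the projective plane via the crosscap edges $F$. The key expectation from the checker is that in every case $|c(I)|\le 4$.

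Next I would establish safety, i.e.\ that $G\dotdiv c(I)\in\mathcal{P}_1\setminus\mathcal{P}_0$, splitting the argument according to $|c(I)|$. If $I$ is D-reducible there is nothing to delete and safety is immediate. If $|c(I)|\le 3$, Lemma~\ref{lem:4cont3} applies directly. If $|c(I)|=4$, Proposition~\ref{lem:4cont2} gives safety provided $c(I)$ is not contained in any cyclic 5-edge-cut of $G$; by Lemma~\ref{lem:bridge-of-proj-island} this amounts to excluding a curve $\gamma$ that together with $c(I)$ would slice off a bridge. Such a $\gamma$ would identify a fifth edge $e_{\mathrm{bridge}}\notin E(I)$ completing $c(I)$ to a 5-edge-cut of $G$; combining $e_{\mathrm{bridge}}$ with the ring of $I$ and the edges of $F$ would then produce either a cyclic cut of size less than five (contradicting Lemma~\ref{minc}) or a noncontractible curve meeting $G$ in fewer than $y$ edges (contradicting the definition of $y$ as the representativity). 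For the few exceptional islands already flagged in Figures~\ref{fig:delta-6-dangerous} and~\ref{fig:pi-3-7-dangerous}, the explicit noncontractible curves exhibited there certify that the reduced graph has representativity at most two and hence cannot belong to $\mathcal{P}_0$.

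The hard part I expect is ensuring on the enumeration side that the reducibility database really covers every $I\in\Pi_y^k$ throughout the full ranges stated, and that every C-reducible certificate found has $|c(I)|\le 4$. This is handled by an exhaustive computer run of the same kind used to verify $\mathcal{K}$, with the output recorded in the supplementary files. Once those computer-verified facts are in hand, the case analysis above delivers the lemma uniformly across all three values of $y$, since the only ingredients beyond the reducibility certificates are Lemma~\ref{lem:4cont3}, Proposition~\ref{lem:4cont2}, and Lemma~\ref{lem:bridge-of-proj-island}, all of which have already been proved.
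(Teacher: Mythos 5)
Your proposal follows essentially the same route as the paper: enumerate $\Pi_y^k$ for the stated ranges, computer-check reducibility (the paper finds contraction size at most three except for exactly 12 islands of size four), then invoke Lemma~\ref{lem:4cont3} for $|c(I)|\le 3$ and Lemma~\ref{lem:bridge-of-proj-island} for $|c(I)|=4$, with the non-existence of the bad curve $\gamma$ verified by computer rather than by your sketched a priori cut/representativity argument. The only other difference is that the paper dispatches $y=3,\,k\le 3$ immediately as vacuous, since such an island would force a cyclic $(\le 3)$-edge-cut contradicting Lemma~\ref{minc}.
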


\begin{proof}
    If $I \in \Pi_y^k$ appears in $G$, there is a cyclical $k$-edge-cut. Thus, for $y = 3, k \leq 3$, $I$ would not appear.
    For $y = 4, k\leq 8$ and $y=5, k \leq 12$, we simply conduct a reducibility check over all graphs in $\Pi_y^k$, using computer programs.
    As a result, we confirm that every $G \in \Pi_y^k$ is either D or C-reducible with contraction size no bigger than three, except for 12 cases in $\Pi_y^k$, where the contraction size is exactly four.
    
    We now refer to Lemma \ref{lem:4cont3} and Lemma \ref{lem:bridge-of-proj-island}.
    When $|c(I)| \leq 3$, $I$ can be safely reduced. 
    When $|c(I)| = 4$, $I$ can be safely reduced if there is no $\gamma$ as described in Lemma \ref{lem:bridge-of-proj-island}.
    Using computer checks, we have checked that none of the 12 cases contain such a curve $\gamma$. (One can also check by hand as well.)
    
     The pseudocodes and the detailed results are given in the Appendix, see Section \ref{sec:arp1}.
\end{proof}

Let $\Phi_G$ be an embedding of $G$ in the projective plane. 
Apparently, $G - F$ is planar. 
%since representativity on the projective plane equals 0. 
We now obtain the planar graph $H$ from $G - F$ by adding a new vertex $v_{\infty}$ in the outer face $C$ of $H$, and adding edges between $v_{\infty}$ and each of the $2y$ vertices of degree two in $C$. 
%The graph $H$ is planar, so we can get an embedding of $H$ to the plane and take a dual of $H$. 
We denote the dual graph of $H$ by $H'$. 
A vertex $v_{\infty}$ is of degree exactly $2y$, and all other vertices of $H$ are of degree three, so one face of $H'$ has size $2y$ and other faces are triangles. 
We denote the cycle that bounds the face of length $2y$ in $H'$ by $C'$.

\begin{lem}\label{lem:lowrephand}\showlabel{lem:lowrephand}
 If $k > \frac{26}{5} \cdot y - 12$, then a reducible configuration appears in $G - F$.
\end{lem}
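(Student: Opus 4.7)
The plan is to apply Lemma \ref{lem:conf-in-T} to the plane near-triangulation $H'$, with the outer cycle $C'$ (of length $2y$) playing the role of the boundary cycle $C$ in that lemma. First I would identify the relevant counts. By construction $|C'|=2y$. Edges of $H'$ joining $C'$ to $V(H')\setminus V(C')$ are dual to edges of $H$ that bound exactly one of the $2y$ faces around $v_\infty$, i.e., to edges of $G$ lying on $C$; hence their number equals $|C|$. Every vertex of $C$ except the $2y$ endpoints of $F$ is incident to exactly one edge to $G-C$, so $k=|C|-2y$ and the number of such edges equals $|C|=k+2y$.

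Next I would verify the three structural hypotheses of Lemma \ref{lem:conf-in-T}. Internal $6$-connectivity of $H'$ and the inducedness of $C'$ should follow from the cyclic $5$-edge-connectivity of $G$ (Lemma \ref{minc}) together with the fact that $y$ is the representativity of the embedding: a violating separation in $H'$ would translate into a cyclic cut of $G$ of size $\le 4$, a cyclic $5$-cut of $G$ whose small side is not a $5$-cycle, or a non-contractible curve crossing $G$ fewer than $y$ times. Likewise, a vertex $w\in V(H')\setminus V(C')$ adjacent to four consecutive vertices of $C'$ would correspond to a face $f$ of $G-F$ whose boundary meets four consecutive arcs of $C$ between endpoints of $F$; splicing three short detours through $f$ with alternating arcs of $C$ would then produce either a small cyclic cut of $G$ or a non-contractible curve crossing $G$ at most $y-1$ times, contradicting the representativity. (Choosing $C$ minimal among cycles containing every endpoint of $F$ may be helpful here.)

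Finally, plugging $|C'|=2y$ and $k+2y$ into Lemma \ref{lem:conf-in-T}, the hypothesis $k+2y > \tfrac{18}{5}(2y)-12$ becomes precisely $k>\tfrac{26}{5}y-12$, the assumption. Hence some reducible configuration $K\in\mathcal{K}$ appears in $H'$. The completion argument inside Lemma \ref{lem:conf-in-T} adds vertices of degree $\ge 12$ inside $C'$, and since no configuration in $\mathcal{K}$ has a vertex of degree $\ge 12$, $K$ uses no vertex of $C'$; thus its island $I(K)$ involves no edge incident to $v_\infty$ and lies entirely in $G-F$, as required. The main obstacle will be the third structural hypothesis (no vertex of $V(H')\setminus V(C')$ adjacent to four consecutive vertices of $C'$), for which one must carefully combine cyclic $5$-edge-connectivity of $G$ with the representativity assumption; the other two hypotheses are routine consequences of Lemma \ref{minc} and the definition of representativity, and the arithmetic matches exactly.
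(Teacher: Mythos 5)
Your proposal is correct and follows essentially the same route as the paper: apply Lemma \ref{lem:conf-in-T} to $H'$ with boundary cycle $C'$ of length $2y$, rule out a vertex of $H'-C'$ adjacent to four consecutive vertices of $C'$ by observing that this would make three endpoints of $F$ co-facial and allow $\gamma$ to be rerouted through fewer than $y$ edges, count the $2y+k$ edges between $C'$ and $H'-C'$, and check that the threshold $2y+k>\tfrac{18}{5}\cdot 2y-12$ is exactly $k>\tfrac{26}{5}y-12$. Your extra care about internal $6$-connectivity of $H'$ and about why the resulting configuration avoids $C'$ (no configuration in $\mathcal{K}$ has a vertex of degree $\ge 12$) is consistent with, and slightly more explicit than, the paper's argument.
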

\begin{proof}
    $C'$ must be an induced cycle in $H'$, as $C$ is a cycle in $G$.
%    It implies that $H$ has a bridge. 
%    This contradicts the fact that $C$ is a cycle, so $C'$ is an induced cycle in $H'$.
    
    If there is a vertex of $H' - C'$ which is adjacent to at least four consecutive vertices of $C'$, three endpoints of $F$ in $C$ are consecutive (i.e., co-facial in $G$).
    Therefore, there is a non-contractible curve that intersects less than $y$ edges of $G$. 
    This contradicts the assumption of representativity of $G$.

    Thus there are $2y+k$ edges between $C'$ and $H' - C'$. 
    By Lemma \ref{lem:conf-in-T}, if $2y+k > \frac{18}{5} \cdot 2y - 12$, at least one of our reducible configurations appears in $H'$, so an island of the reducible configurations must appear in $G - F$, as desired.
\end{proof}

With Lemmas \ref{lem:lowrepcomp} and \ref{lem:lowrephand}, we could find a reducible island or a reducible projective island in most of the cases.
The cases left are when $y = 5$ and $13 \leq k \leq 14$.
We handle the case $y=5$, $k=14$ by examining the structure when the equality holds in Lemma \ref{lem:conf-in-T}. 

In the following proofs of the lemmas, we use the same notations used in the proof of the Lemma \ref{lem:conf-in-T}. For example, we denote the vertices of $C'$ by $u_0u_1,\dots,u_{2y-1}$ and the vertex in $H'$ which is adjacent to $u_i,u_{i+1}$ is denoted by $v_i$.

\begin{lem}\label{lem:rep5-14}\showlabel{lem:rep5-14}
If $y=5$ and $k=14$, then a reducible configuration appears in $G - F$.
\end{lem}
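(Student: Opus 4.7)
The plan is to apply Lemma~\ref{lem:conf-in-T} to the near-triangulation $H'$ with outer cycle $C'$ of length $n := 2y = 10$. By the discussion preceding Lemma~\ref{lem:lowrephand}, the number of edges between $C'$ and $H'-C'$ equals $2y + k = 24$, which is exactly the threshold $\tfrac{18}{5}|C'| - 12 = 24$, so the strict inequality in Lemma~\ref{lem:conf-in-T} fails by a hair and a direct appeal is unavailable. Re-running the proof (via Lemmas~\ref{lem:charge-T-C}, \ref{lem:to-C}, and~\ref{lem:charge-to-C}) shows, however, that the sum of the final charges over $V(H'-C')$ is at least $(60 - 20n + 10\cdot 24) - (5\cdot 24 - 2n - b) = b$, where $b$ denotes the number of indices $i$ for which $v_i = v_{i+1}$, i.e., the number of vertices of $H'-C'$ adjacent to three consecutive vertices of $C'$.

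If $b \geq 1$, the total final charge in $H'-C'$ is strictly positive, so some vertex of $H'-C'$ carries positive final charge. Extending $H'$ to an internally 6-connected triangulation $T'$ as in the proof of Lemma~\ref{lem:conf-in-T} and running the same discharging argument, we obtain a configuration $K \in \mathcal{K}$ appearing in $T'$ near that vertex. Because every vertex of $C'$ has degree at least $12$ in $T'$ while no configuration in $\mathcal{K}$ contains a vertex of degree at least $12$, the configuration $K$ must be disjoint from $C'$, and hence, being connected, lies inside $H'-C' \subseteq G-F$, as required.

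The remaining case is $b = 0$, where all ten vertices $v_0, \ldots, v_9$ adjacent to consecutive pairs on $C'$ are pairwise distinct and every bound in the charge analysis must be achieved with equality. By Lemmas~\ref{lem:to-C} and~\ref{lem:twoedgesends}, tightness forces $\deg_{H'}(v_i) = 5$ for every $i$ and pins the remaining three neighbors of each $v_i$ in $H'-C'$ to one of the extremal patterns of Figure~\ref{fig:twoedge8} or Figure~\ref{fig:twoedge7}. In particular, each $v_i$ has a degree-5 neighbor in $H'-C'$, and consecutive $v_i, v_{i+1}$ are either adjacent in $H'-C'$ or share a common degree-5 neighbor there. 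I would finish the proof with a finite case analysis over how these extremal patterns can glue cyclically around $C'$, showing that the resulting forced local structure always contains a configuration of $\mathcal{K}$ (typically conf(1), the 4-cycle of degree-5 vertices, whose safe reduction was handled in Lemma~\ref{lem:5555}).

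The principal obstacle is precisely this last structural analysis in the $b = 0$ case: demonstrating that no cyclic assembly of the Figure~\ref{fig:twoedge8}/Figure~\ref{fig:twoedge7} patterns around $C'$ avoids every configuration in $\mathcal{K}$. The enumeration is finite but potentially tedious, and I would expect to handle a few base cases by hand and delegate the remainder to a computer check, in the same style as the other case analyses in the paper.
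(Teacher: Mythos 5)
Your setup and your treatment of the case $b\ge 1$ coincide with the paper's proof: the initial charge in $V(H')-V(C')$ is $60-20\cdot 10+10\cdot 24=100$, the charge leaving is at most $100-b$ by Lemma~\ref{lem:charge-to-C}, and any positive surplus yields a configuration of $\mathcal{K}$ avoiding the degree-$\ge 12$ vertices of $C'$. You also correctly observe that in the remaining case $b=0$ every group bound of Lemma~\ref{lem:to-C} must be tight, forcing $\deg(v_i)=5$ for all $i$ and the extremal patterns of Lemma~\ref{lem:twoedgesends}.

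The genuine gap is that you stop exactly where the proof has to be finished: you defer the $b=0$ case to an unspecified ``finite case analysis over cyclic gluings,'' possibly by computer. The paper closes this case in a few lines with no enumeration. With $b=0$ all $v_i$ are distinct, so each $u_i$ has $n_i\ge 2$ neighbours in $H'-C'$; since $\sum_i n_i=24$, only four of the ten gaps carry an extra neighbour, so by pigeonhole there is an index $i$ with $n_{i+1}=n_{i+2}=2$, i.e.\ $v_iv_{i+1}$ and $v_{i+1}v_{i+2}$ are both edges. Tightness of the group bound at that position forces $\phi(v_{i+1},u_{i+1})+\phi(v_{i+1},u_{i+2})=8$, which by Lemma~\ref{lem:twoedgesends} forces $v_i,v_{i+1},v_{i+2}$ all of degree $5$ with their common neighbour of degree $6$; a second application of tightness, now to the pair $\phi(v_i,u_i)+\phi(v_i,u_{i+1})$ (which must equal $8$, the value $7$ being incompatible with the degrees just determined), forces one further degree-$5$ neighbour of $v_i$, and conf(2) appears. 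So no gluing analysis or computer check is needed --- a single tight spot suffices --- and as written your argument is a correct plan with the decisive step missing.
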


\begin{proof}
When $y=5$ and $k=14$, $\sum_{v \in V(H') - V(C')} 10 \cdot (6 - d(v)) = 100$ by Lemma \ref{lem:charge-T-C} by assigning $n=2y=10,k=2y+k=24$. The amount of charge sent from $V(H') - V(C')$ to $V(C')$ is at most $100 - b$ by Lemma \ref{lem:charge-to-C}. 

We only need to handle the case that $b=0$. In this case there are $24$ edges between $V(C')$ and $V(H') - V(C')$. Moreover, there are four edges between $V(C')$ and $V(H') - V(C')$, whose endpoints are different from $v_i$ $(0 \leq i < 2y)$, and there exists $i$ $(0 \leq i < 2y)$ such that $v_i$ and $v_{i+1}$ are adjacent and $v_{i+1}, v_{i+2}$ are adjacent. The equality $\phi(v_{i+1},u_{i+1}) + \phi(v_{i+1}, u_{i+2}) = 8$ must hold to send charge $100$ from $V(H') - V(C')$ to $V(C')$, so $v_i, v_{i+1}, v_{i+2}$ must all be of degree $5$, and the degree of the vertex that is adjacent to all of $v_i, v_{i+1}, v_{i+2}$ must be $6$ by Lemma \ref{lem:to-C}. Also, the sum $\phi(v_i,u_i) + \phi(v_i, u_{i+1})$ is either $7$ or $8$ to send charge $100$ from $V(H') - V(C')$ to $V(C')$, but the case of $7$ is impossible by our degree conditions above. When $\phi(v_i,u_i) + \phi(v_i, u_{i+1}) = 8$, the degree of $v_{i-1, n_{i-1}-2}$ is $5$, so conf(2) appears.
% conf appears
\end{proof}

For $y=5$ and $k=13$, we filter some cases in $\Pi_y^k$ by some lemmas below, and for the rest of the cases, we do some computer checks.

\begin{lem}\label{lem:rep5-13-0}\showlabel{lem:rep5-13-0}
Assume that $y=5,k=13$, no island of the reducible configurations appears in $G - C$, and there are at least two integers $i$ $(0 \leq i < 2y)$ so that $v_{i-1} \neq v_i$ and $v_i = v_{i+1}$. Then for some $j$ $(0 \leq j < 2y)$, the degree of $v_j$ is $5$.
\end{lem}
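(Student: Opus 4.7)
I would argue by contradiction: assume $d(v_j) \ge 6$ for every $j \in \{0,1,\dots,2y-1\}$. The plan is to sum the sharp per-index bounds of Lemma~\ref{lem:to-C}, specialized under this degree assumption, to show that less charge leaves $V(H')-V(C')$ than the initial charge stored there, leaving some vertex of $V(H')-V(C')$ with strictly positive final charge. By Lemma~\ref{T(v)>0} this produces a reducible configuration of $\mathcal{K}$ in the vicinity of that vertex; since every vertex of $V(C')$ has degree $\ge 12$ in the internally $6$-connected triangulation extending $H'$ while no configuration in $\mathcal{K}$ contains a vertex of degree $\ge 12$, the configuration lies entirely within $V(H')-V(C')$, and the corresponding island lies in $G-C$, contradicting the hypothesis.

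Quantitatively, set $n = 2y = 10$, so that the number of edges between $C'$ and $H'-C'$ equals $2y + k = 23$ and Lemma~\ref{lem:charge-T-C} gives $\sum_{v \in V(H')-V(C')} 10(6-d(v)) = 60 - 20\cdot 10 + 10\cdot 23 = 90$. Let $a$ and $b$ be the number of indices $i$ satisfying respectively case (i) and case (ii) of Lemma~\ref{lem:to-C}, so that $a + 2b = 10$ and $b \ge 2$ by hypothesis. Because $d(v_i) \ge 6$ for every $i$, the residual constants in Lemma~\ref{lem:to-C} satisfy $f_{d(v_i)} \le 4$ and $g_{d(v_i)} \le 6$. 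Summing over indices and using the counting identity $\sum_{\text{(i)}} n_{i+1} + \sum_{\text{(ii)}} n_{i+2} = 23 - b$ (where the $-b$ reflects that each case-(ii) block already accounts for the additional edge $v_iu_{i+2}$), the total charge sent from $V(H')-V(C')$ to $V(C')$ is at most
\[
    5(23-b) - 10(a+b) + 4a + 6b \;=\; 115 - 6a - 9b \;=\; 55 + 3b,
\]
where the last equality uses $a = 10-2b$. Consequently, the charge remaining in $V(H')-V(C')$ is at least $90 - (55 + 3b) = 35 - 3b$, which is at least $35 - 3\cdot 5 = 20 > 0$, yielding the desired positive-charge vertex and the contradiction.

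The main obstacle is the combinatorial bookkeeping supporting the identity $\sum_{\text{(i)}} n_{i+1} + \sum_{\text{(ii)}} n_{i+2} = 23 - b$: one must verify that each edge from $V(H')-V(C')$ to $V(C')$ is counted exactly once, by checking that the inner neighborhood of each $u_j$ is ``owned'' by exactly one index (case (i) at $i = j-1$ when $v_{j-2} \neq v_{j-1} \neq v_j$, or case (ii) at $i = j-2$ when $v_{j-2} = v_{j-1}$), and that in the degenerate situation $v_{j-1} = v_j$ the single edge $v_{j-1}u_j$ is already subsumed by the case-(ii) block at $i = j-2$. Once this identity is established and combined with $a = 10 - 2b$, the numerical slack of at least $20$ gives the contradiction comfortably, and the conclusion that some $v_j$ has degree $5$ follows immediately.
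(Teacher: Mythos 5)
Your proof is correct for the statement as literally written, and it uses exactly the paper's machinery: Lemma \ref{lem:charge-T-C} for the initial charge $90$, the per-block bounds of Lemma \ref{lem:to-C} with the constants $f_d,g_d$, the identity $a+2b=10$ with $b\ge 2$, and the positive-residual-charge conclusion via the argument of Lemma \ref{lem:conf-in-T}. Your bookkeeping identity $\sum_{\text{(i)}} n_{i+1} + \sum_{\text{(ii)}} n_{i+2} = 23-b$ checks out (each case-(ii) block carries $n_{i+2}+1$ edges, each case-(i) block carries $n_{i+1}$), and the resulting bound $55+3b\le 70<90$ is a valid, if generous, contradiction.

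The one substantive difference is the quantifier you contradict. You assume \emph{all} $v_j$ have degree at least $6$, which proves only ``some $v_j$ has degree $5$.'' The paper instead fixes a single index $j$, observes that replacing $f_5$ by $f_{d}$ or $g_5$ by $g_{d}$ for that one block alone costs at least $4$ against the baseline $95-b\le 93$ of Lemma \ref{lem:charge-to-C}, and so gets $93-4=89<90$; this proves the stronger statement that \emph{every} $v_j$ has degree $5$. That stronger form is what Lemmas \ref{lem:rep5-13-1} and \ref{lem:rep5-13-2} actually consume (they need specific $v_j$'s to have degree $5$), so as written your argument would not support the downstream applications. The fix is immediate with your setup: keep the sharp constants $f_5=8$, $g_5=10$ on all blocks except the one containing the chosen $v_j$, where the deficit of $4$ already pushes the total below $90$.
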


\begin{proof}
When $y=5,k=13$, $\sum_{v \in V(H') - V(C')} 10 \cdot (6 - d(v)) = 90$ by Lemma \ref{lem:charge-T-C}. The total amount of charge sent from $V(H') - V(C')$ to $V(C')$ is at most $95 - b$ by Lemma \ref{lem:charge-to-C} ($b$ is defined as in Lemma \ref{lem:charge-to-C}). By the hypothesis $b \geq 2$, the total amount of charge sent is at most $93$. 

Suppose that the degree of $v_j$ is at least $6$. 
In each case of Lemma \ref{lem:to-C}, the possible maximum amount of charge sent is lower by $4$ than that of the case when the degree of $v_j$ is $5$, so the total amount of charge sent is at most $89$.
This implies that the sum of the final charge accumulated in vertices of $V(H') - V(C')$ is positive. Thus we can find an island of reducible configurations %by doing the same thing as 
by Lemma \ref{lem:conf-in-T}.
\end{proof}

% \textbf{COMPUTER: rep5-13}
\begin{lem}\label{lem:rep5-13-1}\showlabel{lem:rep5-13-1}
Assume that $y=5$ and $k=13$, no reducible configuration appears in $G - C$, and there are at least two integers $i$ $(0 \leq i < 2y)$ so that $v_{i-1} \neq v_i$ and $v_i = v_{i+1}$. 
Then for any $j$ $(0 \leq j < 2y)$, the triple
$(n_j, n_{j+1}, n_{j+2})$ is different from the following four possibilities: $(1, 2, 1)$, $(2, 1, 2)$, $(1, 2, 2)$, and $(2, 2, 1)$.
\end{lem}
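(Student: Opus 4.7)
The plan is to argue by contradiction: assuming one of the four triples $(n_j,n_{j+1},n_{j+2})$ occurs at some index $j$, I aim to exhibit a reducible configuration in $H'-V(C')$ (hence a reducible island in $G-C$), contradicting the standing hypothesis of the lemma. The argument has two ingredients: a refined version of the charge accounting used in Lemma \ref{lem:rep5-13-0}, and a direct structural analysis of the cyclic neighborhood forced by the triple when that charge argument alone is insufficient.

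For the setup, Lemma \ref{lem:charge-T-C} with $n=|C'|=10$ and with $23$ edges between $C'$ and $V(H')-V(C')$ gives an initial charge of $90$ on $V(H')-V(C')$, while Lemma \ref{lem:charge-to-C} bounds the charge sent to $V(C')$ by $95-b \le 93$, so the residual charge is at least $b-5\ge -3$. I would first dispose of the case where some $v_i$ has degree at least $6$: inspecting each case of Lemma \ref{lem:to-C} shows, exactly as in the proof of Lemma \ref{lem:rep5-13-0}, that upgrading a single $v_i$ from degree $5$ to degree at least $6$ saves at least $4$ more units, which closes the gap of $4$ and produces a vertex of positive final charge in $V(H')-V(C')$. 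Applying Lemma \ref{lem:conf-in-T} to the extended triangulation $T'$ (in which $V(C')$ is padded to degree at least $12$) would then give a reducible configuration, and since no configuration in $\mathcal{K}$ has a vertex of degree $\ge 12$, it must in fact lie in $H'-V(C')$, yielding the desired reducible island in $G-C$ and the desired contradiction.

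The remaining case is when every $v_i$ has degree exactly $5$, where each of the four triples rigidly determines the complete neighborhood of the vertex (or pair of vertices) that is adjacent to three consecutive vertices of $C'$. For example, for $(2,1,2)$ at $j$, the vertex $v:=v_j=v_{j+1}$ has precisely the neighbors $u_j,u_{j+1},u_{j+2},v_{j+2},v_{j-1}$ in cyclic order, forcing $v_{j+2}$ and $v_{j-1}$ to be adjacent and hence producing a triangle of three mutually adjacent degree-$5$ vertices in $H'-V(C')$. Parallel analyses for $(1,2,1)$, $(1,2,2)$, and $(2,2,1)$ likewise produce a cluster of mutually adjacent degree-$5$ vertices in $H'-V(C')$: under $(1,2,1)$ one gets two degree-$5$ triangles sharing the edge between $v_{j-1}=v_j$ and $v_{j+1}=v_{j+2}$, and under $(1,2,2)$ or $(2,2,1)$ one gets a diamond on four degree-$5$ vertices forced by the triangulation around the two central $u_i$'s.

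The main obstacle will be verifying that each such forced cluster actually contains a reducible configuration from $\mathcal{K}$: in the projective plane many of the classical small planar-reducible configurations (including some triangles of three degree-$5$ vertices) are no longer reducible, so one cannot simply invoke a classical reducibility result. I expect to finish by matching, triple by triple, the forced structure against the small degree-$5$ configurations conf(1) through conf(24) used in Section \ref{sect:deg5and6}, possibly after extracting one further layer of forced degree-$5$ adjacencies around each cluster, and concluding in each case that a member of $\mathcal{K}$ appears inside $H'-V(C')$.
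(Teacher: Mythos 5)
Your first step (disposing of the case where some $v_i$ has degree at least $6$) is fine, but it is just a re-derivation of Lemma \ref{lem:rep5-13-0}, which the paper simply invokes. The genuine gap is in your endgame. Having forced all relevant $v_i$ to have degree $5$ and extracted a small cluster of mutually adjacent degree-$5$ vertices (a triangle for $(2,1,2)$, etc.), you propose to finish by matching that cluster against the configurations in $\mathcal{K}$. This cannot work: the whole difficulty of the projective-plane setting, stressed repeatedly in the paper, is that small clusters of degree-$5$ vertices are \emph{not} reducible there. The smallest configuration in $\mathcal{K}$ built from degree-$5$ vertices is conf(1), which has four such vertices in a specific arrangement and itself needs the delicate special treatment of Lemma \ref{lem:5555}; a triangle of three mutually adjacent degree-$5$ vertices, or the diamonds you describe for $(1,2,2)$ and $(2,2,1)$, are not members of $\mathcal{K}$ and are not reducible, so no contradiction with the hypothesis ``no reducible configuration appears in $G-C$'' is obtained.

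What you are missing is the topological ingredient the paper actually uses: the boundary cycle $C'$ of the cut-open face arises from cutting $G'$ along the non-contractible curve, so antipodal vertices are identified in $G'$ (i.e.\ $u_i$ and $u_{i+5}$ are the same vertex of $G'$). The forced local structure then closes up across this identification. For $(1,2,1)$ the fifth neighbor $w$ of $v_j=v_{j-1}$ is adjacent to both $u_{j-1}$ and $u_{j+3}$, and since $u_{j+3}=u_{j-2}$ is adjacent to $u_{j-1}$ in $G'$, the vertices $w,u_{j-1},u_{j+3}$ form a non-contractible $3$-cycle in $G'$; the cases $(2,1,2)$, $(1,2,2)$, $(2,2,1)$ similarly yield non-contractible cycles of length $3$ or $4$. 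Each of these contradicts the standing assumption that the representativity is $y=5$, not the absence of reducible configurations. Without using this antipodal identification, your forced clusters give no contradiction at all, so the proof as proposed does not go through.
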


\begin{proof}
Suppose that there is a $j$ such that $(n_j, n_{j+1}, n_{j+2}) = (1, 2, 1)$. 
By Lemma \ref{lem:rep5-13-0}, the degree of $v_j (= v_{j-1})$, $v_{j+1}(= v_{j+2})$ is $5$. The two vertices $v_j$ and $v_{j+1}$ are adjacent and there is a vertex $w$ which is adjacent to all of $u_{j-1},v_j,v_{j+1},u_{j+3}$. %We denote the vertex as $w$. 
In the dual graph $G'$ of $G$ on the projective plane, the three vertices which correspond to $w, u_{j-1}, u_{j+3}$, respectively, form a non-contractible cycle of length $3$ in $G'$ since $u_{j-2}$ and $u_{j+3}$ are indeed the same vertex in $G'$. This contradicts the assumption $y=5$.

Suppose that there is a $j$ such that $(n_j, n_{j+1}, n_{j+2}) = (2, 1, 2)$.
By Lemma \ref{lem:rep5-13-0}, the degree of $v_{j-1}, v_j(= v_{j+1}), v_{j+2}$ is $5$. 
The three vertices $v_{j-1}, v_j, v_{j+2}$ are adjacent to each other and there is a vertex $w$ which is adjacent to $u_{j-1}, v_{j-1}, v_{j+2}, u_{j+3}$. %We denote the vertex as $w$. 
In the same way as above, we derive a contradiction.

Suppose that there is a $j$ such that $(n_j, n_{j+1}, n_{j+2}) = (1, 2, 2)$.
By Lemma \ref{lem:rep5-13-0}, the degree of $v_{j-1}(=v_j), v_{j+1}, v_{j+2}$ is $5$. The vertex $v_{j-1}$ is adjacent to $v_{j+1}$ and $v_{j+1}$ is adjacent to $v_{j+2}$. There is a vertex $w$ which is adjacent to $u_{j-1}, v_{j-1}, v_{j+1}, v_{j+2}$. 
%We denote the vertex as $w$. 
There is also a vertex $x$ which is adjacent to $w, v_{j+2}, u_{j+3}$. %We denote the vertex as $x$. 
%In the dual graph of $G$ on the projective plane, 
Then the four vertices which correspond to $w, x, u_{j+3}, u_{j-1}$ form a noncontractible cycle of length $4$ in $G'$, since $u_{j-2}$ and $u_{j+3}$ are indeed the same vertex in $G'$. This contradicts the assumption $y=5$.
\end{proof}

\begin{lem}\label{lem:rep5-13-2}\showlabel{lem:rep5-13-2}
Assume that $y=5,k=13$, and there are at least two integers $i (0 \leq i < 2y)$ so that $v_{i-1} \neq v_i$ and $v_i = v_{i+1}$, and there exists $j (0 \leq j < 2y)$ such that either $n_j + n_{j+5} = 3$ and $n_{j+1} + n_{j+6} = 3$, or $n_j + n_{j+5} = 3$ and $n_{j+1} + n_{j+6} = 4$.
Then, conf(1) or conf(4) appears in $G'$.
\end{lem}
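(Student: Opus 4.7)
The plan is to exploit the antipodal identification $u_i = u_{i+5}$ (in passing from $H'$ to $G'$) to produce concrete low-degree vertices in $G'$, and then read off either conf(1) or conf(4) from the resulting local structure.

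I would begin by normalizing. Since $n_j + n_{j+5} = 3$, exactly one of these two indices equals $1$, so up to a symmetric relabeling take $n_j = 1$ and $n_{j+5} = 2$; set $a := v_{j-1} = v_j$ (the unique $(H' - C')$-neighbor of $u_j$) and let $d := v_{j+4}$, $b := v_{j+5}$ denote the two $(H' - C')$-neighbors of $u_{j+5}$. In Case A, where $n_{j+1} + n_{j+6} = 3$, the split $(n_{j+1}, n_{j+6}) = (1, 2)$ is ruled out because it forces $v_{j-1} = v_j = v_{j+1} = a$, so that $a$ meets the four consecutive vertices $u_{j-1}, u_j, u_{j+1}, u_{j+2}$, contradicting $y = 5$. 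Hence I may assume $(n_{j+1}, n_{j+6}) = (2, 1)$ and set $c := v_{j+1}$, $b = v_{j+5} = v_{j+6}$; an analogous argument in Case B rules out $(1, 3)$, leaving $(2, 2)$ or $(3, 1)$.

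Next I would compute the local degrees and triangles in $G'$. Under the antipodal map the edges $u_j u_{j \pm 1}$ in $H'$ get identified with $u_{j+5} u_{j+4}$ and $u_{j+5} u_{j+6}$, so $u_j$ retains $3 + 4 - 2 = 5$ distinct neighbors in $G'$, namely $u_{j-1}, a, u_{j+1}, b, d$ in this cyclic order, and in general the degree of $u_j = u_{j+5}$ in $G'$ equals $n_j + n_{j+5} + 2$. Consequently, in Case A, $u_{j+1}$ behaves symmetrically and has degree $5$ with neighbors $u_j, a, c, u_{j+2}, b$; in Case B, $u_{j+1}$ instead has degree $6$ since the two contributions now sum to $6$. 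In all subcases the triangles $(u_j, u_{j+1}, a)$ and $(u_j, u_{j+1}, b)$ both lie in $G'$, so the four vertices $u_j, u_{j+1}, a, b$ form a diamond glued along the edge $u_j u_{j+1}$, which is precisely the interior skeleton of conf(1) and conf(4).

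To distinguish which of the two configurations actually appears, I would examine the residual degrees of $a$ and $b$ in $G'$ (these coincide with the degrees in $H'$ since $a, b$ lie outside $C'$), invoking Lemma~\ref{lem:rep5-13-0} to supply at least one $v_\ell$ of degree $5$. In Case A, if $a$ or $b$ already has degree $5$, then the four degree-$5$ vertices $u_j, u_{j+1}, a, b$ together with the ring read off from their cyclic orders realize conf(1); if both $a, b$ have degree at least $6$, the extra degree-$5$ vertex $v_\ell$ must lie adjacent to the diamond, and one verifies that the resulting pattern is conf(4). In Case B the vertex $u_{j+1}$ is already of degree $6$ and the diamond together with the degree counts matches the conf(4) pattern directly. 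The main obstacle I foresee is the bookkeeping around the outer ring of the diamond and a handful of potentially degenerate identifications (such as $a = b$, $a = d$, or $b = c$); each of these would force an additional $u_i$-neighbor of $a$ or $b$ and is excluded either by the four-consecutive-vertex prohibition (from representativity $y = 5$) or by Lemma~\ref{lem:rep5-13-1}, so the whole verification ultimately reduces to a finite local case check.
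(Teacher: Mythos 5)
Your setup matches the paper's: you normalize $(n_j,n_{j+5})=(1,2)$, observe that the antipodally identified vertex $u_j=u_{j+5}$ has degree $n_j+n_{j+5}+2=5$ in $G'$ (and $u_{j+1}$ has degree $5$ or $6$ according to the case), and locate the diamond on $v_j,u_j,u_{j+1},v_{j+5}$ (with $v_{j+4}$ joining in the second case); the closing appeal to representativity $5$ to guarantee the configuration actually \emph{appears} is also the paper's. Your explicit exclusion of $(n_{j+1},n_{j+6})=(1,2)$ via the four-consecutive-neighbours prohibition is a point the paper leaves implicit, and is correct.

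The genuine gap is in your final step, where you must pin down the degrees of $a=v_j$ and $b=v_{j+5}$ (and $d=v_{j+4}$ in Case B). You extract from Lemma \ref{lem:rep5-13-0} only that \emph{some} $v_\ell$ has degree $5$, and the ensuing case split does not close: conf(1) is the configuration of \emph{four} degree-$5$ vertices, so ``$a$ or $b$ already has degree $5$'' is insufficient (you need both, in addition to $u_j$ and $u_{j+1}$); and the fallback ``if both $a,b$ have degree at least $6$ then the pattern is conf(4)'' is false --- conf(4) is the five-vertex configuration $v_j,u_j,u_{j+1},v_{j+5},v_{j+4}$ reserved for the case $n_{j+1}+n_{j+6}=4$ (degree-$6$ $u_{j+1}$), and it still requires $v_j,v_{j+5},v_{j+4}$ to have degree $5$. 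If both $a$ and $b$ had degree at least $6$ one would instead have to invoke Lemma \ref{lem:rep5k13-charge} to produce some \emph{other} reducible configuration, which is not the stated conclusion. The repair is to use what the proof of Lemma \ref{lem:rep5-13-0} actually establishes (equivalently, the two alternatives of Lemma \ref{lem:rep5k13-charge}): under the standing hypothesis that no reducible configuration appears in $G-C$, a single $v_\ell$ of degree at least $6$ already forces positive charge inside $H'-C'$, so \emph{every} $v_\ell$ has degree exactly $5$. With that, $a$, $b$ and $d$ all have degree $5$ and the diamond realizes conf(1) or conf(4) exactly as the paper concludes.
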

\begin{proof}
When $n_j + n_{j+5} = 3$, the size of the face of $G$ which corresponds to $u_j$(and $u_{j+5}$) is $5$. Since $n_j \geq 1$ for all $j$, $n_j + n_{j+5} = 3$ implies $(n_j, n_{j+5}) = (1, 2)$ or $(2, 1)$. We assume $(n_j, n_{j+5}) = (1, 2)$ without loss of generality. Each pair of $(v_{j+4},v_{j+5})$, $(v_{j+4},u_{j+5})$, $(v_{j+5},u_{j+5})$, $(v_j,u_j)$ are adjacent. By Lemma \ref{lem:rep5-13-0}, the degree of $v_j,v_{j+4},v_{j+5}$ is $5$. Similarly, when $n_{j+1} + n_{j+6} = 3$, the size of the face of $G$ which corresponds to $u_{j+1}$(and $u_{j+6}$) is $5$. The vertex $u_{j+1}$ is  adjacent to $v_j$, $u_j$. The vertex $u_{j+6}$ is adjacent to $v_{j+5}$. 
In both the cases, the island which consists of the face that corresponds to $v_j,u_j,u_{j+1},v_{j+5}$ constitutes the island of conf(1).
% conf appears

When $n_{j+1} + n_{j+6} = 4$, the length of the face of $G$ which corresponds to $u_{j+1}$(and $u_{j+6}$) is $6$.
The island which consists of the face that corresponds to $v_j,u_j,u_{j+1},v_{j+5},v_{j+4}$ constitues the island of conf(4).
% conf appears

The diameter of both conf(1) and conf(4) is two, and the representativity of $G$ is five. 
Thus, the configuration appears in $G'$.
\end{proof}

\begin{lem}\label{lem:rep5k13-charge}\showlabel{lem:rep5k13-charge}
Assume that $y=5,k=13$. 
If one of the following conditions is satisfied, at least one island of the reducible configurations appears in $G - C$.  
\begin{itemize}
    \item Some $j$ $(0 \leq j < 2y)$ satisfies $v_j = v_{j+1}$ and the degree of the vertex is at least $7$. 
    \item For some $j_1, j_2$ $(0 \leq j_1 < j_2 < 2y)$, both $v_{j_1}, v_{j_2}$ are of degree at least $6$. 
\end{itemize}
\end{lem}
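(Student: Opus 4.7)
The plan is to apply the discharging machinery already used in Lemma \ref{lem:rep5-14} (and, in spirit, in the proof of Lemma \ref{lem:conf-in-T}). Setting $n = 2y = 10$ and counting the $2y + k = 23$ edges between $C'$ and $H' - C'$, Lemma \ref{lem:charge-T-C} gives a total initial charge of $60 - 200 + 230 = 90$ in $V(H') \setminus V(C')$, while Lemma \ref{lem:charge-to-C} gives a base upper bound of $95 - b$ on the total charge flowing from $V(H') \setminus V(C')$ to $V(C')$, where $b$ counts the indices $i$ of case (ii) type in Lemma \ref{lem:to-C} (i.e.\ $v_{i-1} \ne v_i$ and $v_i = v_{i+1}$). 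To finish, it suffices to sharpen this upper bound so that it becomes strictly less than the initial charge: then a vertex of positive final charge exists in $V(H') \setminus V(C')$ as in the proof of Lemma \ref{lem:conf-in-T}, and since no configuration in $\mathcal{K}$ has a vertex of degree $\ge 12$, an island of one of our reducible configurations must appear in $G - C$, as desired.

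Under the first hypothesis, an index $j$ with $v_j = v_{j+1}$ and $d(v_j) \ge 7$ contributes one of the case (ii) terms in Lemma \ref{lem:to-C}. Substituting $g_d \le 4$ for $g_5 = 10$ in the corresponding inequality shaves off at least an additional $g_5 - g_d \ge 6$ compared to the $-5$ already built into the $-5b$ term of Lemma \ref{lem:charge-to-C}. Hence the total sent out is bounded above by $95 - b - 6 \le 88 < 90$, as required.

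Under the second hypothesis, each index $j$ with $d(v_j) \ge 6$ improves its associated case bound by at least $4$: in case (i) since $f_5 - f_6 = 4$, and in case (ii) since $g_5 - g_6 = 4$. When $v_{j_1}$ and $v_{j_2}$ belong to two distinct cases (as happens generically, for instance whenever they are distinct vertices of $H'$), these improvements add, yielding a total extra saving of at least $8$ and hence the bound $95 - b - 8 \le 87 < 90$, which finishes the proof.

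The main obstacle is the degenerate situation in which $v_{j_1}$ and $v_{j_2}$ are in fact the same vertex of $H' - C'$, forcing $j_2 = j_1 + 1$ and a single case (ii) at $j_1$ with $d(v_{j_1}) = 6$ (the degree $\ge 7$ case being already covered by the first hypothesis). The naive accounting then only saves an extra $4$, yielding the tight bound $95 - b - 4 \le 90$. To close this last unit I would inspect when equality can hold simultaneously in all of Lemmas \ref{lem:vsends}, \ref{lem:edgesends}, \ref{lem:twoedgesends}, and \ref{lem:threeedgesends} along the whole cycle $C'$: such simultaneous tightness forces a very rigid local degree pattern around $v_{j_1}$ in which essentially all surrounding vertices must have degree $5$, which in turn allows one to locate a small reducible configuration (for example conf(1) or conf(2) from Figure \ref{fig:confs_degree56}) in the vicinity of $v_{j_1}$. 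Verifying this rigidity carefully, and checking that none of the surviving extremal patterns is consistent with the cyclic $5$-edge-connectivity of $G$, is the technical heart of the argument.
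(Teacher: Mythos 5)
Your first two paragraphs reproduce the paper's proof almost verbatim: the paper likewise computes the initial charge $90$ from Lemma \ref{lem:charge-T-C}, the upper bound $95$ from Lemma \ref{lem:charge-to-C}, and then observes that the first hypothesis lowers the bound by $6$ (via $g_5-g_7$ in Lemma \ref{lem:to-C}) and the second by $8=4+4$ (via $f_5-f_6$ or $g_5-g_6$ at each of the two vertices), giving $89<90$ and $87<90$ respectively. So for those cases your argument is correct and is essentially the paper's.

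The degenerate case you isolate in your last paragraph --- $j_2=j_1+1$ with $v_{j_1}=v_{j_2}$ a single vertex of degree exactly $6$, so that only one case-(ii) group of Lemma \ref{lem:to-C} is affected and the naive bound is $95-b-4\le 90$ --- is not treated in the paper either; the paper's proof simply asserts the savings add to $8$, which tacitly presumes $v_{j_1}$ and $v_{j_2}$ are distinct vertices of $H'-C'$ and hence lie in distinct groups of the Lemma \ref{lem:to-C} decomposition. Under that (evidently intended) reading of the hypothesis, your extremal analysis is unnecessary and your proof is complete as it stands after the second paragraph. If one insists on the literal reading in which the two indices may point to the same vertex, then your proposed fix is only a sketch: you do not actually carry out the tightness analysis, and as written this part of the proposal is a gap --- but it is a gap in a case the paper's own proof does not claim to cover, so it does not affect the comparison with the paper's argument.
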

\begin{proof}
As mentioned in the proof of Lemma \ref{lem:rep5-13-0}, the initial amount of charge accumulated in $V(H') - V(C')$ is $90$, and the maximum amount of charge sent from $V(H') - V(C')$ to $V(C')$ is $95$. If the first condition holds, the total amount of charge sent is lower than the maximum by $6$ by Lemma \ref{lem:to-C}. If the second condition holds, the total amount of charge sent is lower than the maximum by $8(=4+4)$. 
This implies that the sum of the final charge accumulated in vertices of $V(H') - V(C')$ is positive. Thus we can find an island of reducible configurations %by doing the same thing as 
by Lemma \ref{lem:conf-in-T}.
%Therefore, there must be a vertex of positive charge in $V(H') - V(C')$, and hence we can find an island of reducible configurations %by doing the same thing as 
%by Lemma \ref{lem:conf-in-T}.
\end{proof}

We are now ready to prove the main result in this section. 

\begin{lem}\label{lem:repfinal}\showlabel{lem:repfinal}
Suppose that $3 \leq y \leq 5$. Then one of the following holds.
\begin{enumerate}
    \item At least one island of the reducible configurations appears in $G - C$.
    \item There is a projective island $I$ appearing in $G$, and it can be safely reduced, i.e. $G \dotdiv c(I)$ is in $\mathcal{P}_1 \setminus \mathcal{P}_0$.
\end{enumerate}
\end{lem}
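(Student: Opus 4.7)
The plan is to case-split on the representativity $y \in \{3,4,5\}$ and, within each value of $y$, on $k$, the number of edges between $C$ and $G-C$; equivalently, $k$ is the ring size of the projective island $I := C + F \in \Pi_y^k$ that appears in $G$. In every range of $k$ I will produce one of the two conclusions of the lemma. When $k$ is small I apply Lemma~\ref{lem:lowrepcomp} to $I$ itself, which yields conclusion~(2) directly. When $k$ is large I apply Lemma~\ref{lem:lowrephand}, whose proof invokes the discharging bound of Lemma~\ref{lem:conf-in-T} on the planar near-triangulation $H'$ to produce a reducible configuration strictly inside the disk bounded by $C'$; the associated island then lies in $G - C$ and yields conclusion~(1).

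For $y = 3$ the two regimes cover $k \leq 3$ (Lemma~\ref{lem:lowrepcomp}) and $k \geq 4 > \tfrac{26}{5}\cdot 3 - 12$ (Lemma~\ref{lem:lowrephand}), with no gap. For $y = 4$ the regimes are $k \leq 8$ and $k \geq 9 > \tfrac{26}{5}\cdot 4 - 12$, again meeting. For $y = 5$, Lemma~\ref{lem:lowrepcomp} handles $k \leq 12$, Lemma~\ref{lem:lowrephand} handles $k \geq 15$, and $k = 14$ is exactly Lemma~\ref{lem:rep5-14}. This leaves $k = 13$ as the one sub-case requiring real work.

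For $y = 5$, $k = 13$ I will argue by contradiction: assuming that no island of a reducible configuration in $\mathcal{K}$ appears in $G - C$, I combine the structural results of Lemmas~\ref{lem:rep5-13-0}, \ref{lem:rep5-13-1}, \ref{lem:rep5-13-2} and \ref{lem:rep5k13-charge} to restrict the combinatorial type of $I \in \Pi_5^{13}$. Specifically, Lemma~\ref{lem:rep5k13-charge} eliminates any $v_j = v_{j+1}$ of degree $\geq 7$ and forbids two distinct $v_j$'s of degree $\geq 6$; Lemma~\ref{lem:rep5-13-0} forces (in the charge regime $b \geq 2$) some $v_j$ to have degree exactly $5$; Lemma~\ref{lem:rep5-13-1} excludes the triples $(n_j,n_{j+1},n_{j+2}) \in \{(1,2,1),(2,1,2),(1,2,2),(2,2,1)\}$; and Lemma~\ref{lem:rep5-13-2} forbids the face-size patterns that would plant conf$(1)$ or conf$(4)$ in $G'$. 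The low-$b$ sub-cases ($b \leq 1$) impose enough rigidity on $(n_0,\dots,n_{2y-1})$ that they reduce to a handful of combinatorial patterns, which are absorbed into the same computer enumeration. The surviving set of projective islands in $\Pi_5^{13}$ is then small and explicit, and I verify each by computer in the style of Lemma~\ref{lem:lowrepcomp}: checking D- or C-reducibility, and, for any case with contraction size four, applying Lemma~\ref{lem:bridge-of-proj-island} to rule out an obstructing non-contractible curve that would create a bridge or a Petersen-like remnant.

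The main obstacle is precisely this final enumeration for $y = 5$, $k = 13$: the four structural lemmas prune substantially, but one must still carefully enumerate the remaining projective islands, certify reducibility for each, and for the few residual cases with contraction size four confirm that the safety condition of Lemma~\ref{lem:bridge-of-proj-island} (no curve $\gamma$ through $c(I)$ avoiding the rest of $E(J)$) holds in the actual embedding of $I$ in $G$; this is exactly analogous to the checks already carried out in the proof of Lemma~\ref{lem:lowrepcomp}, but applied to the constrained subfamily of $\Pi_5^{13}$ produced above.
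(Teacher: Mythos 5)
Your decomposition by $y$ and $k$, the numerical thresholds, and the use of Lemmas~\ref{lem:lowrepcomp}, \ref{lem:lowrephand} and \ref{lem:rep5-14} all match the paper's proof, as does the strategy for $y=5$, $k=13$ of using Lemmas~\ref{lem:rep5-13-0}--\ref{lem:rep5k13-charge} to cut down to an explicit subfamily of $\Pi_5^{13}$ (the paper's $\Pi_5^{13*}$) and then certifying reducibility by computer. However, there is a genuine gap at the very end: you only provide a safety argument for surviving islands of contraction size at most four (Lemma~\ref{lem:4cont3} for $|c(I)|\le 3$, Proposition~\ref{lem:4cont2} together with Lemma~\ref{lem:bridge-of-proj-island} for $|c(I)|=4$), implicitly assuming that is all that remains. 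It is not: the paper finds $37$ projective islands in $\Pi_5^{13*}$ whose contraction size is five or more, and none of the lemmas you cite establishes that such an island can be safely reduced --- the machinery of Section~\ref{sect:largecont} applies to configurations in $\mathcal{K}$ sitting inside a disk, not to these projective islands wrapping the crosscap.

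The paper closes this gap with an extra argument you would need to supply: a claim that, when $x_0=0$ and $x_1=1$, certain vertices adjacent to two of the five faces $w_0,\dots,w_4$ must have degrees matching $w_0,w_1$ (otherwise the noncontractible curve $\gamma$ can be rerouted to give $k\neq 13$), which combined with Lemma~\ref{lem:rep5k13-charge} disposes of $34$ of the $37$ islands by rerouting $\gamma$; for the remaining three, one exhibits a different projective island $I''\in\Pi_5^{13}$ through the same noncontractible $5$-cycle that is C-reducible with contraction size at most one, and reduces using $I''$ instead. Without some argument of this kind for the large-contraction cases, your enumeration does not complete the proof.
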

\begin{proof}
    Assume $y=3$. 
    Since $G$ is cyclically 5-edge connected, $k \geq 5$. 
    By Lemma \ref{lem:lowrephand}, the first conclusion holds. 

    Assume $y=4$. 
    If $k \leq 8$, the second conclusion holds by Lemma \ref{lem:lowrepcomp}. 
    On the other hand, if $k \geq 9$ by Lemma \ref{lem:lowrephand}, the first conclusion holds.

    Assume $y=5$. 
    If $k \leq 12$, the second conclusion holds by Lemma \ref{lem:lowrepcomp}. 
    On the other hand, if $k \geq 14$ by Lemma \ref{lem:lowrephand} and Lemma \ref{lem:rep5-14}, the first conclusion holds. 

    The remaining case is when $y = 5, k = 13$. 
    If one of Lemma \ref{lem:rep5-13-1}, \ref{lem:rep5-13-2}, or \ref{lem:rep5k13-charge} applies, the first conclusion holds. 
    If neither of the three lemmas is satisfied, the projective island appearing in $G$ must be a member of $\Pi_5^{13*}$, where $\Pi_5^{13*}$ is defined as follows.

    \begin{dfn}
        Let $I$ be some projective island in $\Pi_y^{k}$, and let $C(I)$ be the path of length $2y + k$, which yields the boundary of the outer face of $I$.
        There must be $2y$ vertices of degree 3 in $C(I)$.
        Let those $2y$ vertices be labeled $c_{0}, c_{1}, \dots, c_{2y-1}$ counterclockwise, and let $x_{i}$ be the distance between $c_{i}$ and $c_{i+1}$ in $C(I)$.
        (Let the labels wrap around, so that $c_{2y} = c_0, c_{2y+1} = c_1, \dots$, etc.)
        
        Now, let $\Pi_5^{13*}$ be the set of projective islands $I$ in $\Pi_5^{13}$ that satisfies the following two conditions.
        \begin{itemize}
            \item There is at most one index $i$ satisfying $x_{i} = 0$.
            \item For each $i$ $(0 \leq i < 5)$ the following holds:
            \begin{itemize}
                \item[(a)] $x_{i} + x_{i+1} + x_{i+2} \geq 3$, and
                \item[(b)] $x_{i} + x_{i+1} + x_{i+5} + x_{i+6} \geq 4$.
            \end{itemize}
        \end{itemize}
    \end{dfn}

    Luckily, there are no non-reducible projective islands in $\Pi_5^{13*}$. 
    By Lemma \ref{lem:4cont3} if the contraction size is at most three (i.e., some of the reducible projective islands in $\Pi_5^{13*}$ are C-reducible), they are safely reduced. 
    By Proposition \ref{lem:4cont2}, if the contraction size is exactly four, we need to check whether or not deleting these four edges would result in a bridge in the resulting graph of $G$. 
    This can be computer-checked by the exact same procedure explained in the proof of Lemma \ref{lem:lowrepcomp}, and we confirmed that no bridges would occur.
    
    %As in Section \ref{sec:arp1} in the appendix, there are exactly six configurations that are of contraction size at least five. They are in...

\medskip

    Finally, there are 37 projective islands that have a contraction size of five or more. 
    To further narrow down the cases, we need the following claim. 
    
    \begin{claim}
        Let $I$ be a projective island in $\Pi_5^{13*}$, and let $W$ be the dual of $I$, which is a triangulation with an exception of one of the faces that is not a triangle.
        Let $c_{i}$ and $x_{i}$ be defined as above.
        Let $w_0, \ldots, w_4$ be the vertices in $W$ corresponding to the inner faces in $I$.
        If $x_{0} = 0$ and $x_{1} = 1$, there are vertices $v_0$ and $v_1$ in $W$ which are adjacent to at least two of $w_0, \ldots, w_4$, and moreover the degree of $v_0, v_1$ is equal to the degrees of $w_0$ and $w_1$, respectively.
    \end{claim}

    This claim follows from  Lemmas  \ref{lem:lowrepcomp} and \ref{lem:lowrephand};  
    indeed, $v_0, w_1, w_2, w_3, w_4$ and $v_0, v_1, w_2, w_3, w_4$ are both vertices that yield non-contractible 5-cycles, so if the degrees of $v_0, v_1$ differ from $w_0, w_1$, we can reroute the noncontractible curve $\gamma$ so that $k \neq 13$.

    With this claim, we can determine the degrees of some of the vertices surrounding the ring, and using Lemma \ref{lem:rep5k13-charge}, we find that for 34 projective islands (of the 37 projective islands), we can reroute the non-contractible $\gamma$ so that $k \neq 13$.
    For the remaining three cases, we can only obtain a non-contractible 5-cycle of $k = 13$, but the orientation of the surrounding edges is different.
    Even in this case, we can find another projective island $I'' \in \Pi_5^{13}$, and we confirm that each of the $I''$ is C-reducible with at most one contraction. 
\end{proof}

{\bf Remarks:} We would like that the proofs in this section would have algorithmic consequences. 
More specifically, assuming that the first case in Lemma \ref{lem:repfinal} would not occur, we would do the following. Assume that we take a noncontractible curve $\gamma$. By this assumption, we may assume either (i) $y=4, k \leq 8$, or (ii) $y=5, k \leq 13$. If (i) or (ii) with $k \leq 12$ occurs, we can find a projective island. On the other hand, if (ii) occurs and $k=13$, then we follow the proof of Lemma \ref{lem:repfinal}, but what we really need is to reroute the non-contractible curve $\gamma$ once, and this would result in the second conclusion of Lemma \ref{lem:repfinal}, which can be clearly done in linear time, 

% ====================================================
\section{Contractions of size $\geq 5$}
\label{sect:largecont}\showlabel{sect:largecont}
% ====================================================

\subsection{Algorithm to filter out safe configurations} \label{subsec:five}\showlabel{subsec:five}
In Section \ref{sect:smallcont}, we have discussed configurations with contraction sizes of at most four.
However, for the configurations of contraction size above four, it seems too tedious and time-consuming to account for every possible way of edges that the whole graph could be reduced to a Petersen-like graph.
Hence, we will now try to check that for each configuration, no reduction exists such that the resulting graph will be Petersen-like.
Throughout this section, we will call a C-reducible configuration $K$ \emph{safe} if no reduction of $I(K)$ in a minimal counterexample $G$ results in a Petersen-like graph.

In our proof, there are 201 reducible configurations in $\mathcal{K}$ that have a contraction size of five or more. 
For each of those configurations, we first check whether a loop in $G'$ (which is a bridge in $G$) could occur after reduction.
For this, we enumerate all possible paths $P$ in configuration $W$ in $G'$, where $E(P) \subseteq c(W)$ and the endpoints of $P$ are in the ring vertices $r_1$ and $r_2$.
In such a path $P$, we try by adding a new edge connecting $r_1$ and $r_2$ and check if Lemmas \ref{minc} or \ref{lem:6,7-cut} could be applied.
If so, such an edge cannot exist, and there will be no loops after contracting edges of $P$.

Next, we check whether $G$ would be Petersen-like after reduction. 
Let us first give sketch for this process. 
For this, we enumerate all of the possible edges and paths that could form outside $W$ in $G'$. 
We filter out the vertices that could be deleted due to some structure outside $W$ (i.e., some low cut reductions in $G$), and we obtain a graph $W^-$ in $G'$, which consists of the vertices that will never be deleted regardless of structures outside of $W$ (note that we provided similar arguments in Section \ref{sect:6,7-cut}). We will give more details below in the next paragraph. 
Finally, we calculate the size of $W^-$ and degree of each vertex in $W^-$ after low-cut reductions.
If $W^-$ has order seven or more, the resulting graph would not be the complete graph $K_6$, which is safe. 
(Recall that $K_6$ is the dual of the projective embedding of the Petersen graph).
If the resulting graph contains a vertex of degree four or of degree of six or more, we can conclude that it will never be $K_6$. 

Let us explain the details of how we construct $W^-$.
We first obtain a set of pair of vertices $P = \{\{r_1, r_2\} \mid r_1, r_2 \in R, r_1 \neq r_2\}$ (where $R$ is the set of ring vertices of $W$). 
Then, we enumerate all possible graphs that can be obtained by the following procedure:
\begin{enumerate}
    \item Pick $0 \leq m \leq 3$ pairs of vertices $(s_i, t_i) \in P$ ($1 \leq i \leq m$).
    \item Select $m$ integers $1 \leq l_i \leq 3$ so that the sum of $l_i$ is no more than three.
    \item For every $i$, add a path of length $l_i$ between the two vertices $s_i, t_i$.
\end{enumerate}
Let the graph obtained as above be $W^+$.
Let $C$ be some cycle in $W^+$ such that $|E(C) \setminus c(W)| \leq 3$, but neither Lemma \ref{minc} nor Lemma \ref{lem:6,7-cut} applies to $C$.
The length of $C$ after reduction is $|E(C) \setminus c(W)|$. 
%so the vertex separation formed by $V(C)$ will be a subject of low-cut removals.
If $5 \leq |C| \leq 7$, we flag the vertices in the smaller one of the two connected components of $G - C$.
If $8 \leq |C|$, we flag all vertices in $W - C$.

After all of the possible $W^+$ and its cycles $C$ are enumerated, we delete the flagged vertices to get $W^-$. 

With the above method of obtaining $W^-$ and checking its size (if it is at least 7 we are done) and vertices' degrees (if there is a vertex of degree at least 6 or at most 4, we are done), we can greatly reduce the number of unsafe configurations.
We implement an algorithm specifically to verify the two conditions. 
After running the program, 195 of the 201 potentially unsafe configurations were found to be safe.

The method used here is quite complex, so a naive implementation may result in a program taking too much time. 
We actually use some simple pruning techniques that are irrelevant to this proof. We show the pseudocode of the algorithm in Subsection \ref{subsec:checkconfgeq5}.

\subsection{The remaining unsafe configurations} \label{subsec:remaining}\showlabel{subsec:remaining}

The remaining six configurations require further analysis, but we are able to provide our proof for each case with further analysis by hand. One of them is done by Lemma \ref{lem:5555}.

For the other three configurations, we give details in the appendix (see Section \ref{subsec:remaining1}). 
In conclusion, we obtain the following. 

\begin{lem}
    Let $I$ be a C-reducible island of contraction size at least five in $\mathcal{K}$.
    If $I$ appears in $G$, it can be safely reduced.
\end{lem}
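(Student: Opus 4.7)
The plan is to combine the algorithmic filter developed in Subsection~\ref{subsec:five} with a small amount of case-by-case analysis for the configurations that the filter cannot resolve. Let $I \in \mathcal{K}$ be a C-reducible island with $|c(I)|\geq 5$ that appears in a minimal counterexample $G$, and let $W$ denote its dual configuration in $G'$. The first step is to show that $G\dotdiv c(I)$ has no bridge. For this, I would enumerate every path $P$ whose edges lie in $c(W)$ and whose endpoints $r_1,r_2$ belong to the ring of $W$; a bridge after contraction would correspond to an edge $r_1r_2$ of $G'$ yielding together with $P$ a short cycle in $G'$. Applying Lemma~\ref{minc} and Lemma~\ref{lem:6,7-cut} to this cycle, combined with the already-verified minimality and representativity bounds, rules out the existence of such an edge. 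This part is purely mechanical and uniform across all configurations.

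The second and main step is to verify that $\langle G'/c(W)\rangle \neq K_6$. Here I would run the filtering procedure described in Subsection~\ref{subsec:five}: enumerate all possible short cycles that can be completed outside $W$ by adding at most three edges between ring vertices (so that the cycle after contraction has length $\leq 3$), apply Lemma~\ref{minc} and Lemma~\ref{lem:6,7-cut} to exclude those cycles that violate the induction hypothesis, and flag only the vertices that could legitimately be removed by a $2$- or $3$-vertex-cut reduction. Deleting the flagged vertices gives a graph $W^-$ which is a subgraph of $\langle G'/c(W)\rangle$. Whenever $|V(W^-)|\geq 7$, or $W^-$ contains a vertex of degree $4$ or of degree $\geq 6$, the resulting graph cannot be $K_6$, and $I$ is safely reduced. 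The computer verification shows that this suffices for $195$ of the $201$ configurations of contraction size $\geq 5$.

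The hard part, and the genuine obstacle, is the handful of residual configurations where $W^-$ has exactly six vertices, all of degree $5$, so the simple size/degree criterion is inconclusive. For each of these six configurations, I would give a dedicated argument. One of them is precisely conf(1) from Figure~\ref{fig:55551}, which already requires a bespoke argument and is disposed of by Lemma~\ref{lem:5555}: there one uses two separate C-reducible contractions $C_1,C_2$ of conf(1), together with the auxiliary D-reducible configurations of Figure~\ref{fig:5555conf}, and branches on the degree of $v_0$ to force a small non-contractible or contractible cycle that contradicts Lemma~\ref{minc} or reduces representativity to at most $2$. For each of the remaining five configurations, I would follow the same template: fix the candidate reduction that could produce $K_6$, read off the forced degrees of the six ring-side vertices, and then either (a) identify a short cycle inside $W\cup(\text{outside ring})$ that contradicts Lemma~\ref{minc} or Lemma~\ref{lem:6,7-cut}, or (b) identify an alternative reducible configuration (possibly D-reducible) that appears instead of $I$ and that is already known to be safe. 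These arguments are deferred to the appendix (Subsection~\ref{subsec:remaining1}), since they are largely repetitive diagram chases, but their structure is uniform and invokes only the tools developed earlier in the paper. Combining the filter with these explicit checks gives the lemma.
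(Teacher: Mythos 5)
Your proposal follows the paper's own argument essentially verbatim: the bridge check via path enumeration and Lemmas~\ref{minc} and \ref{lem:6,7-cut}, the computer filter of Subsection~\ref{subsec:five} producing $W^-$ and the size/degree criterion that disposes of 195 of the 201 configurations, and the residual hand analysis with conf(1) settled by Lemma~\ref{lem:5555} and the rest deferred to the appendix. This is the same decomposition and the same set of tools as the paper, so there is nothing further to add.
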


Finally, together with Lemma \ref{lem:cont4}, we get the same conclusion for all reducible configurations.

\begin{lem}\label{lem:safely}
    Let $I$ be an island in $\mathcal{K}$. 
    If $I$ appears in $G$, it can be safely reduced.
\end{lem}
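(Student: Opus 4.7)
The plan is to prove Lemma \ref{lem:safely} by a straightforward case analysis on the contraction size of the island $I \in \mathcal{K}$, combining the two immediately preceding results. Since every island in $\mathcal{K}$ is either D-reducible (contraction size zero) or C-reducible with some positive contraction size, it suffices to handle these cases in turn and verify that the hypotheses of the appropriate preceding lemma are met.

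First I would dispose of the D-reducible case. If $I$ is D-reducible, then $c(I) = \varnothing$, so $G \dotdiv c(I) = G$ and no edges of $G$ are actually deleted. The extension argument from Definition \ref{dfn:Dred} and Lemma \ref{reduc1} then immediately yield a 3-edge-coloring of $G$ from any coloring of $G$ itself, giving a contradiction to $G$ being a counterexample; equivalently, the ``reduction'' leaves $G$ unchanged, and there is nothing that could push us into $\mathcal{P}_0$ or create a bridge. So D-reducible islands are trivially safe.

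Next I would handle the C-reducible case by splitting on the contraction size $|c(I)|$. For $1 \le |c(I)| \le 4$, I invoke Lemma \ref{lem:cont4} directly, which asserts safety for exactly this range. For $|c(I)| \ge 5$, I invoke the lemma just proved (the conclusion of Section \ref{subsec:remaining}), which covers the remaining 201 configurations of large contraction size through the filtering algorithm of Section \ref{subsec:five} together with the hand analysis of the six residual configurations (including the special treatment of conf(1) in Lemma \ref{lem:5555}). Combining the two yields the claim for every island in $\mathcal{K}$.

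There is no genuine obstacle at this stage: all the real technical work, namely ruling out bridges, $K_6$-outcomes from low-vertex-cut reductions, and Petersen-like outcomes from reductions involving five or more contraction edges, has been carried out in the earlier sections. The only thing to remark is that Lemma \ref{lem:cont4} itself requires that $c(I)$ not be contained in a cyclic 5-edge-cut of $G$ when $|c(I)| = 4$; this hypothesis is verified by the computer check mentioned in the paragraph preceding Lemma \ref{lem:cont4}, which confirmed that for every $I \in \mathcal{K}$ with $|c(I)| \le 4$ no such cyclic 5-cut exists. Hence the invocation is legitimate, and Lemma \ref{lem:safely} follows immediately by case analysis.
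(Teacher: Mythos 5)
Your proposal coincides with the paper's own derivation: Lemma \ref{lem:safely} is obtained there by simply combining Lemma \ref{lem:cont4} (contraction size at most four) with the unnamed lemma concluding Section \ref{subsec:remaining} (contraction size at least five), which is exactly your case split on $|c(I)|$. Your extra remarks — that D-reducible islands delete no edges and so pose no danger of a bridge or a Petersen-like outcome, and that the hypothesis of Proposition \ref{lem:4cont2} about cyclic 5-edge-cuts is discharged by the computer check preceding Lemma \ref{lem:cont4} — match what the paper asserts implicitly.
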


% ====================================================
\section{Main proof}
\label{sect:mainproof}\showlabel{sect:mainproof}
% ====================================================

We now discuss Theorem \ref{mainth}. Let $G$ be a minimal counterexample. We know the following properties by Lemmas \ref{minc} and \ref{lem:safely}:

\begin{enumerate}
    \item  $G$ is 2-cell embedded in the projective plane $\Sigma$, 
    \item $G$ is cyclically 5-edge-connected, and if there is an edge-cut $F$ of size exactly five, one of the connected components of $G-F$ is a 5-cycle, and 
    \item 
     no island of a reducible configuration from $\mathcal{K}$ appears in $G$. 
\end{enumerate}

We first show the following lemma.
\begin{lem}\label{3-5}\showlabel{3-5}
   Representativity of $G$ in $\Sigma$ is either three, four, or five.
\end{lem}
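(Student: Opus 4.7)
The plan is to handle the two extremes separately: representativity $\le 2$ (where planarity and low cyclic edge-cuts do the work) and representativity $\ge 6$ (where the discharging plus reducibility machinery of the previous sections gives a clean contradiction).

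For rep $= 0$ and rep $= 1$, I would argue that a noncontractible simple closed curve $\gamma$ meeting $G$ in at most one edge lets us re-embed $G$ in the plane: cutting $\Sigma$ along $\gamma$ opens the projective plane into a disk, and the (at most one) edge of $G$ crossing $\gamma$ can be rerouted as an arc inside that disk. The resulting planar embedding of the $2$-connected cubic graph $G$ would then be $3$-edge-colorable by Theorem~\ref{thm:3ECplanar}, contradicting the fact that $G$ is a counterexample.

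For rep $= 2$, let $\gamma$ cross two edges $e_1 = u_1v_1$ and $e_2 = u_2v_2$. If $\{e_1,e_2\}$ disconnects $G$, then both sides are subcubic with exactly two degree-$2$ vertices and otherwise degree-$3$ vertices; neither side can be a forest, so both contain cycles. This is a cyclic $2$-edge-cut, flatly contradicting Lemma~\ref{minc}. Otherwise $G-\{e_1,e_2\}$ is connected, and cutting $\Sigma$ along $\gamma$ embeds it in a disk with the four endpoints appearing on the boundary in the interleaved cyclic order $u_1,u_2,v_1,v_2$ forced by the antipodal identification. I would then use $3$-edge-connectivity and the planarity of the disk interior to extract a cyclic edge-cut of $G$ of size at most four that separates $\{u_1,v_1\}$ from $\{u_2,v_2\}$, again contradicting Lemma~\ref{minc}.

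For rep $\ge 6$, Corollary~\ref{Kapper} tells us that \emph{no} reducible configuration from $\mathcal{K}$ is a subgraph of $G'$. On the other hand, Lemma~\ref{lem:120} gives $\sum_{v \in V(G')} T(v) = 60 > 0$, so some vertex of $G'$ carries positive final charge; Lemma~\ref{T(v)>0} then forces a subgraph of $G'$ isomorphic to some configuration of $\mathcal{K}$, an immediate contradiction.

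The main obstacle will be the rep $= 2$ subcase in which $G-\{e_1,e_2\}$ is connected: one has to translate the combinatorics of the interleaved endpoints on the boundary of the cut-open disk into a genuine small cyclic edge-cut of $G$, rather than argue it away by a one-line appeal to an earlier lemma. All the other subcases reduce quickly to Theorem~\ref{thm:3ECplanar}, Lemma~\ref{minc}, or the discharging/reducibility bookkeeping already in place.
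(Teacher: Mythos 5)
Your treatment of representativity $\le 1$ and $\ge 6$ matches the paper: planarity plus Theorem~\ref{thm:3ECplanar} for the first, and the contradiction between Lemma~\ref{lem:120}, Lemma~\ref{T(v)>0} and Corollary~\ref{Kapper} for the second. The problem is the representativity~$2$ case, and it is not merely the ``main obstacle'' you flag — the approach fails. Cutting $\Sigma$ along a noncontractible curve $\gamma$ meeting $G$ in two edges $e_1,e_2$ shows only that $G-\{e_1,e_2\}$ is planar and that $e_1,e_2$ must cross each other when redrawn in the disk; that is, $G$ has crossing number at most one. It does \emph{not} produce a cyclic edge-cut of size at most four separating $\{u_1,v_1\}$ from $\{u_2,v_2\}$: any such cut of $G$ would consist of $e_1,e_2$ together with an edge-cut of the planar part separating the two pairs of endpoints, and there is no bound on the size of that planar cut. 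Single-cross cubic graphs can be cyclically $5$-edge-connected, so no appeal to Lemma~\ref{minc} is available. Indeed, if your claimed cut always existed, every $2$-connected single-cross cubic graph would be $3$-edge-colorable by a trivial induction from the Four Color Theorem, whereas this is precisely Jaeger's nontrivial theorem.

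The paper closes this case by citing exactly that result: a $2$-connected cubic graph with crossing number at most one is $3$-edge-colorable (Jaeger, or statement~2.1 of the doublecross paper), which is a genuine coloring argument and not a connectivity reduction. Replace your rep~$=2$ paragraph with that citation (keeping your correct observation that $\{e_1,e_2\}$ itself cannot be a cyclic $2$-edge-cut by Lemma~\ref{minc}, though even that is subsumed), and the proof goes through.
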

\begin{proof}
If representativity of $G$ in $\Sigma$ is at most one, then $G$ is planar (see \cite{MT}), and hence by the Four Color Theorem \cite{4ct1, 4ct2, RSST}, $G$ is three edge-colorable (a contradiction). 

If representativity of $G$ in $\Sigma$ is at most two, then $G$ can be embedded in the plane with one cross, and by \cite{jaeger1980tait} or 2.1 of \cite{doublecross}, $G$ is three edge-colorable (a contradiction), 

Assume now that the representativity of $G$ in $\Sigma$ is at least six. If there is a separating cycle $C$ of length six or seven that satisfies Lemma \ref{lem:6,7-cut}, we take the cycle $C$ that satisfies the second condition in Lemma \ref{lem:6,7-cut}, and subject to that, the number of vertices inside the disk $D$ bounded by $C$ is as small as possible. 

By Lemmas  \ref{lem:6,7-cut} and \ref{T(v)>0}, there is a vertex $v$ of $G$ with $T(v) > 0$, and $G$ contains one of the reducible configurations $K$ as a subgraph strictly inside the disk bounded by $C$ (if $C$ exists). If it is not induced, this contradicts Corollary \ref{Kapper}. By Lemma \ref{lem:safely}, $K$ can be safely reduced. This completes the proof. 
\end{proof}

By Lemma \ref{3-5}, it remains to consider the cases that the representativity of $G$ in $\Sigma$ is either three, four or five. 
Let $C$ be the cycle of $G$ as in Lemma \ref{lem:repcycle}. 
By Lemma \ref{lem:repfinal}, either at least one island of the reducible configurations appears in $G - C$ or there is a projective island $I$ that can be safely reduced in $G$, i.e. $G \dotdiv c(I)$ is in $\mathcal{P}_1 \setminus \mathcal{P}_0$. In the first case, there is an island $I$ that can be safely reduced in $G$. 
This is a contradiction to the minimality of $G$. 

%Let $C$ be the cycle as in Lemma \ref{lem:repcycle}. 
%If there are more than $\frac{26}{5} \cdot y - 12$ edges between $V(C)$ and $V(G) \setminus V(C)$, by Lemma \ref{lem:lowrephand}, 
%we find an island $K$ of one of the reducible configurations. 
%If $K$ is not induced, this is because there are two vertices in $K$ that are of distance at least five in $K$ (indeed, there are no such pairs of distance at least six). In this case, as case 1 in Lemma \ref{pairs6} in Subsection \ref{sect:dist5}, we can obtain a contractible cycle of length at most five, which would not exist in a minimal counterexample. Thus we may assume that $K$ is induced,  
%which contradicts Lemma \ref{reduc}. 

%On the other hand, if there are at most $\frac{26}{5} \cdot y - 12$ edges between $V(C)$ and $V(G) \setminus V(C)$, we can find a reducible configuration by Lemma.. 
This completes the proof of Theorem \ref{mainth}.\qed

\section{Finding a 3-edge-coloring}
\label{sect:algorithm}\showlabel{sect:algorithm}

In this section, we consider the algorithmic corollary of our main theorem, Theorem \ref{mainth}, see Theorem \ref{algo}. Let us mention the problem again. 

\begin{quote}{\bf Algorithmic Problem}\\
{\bf Input}: A cubic graph $G$.\\
{\bf Output}: Either a three edge-coloring of $G$ or an obstruction that is not three edge-colorable, or else conclude that $G$ cannot be embedded in the projective plane (and output a forbidden minor for embedding a graph in the projective plane).\\
{\bf Time complexity}: $O(n^2)$, where $n=|V(G)|$.
\end{quote}

\medskip

{\bf Description}

\medskip

{\bf Step 1:} Embedding $G$ in the projective plane. 

\medskip

This can be done using the algorithm by Mohar \cite{MOHARproj}. If $G$ cannot be embedded in the projective plane, we just output the obstruction (i.e., a forbidden minor) obtained by the same algorithm \cite{MOHARproj}.

Hereafter, we may assume that the input graph $G$ is embedded in the projective plane $\Sigma$.

\medskip

{\bf Step 2:} Reduction to a cyclically 5-edge-connected graph. 

\medskip

This step is almost identical to that for the planar case, see \cite{RSST-STOC}. So we just give a brief sketch here. 

We try to find a cyclic cut of size four or less.  
%or a cyclic cut of size five where both connected components are not 5-cycles. 
We actually find such a cut (if it exists) of minimum size. If there is such a cut $F$, let the two connected components of $G-F$ be $G_1$ and $G_2$, and we obtain a 3-edge-coloring by applying this algorithm to both $G_i$ recursively.
If we fail to three-edge color one of $G_i$, then we obtain an obstruction and output it. The detailed algorithm is described in Section \ref{sect:below5cuts} in the appendix.

Hereafter we assume that $G$ is a cyclically 5-edge-connected graph. If there is a cut $F$ of size five, where both connected components are not 5-cycles, again, 
we obtain a three-edge coloring by applying this algorithm to both $G_i$ recursively (we never fail because we do not reduce to the obstructions). 
As in Lemma \ref{minc}, we can obtain the coloring of $G$ by merging the edge-colorings of $G_1$ and $G_2$.

\medskip

{\bf Step 3:} When representativity of $G$ in $\Sigma$ is at most two or at least six. 

\medskip

If the representativity of $G$ in $\Sigma$ is at most two, we can easily obtain an embedding of $H$ in the plane with one cross, using some planarity testing (see \cite{MT}), and by 2.1 of \cite{doublecross}, we can get a three edge-color $G$. 
Note that the proof of 2.1 of \cite{doublecross} (also \cite{jaeger1980tait} ) is really the same as that given in \cite{RSST}. Thus we can get such a three-edge coloring of $G$ in $O(n^2)$ time.

If the representativity of $G$ in $\Sigma$ is at least six, as in the proof of Lemma \ref{3-5}, by Lemma \ref{T(v)>0}, there is a vertex $v$ of $G$ with $T(v) > 0$ and $G$ contains one of the reducible configurations $K$ as a subgraph (it may not be induced). If this is not induced, we may obtain one of the conclusions in Lemma \ref{pairs6} (but since representativity is at least six, we obtain a contractible cycle that satisfies Fact \ref{fact:cont-cycle}). If this is induced, it allows us to make a reduction. 

We may apply Lemma \ref{lem:6,7-cut} if we find a separating cycle of length six or seven in $G'$. This case needs a bit of care. We take all vertices $V'$ of $G$ with $T(v) > 0$. 
Then for each vertex $v \in V'$, we find a reducible configuration $K_v$ with the ring $R_v$. We apply Claims \ref{clm:loop-6,7cycle} and \ref{clm:loopless} to $K_v \cup R_v$. By Lemma \ref{lem:6,7-cut}, we know that at least one of the configurations does not apply Claim \ref{clm:loop-6,7cycle}, which allows us to make a reduction.

\medskip

{\bf Step 4:} When representativity of $G$ in $\Sigma$ is either three or four or five. 

\medskip

Let $C$ be the cycle of $G$ as in Lemma \ref{lem:repcycle}. 
By Lemma \ref{lem:repfinal}, either at least one island of the reducible configurations appears in $G - C$, or there is a projective island $I$ that can be safely reduced in $G$, i.e. $G \dotdiv c(I)$ is in $\mathcal{P}_1 \setminus \mathcal{P}_0$.   

In the first case, we do the following:  by Lemma \ref{T(v)>0} there is a vertex $v$ of $G$ with $T(v) > 0$, and $G$ contains one of the reducible configurations $K$ as a subgraph. We may apply Lemma \ref{lem:6,7-cut} if we find a separating cycle of length six or seven in $G'$. Then the rest of the arguments are the same as those in Step 3.  

In the second case, we can just find such a projective island $I$ by just looking at $C$. For more details, we refer the reader to Remark right after Lemma \ref{lem:repfinal}. 

In either case, we can make a reduction. 

\medskip

Now let us estimate the time complexity of this algorithm. Step 1 takes $O(n)$ time as in \cite{MOHARproj}. All of Steps 2, 3 and 4 take an $O(n)$ time and we recuse. More precisely, for the correctness and the time complexity: 

\begin{itemize}
\item 
     We can find the shortest non-contractible curve in $O(n)$ time by \cite{CABELLO}.
    \item
     By Lemma \ref{lem:safely}, all reducible configurations in $\mathcal{K}$ are safely reduced. 
    
    \item 
    The discharging process takes $O(n)$ time because we have at most 179 rules, and each can apply just once to each edge. This allows us to find the set of all vertices $V'$ that end up with a positive charge (i.e., $T(v) > 0$) in $O(n)$ time. 
    \item 
    For the reducibility process, we only need to look at a vertex $v \in V'$, the first neighbors, the second neighbors, and the third neighbors of $v$. But we do not have to look at all the vertices of degree at least 12, as none of our reducible configurations in $\mathcal{K}$ contains a vertex of degree 12 or more. Let $Q_v$ be the graph induced by these neighbors, together with $v$, after deleting the vertices of degree at least 12. 

%    It is well-known that such a graph $W$ is of tree-width at most 24 (see Eppstein \cite{Eppstein00}). Moreover, 
Each reducible configuration is of order at most 20, and by our choice, $Q_v$ has at most 10000 vertices. By Lemma \ref{T(v)>0}, we know that $Q_v$ contains a reducible configuration $K_v$. Thus we can find a reducible configuration $K_v$ in $O(1)$ time. 

Thus in total, this process takes $O(n)$ time. 

\item 
As in Step 3, 
we may apply Lemma \ref{lem:6,7-cut} if we find a separating cycle of length six or seven in $G'$. This case needs a bit of care. 

For each vertex $v \in V'$, there is a reducible configuration $K_v$ with the ring $R_v$, as above. We apply Claims \ref{clm:loop-6,7cycle} and \ref{clm:loopless}  to $K_v \cup R_v$. By Lemma \ref{lem:6,7-cut}, we know that at least one of the configurations does not apply Claim \ref{clm:loop-6,7cycle}, which allows us to make a reduction. For each $K_v \cup R_v$ this process takes $O(1)$ time, and hence in total it takes $O(n)$ time. 

\item 
We can actually enumerate all 5,6,7-cycles in $G'$ in $O(n)$ time by using \cite{FOCS05,Eppstein00}. To this end, we first refer to Theorem 3.1 in \cite{FOCS05}. The important idea is that we run a
bread-first search from some root vertex $r$ in $G'$, and assign the label to each vertex $v \in V(G')$ to be the distance between $r$ and $v$
modulo eight. The union of any seven label sets is the disjoint
union of subgraphs, each consisting of at most seven breadth-first layers. By \cite{Eppstein00} each
of these subgraphs, and therefore the union, has treewidth
at most 100. 

Thus by the standard dynamic programming on bounded treewidth graphs, we can enumerate all cycles of length 5, 6, and 7 in $O(n)$ time for each of the seven label sets. It is clear that every cycle of length at most seven must be contained in some of the seven label sets, and in addition, there are eight such bounded treewidth graphs, thus we can enumerate all cycles of length 5, 6, and 7 in $G'$ in $O(n)$ time. 
We can also prove that the number of such cycles is $O(n)$ by showing that there are at most three such cycles that can cross by Lemma \ref{minc}, see also Figures \ref{fig:6cut} and \ref{fig:7cut} (and the proofs associated with these figures). 

Among these cycles, we can take a minimal one, i.e. a cycle that satisfies Lemma \ref{lem:6,7-cut}, and subject to that, the number of vertices strictly inside the disk bounded by the cycle is as small as possible. By Lemma \ref{lem:6,7-cut}, we know that at least one of the configurations appears strictly inside the disk, which allows us to make a reduction.  

%   \item 
%   As in the previous item, we can find a separating cycle of length at most 7 in $G'$ in $O(n)$ time, if it exists in $G'$. 
%   This allows us to apply Lemma \ref{lem:6,7-cut} as well, which is used in Step 3.
\end{itemize}

Thus the total time complexity is $O(n^2)$ as claimed. 

\bibliographystyle{plain}
\bibliography{4CT}

\begin{thebibliography}{10}

\bibitem{4ct1}
K.~Appel and W.~Haken.
\newblock Every planar map is four colorable. {I}. {D}ischarging.
\newblock {\em Illinois J. Math.}, 21(3):429--490, 1977.

\bibitem{4ct2}
K.~Appel, W.~Haken, and J.~Koch.
\newblock Every planar map is four colorable. {II}. {R}educibility.
\newblock {\em Illinois J. Math.}, 21(3):491--567, 1977.

\bibitem{CABELLO}
Sergio Cabello, Éric {Colin de Verdière}, and Francis Lazarus.
\newblock Algorithms for the edge-width of an embedded graph.
\newblock {\em Computational Geometry}, 45(5):215--224, 2012.
\newblock Special issue: 26th Annual Symposium on Computation Geometry at
  Snowbird, Utah, USA.

\bibitem{FOCS05}
Erik~D. Demaine, Mohammad~Taghi Hajiaghayi, and Ken{-}ichi Kawarabayashi.
\newblock Algorithmic graph minor theory: Decomposition, approximation, and
  coloring.
\newblock In {\em 46th Annual {IEEE} Symposium on Foundations of Computer
  Science {(FOCS} 2005), 23-25 October 2005, Pittsburgh, PA, USA, Proceedings},
  pages 637--646. {IEEE} Computer Society, 2005.

\bibitem{DGMVZ}
Matt DeVos, Luis Goddyn, Bojan Mohar, Dirk Vertigan, and Xuding Zhu.
\newblock Coloring-flow duality of embedded graphs.
\newblock {\em Trans. Amer. Math. Soc.}, 357(10):3993--4016, 2005.

\bibitem{Diestel_book}
Reinhard Diestel.
\newblock {\em Graph theory}, volume 173 of {\em Graduate Texts in
  Mathematics}.
\newblock Springer, Berlin, fifth edition, 2018.

\bibitem{doublecross}
Katherine Edwards, Daniel~P. Sanders, Paul Seymour, and Robin Thomas.
\newblock Three-edge-colouring doublecross cubic graphs.
\newblock {\em Journal of Combinatorial Theory, Series B}, 119:66--95, 2016.

\bibitem{Eppstein00}
David Eppstein.
\newblock Diameter and treewidth in minor-closed graph families.
\newblock {\em Algorithmica}, 27:275--291, 2000.

\bibitem{grunbaum}
B.~Gr\"{u}nbaum.
\newblock Conjecture 6.
\newblock In {\em Recent Progress in Combinatorics, Proceedings of the Third
  Waterloo Conference on Combinatorics, May 1968, W.T. Tutte (ed.)}, page 343.
  Academic Press, New York, 1969.

\bibitem{Heesch69}
Heinrich Heesch.
\newblock {\em Untersuchungen zum {V}ierfarbenproblem}, volume 810/a/b of {\em
  Hochschulskriptum}.
\newblock Bibliographisches Institut, Mannheim-Vienna-Z\"urich, 1969.

\bibitem{jaeger1980tait}
F.~Jaeger.
\newblock Tait’s theorem for graphs with crossing number at most one.
\newblock {\em Ars Combin.}, 9:283--287, 1980.

\bibitem{kochol}
Martin Kochol.
\newblock Polyhedral embeddings of snarks in orientable surfaces.
\newblock {\em Proc. Amer. Math. Soc.}, 137(5):1613--1619, 2009.

\bibitem{MOHARproj}
B.~Mohar.
\newblock Projective planarity in linear time.
\newblock {\em Journal of Algorithms}, 15(3):482--502, 1993.

\bibitem{Mohar-snarksPP}
Bojan Mohar.
\newblock Snarks on the projective plane, 2004.

\bibitem{MT}
Bojan Mohar and Carsten Thomassen.
\newblock {\em Graphs on surfaces}.
\newblock Johns Hopkins Studies in the Mathematical Sciences. Johns Hopkins
  University Press, Baltimore, MD, 2001.

\bibitem{petersen-1898}
Julius Petersen.
\newblock Sur le théorème de {T}ait.
\newblock {\em L'Intermédiaire des mathématiciens}, 5:225–227, 1898.

\bibitem{RSST}
Neil Robertson, Daniel Sanders, Paul Seymour, and Robin Thomas.
\newblock The four-colour theorem.
\newblock {\em J. Combin. Theory Ser. B}, 70(1):2--44, 1997.

\bibitem{RSST-STOC}
Neil Robertson, Daniel~P. Sanders, Paul Seymour, and Robin Thomas.
\newblock Efficiently four-coloring planar graphs.
\newblock In {\em Proceedings of the {T}wenty-eighth {A}nnual {ACM} {S}ymposium
  on the {T}heory of {C}omputing ({P}hiladelphia, {PA}, 1996)}, pages 571--575.
  ACM, New York, 1996.

\bibitem{3edgecoloring}
Neil Robertson, Paul Seymour, and Robin Thomas.
\newblock Tutte's three-edge coloring conjecture.
\newblock {\em J. Combin. Theory Ser. B}, 70(1):166--183, 1997.

\bibitem{tutte}
W.~Tutte.
\newblock On the algebraic theory of graph colorings.
\newblock {\em J. Combin. Theory}, 1:15--50, 1966.

\bibitem{Vodopivec}
Andrej Vodopivec.
\newblock On embeddings of snarks in the torus.
\newblock {\em Discrete Math.}, 308(10):1847--1849, 2008.

\end{thebibliography}

% Start Appendices
%
\appendix
%

% ====================================================
\section{List of cases when a vertex sends charge 5 or 6}
\label{sect:sendscharge}\showlabel{sect:sendcharge}
% ====================================================

\tikzset{deg5/.style={thick, circle, draw, fill=black, inner sep=1.5pt,}}
\tikzset{deg6/.style={thick, circle, draw, fill=black, inner sep=0pt,}}
\tikzset{deg7/.style={thick, circle, draw, fill=white, inner sep=2pt,}}
\tikzset{deg8/.style={thick, rectangle, draw, fill=white, inner sep=2pt,}}
\tikzset{deg9/.style={thick, regular polygon, regular polygon sides=3, rotate=180, draw, fill=white, inner sep=1pt,}}
\tikzset{deg10/.style={thick, regular polygon, draw, fill=white, inner sep=2pt,}}
\tikzset{->-/.style={decoration={
    markings,
    mark=at position .6 with {\arrow{>}}}, postaction={decorate}}}
\tikzset{->>-/.style={decoration={
    markings, 
    mark=at position .5 with {\arrow{>}};, 
    mark=at position .6 with {\arrow{>}};}, postaction={decorate}}}

\begin{figure}[htbp]
\begin{tabular}{ccccc}
~\hskip 1.8cm ~ &
\begin{minipage}[t]{0.2\hsize}
\centering
% (page, row, col): (0, 0, 0)
% filename: from6to7+_10.rule
\begin{tikzpicture} []
    \node [deg6] at (0.8, 2.151) (v0) {};
    \node [deg7] at (1.8, 2.151) (v1) {};
    \node [above = 0.15 cm of v1, anchor=center] (v1+) { $+$ };
    \node [deg5] at (1.3, 3.017) (v2) {};
    \node [deg5] at (0.3, 3.017) (v3) {};
    \node [deg5] at (0.3, 1.285) (v4) {};
    \node [deg5] at (1.3, 1.285) (v5) {};
    \node [deg5] at (1.126, 4.002) (v6) {};
    \node [deg5] at (1.126, 0.3) (v7) {};
    \draw [->-] (v0) -- (v1);
    \foreach \u / \v in {v0/v1, v0/v2, v0/v3, v0/v4, v0/v5, v1/v2, v1/v5, v2/v3, v2/v6, v3/v6, v4/v5, v4/v7, v5/v7}
        \draw (\u) -- (\v);
\end{tikzpicture}
\end{minipage}
&
\begin{minipage}[t]{0.2\hsize}
\centering
% (page, row, col): (0, 0, 1)
% filename: from6to7+_23.rule
\begin{tikzpicture} []
    \node [deg6] at (0.8, 2.151) (v0) {};
    \node [deg7] at (1.8, 2.151) (v1) {};
    \node [above = 0.15 cm of v1, anchor=center] (v1+) { $+$ };
    \node [deg5] at (1.3, 3.017) (v2) {};
    \node [deg5] at (0.3, 3.017) (v3) {};
    \node [deg6] at (0.3, 1.285) (v4) {};
    \node [deg5] at (1.3, 1.285) (v5) {};
    \node [deg5] at (1.126, 4.002) (v6) {};
    \node [deg5] at (1.126, 0.3) (v7) {};
    \node [deg5] at (2.24, 0.943) (v8) {};
    \draw [->-] (v0) -- (v1);
    \foreach \u / \v in {v0/v1, v0/v2, v0/v3, v0/v4, v0/v5, v1/v2, v1/v5, v1/v8, v2/v3, v2/v6, v3/v6, v4/v5, v4/v7, v5/v7, v5/v8, v7/v8}
        \draw (\u) -- (\v);
\end{tikzpicture}
\end{minipage}
&
\begin{minipage}[t]{0.2\hsize}
\centering
% (page, row, col): (0, 0, 2)
% filename: from7to7+_289.rule
\begin{tikzpicture} []
    \node [deg7] at (1.201, 2.057) (v0) {};
    \node [deg7] at (2.201, 2.057) (v1) {};
    \node [above = 0.15 cm of v1, anchor=center] (v1+) { $+$ };
    \node [deg5] at (1.824, 1.275) (v2) {};
    \node [deg5] at (0.978, 1.082) (v3) {};
    \node [deg7] at (0.3, 1.623) (v4) {};
    \node [above = 0.15 cm of v4, anchor=center] (v4+) { $+$ };
    \node [deg7] at (0.3, 2.491) (v5) {};
    \node [above = 0.15 cm of v5, anchor=center] (v5+) { $+$ };
    \node [deg5] at (0.978, 3.032) (v6) {};
    \node [deg5] at (1.824, 2.839) (v7) {};
    \node [deg5] at (1.355, 0.3) (v8) {};
    \node [deg5] at (1.824, 3.706) (v9) {};
    \draw [->-] (v0) -- (v1);
    \foreach \u / \v in {v0/v1, v0/v2, v0/v3, v0/v4, v0/v5, v0/v6, v0/v7, v1/v2, v1/v7, v2/v3, v2/v8, v3/v4, v3/v8, v4/v5, v5/v6, v6/v7, v6/v9, v7/v9}
        \draw (\u) -- (\v);
\end{tikzpicture}
\end{minipage}
&
\end{tabular}
\caption{A vertex sends charge 6 in these cases.}
\label{fig:proj6}
% \showlabel{fig:proj6}
\end{figure}
\subfile{tikz/proj5}
\tikzset{deg5/.style={thick, circle, draw, fill=black, inner sep=1.5pt,}}
\tikzset{deg6/.style={thick, circle, draw, fill=black, inner sep=0pt,}}
\tikzset{deg7/.style={thick, circle, draw, fill=white, inner sep=2pt,}}
\tikzset{deg8/.style={thick, rectangle, draw, fill=white, inner sep=2pt,}}
\tikzset{deg9/.style={thick, regular polygon, regular polygon sides=3, rotate=180, draw, fill=white, inner sep=1pt,}}
\tikzset{deg10/.style={thick, regular polygon, draw, fill=white, inner sep=2pt,}}
\tikzset{->-/.style={decoration={
    markings,
    mark=at position .6 with {\arrow{>}}}, postaction={decorate}}}
\tikzset{->>-/.style={decoration={
    markings, 
    mark=at position .5 with {\arrow{>}};, 
    mark=at position .6 with {\arrow{>}};}, postaction={decorate}}}

\begin{figure}[htbp]
\begin{tabular}{ccccc}

\begin{minipage}[t]{0.18\hsize}
\centering
% (page, row, col) = (0, 0, 0)
\begin{tikzpicture} []
    \node [deg5] at (1.109, 0.3) (v0) {};
    \node [deg7] at (2.109, 0.3) (v1) {};
    \node [above = 0.15 cm of v1, anchor=center] (v1+) { $+$ };
    \node [deg5] at (1.418, 1.251) (v2) {};
    \node [deg5] at (0.3, 0.888) (v3) {};
    \draw [->-] (v0) -- (v1);
    \foreach \u / \v in {v0/v1, v0/v2, v0/v3, v1/v2, v2/v3}
        \draw (\u) -- (\v);
\end{tikzpicture}
\end{minipage}
&
\begin{minipage}[t]{0.18\hsize}
\centering
% (page, row, col) = (0, 0, 1)
\begin{tikzpicture} []
    \node [deg5] at (1.109, 0.888) (v0) {};
    \node [deg7] at (2.109, 0.888) (v1) {};
    \node [above = 0.15 cm of v1, anchor=center] (v1+) { $+$ };
    \node [deg5] at (1.418, 1.839) (v2) {};
    \node [deg6] at (0.3, 1.476) (v3) {};
    \node [deg5] at (0.3, 0.3) (v4) {};
    \draw [->-] (v0) -- (v1);
    \foreach \u / \v in {v0/v1, v0/v2, v0/v3, v0/v4, v1/v2, v2/v3, v3/v4}
        \draw (\u) -- (\v);
\end{tikzpicture}
\end{minipage}
&
\begin{minipage}[t]{0.18\hsize}
\centering
% (page, row, col) = (0, 1, 0)
\begin{tikzpicture} []
    \node [deg5] at (1.109, 0.3) (v0) {};
    \node [deg7] at (2.109, 0.3) (v1) {};
    \node [above = 0.15 cm of v1, anchor=center] (v1+) { $+$ };
    \node [deg5] at (1.418, 1.251) (v2) {};
    \node [deg6] at (0.3, 0.888) (v3) {};
    \node [deg5] at (2.436, 1.839) (v4) {};
    \node [deg6] at (0.94, 2.325) (v5) {};
    \draw [->-] (v0) -- (v1);
    \foreach \u / \v in {v0/v1, v0/v2, v0/v3, v1/v2, v1/v4, v2/v3, v2/v4, v2/v5, v3/v5, v4/v5}
        \draw (\u) -- (\v);
\end{tikzpicture}
\end{minipage}
&
\begin{minipage}[t]{0.18\hsize}
\centering
% (page, row, col) = (0, 0, 2)
\begin{tikzpicture} []
    \node [deg5] at (1.109, 1.251) (v0) {};
    \node [deg7] at (2.109, 1.251) (v1) {};
    \node [above = 0.15 cm of v1, anchor=center] (v1+) { $+$ };
    \node [deg6] at (1.418, 2.202) (v2) {};
    \node [deg5] at (0.3, 1.839) (v3) {};
    \node [deg5] at (0.3, 0.663) (v4) {};
    \node [deg6] at (1.418, 0.3) (v5) {};
    \draw [->-] (v0) -- (v1);
    \foreach \u / \v in {v0/v1, v0/v2, v0/v3, v0/v4, v0/v5, v1/v2, v1/v5, v2/v3, v3/v4, v4/v5}
        \draw (\u) -- (\v);
\end{tikzpicture}
\end{minipage}
&
\begin{minipage}[t]{0.18\hsize}
\centering
% (page, row, col) = (0, 0, 3)
\begin{tikzpicture} []
    \node [deg5] at (0.3, 1.251) (v0) {};
    \node [deg7] at (1.3, 1.251) (v1) {};
    \node [above = 0.15 cm of v1, anchor=center] (v1+) { $+$ };
    \node [deg5] at (0.609, 0.3) (v2) {};
    \node [deg5] at (0.609, 2.202) (v3) {};
    \draw [->-] (v0) -- (v1);
    \foreach \u / \v in {v0/v1, v0/v2, v0/v3, v1/v2, v1/v3}
        \draw (\u) -- (\v);
\end{tikzpicture}
\end{minipage}
\\[4mm]
\begin{minipage}[t]{0.18\hsize}
\centering
% (page, row, col) = (0, 0, 4)
\begin{tikzpicture} []
    \node [deg5] at (1.109, 1.251) (v0) {};
    \node [deg7] at (2.109, 1.251) (v1) {};
    \node [above = 0.15 cm of v1, anchor=center] (v1+) { $+$ };
    \node [deg5] at (1.418, 0.3) (v2) {};
    \node [deg6] at (0.3, 1.839) (v3) {};
    \node [deg6] at (1.418, 2.202) (v4) {};
    \node [deg6] at (0.587, 3.033) (v5) {};
    \node [deg5] at (1.781, 3.32) (v6) {};
    \draw [->-] (v0) -- (v1);
    \foreach \u / \v in {v0/v1, v0/v2, v0/v3, v0/v4, v1/v2, v1/v4, v3/v4, v3/v5, v4/v5, v4/v6, v5/v6}
        \draw (\u) -- (\v);
\end{tikzpicture}
\end{minipage}
&
\begin{minipage}[t]{0.18\hsize}
\centering
% (page, row, col) = (0, 1, 1)
\begin{tikzpicture} []
    \node [deg5] at (1.109, 1.251) (v0) {};
    \node [deg7] at (2.109, 1.251) (v1) {};
    \node [above = 0.15 cm of v1, anchor=center] (v1+) { $+$ };
    \node [deg5] at (1.418, 0.3) (v2) {};
    \node [deg5] at (0.3, 1.839) (v3) {};
    \node [deg6] at (1.418, 2.202) (v4) {};
    \draw [->-] (v0) -- (v1);
    \foreach \u / \v in {v0/v1, v0/v2, v0/v3, v0/v4, v1/v2, v1/v4, v3/v4}
        \draw (\u) -- (\v);
\end{tikzpicture}
\end{minipage}
&
\end{tabular}
\caption{A vertex of degree $5$ sends charge 4 in these cases.}
\label{fig:proj4_deg5}
\end{figure}

\clearpage
% ====================================================
\section{Discharging rules}
\label{sect:rule}\showlabel{sect:rule}
% ====================================================
% 毎回コンパイルに時間がかかるので、編集時にはコメントアウトしておく。最終的には出力する。
\subfile{tikz/rule}
\clearpage

% ==============================================%======
%\section{List of reducible %configurations}
%\label{sect:redc}\showlabel{sect:redc}
% ====================================================

\clearpage
% ====================================================
\section{Appendix: Pseudo codes}
\label{sect:code}\showlabel{sect:code}
% ====================================================

\subsection{Pseudocode for checking reducibility of configurations}

We show a pseudocode of the reducibility checker introduced in Section \ref{sect:reducible configurations}.
The algorithm must be provided with two inputs: a configuration $W$ and the Kempe chain information ``Kempe''.
Before the main algorithm, we show how to generate Kempe.

In this subsection, a \textit{Kempe chain} of length $2r$ is represented by a set of $r$ disjoint pairs of integers in $\{1, \cdots, 2r\}$.
We prepare a set of Kempe chains for planar and projective planar configurations so that each Kempe chain corresponds to the signed matching and projective signed matching as defined in Definition \ref{dfn:matching} and \ref{dfn:projmatching}.

\begin{algorithm}[htbp]
    \caption{GetKempe($r$, \texttt{kempeType})}
    \label{alg:kempe}
    \begin{algorithmic}[1]
        \Require {\texttt{kempeType} $=$ \texttt{projective} or \texttt{planar}}
        \Ensure {Kempe($r$), the set of Kempe chains of length $2r$}
        \State Kempe($r$) $\gets \varnothing$
        \If {\texttt{kempeType} is \texttt{projective}}
            \ForAll {$K \in $ GetKempe($r-1$, \texttt{planar})}
                \ForAll {$a, b \in \{1, \cdots, 2(r-1)\}, a \leq b$}
                    \State flip $\gets$ a function $(x) \mapsto \begin{cases} x & (x < a) \\ a + b - x & (a \leq x < b) \\ x+2 & (b \leq x) \end{cases}$
                    \State $K' \gets$ map each $x \in P \in K$ to flip($x$)
                    \State $K' \gets K' \cup \{\{a, (b + 1)\}\}$ 
                    \State Kempe($r$) $\gets$ Kempe($r$) $\cup \{K'\}$
                \EndFor
            \EndFor
        \Else
             \State lift $\gets$ a function $(d, x) \mapsto d + x$     
             \ForAll {$K \in $ GetKempe($r-1$, \texttt{planar})}
                \State $K' \gets$ map each $x \in P \in K$ to lift($1, x$)
                \State $K' \gets K' \cup \{\{1, 2r\}\}$
                \State Kempe($r$) $\gets$ Kempe($r$) $\cup \{K'\}$
             \EndFor
             \ForAll{$i \in \{1, \cdots, r-1\}$}
                \ForAll {$K_1 \in $ GetKempe($i$, \texttt{planar})}
                    \ForAll {$K_2 \in $ GetKempe($r-i$, \texttt{planar})}
                        \State $K_2' \gets$ map each $x \in P \in K_2$ to lift($2i, x$)
                        \State $K' \gets K_1 \cup K_2'$
                        \State Kempe($r$) $\gets$ Kempe($r$) $\cup \{K'\}$
                    \EndFor
                \EndFor
             \EndFor
        \EndIf
        \State \Return Kempe($r$)
    \end{algorithmic}
\end{algorithm}

Before running the main algorithm, we store the results of GetKempe($r$, \texttt{kempeType}) for $1 \leq r \leq 9$ into an external file to reduce running time.

Now, we introduce the pseudocode of the reducibility checker as below.
The code should behave exactly the same as the algorithm used in previous works, such as the proof of 4CT and the proof of doublecross cases, as long as the correct Kempe chain information is provided.

\begin{algorithm}[htbp]
    \caption{GetMaximalColorableSet($I$, \texttt{kempeType})}
    \label{alg:get-maximal}
    \begin{algorithmic}[1]
        \Require {An island $I$, \texttt{kempeType} $=$ \texttt{projective} or \texttt{planar}}
        \Ensure {$\mathcal{C}$, a set of ring-colorings of $I$}
        \State $R \gets$ the edge set of the ring of $I$
        \State $\Phi \gets$ the set of colorings of $R$
        \State $\mathcal{C} \gets \{ \phi \in \Phi \mid $ there is a coloring of $I$ such that the restriction to $R$ is $\phi \}$
        \While {\True}
            \ForAll {$\phi \in \Phi$}
                \ForAll {$c \in \{0,1,2\}$}
                    \State $R' \gets \{r \in R \mid \phi(r) \neq c\}$
                    \State $r' \gets \frac{|R'|}{2}$
                    \State \texttt{hasBadKempe} $\gets$ \False
                    \ForAll {$K \in$ GetKempe($r'$, \texttt{kempeType})}
                        \If {for all $\phi' \in \Phi$ s.t. $\phi \sim_{K} \phi'$, $\phi' \notin \mathcal{C}$}
                            \State \texttt{hasBadKempe} $\gets$ \True
                        \EndIf
                    \EndFor
                    \If {\texttt{hasBadKempe} is \False}
                        \State $\mathcal{C} \gets \mathcal{C} \cup \{\phi\}$
                        \Break
                    \EndIf
                \EndFor
            \EndFor
        \EndWhile
        \State \Return $\mathcal{C}$
    \end{algorithmic}
\end{algorithm}

\begin{algorithm}[htbp]
    \caption{CheckReducibility($W$, \texttt{kempeType})}
    \label{alg:check-red}
    \begin{algorithmic}[1]
        \Require {A configuration $W$, \texttt{kempeType} $=$ \texttt{projective} or \texttt{planar}}
        \Ensure {(a) D-reducible, (b) C-reducible (with the contraction edge set), or (c) non-reducible}
        \State $I \gets$ the island corresponding to the configuration $W$.
        \State $\mathcal{C} \gets$ GetMaximalColorableSet($I$, \texttt{kempeType})
        \If {all ring-colorings are contained in $\mathcal{C}$}
            \State \Return D-reducible
        \EndIf
        \ForAll {$E \subset I$ where each connected component of $(I + R) \dotdiv E$ is bridgeless}
            \State $I' \gets I \dotdiv E$
            \State $\mathcal{C}' \gets \{ \phi \in \Phi \mid $ there is a coloring of $I'$ such that the restriction to $R$ is $\phi \}$
            \If {$\mathcal{C}' \subseteq \mathcal{C}$}
                \State \Return C-reducible (the contraction edge set $c(I) = E$)
            \EndIf
        \EndFor
        \State \Return non-reducible
    \end{algorithmic}
\end{algorithm}

\subsection{Pseudo code for checking dist 5 in a configuration}
\label{subsect:code-dist5}\showlabel{subsect:code-dist5}

We use several terms to explain the pseudo-code in this section. \textit{A diagonal vertex} of edge $uv$ in near triangulation $S$ is a vertex that constitutes a triangle face with $uv$ in $S$. Since $S$ is a near triangulation, an edge that incidents with a non-triangle face has one diagonal vertex, whereas other edges have two diagonal vertices.
A configuration $K$ of $\mathcal{K}$ is a near triangulation with no 3-vertex-cut, so diagonal vertices of an edge are obtained by finding a vertex that constitutes a triangle with the edge without considering an embedding of it.

\begin{algorithm}
\caption{checkDist5($K$)}
\begin{algorithmic}[1]
    \Require{A configuration $K$, its free completion $S$ with its ring $R$}
    \Ensure{A boolean value that an edge $uv$ can exist}
    \State \texttt{uv\_possible} $\gets \False$
    \ForAll {$u, v \gets V(K)$ such that $d_K(u,v) = 5$}
        \ForAll {\texttt{neighbor\_u} $\gets$ a vertex of $R$ that is adjacent to $u$}
            \ForAll {\texttt{neighbr\_v} $\gets$ a vertex of $R$ that is adjacent to $v$}
                \If {checkDist5Contractible($K$, $u$, $v$, \texttt{neighbor\_u}, \texttt{neighbor\_v})}
                    \State \texttt{uv\_possible} $\gets \True$
                    \Comment{dangerous pair exists}
                \EndIf
                \If {checkDist5NonContractible($K$, $u$, $v$, \texttt{neighbor\_u}, \texttt{neighbor\_v})}
                    \State \texttt{uv\_possible} $\gets \True$
                    \Comment{dangerous pair exists}
                \EndIf
            \EndFor
        \EndFor
    \EndFor
    \State \Return \texttt{uv\_possible} 
\end{algorithmic}
\end{algorithm}

\begin{algorithm}
\caption{getCorrespondingVertices($K$, $u$, $v$, \texttt{neighbor\_u}, \texttt{neighbor\_v}, $x_u$, $y_u$, $x_v$, $y_v$)}
\label{alg:corresponding_vertices}
\begin{algorithmic}[1]
    \Require{A configuration $K$, its free completion $S$ with its ring $R$, $u,v \in V(K)$, \texttt{neighbor\_u}, \texttt{neighbor\_v} $\in V(R)$, $x_u, y_u, x_v, y_v \in V(S)$}
    \Ensure {pairs of corresponding vertices in $S$ around $x(=x_u,y_u), y(=y_u,y_v)$}
    \State \texttt{corresponding\_vertices} $\gets \{(u, $\texttt{neighbor\_v}$), ($\texttt{neighbor\_u}, $v), (x_u, x_v), (y_u, y_v)\}$
    \State \texttt{corresponding\_edges} $\gets \{(ux_u,$ \texttt{neighbor\_v}$x_v), (uy_u, $\texttt{neighbor\_v}$y_v), ($\texttt{neighbor\_u}$x_u, vx_v), ($\texttt{neighbor\_u}$y_u, vy_v)\}$
    \ForAll {(\texttt{edge\_u}, \texttt{edge\_v}) $\gets$ \texttt{correspdonding\_edges}}
        \While {not (all diagonal vertices of \texttt{edge\_u} belong to \texttt{corresponding\_vertices}, or all diagonal vertices of \texttt{edge\_v} belong to \texttt{corresponding\_vertices})}
            \State \texttt{vertex\_u} $\gets$ the diagonal vertex of \texttt{edge\_u} that does not belong to any pair of \texttt{corresponding\_vertices}
            \State \texttt{vertex\_v} $\gets$ the diagonal vertex of \texttt{edge\_v} that does not belong to any pair of \texttt{corresponding\_vertices}
            \State \texttt{corresponding\_vertices} $\gets$ \texttt{corresponding\_vertices} $\cup$ $\{$(\texttt{vertex\_u},  \texttt{vertex\_v})$\}$
            \State \texttt{edge\_u} $\gets$ an edge whose ends are \texttt{vertex\_u} and $x_u$ or $y_u$ (we choose $x_u$ or $y_u$, which is an end of \texttt{edge\_u})
            \State \texttt{edge\_v} $\gets$ an edge whose ends are \texttt{vertex\_v} and $x_v$ or $y_v$ (we choose $x_v$ or $y_v$, which is an end of \texttt{edge\_v})
        \EndWhile
    \EndFor
    \State \Return \texttt{corresponding\_vertices}
\end{algorithmic}
\end{algorithm}

\begin{algorithm}
\caption{checkDist5NonContractible($K$, $u$, $v$, \texttt{neighbor\_u}, \texttt{neighbor\_v})}
\label{alg:check_noncontractible}
\begin{algorithmic}[1]
    \Require{A configuration $K$, its free completion $S$ with its ring $R$, $u,v$ in $V(K)$, \texttt{neighbor\_u}, \texttt{neighbor\_v} in $V(R)$}
    \Ensure {A boolean value that represents whether $u$ can be adjacent to $v$ non-contractibly.}
    \State $x_u, y_u \gets$ two diagonal vertices of edge $u$\texttt{neighbor\_u} in $S$
    \State $x_v, y_v \gets$ two diagonal vertices of edge $v$\texttt{neighbor\_v} in $S$ such that $x_v, y_v$ corresponds to $x_u, y_u$ respectively. \Comment{choose $x_v, y_v$ such that $u$ is adjacent to $v$ non-contractibly.}
    \State \texttt{corresponding\_vertices} $\gets$ getCorrespondingVertices($K$, $u$, $v$, \texttt{neighbor\_u}, \texttt{neighbor\_v}, $x_u$, $y_u$, $x_v$, $y_v$)
    \ForAll{$(w_u, w_v) \gets$ \texttt{corresponding\_vertices}}
        \If {($w_u \in K$ and $w_v \in K$) or $d_S(w_u, w_v) \leq 5$} \Comment{representativity is at most 5}
            \State \Return \False
        \EndIf
    \EndFor
    \State \Return \True
\end{algorithmic}
\end{algorithm}

\begin{algorithm}
\caption{checkDist5Contractible($K$, $u$, $v$, \texttt{neighbor\_u}, \texttt{neighbor\_v})}
\label{alg:check_contractible}
\begin{algorithmic}[1]
    \Require{A configuration $K$, its free completion $S$ with its ring $R$, $u,v$ in $V(K)$, \texttt{neighbor\_u}, \texttt{neighbor\_v} in $V(R)$}
    \Ensure {A boolean value that represents whether $u$ can be adjacent to $v$ contractibly.}
    \State $x_u, y_u \gets$ two diagonal vertices of edge $u$\texttt{neighbor\_u} in $S$
    \State $x_v, y_v \gets$ two diagonal vertices of edge $v$\texttt{neighbor\_v} in $S$ such that $x_v, y_v$ corresponds to $x_u, y_u$ respectively. \Comment{choose $x_v, y_v$ such that $u$ is adjacent to $v$ contractibly.} 
    \State \texttt{corresponding\_vertices} $\gets$ getCorrespondingVertices($K$, $u$, $v$, \texttt{neighbor\_u}, \texttt{neighbor\_v}, $x_u$, $y_u$, $x_v$, $y_v$)
    \ForAll {$(w_u, w_v) \gets$ \texttt{corresponding\_vertices}}
        \If {$w_u \in K$ and $w_v \in K$}
            \State \Return \False
        \EndIf
    \EndFor
    \ForAll {$P \gets$ all paths of dist $5$, connecting $u$ and $v$ in $K$}
        \State $C_1, C_2 \gets$ two connected components of $S - $($P$ + \texttt{neighbor\_u} + \texttt{neighbor\_v})
        \If {$\min \{|C_1|, |C_2|\} > 3$}
            \State \Return \False
            \Comment{A contradiction to the condition of 6-cut}
        \EndIf
    \EndFor
    \ForAll {(\texttt{vertex\_u}, \texttt{vertex\_v}) $\gets \{(x_u, x_v), (y_u, y_v)\}$}
        \If {\texttt{vertex\_u} $\in R$ and \texttt{vertex\_v} $\in R$}
            \ForAll {\texttt{neigh\_vertex\_u} $\gets$ all neighbors of \texttt{vertex\_u} in $K$}
                 \ForAll {\texttt{neigh\_vertex\_v} $\gets$ all neighbors of \texttt{vertex\_v} in $K$}
                     \ForAll {$P \gets$ all shortest paths between \texttt{neigh\_vertex\_u} and \texttt{neigh\_vertex\_v} in $K$}
                         \State $C_1, C_2 \gets$ two connected components of $S - $($P + $ \texttt{vertex\_u} + \texttt{vertex\_v})
                         \State $l \gets$ (the number of edges in $P$) + 2
                         \State $x \gets \min(|C_1|, |C_2|)$
                         \If {$l \leq 4$, $x > 0$ or $l = 5$, $x > 1$ or $l = 6$, $x > 3$}
                             \State \Return \False 
                             \Comment{A contradiction to the condition of 2,3,4,5,6-cut}
                         \EndIf
                     \EndFor
                 \EndFor
            \EndFor
        \ElsIf {\texttt{vertex\_u} $\in R$ and \texttt{vertex\_v} $\not \in R$}
            \ForAll {\texttt{neigh\_vertex\_u} $\gets$ all neighbors of \texttt{vertex\_u} in $K$}
                \State \texttt{neigh\_vertex\_v} $\gets$ the vertex corresponding \texttt{neigh\_vertex\_u} in \texttt{corresponding\_vertices}
                \ForAll {$P \gets$ all shortest paths between \texttt{neigh\_vertex\_u} and \texttt{vertex\_v} in $K$}
                    \State $C_1, C_2 \gets$ two connected components of $S - $($P + $ \texttt{vertex\_u} + \texttt{neigh\_vertex\_v})
                         \State $l \gets$ (the number of edges in $P$) + 1
                         \State $x \gets \min(|C_1|, |C_2|)$
                         \If {$l \leq 4$, $x > 0$ or $l = 5$, $x > 1$ or $l = 6$, $x > 3$}
                             \State \Return \False
                             \Comment{A contradiction to the condition of 2,3,4,5,6-cut}
                         \EndIf
                     \EndFor
            \EndFor
        \ElsIf {\texttt{vertex\_u} $\not \in R$ and \texttt{vertex\_v} $\in R$}
            \State ...\Comment{The same procedure as \texttt{vertex\_u} 
 $\in R$ and \texttt{vertex\_v} $\not \in R$ except swapping the role of \texttt{vertex\_u} and \texttt{vertex\_v}, so we omit it}
        \EndIf
    \EndFor
    \State \Return \True
\end{algorithmic}
\end{algorithm}

The symbol $K$ denotes a configuration, and $S$ denotes free completion of $K$ with its ring $R$.
When we check $K$, we enumerate all pairs of vertices of $V(K)$ such that the distance between them is $5$. The symbol $u,v$ denotes such vertices. We fix a neighbor of $u$ as \texttt{neighbor\_u} and a neighbor of $v$ as \texttt{neighbor\_v}. We assume a vertex $u$ corresponds to \texttt{neigbor\_v}, and $v$ corresponds to \texttt{neighbor\_u}. The assumption corresponds to the case an edge $uv$ exists. There are two cases where an edge $uv$ exists. One case is contractible, the other is a non-contractible case. In a minimal counterexample, which is a triangulation, there are two diagonal vertices of an edge $uv$, denoted as $x, y$. There is a vertex around $u, v$ that corresponds to $x, y$ respectively, each of which is denoted as $x_u, x_v, y_u, y_v$. The correspondence of those vertices is different in contractible cases and non-contractible cases. The symbol $P$ denotes a path in $S$ that consists of a path of length 5 between $u$ and $v$ in $K$, and \texttt{neighbor\_u}, \texttt{neighbor\_v}. The path $P$ divides $S$ into two components. In the contractible case, $x_u$, $x_v$ belong to the same component, while $x_u$, $x_v$ belong to the different component in the non-contractible case (The same holds true for $y_u$, $y_v$). This corresponds to the fact that a non-contractible cycle in the projective plane is \emph{one-sided}, which means left and right interchange when traversing.
In both cases, we calculate other corresponding vertices in the vicinity of $x$ and $y$ by the function getCorrespondingVertices in Algorithm \ref{alg:corresponding_vertices}. In the non-contractible case, we check the representativity is at most 5 by calculating the shortest path distance between corresponding vertices in Algorithm \ref{alg:check_noncontractible}. In the contractible case, we check the condition of a low cut in Algorithm \ref{alg:check_contractible}. To do so, we calculate the shortest path distance between corresponding vertices and the size of components divided by the path. Note that the size of components $C \subseteq V(S)$ is calculated loosely since two vertices in $R$ may be the same and a vertex in $K$ and a vertex in $R$ may be the same when the distance between them is $5$. To be specific, the size is calculated by $|V(C \cap K)| + \max \{ |$neighbors of $v$ in $R \cap C$ $| \mid v \in V(K), $there is no vertex of $C$ of distance 5 between $v$ $\}$.
In the proof of Claim \ref{clm:all-imply} in Section \ref{subsect:imply1}, we need to check $K \in \mathcal{K}$ with a stronger constraint. More precisely, the size is calculated by $|V(C \cap K)| + \max \{ f(|$neighbors of $v$ in $R \cap C$ $|) \mid v \in V(K), $there is no vertex of $C$ of distance 5 between $v$ $\}$, where $f \colon \{0,1,2,3\} \rightarrow \{0, 1\}$ is defined as $f(0)=f(1)=1, f(2)=f(3)=1$.

\subsection{Pseudo code for discharging}
\label{subsect:code-disch}\showlabel{subsect:code-disch}
\begin{dfn}
    A \emph{range-configuration} is a 3-tuple $K = (G, \alpha_K, \beta_K)$, where $G$ is a near-triangulation on the plane and $\alpha_K: V(G) \rightarrow \mathbb{Z}_+$, $\beta_K : V(G) \rightarrow \mathbb{Z}_+ \cup \infty$ are maps satisfying the following conditions:
    \begin{enumerate}
        \renewcommand{\labelenumi}{(\roman{enumi})}
        \item For every vertex $v \in V(G)$, $G - v$ has at most two components, and if there are two, then $\alpha_K(v) = \beta_K(v) = \text{deg}_{G}(v) + 2$.
        \item For every vertex $v \in V(G)$, if $v$ is not incident with the infinite face, then $\alpha_K(v) = \beta_K(v) = \text{deg}_{G}(v)$, and otherwise $\beta_K(v) \geq \alpha_K(v) > \text{deg}_{G}(v)$. In either case, $\alpha_K(v) \geq 5$.
    \end{enumerate}
\end{dfn}
A definition of range-configuration is similar to that of a configuration. The only difference is a vertex of a range-configuration that is incident with the infinite face does not have a fixed degree but a range of possible degrees.
We define range-configuration here since it is useful in describing a discharging algorithm.

\begin{dfn}
    Let $T = (G, \alpha_T, \beta_T), T' = (G', \alpha_{T'}, \beta_{T'})$ be two range-configurations and $abc, a'b'c'$ be a triangle in $G, G'$ respectively.
    If there is a range-configuration $T'' = (G'', \alpha_{T''}, \beta_{T''})$ such that there exists an injective map $f \colon V(G) \rightarrow V(G''), f' \colon V(G') \rightarrow V(G'')$ with following properties, We say $T''$ is obtained by \textit{overlaping $T$ and $T'$ so that a triangle $abc$ corresponds to $a'b'c'$}.
    \begin{enumerate}
        \renewcommand{\labelenumi}{(\roman{enumi})}
        \item $f(a) = f'(a')$, $f(b) = f'(b')$, and $f(c) = f'(c')$.
        \item For each vertex $v'' \in V(G'')$, there exists $v \in V(G)$ such that $v'' = f(v)$, or there exists $v' \in V(G')$ such that $v'' = f'(v')$.
        \item $uv \in E(G)$ is equivalent to $f(u)f(v) \in E(G'')$. The same condition holds for $f'$ and $T'$.
        \item For each vertex $v'' \in V(G'')$, $\alpha_{T''}(v'')$ has the smallest value under the constraint that
        \begin{itemize}
            \item If $f^{-1}(v'') \neq \emptyset$ $\alpha_{T''}(v'') \geq \alpha_{T}(f^{-1}(v''))$
            \item If $f'^{-1}(v'') \neq \emptyset$, $\alpha_{T''}(v'') \geq \alpha_{T'}(f'^{-1}(v''))$
        \end{itemize}
        \item For each vertex $v'' \in V(G'')$, $\beta_{T''}(v'')$ has the largest value under the constraint that 
        \begin{itemize}
            \item If $f^{-1}(v'') \neq \emptyset$ $\beta_{T''}(v'') \leq \beta_T(f^{-1}(v''))$
            \item If $f'^{-1}(v'') \neq \emptyset$ $\beta_{T''}(v'') \leq \beta_{T'}(f'^{-1}(v''))$
        \end{itemize}
    \end{enumerate}
\end{dfn}

Let $T = (G, \alpha_T, \beta_T), T' = (G', \alpha_{T'}, \beta_{T'})$ be two range-configurations and $uv, u'v'$ be an edge in $G, G'$ respectively.
The number of vertices that are adjacent to $u, v$ in $G$ is 0, 1, or 2 since we only use a 4-connected near-triangulation as $G$. When the number is less than 2, we add several dummy vertices so that each added vertex is adjacent to both $u, v$ and set value $\alpha_T$ as 5, $\beta_T$ as $\infty$, so we can assume there are two vertices that are adjacent to $u, v$. We denote these two vertices as $a_1, a_2$. We do the same thing in $G'$ and denote these two vertices as $a'_1, a'_2$.
When we try to overlap two range-configurations so that $u$ corresponds $u'$ and $v$ corresponds $v'$ (i.e. $f(u) = f'(u'), f(v) = f'(v')$), we specify two pairs of triangles $(uva_1, u'v'a'_1)$ and $(uva_1, u'v'a'_2)$. We call the operation \textit{overlapping $T$, $T'$ so that an edge $uv$ corresponds to $u'v'$}. By this operation, we get 0, 1, or 2 confs.

We regard a rule as a range-configuration by taking 3-tuple $(G(R), \alpha_R, \beta_R)$ from a rule $R$.
We overlap rules of $\mathcal{R}$ when we enumerate a near-triangulation that a vertex sends charge. We overlap two rules $R_1,R_2$ of $\mathcal{R}$ so that an edge $s(R_1)t(R_1)$ corresponds to $s(R_2)t(R_2)$. We enumerate all possible cases that a vertex sends charge by overlapping all combinations of rules of $\mathcal{R}$. We write the pseudo code in Algorithm \ref{alg:enum_send}. The algorithm stops since there is no update in confs after trying to overlap all combinations of rules.

We use a set of rules $\mathcal{R}$ but we do not simply use $\mathcal{R}$. We decompose the degree of each vertex of $R \in \mathcal{R}$ and get another set of rules. The decomposition procedure is the following. For each vertex $v \in V(G(R))$, the map $\alpha_R(v), \beta_R(v)$ represents lowerbound and upperbound of degree of $v$. When $\beta_R(v)$ is not $\infty$, we decompose the range into the value $\alpha_R(v), \alpha_R(v)+1, ..., \beta_R(v)-1,\beta_R(v)$. Otherwise, $\beta_R(v) = \infty$, $\alpha_R(v)$ is at most $7$ for all rule $R \in \mathcal{R}$, so we decompose the range into $\alpha_R(v), \alpha_R(v)+1,...,8$ and $[9, \infty]$.
We do not distinguish the vertex of degree $9,10,11,...$ except the hub of the wheel since all rules of $\mathcal{R}$ do not distinguish degree $9,10,11,...$ (i.e. each vertex $v$ of $R \in \mathcal{R}$ satsify $\alpha_R(v) \leq 8$ and $\beta_R(v)= \infty$ if $9 \leq \beta_R(v)$. ). We repeat the process until all vertices have a ranges of degree $5, 6, 7, 8$, or $[9, \infty]$. By the decomposition procedure, we get another set of rules $\mathcal{R}'$ that is equivalent to $\mathcal{R}$. A set $\mathcal{R}'$ is useful in implementing an algorithm that decides the degree of a cartwheel and finds a reducible configuration that is contained in a cartwheel.

\begin{dfn}
    Let $K$ be a configuration and $L = (G, \alpha_L, \beta_L)$ be a conf.
    A range-configuration $L$ \textit{contains} $K$ when there exists a map $f \colon V(G(K)) \rightarrow V(G)$ such that $uv \in E(G(K))$ is equivalent to $f(u)f(v) \in E(G)$ for all $u,v \in V(G(K))$ and $\alpha_L(f(v)) = \beta_L(f(v)) = \gamma_K(v)$ for each vertex $v \in V(G(K))$.
\end{dfn}

\begin{algorithm}[htbp]
\caption{enumSendCase($\mathcal{R}'$, $\mathcal{C}$)}
\label{alg:enum_send}
\begin{algorithmic}[1]
\Require {A set of rules $\mathcal{R}'$, a set of reducible configurations $\mathcal{C}$}
\Ensure {A set of 4-tuple $(G, s, t, n)$, where $G$ contains no configuration of $\mathcal{C}$, the amount of charge a vertex $s \in V(G)$ sends to $t \in V(G)$ by rules of $\mathcal{R}'$ is $n$.
}
\State $T \gets$ a range-configuration $(G_{st}, \alpha_{T}, \beta_{T})$, where $G_{st}$ is a near-triangulation that consists of only $st$ edge and $\alpha_T(s) = 5, \beta_T(s) = 8, \alpha_T(t) = 7, \beta_T(t) = \infty$.
\State $\mathcal{G} \gets \{T\}$
\While {\True}
    \State $\mathcal{G}' \gets \mathcal{G}$
    \ForAll{$G \gets \mathcal{G}$}
        \State \begin{varwidth}[t]{\linewidth}
            $\mathcal{G}' \gets \mathcal{G}'\ \cup\ \{\text{ range-configurations obtained by overlapping } G \text{ and } (G(R), \alpha_R, \beta_R))$ \par
            \hskip \algorithmicindent so that $st \text{ corresponds to } s(R)t(R) \mid R \in \mathcal{R}\}$
        \end{varwidth}
    \EndFor
    \State $\mathcal{G}'' \gets \{G \in \mathcal{G}' \mid G \text{ does not contain any configuration of }\mathcal{C}\}$
    \If {$\mathcal{G} = \mathcal{G}''$}
        \Break
    \EndIf
    \State $\mathcal{G} \gets \mathcal{G}''$
\EndWhile
\State $\mathcal{G} \gets$ \{$(G,s,t,n) \mid G \in \mathcal{G}$, an amount of charge sent along an edge $st$ by rules of $\mathcal{R}'$ is $n$\}
\State \Return $\mathcal{G}$
\end{algorithmic}
\end{algorithm}

\begin{dfn}
    A \textit{wheel} is a configuration $W$ so that $G(W)$ consists of a cycle $C$ and a vertex $w$ that is adjacent to all vertices of $C$.
    A vertex $w$ is called \textit{hub} of the wheel.
\end{dfn}

\begin{dfn}
    A \textit{cartwheel} is a configuration $W$ such that there is a vertex $w$ and three cycles $C_1, C_2, C_3$ of $G(W)$ with the following properties.
    \begin{enumerate}
        \renewcommand{\labelenumi}{(\roman{enumi})}
        \item $\{w\}, V(C_1), V(C_2), V(C_3)$ are pairwise disjoint and have union $V(G(W))$.
        \item $C_1, C_2, C_3$ are all induced subgraphs of $G(W)$
        \item The distance between $w$ and each vertex of $C_i$ is $i$ $(i=1,2,3)$.
    \end{enumerate}
    We call $w$ the \textit{hub} of cartwheel.
\end{dfn}

When we calculate the amount of charge accumulated in a vertex $v$, we only consider the cartwheel whose hub is $v$ since for each rule $R$ of $\mathcal{R}$, the distance from $s(R), t(R)$ to each vertex of $R$ is at most 3.  

For each rule $R$ of $\mathcal{R}$, degree of $s(R)$ is $5, 6, 7,$ or $8$, so we set $\alpha_T(s) = 5, \beta_T(s) = 8$ in Algorithm \ref{alg:enum_send}. The degree of hub is at least $7$, so we set $\alpha_T(t) = 7, \beta_T(t) = \infty$ in Algorithm \ref{alg:enum_send}.

\begin{algorithm}[htbp]
\caption{discharge($d$, $\mathcal{R}'$, $\mathcal{C}$)}
\label{alg:discharge}
\begin{algorithmic}[1]
\Require {A degree $d$, A set of rule $\mathcal{R}'$, a set of reducible configuration $\mathcal{C}$}
\Ensure {A set of cartwheels where hub's degree is $d$ and hub' charge is positive after applying rules of $\mathcal{R}'$ and contains no configuration of $\mathcal{C}$.}
\State $\{(G, s, t, n)\} \gets \text{enumSendCase}(\mathcal{R}', \mathcal{C})$
\State $\mathcal{W} \gets$ \{$(W, \alpha, \beta) \mid W$ is a wheel, hub is $w$ such that $d(w) = \alpha(w) = \beta(w) = d$, and for other vertex $v \in V(W)$, $\alpha(v) = 5, \beta(v) = \infty$. \}
\ForAll {$v \gets \text{the neighbor of hub }w$}
    \State $\mathcal{W}' \gets \emptyset$
    \ForAll{$W \in \mathcal{W}$}
        \State $\mathcal{W}' \gets \mathcal{W}' \cup \{\text{ range-configurations obtained by overlapping }W\text{ and }G\text{ so that }vw\text{ corresponds to }st \mid G \in \{(G, s, t, n)\}\}$
    \EndFor
    \State $\mathcal{W} \gets \{W \in \mathcal{W}' \mid W\text{ does not contain any configuration of }\mathcal{C}\}$
\EndFor
\State $\mathcal{W} \gets$ \{$W \in \mathcal{W} \mid$ an amount of charge accumulated in hub of $W$ after applying rules of $\mathcal{R}'$ is positive.\}
\State \Return $\mathcal{W}$
\end{algorithmic}
\end{algorithm}
We execute a discharge algorithm for $d=7,8,9,10,11$ and confirm returned set $\mathcal{W} = \emptyset$.
The algorithm described here is a simplified version of a discharging algorithm implemented.
We explain some detailed implementation here.
We use the common result of Algorithm\ref{alg:enum_send} when we execute a discharging algorithm for $d=7,8,9,10,11$. We calculate the result of Algorithm\ref{alg:enum_send} in advance to reduce time.
Also, we execute a discharging program in parallel. First, we enumerate wheels by searching all combinations of degrees of neighbors of hub. The range of hub degree is from $7$ to $11$ and the range of neighbors of the hub is $5, 6, 7, 8, $or $[9, \infty]$. After enumerating wheels, we execute a discharging program for each wheel in parallel. 

We explain the pruning method of searching for combinations of degrees of cartwheel in the discharging algorithm. We adopt three methods. First method is pruning a cartwheel that contains a reducible configuration for each loop as described in Algorithm \ref{alg:discharge}. Second method is pruning the cartwheel when an edge $vw$ has already overlapped some range-configuration $G$ with $(G, s, t, n)$ and the amount of charge that is sent along an edge $vw$ is larger than $n$ for some neighbor $v$. By overlapping a range-configuration $G$ with $(G,s,t,n)$, we search the case the amount of charge is exactly $n$, so if it is larger than $n$, we do not need to search it. We search it by overlapping another conf. Third method is to prune the cartwheel so that the amount of charge accumulated in the hub is already known as at most 0. We calculate the largest possible amount of charge that can be sent from $v$ for each neighbor. Also, we calculate the amount of charge that is sent from $w$ to $v$ for each neighbor only by decided degree. The largest possible value of the charge is calculated by these information and initial charge of the hub. If the value is at most 0, we prune it.

\subsection{Pseudo code for checking cycle of length 6, or 7}
\label{subsect:code-cut6}\showlabel{subsect:code-cut6}

In Section \ref{sect:ccheck}, we use two subroutines \emph{forbiddenCycle}, \emph{forbiddenCycleOneEdge}. We show the pseudo-code that corresponds to these subroutines in Algorithm \ref{alg:forbiddenCycle-6,7cut}, \ref{alg:forbiddenCycleOneEdge-6,7cut}. Note that we have to implement other codes that enumerate all tuples of vertices that have a specified distance after contracting edges in its free completion of the configuration as in Table \ref{table:comp67cut}. This is easy to implement since the task is calculating the shortest path distance, so we omit it here.
We give a few remarks on the Algorithm \ref{alg:forbiddenCycle-6,7cut}, \ref{alg:forbiddenCycleOneEdge-6,7cut}. The symbol $K$ denotes a configuration, and $S$ denotes free completion of $K$ with its ring.
We calculate the number of vertices in the Algorithm as we discussed in Section \ref{sect:ccheck}. For example, in line 12 in Algorithm \ref{alg:forbiddenCycle-6,7cut}, $x = \lceil (s - (k-1))/2 \rceil + t$, where $s,t$ is the number of vertices of the component of $S - R$ such that $P + R$ bounds in the ring of $K$, and in $K$ respectively. We use the same calculation method in other parts of the Algorithms.
Another remark is about the choice of $Q$.
The choice of $Q$ depends on the assumption that how $P$ is embedded outside $K$.
For example, (6cut-1) in Table \ref{table:comp67cut}, we enumerate four vertices $a, b, c, d$ in the ring such that $a, b, c, d$ are in the clockwise order listed in the ring of $K$, and $d(a, b) = d(c, d) = 0$ after contracting edges of $c(K)$.
One of the subroutines we call is forbiddenCycle($K$, $a$, $b$, 2, 6). There are two candidates for $Q$. In this case, we choose a candidate that does not contain $c, d$ as $Q$. 
% since we assume the path of length 2 between $a$ and $b$ is embedded outside $K$ such that t. 
Another subroutine we call is forbiddenCycle($K$, $b$, $a$, 4, 6). In this case, we choose the path that contains $c, d$ as $Q$. These choices come from the assumption that there are paths of length 2, 1, 2, 1 between $a$ and $b$, $b$ and $c$, $c$ and $d$, $d$ and $a$ respectively such that these paths constitute a cycle of length 6 that surrounds $K$. In our practical implementation, we distinguish two choices of $Q$ by switching $u$ and $v$.
The last remark is about the condition of 7cut. The condition of 7cut is used only when \texttt{cutsize} $= 7$ since we cannot always use the minimality condition of 7cut in checking \texttt{cutsize} $= 6$.
More specified information is in the first part of Section \ref{subsect:7-cut}. Please refer to it if the reader is interested.

\begin{algorithm}
\caption{forbiddenCycle($K$, $u$, $v$, $k$, \texttt{cutsize})}
\label{alg:forbiddenCycle-6,7cut}
\begin{algorithmic}[1]
    \Require{A configuration $K$, its free completion $S$ with its ring, two vertices of the ring $u, v$, positive integers $k$, \texttt{cutsize}}
    \Ensure{A boolean value that represents whether there can be a path $P$ of length $k$ between $u$ and $v$ such that $P$ is part of a cycle of length \texttt{cutsize}}
    \State $Q \gets$ the path in the ring that connects $u$ and $v$ 
    \Comment {There are two candidates of $Q$. We choose one of them by assuming $P+Q$ bound the disk in which no vertex of $S$ exists. }
    \If {$|Q| = k$}
        \State \Return \False
    \ElsIf {$|Q| < k$}
        \State \Return \True
    \Else
        \ForAll{$R \gets$ all shortest paths in $S$ between $u$ and $v$}
            \If {all vertices of $R$ belongs the ring}
                \Continue
            \EndIf
            \State $l \gets |R| + k$ 
            \State $x \gets$ the number of vertices of the component of $S - R$ such that $P+R$ bounds
            \If {$l \leq 4, x > 0$ or $l = 5, x > 1$ or $l = 6, x > 3$ or \texttt{cutsize} = 7, $l = 7, x > 4$}
                \State \Return \True
            \EndIf
        \EndFor
        \State \Return \False
    \EndIf
\end{algorithmic}
\end{algorithm}

\begin{algorithm}
\caption{forbiddenCycleOneEdge($K$, $u$, $v$, $k$, \texttt{cutsize})}
\label{alg:forbiddenCycleOneEdge-6,7cut}
\begin{algorithmic}
    \Require{A configuration $K$, its free completion $S$ with its ring, two vertices of the ring $u, v$, positive integers $k$, \texttt{cutsize} }
    \Ensure{A boolean value that represents whether there can be a path $P$ of length $k$ and a vertex $w$ such that $w$ is adjacent to $v$ and endpoints of $P$ are $u$, $w$ and $P$ is part of a cycle of length \texttt{cutsize}}
    \State $Q \gets$ the path in the ring that connects $u$ and $v$
    \Comment {There are two candidates of $Q$. We choose one of them by assuming $P+Q+vw$ bound the disk in which no vertex of $S$ exists. }
    \State $l \gets$ \texttt{cutsize}$- k + |Q| + 1$
    \State $x \gets$ the number of vertices of $S - Q$
    \If {$l \leq 4, x > 0$ or $l = 5, x > 1$ or $l = 6, x > 3$ or \texttt{cutsize} = 7, $l = 7, x > 4$}
        \State \Return \True
    \EndIf
    \ForAll{$R \gets$ all shortest paths in $S$ between $u$ and $v$}
        \State $l \gets |R| + k + 1$ 
        \State $x \gets$ the number of vertices of the component of $S - R$ such that $P+R+vw$ bounds
        \If {$l \leq 4, x > 0$ or $l = 5, x > 1$ or $l = 6, x > 3$ or \texttt{cutsize} = 7, $l = 7, x > 4$}
            \State \Return \True
        \EndIf
    \EndFor
    \State \Return \False
\end{algorithmic}
\end{algorithm}

\subsection{Pseudocode for checking configurations with contraction size $\geq 5$}\label{subsec:checkconfgeq5}\showlabel{subsec:checkconfgeq5}

In section \ref{sect:largecont}, we introduce the method of proving the validity of a configuration $W$ of large contraction size by enumerating all possible outskirts of $W$ and of obtaining a smaller graph $W^-$ which consists of vertices that can never be deleted by a low cut reduction. 
We show the pseudocode of the algorithm that we implemented.

In the algorithm, we use the result of Lemma \ref{lem:6,7-cut} to determine which one of the connected components separated by some vertex separation $C$ could be reduced.
If $8 \leq |C|$, both of the connected components are subject to reduction no matter what.
If $5 \leq |C| \leq 7$, if the size of the connected component is larger than a certain threshold, the reduction would not occur.
The specific criteria are shown in the pseudocode below.

\begin{algorithm}[H]
    \caption{GetLowCutReducable($C_1$, $C_2$, $l$)}
    \label{alg:lowcutreducable}
    \begin{algorithmic}[1]
        \Require {Two connected components $C_1, C_2$, an integer $5 \leq l \leq 8$}
        \Ensure {Returns the possible low-cut reducible components when $C_1$ and $C_2$ are surrounded by a separation of size $l$}
        \If {$l \leq 5$}
            \State \Return $\varnothing$
        \EndIf
        \If {$l \geq 8$}
            \State \Return $C_1 + C_2$
        \EndIf
        \If {$l = 6, \min \{|C_1|, |C_2|\} > 3$ or $l = 7, \min \{|C_1|, |C_2|\} > 4$}
            \State \Return $\varnothing$
        \EndIf
        \If {$l = 6, \max \{|C_1|, |C_2|\} \leq 3$ or $l = 7, \max \{|C_1|, |C_2|\} \leq 4$}
            \State \Return $C_1 + C_2$
        \EndIf
        \State \Return if $|C_1| < |C_2|$ then $C_1$ else $C_2$ 
    \end{algorithmic}
\end{algorithm}

We now check for all possible vertex separations $C$, which by contraction turns into a low-cut reduction, and obtain the set of vertices that could be reduced.
However, finding such a vertex separation $C$ is generally an exponential-time task, and we compromise by precalculating shortest distance paths.
The specific algorithm is shown below.

\begin{algorithm}[H]
    \caption{ReducableVertices($K$, $S$, $R$)}
    \label{alg:reducablevertices}
    \begin{algorithmic}[1]
        \Require {A C-reducible configuration $K$, its free completion $S$ and its ring $R$}
        \Ensure {Returns the set of vertices that may be reduced due to low-cut reductions}
        \State $d:$ a function, receives two vertices in $S$ and returns the distance between them
        \State $d':$ a function, receives two vertices in $S / c(K)$ and returns the distance between them
        \State $U \gets \varnothing$
        \ForAll {$r_p,r_q \in R$}
            \ForAll {$c$ in $\max \{1, 5 - d(r_p, r_q)\}, \ldots, 3 - d'(r_p, r_q)$}
                \State \texttt{hasSmallCut} $\gets$ \False
                \ForAll {$P \gets$ all paths of distance $d(r_p, r_q)$, connecting $r_p$ and $r_q$ in $S$}
                    \State $C_1, C_2 \gets$ the connected components of $S - P$
                    \State $x \gets \min \{|C_1|, |C_2|\}$
                    \If {$|P| + c = 5, x > 1$ or $|P| + c = 6, x > 3$ or $|P| + c = 7, x > 4$}
                        \State \texttt{hasSmallCut} $\gets$ \True
                        \Break
                    \EndIf
                \EndFor
                \If {\texttt{hasSmallCut} is \False}
                    \ForAll {$P \gets$ all paths where $|P \setminus c(K)| = d'(r_p, r_q)$, connecting $r_p$ and $r_q$ in $S$}
                        \State $C_1, C_2 \gets$ the connected components of $S - P$
                        \State $C \gets$ \text{GetLowCutReducable}($C_1$, $C_2$, $|P| + c$)
                        \State $U \gets U \cup V(C)$
                    \EndFor
                \EndIf
            \EndFor
        \EndFor
        \algstore{reducablevertices}
    \end{algorithmic}
\end{algorithm}
\begin{algorithm}[H]
    \begin{algorithmic}[1]
        \algrestore{reducablevertices}
        \ForAll {$r_p^1,r_q^1,r_p^2,r_q^2 \in R$}
            \If {$R - r_p^1 - r_q^1$ does not disconnect $r_p^2$, $r_q^2$, and $R - r_p^1 - r_p^2$ does not disconnect $r_q^1$, $r_q^2$}
                \ForAll {$c_1$ in $\max \{1, 5 - d(r_p^1, r_q^1)\}, \ldots, 3 - d'(r_p^1, r_q^1) $}
                    \ForAll {$c_2$ in $\max \{1, 5 - d(r_p^2, r_q^2)\} , \ldots, 3 - c_1 - d'(r_p^1,r_q^1) - d'(r_p^2,r_q^2)$}
                        \State \texttt{hasSmallCut} $\gets$ \False
                        \ForAll {$P \gets$ all paths of distance $d(r_p^1, r_q^1)$, connecting $r_p^1$ and $r_q^1$ in $S$}
                            \State $C_1, C_2 \gets$ the connected components of $S - P$
                            \State $x \gets \min \{|C_1|, |C_2|\}$
                            \If {$|P| + c = 5, x > 1$ or $|P| + c = 6, x > 3$ or $|P| + c = 7, x > 4$}
                                \State \texttt{hasSmallCut} $\gets$ \True
                                \Break
                            \EndIf
                        \EndFor
                        \ForAll {$P \gets$ all paths of distance $d(r_p^2, r_q^2)$, connecting $r_p^2$ and $r_q^2$ in $S$}
                            \State $C_1, C_2 \gets$ the connected components of $S - P$
                            \State $x \gets \min \{|C_1|, |C_2|\}$
                            \If {$|P| + c = 5, x > 1$ or $|P| + c = 6, x > 3$ or $|P| + c = 7, x > 4$}
                                \State \texttt{hasSmallCut} $\gets$ \True
                                \Break
                            \EndIf
                        \EndFor
                        \If {\texttt{hasSmallCut} is \False}
                            \ForAll {$P_1 \gets$ all paths where $|P_1 \setminus c(K)| = d'(r_p^1, r_p^2)$, connecting $r_p^1$ and $r_p^2$ in $S$}
                                \ForAll {$P_2 \gets$ all paths where $|P_2 \setminus c(K)| = d'(r_q^1, r_q^2)$, connecting $r_q^1$ and $r_q^2$ in $S$}
                                    \State $C_1, C_2, C_3 \gets$ the connected components of $S - P_1 - P_2$ 
                                    \State (Let $C_3$ be the one adjacent to both $P_1$ and $P_2$)
                                    \State $C \gets$ \text{GetLowCutReducable}($C_1 + C_2$, $C_3$, $|P_1| + |P_2| + c_1 + c_2$)
                                    \State $U \gets U \cup V(C)$
                                \EndFor
                            \EndFor
                        \EndIf
                    \EndFor
                \EndFor
            \EndIf
        \EndFor
        \State \Return $U$
    \end{algorithmic}
\end{algorithm}

Finally, we check the order and each of the vertices' degrees. 
We return a reason for $G'$ never to become $K_6$, or nothing if no reason could be found.
Note that we can only calculate the lower bound of the vertices touching the ring, and we must take this into account.

\begin{algorithm}[H]
    \caption{CheckConfiguration($K$)}
    \label{alg:checklargeconf}
    \begin{algorithmic}[1]
        \Require {A configuration $K$, the contraction edge set $c(K)$}
        \Ensure {Returns {\True} if there is some reason for $K$ to be safe, {\False} otherwise}
        \State $S,R \gets$ the free completion of $K$ and its ring
        \State $U \gets$ \text{ReducableVertices}($K$, $S$, $R$)
        \State $S \gets S - U$
        \State $S' \gets S/c(K)$
        \State $R' \gets $ set of vertices in $S'$ which the vertex was (a) in $R$ or (b) some vertices were contracted which contained one or more vertices in $R$ 
        \While {\True}
            \State \texttt{hasSeparation} $\gets$ \False
            \ForAll {$V_{\text{sep}} \subset V(S')$ such that $|V_{\text{sep}}| \leq 3$}
                \ForAll {$C\gets $ connected components of $S' - V_\text{sep}$}
                    \If {$C \subset S' - R'$}
                        $S' \gets S' - V(C)$
                        \State \texttt{hasSeparation} $\gets$ \True
                    \EndIf
                \EndFor
            \EndFor
            \If {\texttt{hasSeparation} is \False}
                \Break
            \EndIf
        \EndWhile
        \If {$|S'| > 6$}
            \State \Return \True
        \EndIf
        \ForAll {$v \in V(S')$}
            \If {$v \notin R'$}
                \If {the degree of $v$ in $S'$ is $4$}
                    \State \Return \True
                \EndIf
            \EndIf
            \If {the degree of $v$ in $S'$ is $6$ or more}
                \State \Return \True
            \EndIf
        \EndFor
        \State \Return \False
    \end{algorithmic}
\end{algorithm}

% ==========================================================
\section{Bigger implies smaller: Reducibility}
\label{sect:smaller-appendix}\showlabel{sect:smaller-appendix}
% ==========================================================

In this section, we show that the reducibility of some configurations is a consequence of the reducibility of other configurations. 

Let us first recall the basic notation used to define reducibility.

  Let $I$ be an island. We add a new incident edge to each vertex of degree 2 in $I$ and denote the set of all added edges by $R$. 
  Let $\mathscr{C}^{\ast}$ be the set of all colorings $\{1,\dots,|R|\} \rightarrow \{0,1,2\}$ that satisfies parity.
  Let $\mathscr{C_0}$ be the set of all $3$-edge-colorings of $R$ that are restrictions of $3$-edge-colorings of $I \cup R$. 
  Let $\mathscr{C}'$ be the maximal consistent subset of $\mathscr{C}^{\ast} - \mathscr{C_0}$. 
  The island $I$ is D-reducible if $\mathscr{C}' = \emptyset$. 
  Recall that a set $\mathscr{C}$ of colorings $\{1,\dots,k\} \rightarrow \{0,1,2\}$ that satisfy parity is \emph{consistent} if for every $\kappa \in \mathscr{C}$ and every $\theta \in \{0,1,2\}$ there is a projective signed matching $M$ such that $\kappa$ $\theta$-fits $M$, and $\mathscr{C}$ contains every coloring that $\theta$-fits $M$.

The C-reducibility is defined analogously, except that $\mathscr{C}^{\ast}$ is a smaller set, containing only those 3-edge-colorings of $R$ that arise from 3-edge-colorings of $I'\cup R$, see Definition \ref{dfn:Cred}. 

To test reducibility, one has to determine if maximal consistent subset of $\mathscr{C}^{\ast} - \mathscr{C_0}$ is empty or not. This is done via the following process.

Let $\mathscr{C}_0$ be as above, and for $i\ge1$, let $\mathscr{C}$ be all 3-edge-colorings of $R$ in $\mathscr{C}^\ast$ that are not in $\cup_{j=0}^{i-1}\mathscr{C}_j$. The we form $\mathscr{C}_i$ by taking all colorings $\kappa\in \mathscr{C}$ for which there is a $\theta\in \{0,1,2\}$ such that for every projective signed matching $M$ of $\{e\in R\mid \kappa(e)\ne\theta\}$ ($\kappa$ $\theta$-fits $M$), there is a coloring $\kappa'$ of $R$ that $\theta$-fits $M$ and belongs to $\cup_{j=0}^{i-1}\mathscr{C}_j$. We will say that colorings in $\mathscr{C}_i$ form level $i$ and that colorings at level 0 \emph{extend to $I$}. We have the following corollary of the construction that allows us testing if a configuration is reducible.

\begin{lem}
    Let\/ $t$ be the smallest integer such that $\mathscr{C}_t = \varnothing$. Then $\mathscr{C}' = \mathscr{C}^{\ast} - \cup_{i=0}^t\mathscr{C}_i$. 
\end{lem}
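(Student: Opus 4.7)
The plan is to prove the equality by establishing the two inclusions separately. Set $D := \mathscr{C}^{\ast}\setminus\bigcup_{i=0}^{t}\mathscr{C}_i$, and note that since $\mathscr{C}_0$ is one of the sets in the union, $D\subseteq \mathscr{C}^{\ast}\setminus\mathscr{C}_0$. By the definition of $\mathscr{C}'$ as the maximal consistent subset of $\mathscr{C}^{\ast}\setminus\mathscr{C}_0$, the inclusion $D\subseteq \mathscr{C}'$ reduces to verifying that $D$ is itself consistent, while the reverse inclusion will be handled by induction on the level index. A brief remark on well-definedness: a union of consistent sets is consistent (the witness matching $M$ for a given $\kappa,\theta$ inside one of them still works inside the union), so the maximal consistent subset is unambiguous.

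For $D\subseteq \mathscr{C}'$, I would fix an arbitrary $\kappa\in D$ and $\theta\in\{0,1,2\}$ and produce a projective signed matching $M$ that $\kappa$ $\theta$-fits and such that every $\theta$-fitting coloring of $M$ lies in $D$. The essential observation is that, because $\mathscr{C}_t=\varnothing$, we have $D = \mathscr{C}^{\ast}\setminus\bigcup_{j<t}\mathscr{C}_j$, so $\kappa$ is eligible for membership in $\mathscr{C}_t$ but must fail the existential clause in the definition of $\mathscr{C}_t$. Negating that clause at the present $\theta$ yields exactly a matching $M$ that $\kappa$ $\theta$-fits and such that no $\theta$-fitting coloring of $M$ lies in $\bigcup_{j<t}\mathscr{C}_j = \bigcup_{i=0}^{t}\mathscr{C}_i$; equivalently, every $\theta$-fitting coloring lies in $D$. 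This is the required consistency.

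For the reverse inclusion $\mathscr{C}'\subseteq D$, I would show by induction on $i\ge 0$ that $\mathscr{C}'\cap\mathscr{C}_i=\varnothing$. The base case $i=0$ is immediate from $\mathscr{C}'\subseteq\mathscr{C}^{\ast}\setminus\mathscr{C}_0$. For the inductive step, suppose toward contradiction that some $\kappa\in\mathscr{C}'\cap\mathscr{C}_i$. Apply the consistency of $\mathscr{C}'$ to $\kappa$ with the specific $\theta$ supplied by $\kappa\in\mathscr{C}_i$: this produces a matching $M$ that $\kappa$ $\theta$-fits and such that every $\theta$-fitting coloring of $M$ lies in $\mathscr{C}'$. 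On the other hand, the definition of $\mathscr{C}_i$ guarantees for this same $M$ a $\theta$-fitting coloring $\kappa'\in\bigcup_{j<i}\mathscr{C}_j$. Hence $\kappa'\in\mathscr{C}'\cap\bigcup_{j<i}\mathscr{C}_j$, contradicting the inductive hypothesis.

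The argument is essentially a formal manipulation of quantifiers, so I do not expect a serious obstacle. The one subtle point is keeping straight the direction of use of each level's defining clause — as a \emph{negated} statement (via $\mathscr{C}_t=\varnothing$) to produce the witness matching in the consistency check for $D$, and as a \emph{positive} statement to force the contradiction in the inductive step — but these two uses are symmetric and the bookkeeping is transparent once the quantifiers are unrolled.
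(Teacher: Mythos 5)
Your proof is correct. The paper actually states this lemma without proof (it is presented as a direct "corollary of the construction," following the analogous fact in the Robertson--Seymour--Sanders--Thomas 4CT machinery), and your two-inclusion argument --- consistency of $\mathscr{C}^{\ast}\setminus\bigcup_{i=0}^{t}\mathscr{C}_i$ obtained by negating the defining clause of $\mathscr{C}_t=\varnothing$, plus induction on the level to show $\mathscr{C}'$ meets no $\mathscr{C}_i$ --- is exactly the standard argument the authors are implicitly relying on, with the quantifiers handled correctly.
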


\begin{thm}\label{thm:bigger to smaller degree}
  Suppose that $I$ is an island that has a path $abcd$ on its outer face, where vertices $b$ and $c$ have degree $2$ in $I$. Let $J$ be the island obtained from $I$ by contracting the edge $bc$. If $I$ is reducible and in the case that it is C-reducible not both of the edges $ab$ and $cd$ are deleted, nor after deleting edges (to reduce) $bc$ is not isolated, then $J$ is also reducible.
\end{thm}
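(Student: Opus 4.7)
The plan is to argue by contradiction, showing that if the maximal consistent subset $\mathscr{C}_J'\subseteq \mathscr{C}_J^{\ast}\setminus \mathscr{C}_0^J$ is nonempty, then so is its analogue $\mathscr{C}_I'$, contradicting the reducibility of $I$. First I would set up a lifting--projection correspondence between ring colorings. For each $\kappa_J\in\mathscr{C}_J^{\ast}$ with $\kappa_J(e_v)=\rho$ (where $e_v$ is the ring edge at the vertex $v$ obtained by contracting $bc$), define its two \emph{lifts} $\kappa_I^{(1)},\kappa_I^{(2)}\in\mathscr{C}_I^{\ast}$ to agree with $\kappa_J$ on all other ring edges and to assign $(e_b,e_c)$ the pair $(x,y)$ or $(y,x)$, where $\{x,y\}=\{0,1,2\}\setminus\{\rho\}$; conversely, any $\kappa_I$ with $\kappa_I(e_b)\ne\kappa_I(e_c)$ projects uniquely to such a $\kappa_J$. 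A short check (using that $|\kappa^{-1}(\rho)|$ decreases by $1$ while $|\kappa^{-1}(x)|$ and $|\kappa^{-1}(y)|$ each increase by $1$) verifies that the parity condition is preserved. Moreover, in any $3$-edge-coloring of $I\cup R_I$ the color of $bc$ is forced at both endpoints to be the unique color missing from the adjacent edges and must equal $\rho$; this implies that $\kappa_J\in\mathscr{C}_0^J$ iff at least one of its lifts lies in $\mathscr{C}_0^I$. Consequently, for every $\kappa_J\in\mathscr{C}_J'$ both lifts lie in $\mathscr{C}_I^{\ast}\setminus\mathscr{C}_0^I$, so $\mathscr{A}:=\{\kappa_I^{(1)},\kappa_I^{(2)}:\kappa_J\in\mathscr{C}_J'\}$ is nonempty.

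Next I would prove that $\mathscr{A}$ is consistent. Given a lift $\kappa_I^{(j)}$ of some $\kappa_J\in\mathscr{C}_J'$ and a color $\theta_I\in\{0,1,2\}$, I build a projective signed matching $M_I$ from a witness $M_J$ for consistency of $\mathscr{C}_J'$: if $\theta_I=\rho$, take $M_J$ at $\theta_J=\rho$ (so $e_v\notin E(M_J)$) and adjoin the match $(\{e_b,e_c\},-1)$ into $A_0$, which introduces no new overlaps since $e_b,e_c$ are consecutive in the ring; if $\theta_I\ne\rho$, take $M_J$ at $\theta_J=\theta_I$ (so $e_v$ is matched in $M_J$ with some $g$) and replace $\{e_v,g\}$ by $\{e',g\}$, where $e'\in\{e_b,e_c\}$ is the edge whose $\kappa_I^{(j)}$-color is not $\theta_I$, with the sign chosen so that $\kappa_I^{(j)}$ $\theta_I$-fits the new match. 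In both cases one checks that $M_I$ is a valid projective signed matching (inserting a new label between $e_b$ and $e_c$ preserves the relative order of all other matches) and that $\kappa_I^{(j)}$ $\theta_I$-fits it.

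The main obstacle is to show that every $\kappa_I'$ that $\theta_I$-fits $M_I$ lies in $\mathscr{A}$. For $\theta_I=\rho$ this is direct: the sign $-1$ on $\{e_b,e_c\}$ forces $\kappa_I'(e_b)\ne\kappa_I'(e_c)$, so $\kappa_I'$ projects to a coloring $\kappa_J'$ that $\rho$-fits $M_J$, giving $\kappa_J'\in\mathscr{C}_J'$ and hence $\kappa_I'\in\mathscr{A}$. The delicate subcase is $\theta_I\ne\rho$ when $\kappa_I'(e_c)$ differs from $\kappa_I^{(j)}(e_c)$: then the projection $\kappa_J'$ assigns $e_v$ the third non-$\theta_I$ color (rather than $\rho$), and the matching sign simultaneously swaps $\kappa_J(g)$ to produce $\kappa_J'(g)$. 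The key observation that closes this case is that the sign on $\{e_v,g\}$ in $M_J$ encodes whether $\kappa_J(e_v)=\kappa_J(g)$, and this equality is preserved under the simultaneous swap of the two non-$\theta_I$ colors at $e_v$ and $g$; thus $\kappa_J'$ still $\theta_I$-fits $M_J$, so $\kappa_J'\in\mathscr{C}_J'$ by consistency, and $\kappa_I'\in\mathscr{A}$. This establishes the consistency of $\mathscr{A}$ and yields the desired contradiction. For the C-reducible case, the extra hypotheses (that $\{ab,cd\}$ is not contained in $c(I)$ and that $bc$ is not isolated in $I\dotdiv c(I)$) guarantee that $c(I)$ induces a well-defined contraction set $c(J)$ on $J$, and the same lifting--projection argument applies verbatim with $\mathscr{C}_0$ replaced throughout by the restriction-of-$3$-edge-colorings sets $\mathscr{D}_J$ and $\mathscr{D}_I$.
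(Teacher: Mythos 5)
Your argument for the D-reducible case is correct and is essentially the paper's own argument in different packaging: both rest on the same lift/projection between ring colorings (with $e_b,e_c$ receiving the two colors distinct from $\kappa_J(e_v)$) and the same surgery on the projective signed matching at the contracted ring edge. The paper runs the transfer as an induction on the levels $\mathscr{C}_0,\mathscr{C}_1,\dots$ of the procedure computing the maximal consistent subset, via an injective map $\varphi$ fixing one of the two lifts; you instead verify directly that the set of \emph{all} lifts of $\mathscr{C}_J'$ is consistent and invoke maximality of $\mathscr{C}_I'$. Taking both lifts is in fact slightly cleaner, since a coloring that $\theta$-fits your $M_I$ need only satisfy $\kappa(e_b)\ne\kappa(e_c)$ rather than the particular cyclic order imposed by the paper's $\varphi$, and your sign bookkeeping in the delicate subcase checks out.

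The one genuine gap is the C-reducible case, which you dispatch in a sentence with an incorrect prescription. By Definition \ref{dfn:Cred}, C-reducibility of $J$ means $\mathscr{D}_J\cap\mathscr{C}_J'=\emptyset$, where $\mathscr{C}_J'$ is still the maximal consistent subset of $\mathscr{C}_J^{\ast}-\mathscr{C}_0^J$ and $\mathscr{D}_J$ is the set of restrictions of $3$-edge-colorings of $(J\dotdiv c(J))\cup R_J$; the statement is therefore not obtained by ``replacing $\mathscr{C}_0$ by $\mathscr{D}$''. Your consistency argument still yields $\mathscr{A}\subseteq\mathscr{C}_I'$ unchanged, but to derive a contradiction from a hypothetical $\kappa_J\in\mathscr{D}_J\cap\mathscr{C}_J'$ you must additionally show that some lift of $\kappa_J$ lies in $\mathscr{D}_I$, i.e.\ that a $3$-edge-coloring of $(J\dotdiv c(J))\cup R_J$ extending $\kappa_J$ can be un-contracted to a $3$-edge-coloring of $(I\dotdiv c(I))\cup R_I$ extending one of the two lifts. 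That un-contraction is precisely where the hypotheses that $ab$ and $cd$ are not both in $c(I)$ and that $bc$ is not isolated after deletion are needed (they ensure $b$ and $c$ retain enough incident edges in $I\dotdiv c(I)$ for the forcing argument from your $\mathscr{C}_0$ claim to go through; if both $ab$ and $cd$ were deleted, $r_b,bc,r_c$ would be suppressed into a single edge and the two ring edges would be forced to receive equal colors, killing the lift). As written, your proposal never deploys these hypotheses, so the step they exist to justify is missing; it is a localized but real omission.
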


\begin{proof}
  Let us observe that the difference between D-reducibility and C-reducibility is only in the definition of $\mathscr{C}^\ast$. Here we are dealing with two configurations, $I$ and $J=I/bc$, and we will use $\mathscr{D}^\ast$, $\mathscr{D}_i$, and $\mathscr{D}'$ when referring to the corresponding sets of edge-colorings related to $J$.
  
  Our goal is to show that $J$ is reducible. Let $Q$ be the corresponding edge-cut in $J$. It contains an edge $q\in Q$ that is incident with the vertex $bc$ corresponding to the contracted edge $bc$. Note that the edge-cut of $I$ is $R = (Q\setminus \{q\}) \cup \{r_b,r_c\}$, where $r_b$ and $r_c$ are incident with $b$ and $c$ in $I$, respectively. Let $\kappa': Q\to \{0,1,2\}$ be a coloring that satisfies the parity condition, i.e. $\kappa'\in \mathscr{D}^\ast$. Then we define a coloring $\kappa:R\to \{0,1,2\}$ by having $\kappa$ to agree with $\kappa'$ on $R\setminus\{r_b,r_c\}$, and setting $\kappa(r_b)=\kappa'(q)+1 \pmod{3}$ and $\kappa(r_c)=\kappa'(q)+2 \pmod{3}$. Then it is clear that $\kappa\in \mathscr{C}^\ast$. By denoting this injective correspondence by $\varphi: \mathscr{D}^\ast \to \mathscr{C}^\ast$, we note
  $$
      \varphi(\mathscr{D}^\ast) = \{\kappa\in \mathscr{C}^\ast\mid \kappa(r_c)=\kappa(r_b)+1 \pmod 3\}. 
  $$ 
  
  We now claim that for each $i\ge0$, 
  \begin{equation}
    \mathscr{D}_0\cup \cdots \cup \mathscr{D}_i \supseteq \varphi^{-1}(\mathscr{C}_0\cup \cdots \cup \mathscr{C}_i).
    \label{eq:D_i}
  \end{equation}
  The proof will be by induction, and we first consider the base case, when $i=0$. Suppose that $\kappa'=\varphi^{-1}(\kappa)$, where $\kappa\in \mathscr{C}_0$. If $\alpha$ is a 3-edge-coloring of $I$ that is consistent with $\kappa$, then we have that $\alpha(ab) = \alpha(r_c)$ and $\alpha(cd) = \alpha(r_b)$. Since $\kappa(r_b)\ne \kappa(r_c)$, it is clear that this yields a 3-edge-coloring of $J$ extending $\kappa'$. Thus $\kappa'\in \mathscr{D}_0$. This proves the base of induction.
  
  For the inductive step, suppose that $\varphi(\kappa')=\kappa\in \mathscr{C}_i$. Let $M'$ be a signed projective matching that $\theta$-fits $\kappa'$. Let $M$ be the signed projective matching on $R$ that coincides with $M'$ on all elements that do not contain $q$. If there is an element $(\{q,r\},\sigma)\in M'$, then we replace it in $M$ with a match that is defined as follows. If $\kappa'(q)\ne \kappa'(r)$, then $\sigma=-1$ and $\kappa'(q)\ne \theta \ne \kappa'(r)$ since $M'$ $\theta$-fits $\kappa'$. In this case we add to $M$ the match $(\{r_b,r\},+1)$ or $(\{r_c,r\},+1)$, depending on whether $\kappa(r_b)= \kappa'(r)$ or $\kappa(r_c)= \kappa'(r)$, respectively. Clearly, $M$ $\theta$-fits $\kappa$. The other possibility is that $\sigma=+1$. Then $\kappa'(q)=\kappa'(r)\ne\theta$ since $M'$ $\theta$-fits $M'$. In this case, we add to $M$ the match $(\{r_b,r\},-1)$ or $(\{r_c,r\},-1)$, depending on whether $\kappa(r_c)=\theta$ or $\kappa(r_b)=\theta$, respectively. Then, again, $M$ $\theta$-fits $\kappa$. 
  
  Now, since $\kappa\in \mathscr{C}_i$ and $M$ $\theta$-fits $M$, there is a coloring $\bar\kappa$ 
  that $\theta$-fits $M$ and belongs to $\mathscr{C}_0\cup \cdots \cup \mathscr{C}_{i-1}$. By the induction hypothesis, $\varphi^{-1}(\bar\kappa) \in \mathscr{D}_0\cup \cdots \cup \mathscr{D}_{i-1}$. Since $M'$ was arbitrary, this proves that $\kappa'\in \mathscr{D}_i$, and hence we get (\ref{eq:D_i}).

  Since $I$ is reducible, we have that $\mathscr{C}^\ast= \mathscr{C}_0\cup \cdots \cup \mathscr{C}_i$ for some $i$. Then we conclude that $\mathscr{D}^\ast = \mathscr{D}_0\cup \cdots \cup \mathscr{D}_i$.   
\end{proof}

%\bojan{Maybe we should provide some more details in the above proof when we delete one of the edges $ab,bc,cd$.}

The situation of the island in Theorem \ref{thm:bigger to smaller degree} described on the corresponding configurations $K(I)$ and $K(J)$ is as follows. We start with the assumption that a vertex $v$ in $K(I)$ has degree $d$ and $\gamma_{K(I)}(v)=d+3$. The corresponding configuration $K(J)$ is isomorphic to $K(I)$ except that $\gamma_{K(J)}(v)=d+2$. The condition that possible C-reducibility should not delete both $ab$ and $cd$ is saying that two nonconsecutive edges incident with $v$ that lead to the ring of the free completion should not both be contracted in the triangulation. There are examples that show that this condition is necessary.

\begin{coro}\label{cor:bigger to smaller degree}
  Suppose that $K$ is a configuration in $G'$ and $v\in V(K)$ is such that $\gamma_K(v) = d_K(v)+3$. Let $x,y,z$ be the neighbors of $v$ on the ring of the free completion of $K$ and suppose that $K$ is reducible such that in the case of C-reducibility not both of the edges $vx$ and $vz$ are contracted nor $x, z$ are identified. Then the configuration $K'$ obtained from $K$ by changing $\gamma_K(v)$ to $\gamma_K(v)-1$ is also reducible.
\end{coro}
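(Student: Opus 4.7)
The plan is to deduce the corollary directly from Theorem \ref{thm:bigger to smaller degree} by passing from configurations to their associated islands and identifying the operation $\gamma_K(v) \mapsto \gamma_K(v)-1$ with the contraction of a specific edge on the outer boundary of $I(K)$.

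First, I would set up the correspondence. Let $S$ be the free completion of $K$ with ring $R$. The hypothesis $\gamma_K(v) = d_K(v)+3$ means $v$ has three consecutive ring-neighbors $x,y,z$ in $S$, so the two ring-adjacent triangles incident with $v$ are $T_1 = vxy$ and $T_2 = vyz$, and they share the edge $vy$. In the inner dual $I := I(K)$ these triangles become two adjacent degree-$2$ vertices $b,c$ on the outer face of $I$. The fan around $v$ continues on either side with interior triangles $T_0 = vxu$ and $T_3 = vzw$, where $u,w$ are the neighbors of $v$ in $K$ adjacent to $x,z$ respectively. These give degree-$3$ vertices $a,d$ of $I$, and $abcd$ is the path on the outer face required by Theorem \ref{thm:bigger to smaller degree}. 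The configuration $K'$ obtained by replacing $\gamma_K(v)$ by $\gamma_K(v)-1$ has a free completion in which $v$ retains only the two ring-neighbors $x,z$, so the single ring-adjacent triangle incident with $v$ is now $vxz$; in the dual this is exactly the degree-$2$ vertex produced from $b$ and $c$ by contracting the edge $bc$ of $I$. Hence $I(K') = I/bc$, and the theorem applies.

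Next, I would translate the C-reducibility side-conditions. Under the standard duality between triangles of $S$ and vertices of $I$, the edge $ab$ of $I$ is the dual of the edge $vx$ of $S$, and $cd$ of $I$ is the dual of $vz$. Consequently, ``not both of $ab$ and $cd$ are deleted'' in the island exactly matches ``not both of $vx$ and $vz$ are contracted'' in the configuration. Similarly, the edge $bc$ of $I$ is dual to the edge $vy$, and the remaining hypothesis of the theorem (that after the deletions the edge $bc$ is not left isolated) corresponds, through this duality and the bookkeeping of suppressions along the outer boundary, to the statement that the ring vertices $x$ and $z$ are not identified by the reduction. With both translations in place, Theorem \ref{thm:bigger to smaller degree} yields reducibility of $I(K') = I/bc$, and therefore reducibility of $K'$.

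The only step that requires genuine care is the second translation: one must verify that ``$bc$ becomes isolated in $I \dotdiv c(I)$'' is equivalent to ``$x$ and $z$ are identified after the reduction in $K$'', which involves tracking how suppressions of degree-$2$ vertices on the outer face of $I$ interact with the dual ring structure. Once this bookkeeping is carried out, the corollary follows immediately from the theorem, with no further computation needed.
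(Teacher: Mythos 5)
Your proposal is correct and follows essentially the same route as the paper: the corollary is obtained from Theorem \ref{thm:bigger to smaller degree} by the inner-dual correspondence, under which the two ring-adjacent triangles $vxy$, $vyz$ become the degree-2 vertices $b,c$ of $I(K)$, decreasing $\gamma_K(v)$ by one becomes contracting $bc$, and the edges $ab$, $cd$ are dual to $vx$, $vz$. The one bookkeeping step you flag (matching ``$bc$ not isolated'' with ``$x,z$ not identified'') is exactly the part the paper also leaves implicit, so your write-up is at least as complete as the original.
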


Theorem \ref{thm:bigger to smaller degree} and Corollary \ref{cor:bigger to smaller degree} do not work when $\gamma_K(v) = d_K(v)+2$. They can be proved when $\gamma_K(v) = d_K(v)+1$, but we do not need this case.

\subsection{Bigger implies smaller: revisited}\label{subsect:imply1}\showlabel{subsect:imply1}

\emph{Proof of Claim \ref{clm:all-imply}}

Let $v$ be a vertex of $V_0$. The symbol $r_1, r_2, r_3$ denotes three vertices of the ring of $K$ that are adjacent to $v$ in the clockwise order listed along the ring.
A set $V_1 \subseteq V_0$ consists of a vertex $v$ such that the edge $vr_2$ is in $c(K)$.
A set $V_2 \subseteq V_0$ consists of a vertex $v$ such that $r_1, r_3$ is identified by the contraction $c(K)$. 
Note that vertices of $V_2$ do not satisfy the condition of contractions needed for applying Corollary \ref{cor:bigger to smaller degree}. Also, we need attention when $c(K)$ equals $\{vr_2 \mid v \in V_1\}$ and $S \supseteq V_1$. We can apply Corollary \ref{cor:bigger to smaller degree}, but we cannot get a smaller graph than a minimal counterexample by contracting edges of $c(K)$. Hence, we fail to apply induction. This observation leads to the choice of $\mathcal{K}_{K, \text{imply1}}$ defined below.

A set $\mathcal{K}_{K, \text{imply1}}$ consists of $K_S$ for all $S \cap (V_1 \cup V_2) = \emptyset$. A configuration in $\mathcal{K}_{K, \text{imply1}}$ is shown to be reducible by Corollary \ref{cor:bigger to smaller degree}.
We have to check the reducibility of $K_S$, which does not belong to $\mathcal{K}_{K, \text{imply1}}$, but
the number of such configurations for all $K \in \mathcal{K}$ is more than 7000, which is too large.
Hence, we devise the following strategy.
A set $\mathcal{K}_{K,1}$ consists of $K_S$ for all $S \subseteq V_1 \cup V_2$.
The reducibility of $K_S$ in $\mathcal{K}_{K,1}$ can imply $K_{S \cup T}$ for all $T \subseteq V_0 \setminus (V_1 \cup V_2)$ if the constraint of the contraction is satisfied, so we check the reducibility of $K_S$ in $\mathcal{K}_{K,1}$ and check whether or not the contraction of $K_S$ is valid for applying Corollary \ref{cor:bigger to smaller degree}. All configurations in $\mathcal{K}_{K,1}$ are reducible.
A set $\mathcal{K}_{K,\text{imply2}}$ consists of $K_{S \cup T}$ for all $T \subseteq V_0 \setminus (V_1 \cup V_2)$ such that the contraction of $K_S$ is valid.
A set $\mathcal{K}_{K, 2}$ consists of $K_{S \cup T}$ for all $T \subseteq V_0 \setminus (V_1 \cup V_2)$ such that the contraction of $K_S$ is not valid.
We check the reducibility of all configurations in $\mathcal{K}_{K, 2}$. All configurations in $\mathcal{K}_{K, 2}$ are reducible.

A set $\mathcal{K}_{\text{imply1}}, \mathcal{K}_{\text{imply2}}, \mathcal{K}_1, \mathcal{K}_2$ is the union of $\mathcal{K}_{K, \text{imply1}}, \mathcal{K}_{K, \text{imply2}}, \mathcal{K}_{K, 1}, \mathcal{K}_{K, 2}$ for all $K \in \mathcal{K}$ respectively.
We use several computer checks to make sure that a configuration $K$ does not result in $K_6$ after contraction and low-vertex-cut reductions in the previous section of this paper (e.g. Section \ref{sect:ccheck}, Section \ref{sect:largecont}). When doing so, we check all configurations in $\mathcal{K}, \mathcal{K}_1, \mathcal{K}_2$.
In addition, we have to check a configuration in $\mathcal{K}_{\text{imply1}}, \mathcal{K}_{\text{imply2}}$.
A set $\mathcal{K}_{\text{imply1}}^{\ast} \subseteq \mathcal{K}_{\text{imply1}}$ is the union of $\mathcal{K}_{K, \text{imply1}}$ such that the contraction size of $K$ is at least $5$.
For $L \in   \mathcal{K}_{\text{imply1}}^{\ast}$, we check the reducibility of $L$ and get contraction of $L$. The configuration $L$ is C-reducible, but we need contraction of $L$ to check whether or not the resulting graph results in $K_6$.
Otherwise, we do not have to check $L$ since the contraction size of $L$ is at most 4 and hence Lemma \ref{lem:cont4} holds for this $L$.
We do the same thing to $\mathcal{K}_{\text{imply2}}$. In other words, we generate a set $\mathcal{K}_{\text{imply2}}^{\ast}$, and check whether or not a configuration in $\mathcal{K}_{\text{imply2}}^{\ast}$ results in $K_6$, as above. By computer-check in Section \ref{sect:ccheck}, \ref{sect:largecont}, several configuration remains in $\mathcal{K}_1, \mathcal{K}_2$. However, they contain conf(1), since we do not check if a configuration in $\mathcal{K}_1, \mathcal{K}_2, \mathcal{K}_{\text{imply1}}, \mathcal{K}_{\text{imply2}}$ contains a configuration in $\mathcal{K}$ as a subgraph. The configuration conf(1) is already handled in Lemma \ref{lem:5555}, so we do not need to handle them.

Also, we have to check all configurations in $\mathcal{K}_{\text{imply1}}, \mathcal{K}_{\text{imply2}}, \mathcal{K}_1, \mathcal{K}_2$ to prove Lemma \ref{pairs6}. Almost all configurations that have diameter five in $\mathcal{K}$ have a cutvertex, but we have to check a few configurations that remain.
We handle this issue by checking $K \in \mathcal{K}$ with a stronger constraint. The length of the path used in the proof of Lemma \ref{pairs6} does not change. We have to be careful to count the number of vertices. We use a different method from the method described in Lemma \ref{pairs6}.
We calculate the number of the vertices of the ring by choosing one vertex of $K$, which is denoted as $u$, and then counting vertices in the ring that are adjacent to $u$, and subtracting 2 from the value when the counting value is 3, subtracting 1  otherwise for safety. The subtraction is needed to ensure a vertex that has a distance of five may incident with one less ring vertices. When the counting value is 3, $u$ may also be in the same situation, so the subtraction value is 2. By this computer check, one configuration remains, which is denoted as $K$. We handle this one case by generating $K_S$ for all $S \subseteq V_0$, and checking $K_S$ with an original constraint described in the proof of Lemma \ref{pairs6}.

We summarize the argument here in the table below, and we conclude that Claim \ref{clm:all-imply} holds. 

\begin{table}[htbp]
    \centering
    \caption{The number of configurations that need the contraction size $K$ in $\mathcal{K}_1, \mathcal{K}_2, \mathcal{K}_{\text{imply1}}^{\ast}, \mathcal{K}_{\text{imply2}}^{\ast}$ }
    \begin{tabular}{ccccccccccc}
        \toprule
         contraction size & 0 & 1 & 2 & 3 & 4 & 5 & 6 & 7 & $8_{\geq}$ & total   \\
        \midrule
         \# of configurations in $\mathcal{K}_1$ & 80 & 315 & 33 & 51 & 17 & 16 & 7 & 2 & 0 & 521 \\
         \# of configurations in $\mathcal{K}_2$ & 100 & 63 & 1 & 6 & 2 & 0 & 3 & 0 & 0 & 175 \\
         \# of configurations in $\mathcal{K}_{\text{imply1}}^{\ast}$ & 185 & 220 & 11 & 22 & 4 & 7 & 4 & 0 & 0 & 453 \\
         \# of configurations in $\mathcal{K}_{\text{imply2}}^{\ast}$ & 51 & 21 & 6 & 3 & 1 & 0 & 2 & 0 & 0 & 84 \\
        \bottomrule
    \end{tabular}
\end{table}

\section{Small edge-cuts in minimal counterexamples}
\label{sect:below5cuts}\showlabel{sect:below5cuts}

In this section, we show a proof for coloring a projective graph $G$ with a valid edge-cut $F \in E(G)$ which satisfies one of the following:

\begin{itemize}
    \item $2 \leq |F| \leq 4$
    \item $|F| = 5$, and neither of the two connected components of $G-F$ is a 5-cycle.
\end{itemize}

We introduce a proof for a broader theorem, speaking of islands embedded in any surface including the projective plane.

\begin{thm}
    Let $I$ be an island that is embeddable to the plane so that all vertices of degree 2 are adjacent to the outerplane.
    (We call such islands \emph{disk islands}.)
    Let $F$ be its ring.
    $I$ is reducible under a class of embeddable graphs on any surface if one of the following holds.
    \begin{itemize}
        \item $2 \leq |F| \leq 4$
        \item $|F| = 5$ and $|V(I)| > 5$
    \end{itemize}
    \label{thm-disk-cut}
\end{thm}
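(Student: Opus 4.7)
My plan is to verify reducibility in each case of $|F|$ by capping the disk island $I$ with a small planar gadget, invoking the Four Color Theorem on the resulting cubic planar (multi)graph to obtain 3-edge-colorings, and handling any residual ring colorings via projective signed matchings with the overlapping pair permitted because the ambient surface may admit a crosscap.

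For $|F|=2$, parity forces both ring edges to share a color; capping $I$ by a new edge joining its two degree-$2$ vertices yields a bridgeless cubic planar (multi)graph that is 3-edge-colorable by the 4CT, and global color permutations realize all three parity-satisfying colorings, so $\mathscr{C}_0=\mathscr{C}^{\ast}$. For $|F|=3$, parity requires the three ring edges to be pairwise distinctly colored; capping by a new trivalent vertex joined to the three degree-$2$ vertices and applying the 4CT followed by color permutation covers all six colorings. For $|F|=4$, capping by either of the two non-crossing matchings of the degree-$2$ vertices produces every $(4,0,0)$ coloring and both non-crossing $(2,2,0)$ colorings; for a crossing coloring $\kappa=(c_1,c_2,c_1,c_2)$ and $\theta$ the unused color, each of the three possible matchings $M$ of $E(M)=\{1,2,3,4\}$ that $\kappa$ $\theta$-fits admits another $\theta$-fitting coloring already in $\mathscr{C}_0$ (either a monochromatic coloring or a non-crossing pairing), so $\kappa$ cannot lie in any consistent subset of $\mathscr{C}^{\ast}\setminus\mathscr{C}_0$ and D-reducibility follows.

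The delicate case is $|F|=5$ with $|V(I)|>5$. All parity-satisfying ring colorings are of type $(3,1,1)$; a key observation is that since the independence number of $C_5$ is $2$, any such coloring has at least two cyclically consecutive ring edges sharing the majority color. My plan is, for each cyclic pair $\{v_i,v_{i+1}\}$, to cap $I$ by the edge $v_iv_{i+1}$ together with a trivalent vertex joined to the remaining three ring vertices. Each such cap is a bridgeless planar cubic graph and hence 3-edge-colorable by the 4CT; combined with the $S_3$ action of color permutations, the five caps together put every $(3,1,1)$ coloring either directly into $\mathscr{C}_0$ or into a $\theta$-fit class of a projective signed matching whose remaining $\theta$-fit colorings already lie in $\mathscr{C}_0$. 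The hypothesis $|V(I)|>5$ is essential here: when $I=C_5$ the cap-plus-rest construction can produce exactly the Petersen graph, which lies in $\mathcal{P}_0$ and precludes any valid reduction. The main technical obstacle I foresee is verifying that all $(3,1,1)$ colorings not produced directly by a consecutive-pair cap are reachable via a projective signed matching swap; I plan to organize the analysis by the $D_5$ symmetry of the ring so that only a few orbits of $(3,1,1)$ colorings require individual checking, and to use the fact that $|V(I)|>5$ guarantees enough internal structure in $I$ for multiple distinct 3-edge-colorings of each cap to exist, thereby filling the residual orbits.
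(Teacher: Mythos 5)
Your treatment of $|F|=2,3$ matches the paper's. For $|F|=4$ you take a partly different route: the paper also builds the two ``crossing'' caps $G^A_{13}$ and $G^B_{13}$ and invokes single-cross colorability to realize three of the four coloring classes directly, whereas you exclude the crossing class $(c_1,c_2,c_1,c_2)$ by showing that every perfect matching of $\{1,2,3,4\}$ it $\theta$-fits has another $\theta$-fitting coloring in $\mathscr{C}_0$; that exclusion argument is sound and avoids the single-cross result. But your premise for it is too quick: the cap $I+v_1v_2+v_3v_4$ only guarantees that \emph{one} of the two compatible classes (monochromatic or the pairing $\{1,2\},\{3,4\}$) lies in $\mathscr{C}_0$, not both, so ``every $(4,0,0)$ coloring and both non-crossing $(2,2,0)$ colorings'' does not follow from the two planar caps alone. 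You need an additional Kempe-chain step inside $I$ (or the extra caps the paper uses) to get all three non-crossing classes before the exclusion argument can fire.

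The real gap is at $|F|=5$. What you are aiming for --- every $(3,1,1)$ coloring is either in $\mathscr{C}_0$ or killed by the matching/consistency machinery --- is exactly D-reducibility, and that is false in general under the stated hypotheses. Take $I$ to be the $5$-cycle island with one vertex expanded into a triangle: then $|V(I)|>5$ but $\mathscr{C}_0(I)$ is exactly $\mathcal{C}(C_5)=\{\phi^5_{i,i+1}\}$ up to color permutation, and if the maximal consistent subset of $\mathscr{C}^{\ast}-\mathcal{C}(C_5)$ were empty, $C_5$ itself would be D-reducible and the Petersen graph would be $3$-edge-colorable. So some non-adjacent colorings $\phi^5_{i,i+2}$ necessarily survive in $\mathscr{C}'$, and no amount of matching analysis will remove them. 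The correct conclusion (and the paper's) is \emph{C}-reducibility: one proves $\mathscr{C}_0(I)\supseteq\{\phi^5_{12},\phi^5_{23},\phi^5_{34},\phi^5_{45},\phi^5_{15}\}=\mathcal{C}(C_5)$ and then reduces $I$ to the $5$-cycle island $J=C_5$, so that $\mathscr{D}=\mathcal{C}(J)\subseteq\mathscr{C}_0$ and $\mathscr{D}\cap\mathscr{C}'=\emptyset$; the hypothesis $|V(I)|>5$ is used precisely to make this replacement a strict reduction, not because a cap could produce the Petersen graph. Moreover, your five planar caps are not enough even for that weaker goal: each cap $G^C_{i,i+1}$ only certifies that \emph{one} of the three colorings $\phi^5_{ab}$ with $\{a,b\}\subseteq\{1,\dots,5\}\setminus\{i,i+1\}$ is extendable, and one of those three is a non-adjacent pair. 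The paper needs the full family of $21$ caps --- several of which are apex or single-cross and require the Robertson--Seymour--Thomas colorability results rather than the 4CT --- together with a Kempe-chain lemma (if $\phi^5_{ij}$ extends then some $\phi^5_{ik}$ with $k\notin\{i,j\}$ extends) to pin down all five adjacent-pair colorings.
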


\begin{proof}
    Due to the smallness of $F$, there are at most only two Kempe chains that contain the edges in $F$.
    Therefore, the case where the outerplane contains one crosscap (i.e. the surface is a projective plane) is enough for the proof.

    When $|F| = 2,3$, all possible ring colorings that satisfy the parity condition are equivalent.
    So, $\mathcal{C}(I)$ must be equal to $\varnothing$ or $\mathcal{C}^{\text{valid}}$.
    If $|F| = 2$, let $G$ be a cubic graph obtained by connecting the two vertices in $I$ which are of degree two with a new edge, and if $|F| = 3$, let $G$ be a cubic graph obtained by identifying the three vertices in $I+F$ which are degree one.
    In either case, $G$ is planar and by 4CT this graph is colorable.
    Therefore, $\mathcal{C}(I) = \mathcal{C}^{\text{valid}}$, and $I$ is reducible.

    Next, when $|F| = 4$, let us name the four vertices of $I$ having degree 2 in clockwise order as $v_1, v_2, v_3, v_4$.
    We construct six cubic graphs from $I$ as follows. ($u,w$ are new vertices.)
    
    \begin{itemize}
        \item $G^A_{12} = I + v_1v_2 + v_3v_4$
        \item $G^A_{13} = I + v_1v_3 + v_2v_4$
        \item $G^A_{14} = I + v_1v_4 + v_2v_3$
        \item $G^B_{12} = I + v_1u + v_2u + v_3w + v_4w + uw$
        \item $G^B_{13} = I + v_1u + v_3u + v_2w + v_4w + uw$
        \item $G^B_{14} = I + v_1u + v_4u + v_2w + v_3w + uw$
    \end{itemize}

    $G^A_{12}, G^A_{14}, G^B_{12}, G^B_{14}$ are planar and $G^A_{13}, G^B_{13}$ are single-cross, so they are all colorable.
    The multiset expression of the restriction of a coloring of $G^A_{ij}$ to $F$ must be $\{\{i,j,k,l\},\varnothing,\varnothing\}$ or $\{\{i,j\},\{k,l\},\varnothing\}$.
    The multiset expression of the restriction of a coloring of $G^B_{ij}$ to $F$ must be $\{\{i,k\},\{j,l\},\varnothing\}$ or $\{\{i,l\},\{j,k\},\varnothing\}$.
    Note that there are four unique (in the sense that they are pairwise nonequivalent) valid $F$-colorings.
    Since the intersection of $\mathcal{C}(I)$ and the coloring pair above must not be empty, $\mathcal{C}(I)$ must contain at least three unique elements.
    The remaining unique $F$-coloring must necessarily be a ring coloring of $I$, and thus $\mathcal{C}^*(I) = \mathcal{C}^{\text{valid}}$.
    Therefore, $I$ is reducible.

    Finally, when $|F| = 5$, we construct 21 graphs as follows.

    \begin{itemize}
        \item $G^C_{ij} = I + v_iv_j + v_ku + v_lu + v_mu$ ($\{i,j,k,l,m\} = \{1,2,3,4,5\}$, $i < j$)
        \item $G^D_{ij, kl} = I + v_iu + v_ju + v_kw + v_lw + uv + wv + v_mv$ ($\{i,j,k,l,m\} = \{1,2,3,4,5\}$, $i < j$, $i < k$)
        \item $G^E = I + v_1u + v_2v + v_3w+ v_4x + v_5y + uv + vw + wx + xy + yu$
    \end{itemize}

    All of $G^C_{ij}$ and $G^D_{ij,kl}$ and $G^E$ are planar, apex, or singlecross, so every graph here is colorable.
    Note that there are ten unique valid $F$ colorings and that the corresponding multiset for them must be in the form of $\{\{i\}, \{j\}, \{k,l,m\}\}$. 
    Let us denote it by $\phi^5_{ij}$.
    The restriction of a coloring of $G^C_{ij}$ must be one of $\phi^5_{kl}, \phi^5_{km}, \phi^5_{lm}$.
    The restriction of a coloring of $G^D_{ij,kl}$ must be one of $\phi^5_{ik}, \phi^5_{il}, \phi^5_{jk}, \phi^5_{jl}$.
    The restriction of a coloring of $G^E_{ij,kl}$ must be one of $\phi_{12}, \phi_{23}, \phi_{34}, \phi_{45}, \phi_{15}$.

    Using Kempe chains, the following can be said.

    \begin{proposition}
        Let some coloring $\phi$ exist for $I$, and let $\phi^5_{ij}$ be its ring coloring.
        Then, there exists a $k \notin \{i,j\}$ such that $\phi^5_{ik}$ is a ring coloring of $I$.
    \end{proposition}

    \begin{proof}[Subproof]
        We fix a color $\kappa = \phi^5_{ij}(f_i)$ and obtain a Kempe chain in $I$.
        We Kempe change the coloring using the Kempe chain connecting $f_j$ and some other ring edge, which we call $f_k$.
        Then, the newly created coloring of $I$ has a ring coloring $\phi^5_{ik}$.
    \end{proof}

    The above three conditions, together with the proposition, imply that $\mathcal{C}(I)$ contains $\phi^5_{12}, \phi^5_{23}, \phi^5_{34}, \phi^5_{45}, \phi^5_{15}$.
    
    Let $J$ be a 5-cycle embedded in the plane. $J$ is an island and $\mathcal{C}(J)$ is equal to the above set. Since $I$ must be strictly larger than $J$, $I$ is C-reducible.
\end{proof}

\section{Reducibility of projective islands in $\Pi_y^k$}\label{sec:arp1}\showlabel{sec:arp1}

We have checked reducibility on the plane for islands in $\Pi_{y}^k$ with various $y,k$, and the results of each graph family that we use in our proof are shown in Table \ref{tab2}.
Note that the reducibility check is done with planar consistency because if an island $I \in \Pi_{y}^k$ appears in $G$, the crosscap must be contained in $I$, and the outer parts would be planar. 
In Table \ref{tab2}, $c(I) \leq 4$ for almost all C-reducible configurations, except for a few.  
\begin{center}\label{tab2}
    \begin{tabular}{ c|ccc }
        $\Pi_{y}^k$ & \#D-reducible & \#C-reducible & \#Non-reducible \\
        \hline
        $\Pi_{3}^6$ & 5 & 9 & \\
        $\Pi_{3}^7$ & 4 & 23 ($c(I) \geq 4$ for 3 configurations) & \\
        $\Pi_{3}^8$ & 6 & 44 ($c(I) \geq 5$ for 2 configurations) & \\
        $\Pi_{4}^6$ & 2 & & \\
        $\Pi_{4}^7$ & 8 & & \\
        $\Pi_{4}^8$ & 29 & 1 & \\
        $\Pi_{5}^8$ & 2 & & \\
        $\Pi_{5}^9$ & 16 & 1 & \\
        $\Pi_{5}^{10}$ & 61 & 17 & \\
        $\Pi_{5}^{11}$ & 134 & 130 & \\
        $\Pi_{5}^{12}$ & 179 & 564 & \\
        $\Pi_{5}^{13}$ & 115 & 1699 ($c(I) \geq 5$ for 158 configurations) & 6\\
    \end{tabular}
\end{center}

% ===========================================
\section{The remaining unsafe configurations}
\label{subsec:remaining1}\showlabel{subsec:remaining1}
% ===========================================

We now give the details for the remaining 10 configurations that require further analysis in Subsection \ref{subsec:remaining}. 

For all cases, let $G'$ be the graph in which the configuration $W$ appears, and let $S$ be the free completion of $W$.
We assign a number for each vertex of $S$, and we call each vertex $i$ as $v_i$.
The corresponding numbers are shown in the image.

% proj0103.conf (former list0167.conf)
\begin{figure}[H]
  \centering
  \includegraphics[width=5cm]{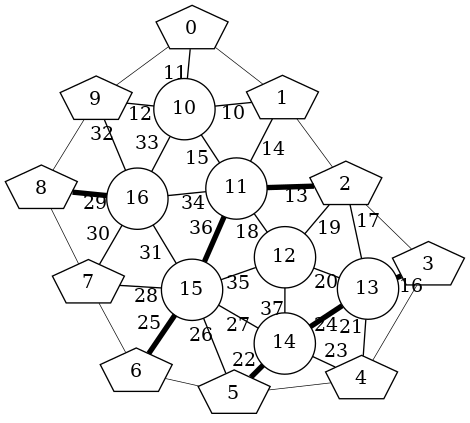}
  % \caption{}
  \label{fig:proj0103}
\end{figure}
\begin{proof}
    The two vertice sets $\{v_{10}\}$ and $\{v_5, v_{14}, v_{13}, v_3\}$ will each become a single vertex after contraction, and will not be removed by 2-vertex cuts nor 3-vertex cuts.
    They must be connected for them to be a vertex in $K_6$, and for that we need at least two of the ring vertices in $\{v_0, v_1, v_3, v_5, v_9\}$ be connected before contraction. 
    However, this will produce a 4-cycle, which we can resolve by low representativity.
\end{proof}

% proj0131.conf (former list0223.png)
\begin{figure}[H]
  \centering
  \includegraphics[width=5cm]{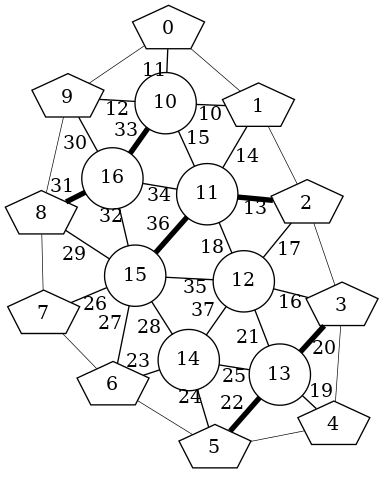}
  % \caption{}
  \label{fig:proj0131}
\end{figure}
\begin{proof}
    A similar proof as above can be constructed.
    The two vertice sets $\{v_{10}, v_{16}, v_8\}$ and $\{v_5, v_{13}, v_3\}$ will each become a single vertex after contraction.
    They must be connected, but we need $v_8$ to be connected to either $v_3$ or $v_5$, each of which can be resolved by low representativity.
\end{proof}

% list2597.conf
\begin{figure}[H]
  \centering
  \includegraphics[width=5cm]{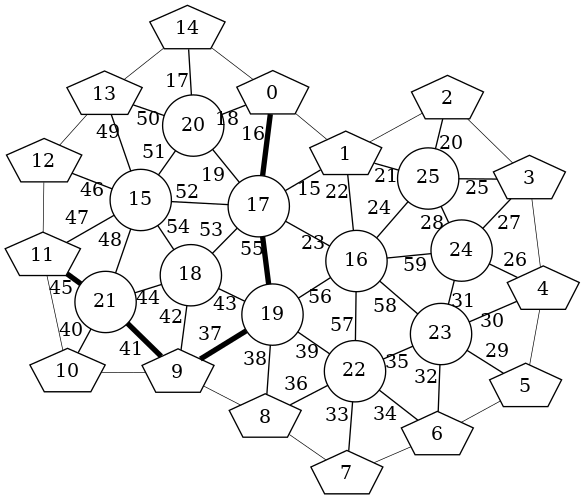}
  % \caption{}
  \label{fig:list2597}
\end{figure}
\begin{proof}
    The only dangerous outer cuts occur when $v_0$ and $v_{11}$ have a path of length 3.
    This creates a dangerous 8-cut which will become a 3-cut after contraction.
    In this case, there is a shorter 7-cut via $v_0, v_{17}, v_{18}, v_{21}, v_{11}$, and both separated components must contain at least $5$ vertices (the smaller one containing $v_{12}, v_{13}, v_{14}, v_{15}, v_{20}$).     
    Therefore, $v_{16}$ remains after reduction, and this must become degree 6.
\end{proof}

% milan0432.conf
\begin{figure}[H]
  \centering
  \includegraphics[width=5cm]{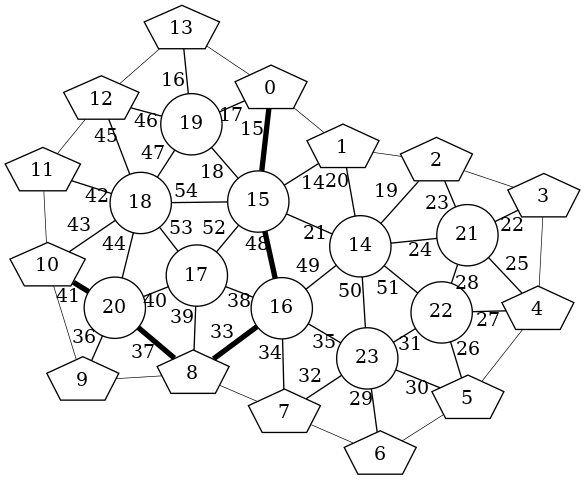}
  % \caption{}
  \label{fig:milan0432}
\end{figure}
\begin{proof}
    The only dangerous outer cuts occur when $v_0$ and $v_{10}$ are connected with a path of length 3.
    This creates a dangerous 8-cut which will become a 3-cut after contraction.
    In this case, there is a shorter 7-cut via $v_0$, $v_{15}$, $v_{17}$, $v_{20}$, $v_{10}$, and both separated components must contain at least $5$ vertices (the smaller one containing $v_{11}$, $v_{12}$, $v_{13}$, $v_{18}$, $v_{19}$).     
    Therefore, $v_{23}$ remains after reduction, and this must become degree 6.
\end{proof}

% proj4560.conf (former milan4225.png)
\begin{figure}[H]
  \centering
  \includegraphics[width=5cm]{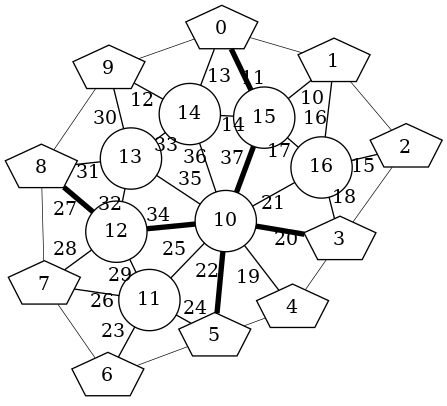}
  % \caption{}
  \label{fig:proj4560}
\end{figure}

\begin{proof}
    The two vertices $v_1$ and $v_7$ will not be removed by 2-vertex cuts nor 3-vertex cuts.
    These two must be connected in order to form $K_6$ after contraction, and this can be resolved by low representativity.
\end{proof}

% =================================
% 4-flows reduction to cubic graphs
%
\section{Strengthening Tutte's 4-Flow Conjecture for projective planar graphs}

An important consequence of this paper is that the Tutte 4-Flow Conjecture can be strengthened for projective planar graphs. This is a direct consequence of our main result, Theorem \ref{mainth} when we consider cubic graphs. In this section we prove that a minimum counterexample would need to be cubic, thus reducing the full conjecture to cubic graphs.

\begin{figure}[htb]
  \centering
  \includegraphics[width=5cm]{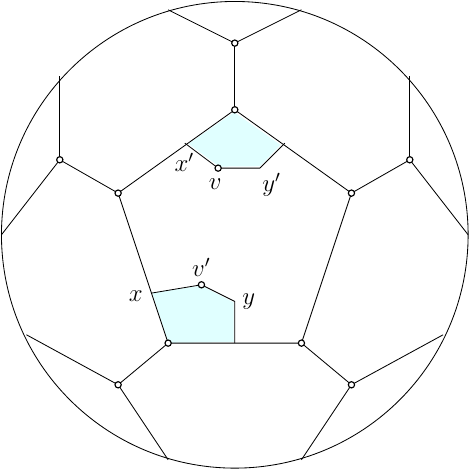}
  \caption{The graph $G_1$ after splitting off the edges $vx,vy$ could become Petersen-like.}
  \label{fig:G1 is Petersen-like}
\end{figure}

Let us extend the notion of a \emph{minimum counterexample} $G$ to be a 2-edge-connected projective planar graph $G$ with minimum $|V(G)|+|E(G)|$ that is not Petersen-like and does not admit a nowhere-zero 4-flow.

\begin{lem}
    A minimum counterexample is a 3-connected cubic graph.
\end{lem}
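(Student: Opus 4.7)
The plan is to show, for a minimum counterexample $G$, that every vertex has degree exactly $3$ (so $G$ is cubic) and that $G$ has no $2$-vertex cut. I would proceed in three reduction steps, each either producing a strictly smaller counterexample (contradicting minimality) or deriving a direct contradiction via the Four Color Theorem. The invariants maintained under each reduction are $2$-edge-connectivity, a projective planar embedding, absence of a nowhere-zero $4$-flow, and not being Petersen-like.

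\textbf{Minimum degree $3$.} Since $G$ is $2$-edge-connected, no vertex has degree $1$. If $v$ has degree $2$ with neighbors $x,y$, the first move is to suppress $v$, replacing the two edges $vx,vy$ by a single edge $xy$ to obtain $G'$. Then $G'$ inherits a projective planar embedding and is $2$-edge-connected, and $|V(G')|+|E(G')| < |V(G)|+|E(G)|$. A nowhere-zero $4$-flow transfers freely between $G$ and $G'$: assign to both $vx$ and $vy$ the value of $xy$, which is forced by flow conservation at $v$. For the Petersen-like property, I would verify that the extended Petersen-like class is closed under suppression and subdivision of degree-$2$ vertices; a subdivision can be realized as a vertex replacement using a planar graph that contains a degree-$2$ vertex on the target edge, and the inverse operation is suppression. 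Thus $G'$ is a smaller counterexample, contradicting minimality.

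\textbf{Maximum degree $3$.} Suppose $v$ has degree $d\ge 4$. Using the projective planar embedding, list the edges at $v$ in cyclic order as $e_1,\dots,e_d$ with $e_i=vx_i$, and for each consecutive pair $(e_i,e_{i+1})$ consider the split $G_i := G - e_i - e_{i+1} + x_ix_{i+1}$, where the new edge is drawn in the face between $e_i$ and $e_{i+1}$ at $v$. Each $G_i$ still embeds in the projective plane and has one fewer edge. Any nowhere-zero $4$-flow on $G_i$ extends to one on $G$ by setting the flows on $e_i$ and $e_{i+1}$ equal to the flow on $x_ix_{i+1}$ (flow balance at $v$ forces this equality), so $G_i$ has no nowhere-zero $4$-flow. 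Mader's splitting theorem guarantees that at least one consecutive split is admissible, meaning the resulting $G_i$ is $2$-edge-connected. The main obstacle is to find an admissible $i$ for which $G_i$ is also not Petersen-like: Figure~\ref{fig:G1 is Petersen-like} shows that a bad choice can yield a Petersen-like result. The argument I plan to make is that if every admissible split at $v$ produced a Petersen-like graph, then the cyclic arrangement of neighbors of $v$ would be forced into the rigid pattern dictated by $P_{10}$ and its planar replacements, making $G$ itself Petersen-like, contradicting our hypothesis. Given one good $i$, $G_i$ is a smaller counterexample, so $G$ has maximum degree $3$ and is cubic.

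\textbf{$3$-connectedness.} Now $G$ is cubic and $2$-edge-connected. Suppose $\{a,b\}$ is a $2$-vertex cut. Split $G$ along $\{a,b\}$ into two sides $G_1,G_2$, each containing $a$ and $b$ together with an added copy of the edge $ab$ (to maintain $2$-edge-connectivity). Both sides are strictly smaller than $G$ and $2$-edge-connected, and from the projective planar embedding one side, say $G_2$, is planar while the other, $G_1$, is projective planar. The Four Color Theorem yields a nowhere-zero $4$-flow on the planar $G_2$. By minimality of $G$, either $G_1$ is Petersen-like or $G_1$ admits a nowhere-zero $4$-flow. In the latter case, the two flows can be glued along the common edge $ab$ to yield a nowhere-zero $4$-flow on $G$, contradicting the hypothesis on $G$. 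In the former case, $G$ is obtained from $G_1$ by replacing the edge $ab$ with the planar graph $G_2$, which is precisely the Petersen-like edge replacement operation defined in the introduction, so $G$ itself is Petersen-like, again a contradiction. Thus $G$ has no $2$-vertex cut, completing the plan.
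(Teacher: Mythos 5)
Your overall architecture differs from the paper's in the order of the reductions, and that reordering is where the real trouble lies. The paper first establishes 3-connectivity (cutvertex, then 2-separator) and only then splits off edges at a vertex of degree $\ge 4$; with 3-connectivity in hand, \emph{every} consecutive split keeps the graph 2-connected, so no splitting-off theorem is needed. You split first, and to preserve 2-edge-connectivity you invoke Mader's splitting theorem. But Mader only guarantees that \emph{some} pair of edges at $v$ is admissible, not a pair that is consecutive in the rotation at $v$; splitting a non-consecutive pair can destroy the projective-planar embedding (the new edge $x_ix_j$ cannot in general be drawn without crossing the intermediate edges at $v$), so you have no guarantee that any admissible split preserves both 2-edge-connectivity and embeddability. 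Moreover, Mader requires $v$ not to be a cutvertex, and you never rule out cutvertices — 2-edge-connectivity does not imply 2-connectivity for non-cubic graphs, and the paper's first paragraph is devoted exactly to this case. Finally, the key claim of this step — that if every admissible split at $v$ yields a Petersen-like graph then $G$ itself is Petersen-like — is announced as a plan ("the cyclic arrangement \dots would be forced into the rigid pattern") but never argued. The paper's mechanism is different and concrete: if splitting $vx,vy$ gives a Petersen-like $H$, then $v$ and the new degree-2 vertex $v'$ lie on a common face and in distinct replacement blocks, and splitting the \emph{adjacent} consecutive pair ($vx',vx$ or $vy,vy'$) cannot yield a Petersen-like graph.

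There is also a gap in the 3-connectedness step. When $G_1$ is Petersen-like you conclude that $G$ is obtained from $G_1$ by the edge-replacement operation and is therefore Petersen-like. That is correct only when $ab\notin E(G)$: the replacement operation deletes the edge being replaced, so if $a$ and $b$ are adjacent in $G$ the 2-sum retains the edge $ab$ and does \emph{not} reconstruct $G$ as an edge replacement (for instance, $P_{10}$ plus one new vertex joined to two adjacent vertices is not Petersen-like). The paper handles exactly this residual case by taking a nowhere-zero 4-flow of $G_1-ab$ (a Petersen-like graph minus one of its connecting edges admits one) and summing it with a flow of the planar side. Your minimum-degree step and the flow-gluing in the 3-connectedness step are fine modulo the closure claims you flag yourself.
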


\begin{proof}
    Suppose that $G$ is a minimum counterexample that is not cubic. If $G$ is not 2-connected, then $G=G_1\cup G_2$ where $G_1\cap G_2$ is a cutvertex. Since $G$ has no cutedges, both $G_1$ and $G_2$ are 2-edge-connected. If they both admit 4-flows, so does $G$. So, one of them must be Petersen-like (as otherwise we would contradict that $G$ is a minimum counterexample). However, in that case, the other one must be planar, and it it easy to see that $G$ itself would be Petersen-like.

    Similarly, if $G$ has a 2-vertex-separator $\{x,y\}$. In that case, $G$ would be a 2-sum of two graphs $G_1$ and $G_2$, both containing the edge $e=xy$. Let $\phi_i$ be a nowhere-zero 4-flow in $G_i$ for $i=1,2$. If $e\notin E(G)$, then we change $\phi_2$ so that $\phi_2(xy)=-\phi_1(xy)$. Then it is clear that $\phi_1+\phi_2$ gives rise to a nowhere-zero-flow in $G$. On the other hand, if $e\in E(G)$, then we change $\phi_2$ so that $\phi_2(xy)\ne -\phi_1(xy)$. Then, again, $\phi_1+\phi_2$ gives rise to a nowhere-zero-flow in $G$. 
    
    This leaves us with the possibility that $G_1$ (say) is Petersen-like. In that case, $G_2$ must be planar. Since $G$ itself is not Petersen-like, the edge $xy\in E(G_1)$ is one of the 15 edges confirming that $G_1$ is Petersen-like. In that case, we let $\phi_1$ be a nowhere-zero 4-flow of $G_1-xy$ (which is 2-edge-connected since otherwise it can be shown that $G$ would be Petersen-like). Now, taking a nowhere-zero 4-flow $\phi_2$ of $G_2$ (or of $G_2-xy$ if $xy\notin E(G)$), the sum $\phi_1+\phi_2$ is a nowhere-zero 4-flow of $G$, a contradiction.
    
    This confirms that $G$ is 3-connected. If it has a vertex $v$ of degree more than 3, consider splitting off a pair of consecutive edges $vx,vy$ in the clockwise rotation around $v$. The resulting graph $H$ is 2-connected since $G$ is 3-connected. A flow in $H$ gives a flow in $G$. As $H$ is smaller (it has fewer edges than $G$), it must be Petersen-like. Let $v'$ be the vertex of degree 2 with neighbors $x,y$ obtained from $v$ after splitting. Since the edges $vx,vy$ are consecutive around $v$, the vertices $v$ and $v'$ are on the same face in the projective plane embedding of $H$, and since $G$ is not Petersen-like, they belong to distinct graphs when building $H$ from the Petersen graph. See Figure \ref{fig:G1 is Petersen-like}. Suppose that the consecutive neighbors around $v$ are $x',x,y,y',\dots$. Now, it is easy to see that we could have split off the edges $vx',vx$ or the edges $vy,vy'$ and then $G_1$ would not be Petersen-like, thus yielding a contradiction.

    The above case shows that the minimum counterexample is cubic. This completes the proof.    
\end{proof}

\end{document}